\let\qedhere\null
\def\C{\mathbb{C}}
\def\N{\mathbb{N}}
\def\id{\mathrm{id}}
\begin{document}

\markboth{A. Kuzmin, V. Ostrovskyi, D. Proskurin, M. Weber and R. Yakymiv}{On $q$-tensor products of Cuntz algebras}

\catchline{}{}{}{}{}

\title{On $q$-tensor products of Cuntz algebras}

\author{Alexey Kuzmin}

\address{Department of Mathematical Sciences,\\ Chalmers University of Technology and University of Gothenburg,\\ Gothenburg, Sweden, vagnard.k@gmail.com}

\author{Vasyl Ostrovskyi}

\address{Institute of Mathematics, NAS of Ukraine,\\ Kyiv, Ukraine, vo@imath.kiev.ua}

\author{Danylo Proskurin}

\address{Faculty of Computer Sciences and Cybernetics,\\ Kyiv National Taras Shevchenko University,\\ Kyiv, Ukraine, prosk@univ.kiev.ua}

\author{Moritz Weber}

\address{Faculty of Mathematics,\\ Saarland University, Saarbr\"ucken, Germany, weber@math.uni-sb.de}

\author{Roman Yakymiv}  

\address{Faculty of Computer Sciences and Cybernetics,\\ Kyiv National Taras Shevchenko University,\\ Kyiv, Ukraine, yakymiv@univ.kiev.ua}

\maketitle

\begin{dedication}
{\it To 75-th birthday of our teacher Yurii S. Samoilenko}
\end{dedication}

\begin{abstract}
We consider the $C^*$-algebra $\mathcal{E}_{n,m}^q$, which is a $q$-twist of two Cuntz-Toeplitz algebras. For the case $|q|<1$, we give an explicit formula which untwists the $q$-deformation showing that the isomorphism class of $\mathcal{E}_{n,m}^q$ does not depend on $q$. For the case $|q|=1$, we give an explicit description of all ideals in $\mathcal{E}_{n,m}^q$. In particular, we show that $\mathcal{E}_{n,m}^q$ contains a unique largest ideal $\mathcal{M}_q$. We identify $\mathcal{E}_{n,m}^q / \mathcal{M}_q$ with the Rieffel deformation of  $\mathcal{O}_n \otimes \mathcal{O}_m$ and use a K-theoretical argument to show that the isomorphism class does not depend on $q$. The latter result holds true in a more general setting of multiparameter deformations.
\end{abstract}

\keywords{Cuntz-Toeplitz algebra, Rieffel's deformation, $q$-deformation, Fock representation, K-theory.}

\ccode{Mathematics Subject Classification 2010: 46L05, 46L35, 46L80, 46L65, 47A67, 81R10}

\section{Introduction}\label{intro}

 Since the early 80's, a wide study of non-classical models of mathematical physics, quantum group theory and noncommutative probability (see e.g., \cite{BoSpe2,fiv,green,mac,MPe,zag}) gave rise to a number of papers on operator algebras generated by various deformed commutation relations \cite{BoSpe,Klimek,mar}, a prominent example being the  irrational rotation algebra, also called the non-commutative torus \cite{rieff}. A major question for such objects is whether deformations exist and how they relate to the original object.

 Other objects of studies, closely related to the ones mentioned above, are $C^*$-algebras generated by isometries, such as Cuntz algebras, extensions of non-commutative tori and their multi-parameter generalizations, see, \cite{coburn, cun, jeu_pinto, Popescu, prolett}. The problems of classification of representations, existence of faithful (universal) representation, as well as the a study of structure of corresponding $C^*$-algebra and dependence of $C^*$-isomorphism classes on parameter of the deformations  are among the central ones in this area. 
 
In our article, we work with a certain class of $C^*$-algebras generated by isometries subject to deformed commutation relations. We study the structure of these $C^*$\nobreakdash-algebras, present their faithful representations and show that some of the algebras we deal with, are independent of the deformation parameter. of deformation. The relation with Rieffel deformation of tensor products, see e.g \cite{Kasprzak_rieffel}, has been significant for our studies.

 \subsection{Context: Wick algebras}

 Let us put our work into a broader context. 
 A general approach to the study of such deformed commutation relations has been provided by the framework of quadratic $*$-algebras allowing Wick ordering (Wick algebras), see \cite{jsw}. It includes, among others, deformations of canonical commutation relations of quantum mechanics, some quantum groups and quantum homogeneous spaces, see e.g., \cite{gisselson,Klimyk,vaksman1,woronowicz}.
On~the other hand, one can consider Wick algebras as deformations of Cuntz-Toeplitz algebras, see \cite{cun,dn,jsw}.

For $\{T_{ij}^{kl},\ i,j,k,l=\overline{1,d}\}\subset\mathbb C$, $T_{ij}^{kl}=\overline{T}_{ji}^{lk}$, the Wick algebra  $W(T)$
is the $*$\nobreakdash-algebra generated by elements $a_j$, $a_j^*$, $j=\overline{1,d}$ subject to the relations
\[
a_i^*a_j=\delta_{ij}\mathbf 1+\sum_{k,l=1}^d T_{ij}^{kl} a_l a_k^*.
\]
It depends \cite{jsw}  on the so called operator of coefficients $T$, 
 given as follows. Let $\mathcal{H}=\mathbb C^d$ and $e_1$, \dots, $e_d$ be the standard orthonormal basis, then
\[
T\colon \mathcal{H}^{\otimes 2}\rightarrow \mathcal{H}^{\otimes 2},\quad
T e_k\otimes e_l=\sum_{i,j=1}^d T_{ik}^{lj} e_i\otimes e_j.
\]

It is a non-trivial and central problem in the theory of Wick algebras to determine whether a Fock representation $\pi_{F, T}$ of a Wick algebra exists, see \cite{BoSpe,jps,jsw}  for some sufficient conditions: for instance, it exists, if $T$ is braided, i.e., $(\mathbf 1\otimes T)(T\otimes\mathbf 1)
(\mathbf 1\otimes T)=(T\otimes\mathbf 1)
(\mathbf 1\otimes T)(\mathbf 1\otimes T)$, and if $\lVert T\rVert\leq 1$; moreover, if $\lVert T\rVert<1$ then $\pi_{F,T}$ is a faithful representation of $W(T)$. 

Another important question concerns the stability of isomorphism classes of the universal $C^*$-envelope\footnote{Recall that given a $*$-algebra $A$, we denote by $\mathcal A = C^*(A)$ its universal $C^*$-algebra, if it exists, i.e. if the set $\mathsf{Rep}\,  A$ of bounded $*$-representations of $ A$ is non-empty and 
\[
\sup_{\pi\in\mathsf{Rep}\, A}\|\pi( a)\|<\infty
\] 
for any $a\in A$. The universal $C^*$-algebra $\mathcal A$ is determined by the  universal property: there exists a $*$-homomorphism $\theta \colon A \to \mathcal A$ such that for any $C^*$-algebra $\mathcal B$ and $*$-homomorphism $\beta \colon A\to \mathcal B$, there exists a unique $*$-homomorphism $\tilde \beta\colon \mathcal A\to\mathcal B$, such that $\beta = \tilde \beta \circ \theta$.} 
$\mathcal{W}(T) = C^*(W(T))$. It was conjectured in  \cite{jsw2}:
\begin{conjecture}\label{q_stab}
If  $T$ is  self-adjoint, braided and $||T|| < 1$, then $\mathcal{W}(T) \simeq \mathcal{W}(0)$.
\end{conjecture}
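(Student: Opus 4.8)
The plan is to push everything into the Fock representation and then locate, inside the Fock image of $\mathcal W(T)$, an honest copy of the undeformed algebra. Since $T$ is self-adjoint, braided and $\|T\|<1$, the quoted faithfulness criterion applies, so $\mathcal W(T)\cong C^*(l_1,\dots,l_d)$ with $l_i:=\pi_{F,T}(a_i)$ the $T$-deformed creation operators on the Fock space $\mathcal F_T(\mathcal H)$; for $T=0$ these are genuine isometries with orthogonal ranges, $\mathcal W(0)=\mathcal E_d$ is the Cuntz--Toeplitz algebra on $d$ generators, whose only nontrivial closed two-sided ideal is the ideal $\mathcal K$ of compacts on $\mathcal F_0(\mathcal H)$ and whose quotient by $\mathcal K$ is $\mathcal O_d$. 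So it remains to prove $C^*(l_1,\dots,l_d)\cong\mathcal E_d$.

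The first step is a ``straightening'' of the generators. The operators $P_n=P_n^{(T)}$ that define the $T$-deformed inner product on $\mathcal H^{\otimes n}$ are positive and, because $\|T\|<1$, invertible with spectral gap bounded below uniformly in $n$, so $W:=\bigl(\bigoplus_n P_n\bigr)^{1/2}$ is a bounded invertible operator identifying $\mathcal F_T(\mathcal H)$ with $\mathcal F_0(\mathcal H)$ and carrying $l_i$ to operators $W l_i W^{-1}$ on $\mathcal F_0(\mathcal H)$, which coincide with the bare creation operators $c_i\colon\xi\mapsto e_i\otimes\xi$ when $T=0$. The goal is to extract from $C^*(W l_1 W^{-1},\dots,W l_d W^{-1})$ a family of isometries $s_1,\dots,s_d$ with $s_i^*s_j=\delta_{ij}$ generating the same $C^*$-algebra --- one wants to write each $s_i$ as an explicit word in the $l_i,l_i^*$ and in the functional calculus of the (invertible) positive operators $l_i^*l_i$ and related ``number-type'' operators. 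This is exactly the kind of explicit ``untwisting'' identity established in the present paper for $\mathcal E^q_{n,m}$ when $|q|<1$, and the expectation is that its mechanism survives for an arbitrary braided $T$ of norm $<1$. Once such $s_i$ are available, the universal property of $\mathcal E_d$ produces a surjective $*$-homomorphism $\varphi\colon\mathcal E_d\to C^*(l_1,\dots,l_d)$.

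The second step is injectivity of $\varphi$. Its kernel is a closed ideal of $\mathcal E_d$, hence one of $0,\mathcal K,\mathcal E_d$; it is not $\mathcal E_d$ (the image is nonzero), and it is not $\mathcal K$ provided $C^*(l_1,\dots,l_d)$ contains a nonzero compact operator --- for otherwise $C^*(l_1,\dots,l_d)\cong\mathcal E_d/\mathcal K=\mathcal O_d$, which is simple and unital and therefore, in any faithful representation, contains no nonzero compact, contradicting that its Fock representation does. So the remaining point is to exhibit one nonzero compact in $C^*(l_1,\dots,l_d)$: for $T=0$ this is the rank-one vacuum projection $1-\sum_i l_il_i^*$; for $T\neq0$ one approximates the vacuum projection in norm by Wick polynomials in the $l_i,l_i^*$, the errors being geometrically small because $\|T\|<1$. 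Then $\ker\varphi=0$ and $\mathcal W(T)\cong\mathcal E_d=\mathcal W(0)$. (An alternative to these two steps is a homotopy argument along $T_s=sT$, $s\in[0,1]$ --- a path of self-adjoint braided contractions, inducing a continuous field of Fock algebras $C^*(l_i^{T_s})$ --- arguing that the associated class of the extension of $\mathcal O_d$ by $\mathcal K$ is locally constant in $s$ inside the discrete group $\mathrm{Ext}(\mathcal O_d)\cong\mathbb Z/(d-1)$, hence equals the Cuntz--Toeplitz class at $s=0$.)

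The main obstacle is the straightening step --- and, in tandem, the production of a nonzero compact --- for a \emph{general} braided $T$. For $T=q\sigma$ (the one-parameter $q$-relations) and for the block/multiparameter $q$-situations such as $\mathcal E^q_{n,m}$, the combinatorics of the $P_n^{(T)}$ and of the words in $l_i,l_i^*$ are explicit enough to write down the untwisting formula, to verify that the normalized generators form a Cuntz--Toeplitz family, and to locate a nonzero compact (this is the content of the Dykema--Nica--type analysis and of the computations carried out here). For arbitrary self-adjoint braided $T$ with $\|T\|<1$ these objects are far less transparent: already the relation $l_i^*l_j=\delta_{ij}+\sum_{k,l}T_{ij}^{kl}l_ll_k^*$ shows the $l_i$ are in general \emph{not} isometries modulo $\mathcal K$ (the correction term need not be compact), so a genuinely non-obvious modification is required, and constructing it --- equivalently, controlling $(P_n^{(T)})^{\pm1/2}$ explicitly enough --- is where the difficulty of the conjecture lies; a related gap is that nuclearity of $\mathcal W(T)$, which would enable classification-type shortcuts, is itself not known for general $T$.
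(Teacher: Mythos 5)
The statement you are proving is Conjecture~\ref{q_stab}, which this paper does \emph{not} prove and explicitly records as open: it is known only for $\lVert T\rVert<\sqrt2-1$ (Jorgensen--Schmitt--Werner) and for a handful of concrete families, of which $\mathcal E_{n,m}^q$ with $|q|<1$ (Theorem~\ref{qstab_thm}) is the paper's contribution. Your text is a programme rather than a proof, and the two load-bearing steps are exactly the content of the open problem. First, the ``straightening'': you need, inside $C^*(l_1,\dots,l_d)$, a Cuntz--Toeplitz family $s_1,\dots,s_d$ generating the same $C^*$-algebra. You do not construct it; you only observe that such a formula exists for $\mathcal E_{n,m}^q$. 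That construction (Theorem~\ref{zer_q_gen} and Theorem~\ref{q_ser_gen}) relies entirely on the block structure of that particular $T$: the $s_i$ are already isometries with orthogonal ranges, $1-Q$ is a projection commuting suitably with the $t_r$, and the series $\sum_k\sum_{|\mu|=k}q^k s_\mu\widetilde t_r s_\mu^*$ converges because $\lvert q\rvert<1$ enters as a scalar. None of this has an analogue for a general self-adjoint braided contraction, where, as you note yourself, $l_i^*l_j-\delta_{ij}$ need not even be compact. Without this step there is no surjection $\varphi\colon\mathcal E_d\to C^*(l_1,\dots,l_d)$, so the ideal-theoretic argument of your second step never starts. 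Second, the claim that the vacuum projection is a norm limit of Wick polynomials ``with geometrically small errors because $\lVert T\rVert<1$'' is asserted, not proved; establishing that $C^*(l_1,\dots,l_d)$ contains a nonzero compact is again essentially equivalent to the known cases of the conjecture (this is where the Dykema--Nica and Kennedy--Nica analyses stop short of $\lVert T\rVert<1$).

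Two further points. The uniform lower bound on the operators $P_n^{(T)}$, needed for $W=(\bigoplus_n P_n)^{1/2}$ to be bounded and boundedly invertible, requires an argument (positivity of each $P_n$ is Bo\.zejko--Speicher, but uniformity in $n$ for all braided $T$ with $\lVert T\rVert<1$ is a separate estimate). And the parenthetical homotopy/$\mathsf{Ext}$ alternative presupposes that each $C^*(l_i^{T_s})$ is already known to be an essential extension of $\mathcal O_d$ by $\mathbb K$ --- i.e.\ it presupposes the two missing steps above --- before one can even speak of its class in $\mathsf{Ext}(\mathcal O_d,\mathbb K)$; continuity of that field in $s$ is also not established. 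In short: the reduction of the conjecture to ``find a Cuntz--Toeplitz family and a nonzero compact in the Fock image'' is a reasonable and standard framing, but it is a reformulation of the difficulty, not a resolution of it, and you correctly identify in your last paragraph that the gap is precisely there.
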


In particular, the authors of \cite{jsw2} have shown that the conjecture holds for the case $||T|| < \sqrt{2} - 1$, for more results on the subject see \cite{dn}, \cite{nk}.

In the case $T=0$ and $d=\dim \mathcal{H} =1$, the Wick algebra $W(0)$ is generated by a single isometry $s$, its  universal $C^*$-algebra exists and is isomorphic to the $C^*$-algebra generated by the unilateral shift, and the Fock representation is faithful. The ideal $\mathcal I$ in $\mathcal E$, generated by $\mathbf 1 - ss^*$ is isomorphic to the algebra of compact operators and $\mathcal E/\mathcal I\simeq C(S^1)$, see \cite{coburn}. When $d\ge 2$, the enveloping universal $C^*$-algebra   exists and it is called the Cuntz-Toeplitz algebra $\mathcal O_d^{(0)}$. It is isomorphic to $C^*(\pi_{F,d}(W(0)))$, so the Fock representation of $\mathcal O_d^{(0)}$ is faithful, see \cite{cun}. Furthermore, the ideal $\mathcal I$ generated by $1-\sum_{j=1}^d s_js_j^*$ is the unique largest ideal in $\mathcal O_d^{(0)}$. It is isomorphic to the algebra of compact operators on $\mathcal F_d$. The quotient $\mathcal O_d^{(0)}/\mathcal I$ is called the Cuntz algebra $\mathcal O_d$. It~is nuclear (as well as $\mathcal O_d^{(0)}$),  simple and purely infinite, see \cite{cun} for more details.

\subsection{Our objects of interest: the $C^*$-algebras $\mathcal E_{n,m}^q$}

In this paper we study the $C^*$-algebras $\mathcal E_{n,m}^q$ generated by Wick algebras $WE_{n,m}^q$ with the operator of coefficients $T$ given by
\begin{align*}
& T u_1\otimes u_2  =0,\quad T v_1\otimes v_2=0,\quad u_1,u_2\in\mathbb C^n,\ v_1,v_2\in\mathbb C^m,\\
&T u\otimes v  = q v\otimes u,\quad T v\otimes u=\overline q u\otimes v,\quad u\in\mathbb C^n,\ v\in\mathbb C^m,
\end{align*}
for $\mathcal{H}=\mathbb C^n\oplus\mathbb C^m$, $|q|\le 1$.
Note, that $T$ satisfies the braid relation and $||T||=|q|\le 1$ for any $n,m\in\mathbb N$. In particular, the Fock representation $\pi_{F,q}$ exists for $|q|\le 1$ and is faithful on $WE_{n,m}^q$ for $|q|<1$, see the above discussion on general Wick algebras. The $C^*$-algebra $\mathcal E_{n,m}^q$ is generated by isometries $\{s_j\}_{j=1}^n$,  and $\{t_r\}_{r=1}^m$, satisfying commutation relations of the following form
\begin{align}\label{baseqrel}
s_i^*s_j&=0, \quad 1\le i\ne j \le n, \notag
\\
 t_r^*t_s&=0,\quad 0\le r\ne s \le m,\notag
 \\
 s_j^*t_r&=q t_r s_j^*, \quad 0\le j \le n, \ 0\le r\le m. 
\end{align}

They are related to the  $C^*$-algebras of deformed canonical commutation relations $\mathcal G_{\{q_{ij}\}}$, $q_{ij}=\overline{q}_{ji}$, $|q_{ij}|\le 1$, $i,j\in\{1,\ldots,d\}$, $-1<q_{ii}<1$, given by
\begin{equation}\label{qijccr}
a_i^* a_j=\delta_{ij}\mathbf 1 + q_{ij} a_j a_i^*,\quad i,j=\overline{1,d}.
\end{equation}

In one degree of freedom ($d=1$),  $\mathcal G_q$ exists for $q\in [-1,1)$ and
$\mathcal G_q\simeq\mathcal E$ for any $q\in(-1,1)$, see \cite{jsw2}. See also \cite{bied,mac} for more on this algebra. 
The $C^*$-algebra $\mathcal G_{q,d}$ of quon commutation relations with $d$ degrees of freedom was introduced and studied in \cite{BoSpe2,fiv,green,zag} and one has  $\mathcal{G}_{q,d}\simeq \mathcal O_d^0$, for $q<\sqrt{2}-1$. 
The above multiparameter version of quons was considered in \cite{BoSpe,mar,MPe}. For $|q_{ij}|<\sqrt{2}-1$ we get
$\mathcal{G}_{\{q_{ij}\}}\simeq\mathcal O_d^0$.
Further related version have been studied  in \cite{BoLyt,prolett,LiM}.

\subsection{$\mathcal E_{n,m}^q$ in the case $n=m=1$}

In the case  $n=1$, $m=1$, $WE_{1,1}^q$ is generated by isometries $s_1$, $s_2$ subject to the relations \[s_1^*s_2= q s_2 s_1^*.\] 
It is easy to see that its universal $C^*$-algebra $\mathcal E_{1,1}^q$ exists for any $|q|\le 1$.

If $|q|<1$, the main result of \cite{jps2} states that  $\mathcal E_{1,1}^q\simeq\mathcal E_{1,1}^{(0)}=\mathcal O_2^{(0)}$ for any $|q|<1$. In particular the Fock representation of $\mathcal E_{1,1}^q$ is faithful. Notice that the $C^*$\nobreakdash-algebra $\mathcal E_{1,1}^q$ was the only known family of Wick algebras where Conjecture~\ref{q_stab}  holds (in this case $\|T\|<1$ iff $|q|<1$). In particular, even the  isomorphism between $C^*$\nobreakdash-algebra generated by three $q$-commuting isometries and $\mathcal O_3^0$ for all $|q|<1$ is still not established.

The case $|q|=1$ was studied in \cite{kab,prolett,weber}. Here, the additional relation \[s_2s_1=q s_1 s_2\]  holds in $\mathcal E_{1,1}^q$. It was shown that  $\mathcal E_{1,1}^q$ is nuclear for any $|q|=1$. Let $\mathcal M_q$ be the ideal generated by the projections $1-s_1s_1^*$ and $1-s_2s_2^*$. Then $\mathcal E_{1,1}^q/\mathcal M_q\simeq\mathcal A_q$, where $\mathcal A_q$ is the non-commutative torus, see \cite{rieff},
\[
\mathcal A_q=C^*(u_1,u_2\, |\, u_1^*u_1=u_1u_1^*=\mathbf 1,\
u_2^*u_2=u_2u_2^*=\mathbf 1,\ u_2^*u_1=q u_1 u_2^*).
\]
If $q$ is not a root of unity, then the corresponding non-commutative torus $\mathcal A_q$ is simple and $\mathcal M_q$ is the unique largest ideal in $\mathcal E_{1,1}^q$. Let us stress that unlike the case $|q|<1$, the $C^*$-isomorphism class of $\mathcal E_{1,1}^q$ is ``unstable'' with respect to $q$. Namely,
$\mathcal E_{1,1}^{q_1}\simeq \mathcal E_{1,1}^{q_2}$ iff $\mathcal A_{q_1}\simeq\mathcal A_{q_2}$, see \cite{kab,prolett,weber}.

One can consider another higher-dimensional analog of $\mathcal{E}_{1,1}^q$. For a set $\{q_{ij}\}_{i, j=1}^d$ of complex numbers such that $|q_{ij}|\le 1$, $q_{ij}=\overline q_{ji}$, $q_{ii}=1$, and $d>2$,  one can consider a $C^*$\nobreakdash-algebra $\mathcal E_{\{q_{ij}\}}$, generated by $s_j$, $s_j^*$, $j=\overline{1,d}$ subject to the relations
\[
s_j^*s_j=1,\quad s_i^*s_j=q_{ij}s_js_i^*.
\]

The case $|q_{ij}|<1$ was considered in \cite{kuzm_pochek}, where it was proved that $\mathcal{E}_{\{ q_{ij} \}}$ is nuclear and the Fock representation is faithful. It turned out that the fixed point $C^*$-subalgebra of $\mathcal E_{\{q_{ij}\}}$ with respect to the canonical action of $\mathbb T^d$ is an AF-algebra and is independent of $\{q_{ij}\}$. However the conjecture that
$\mathcal E_{\{q_{ij}\}}\simeq\mathcal E_{\{0\}}$ remains open.

The case $|q_{ij}|=1$  was studied in \cite{jeu_pinto,kab,prolett}. It was shown that $\mathcal E_{\{q_{ij}\}}$ is nuclear for any such family $\{q_{ij}\}$ and the Fock representation is faithful.

Let us note, see 
\cite{prolett}, that the $C^*$-algebra $\mathcal E_{\{q_{ij}\}}$ with $|q_{ij}|=1$ is isomorphic to the $C^*$-algebra $\mathcal G_{\{q_ij\}}$ determined by deformed quons. In particular this isomorphism implies that the Fock realization of  $\mathcal G_{\{q_{ij}\}}$ is faithful, so the Fock representation can be considered as the universal representation of $\mathcal G_{\{q_ij \}}$. We stress also that apart the case $|q_{ij}|<1$ the $C^*$-isomorphism class of $\mathcal E_{\{q_{ij}\}}$ with $|q_{ij}|=1$ depends on the $C^*$-isomorphism class of the non-commutative torus $\mathbf T_{\{qij\}}$,
\[
\mathbf T_{\{q_{ij}\}}=C^* \left( u_i,\ u_i^*=u_i^{-1},\ u_ju_i=q_{ij}u_j u_i\right),
\]
and is unstable at any point.

\subsection{$\mathcal E_{n,m}^q$ in the case $n,m\geq 2$} 

In our article, we focus on the study of $\mathcal E_{n,m}^q$ with $n,m\ge 2$ (see \cite{yakym} for the case $n=1$, $m\ge 2$) and we also consider a multiparameter case. In  the one parameter case, the analysis is separated into two conceptually different cases, $|q|<1$ and $|q|=1$.

If $|q|<1$, we show that $\mathcal E_{n,m}^q\simeq\mathcal E_{n,m}^0= \mathcal O_{n+m}^{(0)}$, where the latter is the Cuntz-Toeplitz algebra with $n+m$ generators.

For the case $|q|=1$, we stress out that $\mathcal E_{n,m}^q$ is isomorphic to the Rieffel deformation of $\mathcal O_n^{(0)}\otimes\mathcal O_m^{(0)}$ implying in particular the nuclearity of $\mathcal E_{n,m}^q$. We show that it  contains a unique largest ideal $\mathcal M_q$, and we consider the quotient $\mathcal O_n\otimes_q\mathcal O_m:=\mathcal E_{n,m}^q/\mathcal M_q$, in fact, even for a multiparameter  $\Theta=(q_{ij})$ with $\lvert q_{ij}\rvert=1$, denoting the object $\mathcal O_n\otimes_\Theta\mathcal O_m$. We show that $\mathcal O_n\otimes_q\mathcal O_m$ and $\mathcal O_n\otimes_\Theta\mathcal O_m$ are  simple and purely infinite. We use  Kirchberg-Philips's classification Theorem, see \cite{Kirchberg,Philips}, to get one of our main results, namely
\[
\mathcal O_n\otimes_q\mathcal O_m\simeq\mathcal O_n\otimes\mathcal O_m\qquad\textnormal{and}\qquad \mathcal O_n\otimes_\Theta\mathcal O_m\simeq\mathcal O_n\otimes\mathcal O_m
\]
for any $q, q_{ij}\in\mathbb C$, $\lvert q\rvert=\lvert q_{ij}\rvert = 1$.  Next we show that the isomorphism class of $\mathcal M_q$ is independent of $q$ and consider $\mathcal E_{n,m}^q$ as an (essential) extension of $\mathcal O_n\otimes\mathcal O_m$ by $\mathcal M_q$ and study the corresponding $\mathsf{Ext}$ group. In particular, if $\gcd (n-1,m-1)=1$, this group is zero. Thus in this case, $\mathcal{E}_{n,m}^q$ and $\mathcal{E}_{n,m}^1$ both determine the zero class in $\mathsf{Ext}(\mathcal{O}_n \otimes_q \mathcal{O}_m, \mathcal{M}_q)$.

We stress that unlike the case of extensions by compacts, one cannot immediately deduce that two trivial essential extensions are isomorphic. So the problem of an isomorphism $\mathcal{E}_{n,m}^q \simeq \mathcal{E}_{n,m}^1$ remains open.

\subsection{Relation with deformed CCR}

Finally we present the relation of $\mathcal{E}_{n,m}$, $n,m\ge 2$ with multi-component commutation relations, see \cite{BoLyt, Dalet, LiM}.

Take $k\in (0,1)$ and $q\in\mathbb C$, $|q|=1$. Construct $\mathcal H=\mathbb C^n\oplus\mathbb C^m$, $n$, $m\ge 2$ and define $T\colon\mathcal H^{\otimes 2}\rightarrow\mathcal H^{\otimes 2}$ as follows
\begin{align*}
T u_1\otimes u_2 &= k\, u_2\otimes u_1,\quad \mbox{if either}\ u_1,u_2\in\mathbb C^n\ \mbox{or}\ u_1,u_2\in\mathbb C^m\\
T u\otimes v &= q\, v\otimes u,\quad \mbox{if}\ u\in\mathbb C^n,\ v\in\mathbb C^m.
\end{align*}
Denote the corresponding Wick algebra by $WE_{n,m}^{q,k}$ and its universal $C^*$-algebra by $\mathcal E_{n,m}^{q,k}$. This $C^*$-algebra is generated by $s_j$, $t_r$, $j=\overline{1,n}$, $r=\overline{1,m}$, subject to the relations
\begin{align}\label{d_plectons}
s_i^*s_j&=\delta_{ij}\left(\mathbf 1 + k\, s_j s_i^*\right),\nonumber\\
t_r^* t_l&=\delta_{rl}\left(\mathbf 1+ k\, t_l t_r^*\right),\nonumber\\
s_j^* t_r &= q\, t_r s_j^*.
\end{align}
Notice that in  $\mathcal E_{n,m}^{q,k}$ with $|q|=1$, $k\in (0,1)$, the relations 
\begin{equation}\label{d_plectons2}
 t_r s_j= q\, s_j t_r, \quad j=\overline{1,n}, \ r=\overline{1,m},
\end{equation}
hold as well. Indeed, for $B_{jr} = t_r s_j- q\, s_j t_r$ we have 
$B_{jr}^*B_{jr} 
=k^2 B_{jr}B_{jr}^*
$
and $B_{jr}=0$.

Relations \eqref{d_plectons}, \eqref{d_plectons2} can be regarded as an example of system considered in \cite{BoLyt, Dalet} in the case of finite count of degrees of freedom. 

One can show, \cite{prolett},  that $\mathcal E_{n,m}^{q,k}\simeq\mathcal E_{n,m}^q$ for $k\in(0,1)$. Hence, in particular, one of our main results says that the $C^*$-algebra generated by (\ref{d_plectons}) is an extension of $\mathcal O_n\otimes\mathcal O_m$. One more important corollary of the isomorphism is that the Fock representation of relations (\ref{d_plectons}), (\ref{d_plectons2}) is faithful.

Notice that for $k=\pm 1$ we get a discrete analogue of commutation relations for generalized statistics introduced in \cite{LiM}.

\section{The case $|q|<1$}
We start with some lemmas. Let $\Lambda_n$ denote the set of all words in alphabet $\{\overline{1,n}\}$. For any non-empty $\mu=(\mu_1,\ldots,\mu_k)$, and a family of elements $b_1$, \dots, $b_n$, we denote by $b_{\mu}$ the product $b_{\mu_1}\cdots b_{\mu_k}$; we also put $b_{\emptyset}=\mathbf{1}$. In this section we assume that any word $\mu$ belongs to $\Lambda_n$.
\begin{lemma}
Let $Q=\sum_{i=1}^n s_i s_i^*$, then
\[
\sum_{|\mu|=k} s_{\mu}Q s_{\mu}^* =\sum_{|\nu|=k+1} s_{\nu}s_{\nu}^*.
\]
\end{lemma}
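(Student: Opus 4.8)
The statement is an identity about products of the isometries $s_i$ with $Q = \sum_i s_is_i^*$. The natural plan is induction on $k$.

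\medskip

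\textbf{Proof plan.} The plan is to prove the identity by induction on $k$. For the base case $k=0$, the left-hand side is $s_\emptyset Q s_\emptyset^* = Q = \sum_{i=1}^n s_is_i^*$, which is exactly the right-hand side (words $\nu$ of length $1$). For the inductive step, suppose the identity holds for $k$. Writing any word $\mu$ of length $k+1$ as $\mu = (i,\mu')$ with $i \in \{\overline{1,n}\}$ and $|\mu'| = k$, so that $s_\mu = s_i s_{\mu'}$, I would compute
\[
\sum_{|\mu|=k+1} s_\mu Q s_\mu^* = \sum_{i=1}^n s_i \Bigl( \sum_{|\mu'|=k} s_{\mu'} Q s_{\mu'}^* \Bigr) s_i^* = \sum_{i=1}^n s_i \Bigl( \sum_{|\nu|=k+1} s_\nu s_\nu^* \Bigr) s_i^*,
\]
using the inductive hypothesis in the last equality. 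Then $\sum_{i=1}^n \sum_{|\nu|=k+1} s_i s_\nu s_\nu^* s_i^* = \sum_{|\rho|=k+2} s_\rho s_\rho^*$, since every word $\rho$ of length $k+2$ is uniquely $(i,\nu)$ with $|\nu| = k+1$. This closes the induction.

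\medskip

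Alternatively, and perhaps more cleanly, one can give a direct (non-inductive) argument: expand the left-hand side as $\sum_{|\mu|=k} \sum_{i=1}^n s_\mu s_i s_i^* s_\mu^* = \sum_{|\mu|=k}\sum_{i=1}^n s_{\mu i}(s_{\mu i})^*$, and observe that the map $(\mu, i) \mapsto \mu i$ is a bijection from (words of length $k$) $\times\ \{\overline{1,n}\}$ onto (words of length $k+1$); relabelling $\nu = \mu i$ gives the right-hand side immediately. This avoids induction altogether and is really just bookkeeping about concatenation of words.

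\medskip

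I do not anticipate any genuine obstacle here: the only facts used are that $Q = \sum_i s_is_i^*$ by definition and that $s_{\mu i} = s_\mu s_i$, together with the elementary combinatorial observation that appending a letter is a bijection between $\Lambda_n^{(k)} \times \{\overline{1,n\}}$ and $\Lambda_n^{(k+1)}$. Notably, the relations $s_i^*s_j = 0$ for $i\neq j$ and the $q$-commutation relations \eqref{baseqrel} play no role in this particular lemma; it holds for any family of elements $s_1,\dots,s_n$ whatsoever, with $Q$ defined as stated. The lemma is presumably a preparatory step used to analyze the behaviour of the projections $\mathbf 1 - Q$ and the Fock-space filtration in the subsequent arguments.
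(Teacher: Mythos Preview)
Your proof is correct; both the inductive and the direct concatenation arguments work, and indeed the direct one is exactly the one-line computation the paper has in mind when it writes ``Straightforward.'' There is nothing to add.
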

\begin{proof}
Straightforward.
\end{proof}

\begin{lemma}
For any $x\in\mathcal{E}_{n,m}^q$ one has
\[
\Bigl\|\sum_{|\mu|=k}s_{\mu}x s_{\mu}^*\Bigr\| \le \|x\|.
\]
\end{lemma}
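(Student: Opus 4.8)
The plan is to exploit the fact that $Q = \sum_{i=1}^n s_i s_i^*$ is a projection, so each map $\Phi_k\colon x\mapsto \sum_{|\mu|=k} s_\mu x s_\mu^*$ should be completely positive and contractive. First I would record the case $k=1$ and then iterate. For $k=1$, set $\Phi(x) = \sum_{i=1}^n s_i x s_i^*$. Since the $s_i$ are isometries with orthogonal ranges (from $s_i^* s_j = \delta_{ij}\mathbf 1$), the row operator $S = (s_1,\ldots,s_n)$ satisfies $S S^* = Q \le \mathbf 1$ and $S^* S = \mathbf 1_n \otimes \mathbf 1$ (the identity on $\mathbb C^n \otimes \mathcal E_{n,m}^q$). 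Then $\Phi(x) = S (\mathbf 1_n \otimes x) S^*$, so for positive $x$ we get $0 \le \Phi(x)$, and $\|\Phi(x)\| \le \|S\|^2 \|\mathbf 1_n\otimes x\| \le \|x\|$; for general $x$ the same identity $\Phi(x) = S(\mathbf 1_n\otimes x)S^*$ gives $\|\Phi(x)\|\le \|S\|\,\|\mathbf 1_n\otimes x\|\,\|S^*\| \le \|x\|$ directly, using $\|S\| = \|SS^*\|^{1/2} \le 1$.

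Next I would observe that $\Phi_k = \Phi^{\circ k}$, i.e. the $k$-fold composition of $\Phi$ with itself. This is immediate from the definition: $\Phi(\Phi_{k}(x)) = \sum_{i}\sum_{|\mu|=k} s_i s_\mu x s_\mu^* s_i^* = \sum_{|\nu|=k+1} s_\nu x s_\nu^*$, where $\nu = i\mu$ runs over all words of length $k+1$ exactly once as $i$ and $\mu$ vary. Since the composition of contractions is a contraction, $\|\Phi_k(x)\| = \|\Phi^{\circ k}(x)\| \le \|x\|$ for all $k$, which is the claim.

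Alternatively, and perhaps more in the spirit of the preceding Lemma, one can give a direct one-line estimate: by Lemma~1 (with $x$ in place of $Q$, reading the proof of that lemma), or rather by a direct computation, $\Phi_k(x)^*\Phi_k(x) = \sum_{|\mu|=|\mu'|=k} s_{\mu} x^* s_\mu^* s_{\mu'} x s_{\mu'}^* = \sum_{|\mu|=k} s_\mu x^* x s_\mu^*$ because $s_\mu^* s_{\mu'} = \delta_{\mu\mu'}\mathbf 1$ for words of equal length; hence $\|\Phi_k(x)\|^2 = \|\Phi_k(x^*x)\| \le \|x^* x\|\, \|\sum_{|\mu|=k} s_\mu s_\mu^*\| \le \|x\|^2$, using $\Phi_k(x^*x)\le \|x^*x\|\,\sum_{|\mu|=k}s_\mu s_\mu^*$ (positivity of $\Phi_k$) and that $\sum_{|\mu|=k}s_\mu s_\mu^*$ is a projection by Lemma~1.

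The only mild subtlety — and the step I would be most careful about — is the legitimacy of working with the row contraction $S$ and the inequality $\Phi_k(y)\le \|y\|\sum_{|\mu|=k}s_\mu s_\mu^*$ for positive $y$ inside the abstract $C^*$-algebra $\mathcal E_{n,m}^q$ rather than in a concrete representation. This is harmless: one may either pass to a faithful representation of $\mathcal E_{n,m}^q$ on a Hilbert space, or simply note that $\|y\|\mathbf 1 - y \ge 0$ implies $s_\mu(\|y\|\mathbf 1 - y)s_\mu^* \ge 0$ and sum over $|\mu|=k$. No use of $|q|<1$ or of the Fock representation is needed; the estimate holds for all $|q|\le 1$.
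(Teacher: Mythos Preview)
Your proposal is correct, and the ``alternative'' argument you give is essentially the paper's own proof: compute $A^*A = \sum_{|\mu|=k} s_\mu x^*x s_\mu^*$ using $s_\mu^* s_{\mu'} = \delta_{\mu\mu'}\mathbf 1$, then reduce to the positive case via $0\le x^*x \le \|x\|^2 \mathbf 1$ and the fact that $\sum_{|\mu|=k} s_\mu s_\mu^*$ is a projection. Your first approach via the row contraction $S$ and the identity $\Phi_k = \Phi^{\circ k}$ is a valid and slightly cleaner packaging of the same estimate, but not a genuinely different idea.
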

\begin{proof}
 $1$. First prove the claim for positive $x$. In this case one has $0\le x\le \|x\|\mathbf{1}$. Hence $0\le s_{\mu} x s_{\mu}^* \le \|x\| s_{\mu}s_{\mu}^*$, and
\[
\Bigl\| \sum_{|\mu|=k}s_{\mu}x s_{\mu}^*\Bigr\|\le \|x\|\cdot \Bigl\|\sum_{|\mu|=k} s_{\mu}s_{\mu}^*\Bigr\|.
\]
Note that $s_{\mu}^* s_{\lambda} =\delta_{\mu\,\lambda}$, $\mu,\lambda\in\Lambda_n$, $|\mu|=|\lambda|=k$, implying that $\{s_{\mu}s_{\mu}^*\ |\ |\mu|=k\}$ form a family of pairwise orthogonal projections. Hence $\|\sum_{|\mu|=k} s_{\mu}s_{\mu}^*\|=1$, and the statement for positive $x$ is proved.

 $2$. For any $x\in\mathcal{E}_{n,m}^q$. write $A=\sum_{|\mu|=k}s_{\mu} x s_{\mu}^*$, then $A^*=\sum_{|\mu|=k}s_{\mu} x^* s_{\mu}^*$ and
\[
A^* A=\sum_{|\mu|=k}s_{\mu} x^* x s_{\mu}^*.
\]
Then by the proved above,
\[
\|A\|^2 =\|A^*A\|\le \|x^*x\|=\|x\|^2. \qedhere
\]
\end{proof}

Construct $\widetilde{t}_l=(\mathbf{1}-Q) t_l$,  $l=\overline{1,m}$.

\begin{lemma}
The following commutation relations hold
\begin{align*}
&s_i^*\widetilde{t}_l=0,\quad i=\overline{1,n},\quad l=\overline{1,m},
\\
&\widetilde{t}_r^*\widetilde{t}_l=0,\quad l\ne r\quad l,r=\overline{1,m},
\\
& \widetilde{t}_r^*\widetilde{t}_r=\mathbf{1}-|q|^2 Q>0,\quad r=\overline{1,m}.
\end{align*}
\end{lemma}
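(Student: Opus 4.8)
The three identities are all routine computations with the defining relations \eqref{baseqrel}, once one recalls that $Q=\sum_{i=1}^n s_is_i^*$ commutes past the $t_l$'s with a scalar. First I would record the auxiliary fact that $s_i^*Q=\sum_j s_i^*s_js_j^*=s_i^*$ (using $s_i^*s_j=\delta_{ij}$), hence $s_i^*(\mathbf 1-Q)=0$; this immediately gives the first relation, since $s_i^*\widetilde t_l=s_i^*(\mathbf 1-Q)t_l=0$.

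For the remaining two, the key computation is $t_r^*Q\,t_l$. Using $s_j^*t_l=q\,t_ls_j^*$ and its adjoint $t_r^*s_j=\overline q\, s_jt_r^*$, we get
\[
t_r^*Q\,t_l=\sum_{j=1}^n t_r^*s_js_j^*t_l=\sum_{j=1}^n \overline q\,s_jt_r^*t_l q\,s_j^*=|q|^2\sum_{j=1}^n s_j\,(t_r^*t_l)\,s_j^*.
\]
When $l\ne r$ the relation $t_r^*t_l=0$ makes this vanish, and then
\[
\widetilde t_r^*\widetilde t_l=t_r^*(\mathbf 1-Q)^2t_l=t_r^*(\mathbf 1-Q)t_l=t_r^*t_l-t_r^*Q\,t_l=0,
\]
using $(\mathbf 1-Q)^2=\mathbf 1-Q$ since $Q$ is a projection (the $s_is_i^*$ are pairwise orthogonal projections by $s_i^*s_j=\delta_{ij}$). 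When $l=r$ we instead use $t_r^*t_r=\mathbf 1$ (the $t_r$ are isometries), so $t_r^*Q\,t_r=|q|^2\sum_j s_js_j^*=|q|^2Q$, and therefore
\[
\widetilde t_r^*\widetilde t_r=t_r^*t_r-t_r^*Q\,t_r=\mathbf 1-|q|^2Q.
\]
Finally $\mathbf 1-|q|^2Q\ge \mathbf 1-|q|^2\mathbf 1=(1-|q|^2)\mathbf 1>0$ because $0\le Q\le\mathbf 1$ and $|q|<1$ in this section; so $\widetilde t_r$ is (a scalar multiple of) an isometry up to the invertible positive factor $\mathbf 1-|q|^2Q$.

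There is no real obstacle here — every step is a direct manipulation of \eqref{baseqrel} together with the idempotency of $Q$ — so the only thing to be careful about is bookkeeping the $q$ versus $\overline q$ when moving $t_l$ past $s_j^*$ and past $s_j$, and the observation that the strict positivity in the third identity genuinely uses $|q|<1$ (for $|q|=1$ one would only get $\widetilde t_r^*\widetilde t_r=\mathbf 1-Q\ge 0$, which is the degenerate case treated later in the paper).
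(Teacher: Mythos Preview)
Your proof is correct and follows essentially the same route as the paper: both use $s_i^*(\mathbf 1-Q)=0$ for the first relation and then compute $t_r^*(\mathbf 1-Q)t_l$ by commuting each $s_j,s_j^*$ past $t_l,t_r^*$ via \eqref{baseqrel} to obtain $\delta_{rl}(\mathbf 1-|q|^2Q)$. Your version is slightly more explicit in noting $(\mathbf 1-Q)^2=\mathbf 1-Q$ and in justifying the strict positivity via $0\le Q\le\mathbf 1$ and $|q|<1$, but the argument is the same.
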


\begin{proof} We have $s_i^* (\mathbf{1}-Q)=0$, implying that $s_i^*\widetilde{t}_l=0$ for any $i=\overline{1,n}$, and $l=\overline{1,m}$.

Further,
\begin{align*}
\widetilde{t}_r^*\widetilde{t}_l&=t_r^* (\mathbf{1}-Q)t_l=t_r^* t_l-
\sum_{i=1}^n t_r^* s_i s_i^* t_l=\delta_{rl}\mathbf{1}-\sum_{i=1}^n |q|^2 s_i t_r^* t_l s_i^*=\\
&=\delta_{rl}(\mathbf{1}-|q|^2 Q).\qedhere
\end{align*}
\end{proof}

\begin{proposition}\label{gener1}
For any $r=\overline{1,m}$, one has
\[
t_r=\sum_{k=0}^{\infty}\sum_{|\mu|=k} q^k s_{\mu}\widetilde{t}_r s_{\mu}^*.
\]
In particular, the family $\{s_i,\widetilde{t}_r,\ i=\overline{1,n},\ r=\overline{1,m}\}$ generates $\mathcal{E}_{n,m}^q$.
\end{proposition}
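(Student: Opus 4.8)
The plan is to expand $t_r$ iteratively using the decomposition $\mathbf{1} = (\mathbf{1}-Q) + Q$ together with the previously established lemmas, and to show the tail of the resulting series converges to zero in norm.

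First I would write $t_r = (\mathbf{1}-Q)t_r + Q t_r = \widetilde t_r + \sum_{i=1}^n s_i s_i^* t_r$. Using the commutation relation $s_i^* t_r = q\, t_r s_i^*$ from \eqref{baseqrel}, this becomes $t_r = \widetilde t_r + q\sum_{i=1}^n s_i t_r s_i^*$, i.e. $t_r = \widetilde t_r + q \sum_{|\mu|=1} s_\mu t_r s_\mu^*$. Iterating this identity — applying the same splitting to the inner copy of $t_r$ and repeatedly pushing the $s_i^*$'s through using $s_i^* t_r = q t_r s_i^*$, and using $s_\mu^* s_\lambda = \delta_{\mu\lambda}$ to collapse cross terms — I get after $N$ steps
\[
t_r = \sum_{k=0}^{N-1} \sum_{|\mu|=k} q^k s_\mu \widetilde t_r s_\mu^* \;+\; q^N \sum_{|\mu|=N} s_\mu t_r s_\mu^*,
\]
which should be verified by an easy induction on $N$ (the base case is the one-step identity above, and the inductive step replaces the remainder term $q^N\sum_{|\mu|=N} s_\mu t_r s_\mu^*$ by $q^N \sum_{|\mu|=N} s_\mu(\widetilde t_r + q\sum_i s_i t_r s_i^*) s_\mu^*$, absorbing the first piece into the sum and rewriting the second as $q^{N+1}\sum_{|\nu|=N+1} s_\nu t_r s_\nu^*$).

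Next I would control the remainder. By Lemma 2 (the norm estimate $\|\sum_{|\mu|=N} s_\mu x s_\mu^*\| \le \|x\|$ applied with $x = t_r$), the remainder term has norm at most $|q|^N \|t_r\|$, which tends to $0$ as $N\to\infty$ since $|q|<1$. Similarly, the partial sums converge: the $k$-th block $\sum_{|\mu|=k} q^k s_\mu \widetilde t_r s_\mu^*$ has norm at most $|q|^k \|\widetilde t_r\|$ by Lemma 2 again, so the series $\sum_{k=0}^\infty \sum_{|\mu|=k} q^k s_\mu \widetilde t_r s_\mu^*$ is absolutely convergent in the $C^*$-algebra. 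Letting $N\to\infty$ in the displayed identity gives $t_r = \sum_{k=0}^\infty \sum_{|\mu|=k} q^k s_\mu \widetilde t_r s_\mu^*$.

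Finally, for the "in particular" statement: the $C^*$-algebra $\mathcal{E}_{n,m}^q$ is generated by $\{s_i, t_r\}$, and the formula just proved expresses each $t_r$ as a norm-limit of polynomials in the $s_i$, $s_i^*$ and $\widetilde t_r$; hence $\{s_i, \widetilde t_r : i=\overline{1,n},\ r=\overline{1,m}\}$ generates the same $C^*$-algebra. I do not anticipate a serious obstacle here; the only point requiring care is the bookkeeping in the induction — making sure the $q$-powers and the words $\mu$ are tracked correctly when commuting $s_\mu^*$ past $t_r$ (each of the $|\mu|=k$ letters contributes one factor of $q$, giving $q^k$) and that the cross terms vanish by orthogonality of the $s_\mu$'s. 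The convergence is then immediate from Lemma 2.
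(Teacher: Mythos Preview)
Your proof is correct and follows essentially the same route as the paper: both establish the partial-sum identity
\[
\sum_{k=0}^{N}\sum_{|\mu|=k} q^k s_\mu \widetilde t_r s_\mu^* = t_r - q^{N+1}\sum_{|\mu|=N+1} s_\mu t_r s_\mu^*
\]
and then invoke the norm estimate of Lemma~2 to kill the remainder. The only cosmetic difference is that the paper organizes the computation as a telescoping sum (rewriting each block $M_k^r$ as $\sum_{|\mu|=k} s_\mu s_\mu^* t_r - \sum_{|\mu|=k+1} s_\mu s_\mu^* t_r$), whereas you run an explicit induction peeling off one layer at a time; the content is identical.
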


\begin{proof}
Put $M_k^{r}=\sum_{|\mu|=k} q^k s_{\mu}\widetilde{t}_r s_{\mu}^*$, $k\in\mathbb{Z}_{+}$. Then
\[
M_0^r=t_r- Q t_r=t_r-\sum_{|\mu|=1} s_{\mu}s_{\mu}^* t_r,
\]
and
\begin{align*}
M_k^r&=\sum_{|\mu|=k} q^k s_{\mu} (\mathbf{1}-Q) t_r s_{\mu}^*=
\sum_{|\mu|=k}  s_{\mu} (\mathbf{1}-Q) s_{\mu}^*t_r =\\
&=\sum_{|\mu|=k}  s_{\mu} s_{\mu}^* t_r - \sum_{|\mu|=k+1} s_{\mu} s_{\mu}^* t_r.
\end{align*}
Then
\[
S_N^r=\sum_{k=0}^{N} M_k^r=t_r-\sum_{|\mu|=N+1}s_{\mu}s_{\mu}^* t_r=
t_r- q^{N+1}\sum_{|\mu|=N+1}s_{\mu}t_rs_{\mu}^*.
\]
Since $\|\sum_{|\mu|=N+1}s_{\mu}t_rs_{\mu}^*\|\le \|t_r\|=1$ one has that $S_N^r\rightarrow t_r$ in $\mathcal{E}_{n,m}^q$ as $N\to\infty$.
\end{proof}

Suppose that $\mathcal{E}_{n,m}^q$ is realized by Hilbert space operators. Consider the left polar decomposition $\widetilde{t}_r =\widehat{t}_r\cdot c_r$, where $c_r^2=\widetilde{t}_r^*\widetilde{t}_r=\mathbf{1}-|q|^2 Q>0$, implying that $\widehat{t}_r$ is an isometry and
\[
\widehat{t}_r=\widetilde{t}_r c_r^{-1}\in\mathcal{E}_{n,m}^q,\quad r=\overline{1,m}.
\]

\begin{lemma}
The following commutation relations hold
\begin{align*}
s_i^*\widehat{t}_r&=0,\quad i=\overline{1,n},\ r=\overline{1,m},
\\ \widehat{t}_r^*\widehat{t}_l&=\delta_{rl}\mathbf{1},\quad r,l=\overline{1,m}.
\end{align*}
\end{lemma}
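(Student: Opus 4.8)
The plan is to verify the two claimed relations directly from the definitions $\widehat t_r = \widetilde t_r c_r^{-1}$ and $c_r^2 = \widetilde t_r^*\widetilde t_r = \mathbf 1 - |q|^2 Q > 0$, using the commutation relations for the $\widetilde t_l$ established in the previous lemma. The first relation is almost immediate: since $s_i^*\widetilde t_r = 0$ for all $i,r$, multiplying $\widehat t_r = \widetilde t_r c_r^{-1}$ on the left by $s_i^*$ gives $s_i^*\widehat t_r = s_i^*\widetilde t_r c_r^{-1} = 0$. (One only needs to know $c_r^{-1}$ makes sense, which is guaranteed by $c_r \geq (1-|q|^2)^{1/2}\mathbf 1 > 0$ once we are in a Hilbert space realization; actually $c_r^{-1}\in\mathcal E_{n,m}^q$ by continuous functional calculus since $c_r$ is bounded below.)

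For the second relation, I would compute $\widehat t_r^*\widehat t_l = c_r^{-1}\widetilde t_r^*\widetilde t_l c_l^{-1}$. By the preceding lemma, $\widetilde t_r^*\widetilde t_l = \delta_{rl}(\mathbf 1 - |q|^2 Q) = \delta_{rl}c_r^2$. Hence for $r \neq l$ we get $\widehat t_r^*\widehat t_l = 0$, and for $r = l$ we get $\widehat t_r^*\widehat t_r = c_r^{-1}c_r^2 c_r^{-1} = \mathbf 1$. The only mild subtlety is that $c_r$, $c_l$ and $\widetilde t_r^*\widetilde t_l$ need to be manipulated in the right order; but since $c_r$ is by definition the positive square root of $\widetilde t_r^*\widetilde t_r = \mathbf 1 - |q|^2 Q$, which does not depend on $r$ at all, all the $c_r$ coincide as operators — call the common value $c$ — so everything commutes and the computation $\widehat t_r^*\widehat t_l = c^{-1}\delta_{rl}c^2 c^{-1} = \delta_{rl}\mathbf 1$ is transparent.

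There is essentially no obstacle here; the statement is a routine consequence of polar decomposition together with the identities for $\widetilde t_r$. The only point worth flagging is the hypothesis ``Suppose that $\mathcal E_{n,m}^q$ is realized by Hilbert space operators'' made just before the lemma: this is what lets us speak of the polar decomposition and of $\widehat t_r$ as an element of (the von Neumann algebra generated by) $\mathcal E_{n,m}^q$; but in fact $\widehat t_r = \widetilde t_r c^{-1}$ with $c^{-1}$ obtained by functional calculus already lies in the $C^*$-algebra $\mathcal E_{n,m}^q$ itself, so the relations hold there. I would therefore present the argument in two short displayed computations, one for each relation, with a one-line remark identifying $c_r$ with a single operator $c = (\mathbf 1 - |q|^2 Q)^{1/2}$.

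\begin{proof}
Since $c_r^2 = \widetilde t_r^*\widetilde t_r = \mathbf 1 - |q|^2 Q$ is the same positive operator for every $r = \overline{1,m}$, all the $c_r$ coincide; write $c = (\mathbf 1 - |q|^2 Q)^{1/2}$, so that $\widehat t_r = \widetilde t_r c^{-1}$ for all $r$, with $c^{-1}\in\mathcal E_{n,m}^q$ since $c \ge (1-|q|^2)^{1/2}\mathbf 1 > 0$.

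For the first relation, using $s_i^*\widetilde t_r = 0$ from the previous lemma,
\[
s_i^*\widehat t_r = s_i^*\widetilde t_r c^{-1} = 0,\quad i = \overline{1,n},\ r = \overline{1,m}.
\]
For the second, using $\widetilde t_r^*\widetilde t_l = \delta_{rl}(\mathbf 1 - |q|^2 Q) = \delta_{rl} c^2$ and the fact that $c^{-1}$ commutes with $c^2$,
\[
\widehat t_r^*\widehat t_l = c^{-1}\widetilde t_r^*\widetilde t_l c^{-1} = \delta_{rl} c^{-1} c^2 c^{-1} = \delta_{rl}\mathbf 1,\quad r,l = \overline{1,m}.
\]
\end{proof}
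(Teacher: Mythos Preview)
Your proof is correct and follows essentially the same approach as the paper's: both compute $s_i^*\widehat t_r = s_i^*\widetilde t_r\, c_r^{-1} = 0$ and $\widehat t_r^*\widehat t_l = c_r^{-1}\widetilde t_r^*\widetilde t_l\, c_l^{-1}$ directly from the definition and the previous lemma. Your explicit observation that all the $c_r$ coincide with a single operator $c = (\mathbf 1 - |q|^2 Q)^{1/2}$ is a nice clarification, and you handle the $r=l$ case explicitly whereas the paper relies on the remark (made just before the lemma) that $\widehat t_r$ is an isometry; but these are cosmetic differences, not a different route.
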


\begin{proof}
Indeed, for any $i=\overline{1,n}$, and $r=\overline{1,m}$. one has
\[
s_i^* \widehat{t}_r=s_i^*\widetilde{t}_r\, c_r^{-1}=0,
\]
and
\[
\widehat{t}_r^*\widehat{t}_l=c_r^{-1}\widetilde{t}_r^*\widetilde{t}_l c_r^{-1}=0,\quad r\ne l.
\qedhere
\]
\end{proof}

Summing up the results stated above, we get the following

\begin{theorem}\label{zer_q_gen}
Let  $\widehat{t}_r=(\mathbf{1}-Q)t_r (\mathbf{1}-|q|^2 Q)^{-\frac{1}{2}}$, $r=\overline{1,m}$. Then the family $\{s_i,\widehat{t}_r\}_{i=1}^{n}{}_{r={1}}^{m}$ generates $\mathcal{E}_{n,m}^q$, and
\[
s_i^*s_j=\delta_{ij}\mathbf{1},\quad \widehat{t}_r^* \widehat{t}_l=\delta_{rl}\mathbf{1},\quad s_i^* \widehat{t}_r =0,\qquad i,j=\overline{1,n},\ r,l=\overline{1,m}.
\]
\end{theorem}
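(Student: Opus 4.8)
The plan is to assemble Theorem~\ref{zer_q_gen} directly from the lemmas and Proposition~\ref{gener1} that precede it, since all the analytic work has been done. First I would record that the relations $s_i^*s_j=\delta_{ij}\mathbf 1$ are part of the defining relations \eqref{baseqrel} of $\mathcal E_{n,m}^q$, so nothing is needed there. Next, I would invoke the polar-decomposition construction: working in a faithful Hilbert space realization of $\mathcal E_{n,m}^q$ (which exists, e.g. the universal one), set $c_r=(\mathbf 1-|q|^2Q)^{1/2}$, which is a positive invertible element of $\mathcal E_{n,m}^q$ because $0\le |q|^2Q\le |q|^2\mathbf 1$ and $|q|<1$, hence $c_r^{-1}\in\mathcal E_{n,m}^q$ as well. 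Then $\widehat t_r=\widetilde t_r c_r^{-1}=(\mathbf 1-Q)t_r(\mathbf 1-|q|^2Q)^{-1/2}$ lies in $\mathcal E_{n,m}^q$, and the preceding Lemma gives exactly $s_i^*\widehat t_r=0$ and $\widehat t_r^*\widehat t_l=\delta_{rl}\mathbf 1$; in particular each $\widehat t_r$ is an isometry.

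It then remains to check that $\{s_i,\widehat t_r\}$ generates $\mathcal E_{n,m}^q$. By Proposition~\ref{gener1} the family $\{s_i,\widetilde t_r\}$ generates $\mathcal E_{n,m}^q$, so it suffices to express each $\widetilde t_r$ in terms of $\{s_i,\widehat t_r\}$. This is immediate: $\widetilde t_r=\widehat t_r c_r=\widehat t_r(\mathbf 1-|q|^2Q)^{1/2}$, and $Q=\sum_{i=1}^n s_is_i^*$ is a polynomial in the $s_i$, while the continuous function $\lambda\mapsto(1-|q|^2\lambda)^{1/2}$ applied to $Q$ lies in $C^*(s_1,\dots,s_n)\subseteq C^*(s_i,\widehat t_r)$. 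Hence $\widetilde t_r\in C^*(s_i,\widehat t_r)$ for all $r$, so $C^*(s_i,\widehat t_r)=C^*(s_i,\widetilde t_r)=\mathcal E_{n,m}^q$, which completes the argument.

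The only genuinely delicate point — and the one I would state carefully — is that the functional calculus used to define $c_r$ and $c_r^{-1}$ takes place inside $\mathcal E_{n,m}^q$ itself, not merely in some representation: this is legitimate precisely because $\mathbf 1-|q|^2Q$ is a self-adjoint element of the unital $C^*$-algebra $\mathcal E_{n,m}^q$ whose spectrum is contained in $[1-|q|^2,1]\subset(0,\infty)$ when $|q|<1$, so both its square root and the inverse of that square root are given by continuous functions on the spectrum and hence belong to $\mathcal E_{n,m}^q$. I do not expect any serious obstacle beyond making this spectral bound explicit; everything else is a direct citation of the lemmas above.

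\begin{proof}
The relations $s_i^*s_j=\delta_{ij}\mathbf 1$ are among the defining relations \eqref{baseqrel}. Since $|q|<1$ and $Q=\sum_{i=1}^n s_is_i^*$ is a projection, the self-adjoint element $\mathbf 1-|q|^2Q\in\mathcal E_{n,m}^q$ has spectrum contained in $\{1-|q|^2,1\}\subset(0,\infty)$; hence $c_r:=(\mathbf 1-|q|^2Q)^{1/2}$ and $c_r^{-1}=(\mathbf 1-|q|^2Q)^{-1/2}$ belong to $\mathcal E_{n,m}^q$ by continuous functional calculus. Consequently
\[
\widehat t_r=\widetilde t_r c_r^{-1}=(\mathbf 1-Q)t_r(\mathbf 1-|q|^2Q)^{-1/2}\in\mathcal E_{n,m}^q ,\qquad r=\overline{1,m},
\]
and, by the Lemma preceding this theorem, $s_i^*\widehat t_r=0$ and $\widehat t_r^*\widehat t_l=\delta_{rl}\mathbf 1$ for all $i=\overline{1,n}$, $r,l=\overline{1,m}$; in particular each $\widehat t_r$ is an isometry.

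It remains to show that $\{s_i,\widehat t_r\}$ generates $\mathcal E_{n,m}^q$. Let $\mathcal B=C^*(s_i,\widehat t_r\mid i=\overline{1,n},\ r=\overline{1,m})\subseteq\mathcal E_{n,m}^q$. Then $Q=\sum_{i=1}^n s_is_i^*\in\mathcal B$, hence $c_r=(\mathbf 1-|q|^2Q)^{1/2}\in\mathcal B$ by functional calculus, and therefore
\[
\widetilde t_r=\widehat t_r c_r\in\mathcal B,\qquad r=\overline{1,m}.
\]
By Proposition~\ref{gener1} the family $\{s_i,\widetilde t_r\}$ generates $\mathcal E_{n,m}^q$, so $\mathcal B=\mathcal E_{n,m}^q$, as claimed.
\end{proof}
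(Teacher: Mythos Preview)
Your proof is correct and follows essentially the same route as the paper: cite the preceding lemmas for the relations on $\widehat t_r$, then recover $\widetilde t_r=\widehat t_r(\mathbf 1-|q|^2Q)^{1/2}$ and invoke Proposition~\ref{gener1} to conclude that $\{s_i,\widehat t_r\}$ generates $\mathcal E_{n,m}^q$. The only difference is that you make the functional-calculus justification for $c_r,c_r^{-1}\in\mathcal E_{n,m}^q$ explicit, which the paper leaves implicit.
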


\begin{proof}
It remains to note that $\widetilde{t}_r=\widehat{t}_r (1-|q|^2 Q)^{\frac{1}{2}}$, so $\widetilde{t}_r\in C^* (\widehat{t}_r,\, Q)$, so by Proposition \ref{gener1} the elements $s_i$, $\widehat{t}_r$, $i=\overline{1,n}$, $r=\overline{1,m}$, generate $\mathcal{E}_{n,m}^q$.
\end{proof}

\begin{corollary}
Denote by $v_i$, $i=\overline{1,n}+m$, the isometries generating $\mathcal{E}_{n,m}^0=\mathcal{O}_{n+m}^{(0)}$. Then Theorem \ref{zer_q_gen} implies that the correspondence
\[
v_i\mapsto s_i,\ i=\overline{1,n},\quad v_{n+r}\mapsto \widehat{t}_r,\ r=\overline{1,m},
\]
extends uniquely to a surjective homomorphism $\varphi\colon\mathcal{E}_{n,m}^0\rightarrow\mathcal{E}_{n,m}^q$.
\end{corollary}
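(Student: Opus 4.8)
The plan is to exhibit, in the universal $C^*$-algebra $\mathcal{E}_{n,m}^0 = \mathcal{O}_{n+m}^{(0)}$, which is generated by isometries $v_1,\dots,v_{n+m}$ subject only to $v_i^*v_j = \delta_{ij}\mathbf 1$, that the images $s_i$ and $\widehat{t}_r$ in $\mathcal{E}_{n,m}^q$ (as defined in Theorem~\ref{zer_q_gen}) satisfy exactly the defining relations of $\mathcal{O}_{n+m}^{(0)}$, and then invoke the universal property of $\mathcal{O}_{n+m}^{(0)}$. Concretely, I would first recall that $\mathcal{O}_{n+m}^{(0)}$ is the universal $C^*$-algebra on generators $v_1,\dots,v_{n+m}$ with relations $v_i^*v_j=\delta_{ij}\mathbf 1$ for $i,j=\overline{1,n+m}$ (that is, $n+m$ isometries with pairwise orthogonal ranges, and no further relation — in particular their range projections need \emph{not} sum to $\mathbf 1$). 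This universality is exactly the statement that $\mathcal{O}_{n+m}^{(0)}$ is the $C^*$-algebra $\mathcal W(0)$ for $\mathcal H = \mathbb C^{n+m}$, which was recalled in the introduction.

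Next, I would verify that the target family $\{s_i, \widehat{t}_r\}$ in $\mathcal{E}_{n,m}^q$ satisfies precisely these relations. This is exactly the content of Theorem~\ref{zer_q_gen}: the $s_i$ are isometries with $s_i^*s_j = \delta_{ij}\mathbf 1$ (from \eqref{baseqrel}), the $\widehat{t}_r$ are isometries with $\widehat{t}_r^*\widehat{t}_l = \delta_{rl}\mathbf 1$ (shown via the polar decomposition), and the cross terms vanish: $s_i^*\widehat{t}_r = 0$. So all $n+m$ elements are isometries with pairwise orthogonal ranges, hence the assignment $v_i \mapsto s_i$ for $i=\overline{1,n}$ and $v_{n+r}\mapsto \widehat{t}_r$ for $r=\overline{1,m}$ respects the relations. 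By the universal property of $\mathcal{O}_{n+m}^{(0)}$ quoted above, there is a unique $*$-homomorphism $\varphi\colon \mathcal{O}_{n+m}^{(0)} \to \mathcal{E}_{n,m}^q$ extending this assignment.

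Finally, surjectivity: the image of $\varphi$ is a $C^*$-subalgebra of $\mathcal{E}_{n,m}^q$ containing all $s_i$ and all $\widehat{t}_r$, and Theorem~\ref{zer_q_gen} states precisely that $\{s_i,\widehat{t}_r\}$ generates $\mathcal{E}_{n,m}^q$; hence $\varphi$ is onto. Uniqueness of the extension is automatic since the $v_i$ generate $\mathcal{O}_{n+m}^{(0)}$.

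I do not anticipate a genuine obstacle here — the corollary is essentially a repackaging of Theorem~\ref{zer_q_gen} through the universal property, and all the analytic work (the polar decomposition, the invertibility of $\mathbf 1 - |q|^2 Q$, and the norm estimates that make the series in Proposition~\ref{gener1} converge) has already been done. The only point requiring a modicum of care is to state the defining relations of $\mathcal{O}_{n+m}^{(0)}$ in the \emph{Toeplitz} form (isometries with orthogonal ranges, \emph{without} the Cuntz sum relation $\sum v_iv_i^* = \mathbf 1$), so that the universal property applies verbatim to the family $\{s_i,\widehat{t}_r\}$, whose range projections need not sum to $\mathbf 1$; this is consistent with the description of $\mathcal W(0)$ recalled in the introduction. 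One should also note that $\varphi$ is the untwisting map whose injectivity — and hence the isomorphism $\mathcal{E}_{n,m}^q \simeq \mathcal{O}_{n+m}^{(0)}$ announced in the introduction — will require the separate argument (via faithfulness of the Fock representation for $|q|<1$) carried out later; the corollary as stated claims only surjectivity.
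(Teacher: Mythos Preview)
Your proposal is correct and matches the paper's approach: the corollary is stated without a separate proof, as an immediate consequence of Theorem~\ref{zer_q_gen} together with the universal property of $\mathcal{O}_{n+m}^{(0)}$, which is exactly what you spell out. One tangential inaccuracy in your closing remark: the paper establishes injectivity of $\varphi$ not via faithfulness of the Fock representation but by constructing an explicit inverse $\psi$ (Theorem~\ref{q_ser_gen}, Corollary~\ref{cor2}, Theorem~\ref{qstab_thm}); this does not affect your proof of the corollary itself.
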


Our next aim is to construct the inverse homomorphism $\psi\colon\mathcal{E}_{n,m}^q\rightarrow\mathcal{E}_{n,m}^0$. To do it, put
\[
\widetilde{Q}=\sum_{i=1}^n v_i v_i^*\quad \widetilde{w}_r=v_{n+r}(1-|q|^2\widetilde{Q})^{\frac{1}{2}}, \qquad r=\overline{1,m}.
\]
 Then $\widetilde{w}_r^*\widetilde{w}_r=1-|q|^2\widetilde{Q}$, and $\widetilde{w}_r^*\widetilde{w}_l=0$ if $r\ne l$, $r,l=\overline{1,m}$. Construct
\[
w_r=\sum_{k=0}^{\infty}\sum_{|\mu|=k}q^k v_{\mu}\widetilde{w}_r v_{\mu}^*,\quad r=\overline{1,m},
\]
where $\mu$ runs over $\Lambda_n$, and set as above $v_{\mu}=v_{\mu_1}\cdots v_{\mu_k}$. Note that the series above converges with respect to norm in $\mathcal{E}_{n,m}^0$.

\begin{lemma}
The following commutation relations hold
\[
w_r^*w_l=\delta_{rl}\mathbf{1},\quad v_i^* w_r=q w_r v_i^*,\quad i=\overline{1,n},\ r,l=\overline{1,m}.
\]
\end{lemma}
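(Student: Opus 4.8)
The plan is to verify the two relations directly by using the commutation relations between the $v_i$ and the $\widetilde w_r$, exactly mirroring the computation that must have been done for the $s_i$, $\widetilde t_r$ in $\mathcal E_{n,m}^q$. First I would record the key facts about $\widetilde w_r$: since $v_i^*\widetilde Q=v_i^*$, we have $v_i^*\widetilde w_r=0$ for all $i=\overline{1,n}$, and $\widetilde w_r^*\widetilde w_l=\delta_{rl}(\mathbf 1-|q|^2\widetilde Q)$, which is exactly what is stated before the lemma. I would also note the ``translation'' identities $v_i^*\widetilde Q=v_i^*$, $\widetilde Q v_i=v_i$ but $v_j^*v_i v_i^*=\delta_{ij}v_i^*$, so that for a word $\mu$ of length $k$ one has $v_i^*v_\mu=\delta_{i\mu_1}v_{\mu_2\cdots\mu_k}$ (and $0$ if $\mu$ is empty), and that $v_i^*(\mathbf 1-\widetilde Q)=0$, hence $v_i^*v_\mu \widetilde w_r=0$ whenever $\mu$ is nonempty and starts with an index $\ne i$, while for $\mu$ empty $v_i^*\widetilde w_r=0$ too.

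For the relation $v_i^*w_r=q\,w_r v_i^*$: expand $v_i^*w_r=\sum_{k\ge 0}\sum_{|\mu|=k}q^k v_i^*v_\mu\widetilde w_r v_\mu^*$. In each term, $v_i^*v_\mu\widetilde w_r v_\mu^*$ vanishes unless $\mu$ is nonempty with $\mu_1=i$, in which case it equals $q^k v_{\mu'}\widetilde w_r v_{\mu'}^* v_i^*$ where $\mu=(i,\mu')$ and $v_\mu^*=v_{\mu'}^*v_i^*$. Re-indexing by $\mu'$ of length $k-1$ and absorbing one power of $q$, the sum becomes $q\sum_{k\ge 1}\sum_{|\mu'|=k-1}q^{k-1}v_{\mu'}\widetilde w_r v_{\mu'}^* v_i^* = q\,w_r v_i^*$. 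A small point to check is that the rearrangement is legitimate: the series converges in norm (as already noted in the excerpt, since $\|\sum_{|\mu|=N}v_\mu\widetilde w_r v_\mu^*\|$ is bounded uniformly and $|q|<1$), so multiplying termwise by the bounded operators $v_i^*$ on the left and re-summing is valid.

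For the orthogonality $w_r^*w_l=\delta_{rl}\mathbf 1$: write $w_r^*w_l=\sum_{k,\ell}\sum_{|\mu|=k,|\nu|=\ell}\bar q^{\,k}q^{\,\ell}\,v_\mu\widetilde w_r^* v_\mu^* v_\nu\widetilde w_l v_\nu^*$. Here $v_\mu^* v_\nu$ is $\delta_{\mu\nu}\mathbf 1$ when $k=\ell$, and otherwise a product of $v$'s or $v^*$'s; but whenever $k\ne\ell$ the extra factors $v_j$ or $v_j^*$ collide with $\widetilde w_r^*$ or $\widetilde w_l$ via $v_j^*\widetilde w_l=0$ (equivalently $\widetilde w_r^* v_j=0$), killing the term. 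So only the diagonal $k=\ell$, $\mu=\nu$ survives, giving $\sum_k\sum_{|\mu|=k}|q|^{2k} v_\mu\widetilde w_r^*\widetilde w_l v_\mu^*=\delta_{rl}\sum_k|q|^{2k}\sum_{|\mu|=k}v_\mu(\mathbf 1-|q|^2\widetilde Q)v_\mu^*$. Using Lemma~1 (with $v$ in place of $s$), $\sum_{|\mu|=k}v_\mu\widetilde Q v_\mu^*=\sum_{|\nu|=k+1}v_\nu v_\nu^*$, this telescopes: $\sum_k|q|^{2k}\bigl(\sum_{|\mu|=k}v_\mu v_\mu^*-|q|^2\sum_{|\nu|=k+1}v_\nu v_\nu^*\bigr)=\mathbf 1$, since the partial sums equal $\mathbf 1-|q|^{2(N+1)}\sum_{|\mu|=N+1}v_\mu v_\mu^*\to\mathbf 1$.

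The main obstacle, such as it is, is purely bookkeeping: justifying the interchange of the infinite sums with the (norm-bounded) multiplications and with each other, and being careful that ``$v_\mu^* v_\nu$ vanishes against $\widetilde w$'' argument is applied on the correct side. Once the vanishing identities $v_i^*\widetilde w_r=0$ and $\widetilde w_r^* v_i=0$ are in hand, both relations fall out of the telescoping already used in Proposition~\ref{gener1}, so there is no genuinely new difficulty here.
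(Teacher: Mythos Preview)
Your proposal is correct and follows essentially the same approach as the paper's proof: you use the vanishing $v_i^*\widetilde w_r=0$ (and its adjoint) to kill the off-diagonal terms in the double sum for $w_r^*w_l$, then telescope the diagonal via $\widetilde w_r^*\widetilde w_l=\delta_{rl}(\mathbf 1-|q|^2\widetilde Q)$ and Lemma~1, and you establish $v_i^*w_r=q\,w_rv_i^*$ by the same re-indexing argument the paper uses (with $\sigma(\mu)$ in place of your $\mu'$). The only cosmetic difference is the order in which the two relations are treated.
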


\begin{proof}
First we note that $v_i^*\widetilde{w}_r=0$, and $\widetilde{w}_r^* v_i=0$ for any $i=\overline{1,n}$, and $j=\overline{1,m}$, implying that
\[
v_{\delta}^*\widetilde{w}_r=0,\quad \widetilde{w}_r^* v_{\delta}=0,\quad \mbox{for any nonempty}\ \delta\in\Lambda_n,\ r=\overline{1,m}.
\]
Let $|\lambda|\ne |\mu|$, $\lambda,\mu\in\Lambda_n$. If $|\lambda|>|\mu|$, then $\lambda=\widehat{\lambda}\gamma$ with $|\lambda|=|\mu|$ and
$
v_{\lambda}^*v_{\mu}=\delta_{\widehat{\lambda}\mu}v_{\gamma}^*.
$
Otherwise $\mu=\widehat{\mu}\beta$, $|\widehat{\mu}|=|\lambda|$ and
$
v_{\lambda}^*v_{\mu}=\delta_{\lambda\widehat{\mu}}v_{\beta}.
$
So, if $|\lambda|>|\mu|$ one has
\[
v_{\lambda}\widetilde{w}_r^* v_{\lambda}^* v_{\mu}\widetilde{w}_r v_{\mu}^*=
\delta_{\widehat{\lambda}\mu} v_{\lambda}\widetilde{w}_r^* v_{\gamma}^*\widetilde{w}_r v_{\mu}=0,
\]
and if $|\mu|>|\lambda|$, then
\[
v_{\lambda}\widetilde{w}_r^* v_{\lambda}^* v_{\mu}\widetilde{w}_r v_{\mu}^*=
\delta_{\lambda\widehat{\mu}} v_{\lambda}\widetilde{w}_r^* v_{\beta}\widetilde{w}_r v_{\mu}=0.
\]
Since $v_{\mu}^*v_{\lambda}=\delta_{\mu\lambda}\mathbf{1}$, if $|\mu|=|\lambda|$, one has
\begin{align*}
 w_r^*w_r&=\lim_{N\rightarrow\infty}\Bigl(\sum_{k=0}^N \sum_{|\lambda|=k}|q|^{k} v_{\lambda}\widetilde{w}_r^* v_{\lambda}^*\Bigr)\cdot
\Bigl(\sum_{l=0}^N \sum_{|\mu|=l}|q|^{l} v_{\mu}\widetilde{w}_r v_{\mu}^*\Bigr)
\\
&=\lim_{N\rightarrow\infty}\sum_{k,l=0}^N \sum_{|\lambda|=k,|\mu|=l}|q|^{k+l} v_{\lambda}\widetilde{w}_r^* v_{\lambda}^* v_{\mu}\widetilde{w}_r v_{\mu}^*
\\
&=
\lim_{N\rightarrow\infty}\sum_{k=0}^N \sum_{|\lambda|,|\mu|=k,}|q|^{2k} v_{\lambda}\widetilde{w}_r^* v_{\lambda}^* v_{\mu}\widetilde{w}_r v_{\mu}^*
\\
&=\lim_{N\rightarrow\infty}\sum_{k=0}^N \sum_{|\mu|=k}|q|^{2k} v_{\mu}\widetilde{w}_r^*\widetilde{w}_r v_{\mu}^*=
\lim_{N\rightarrow\infty}\sum_{k=0}^N \sum_{|\mu|=k}|q|^{2k} v_{\mu}(\mathbf{1}-|q|^2\widetilde{Q}^2) v_{\mu}^*
\\
&=\lim_{N\rightarrow\infty}\sum_{k=0}^N \Bigl (\sum_{|\mu|=k}|q|^{2k} v_{\mu}v_{\mu}^*-\sum_{|\mu|=k+1} |q|^{2k+2}v_{\mu} v_{\mu}^* \Bigr)
\\
&=\lim_{N\rightarrow\infty}\Bigl(\mathbf{1}-|q|^{2N+2}\sum_{|\mu|=N+1} v_{\mu}v_{\mu}^*\Bigr)=\mathbf{1}.
\end{align*}
Since $\widetilde{w}_r^*\widetilde{w}_l=0$, $r\ne l$, the same arguments as above imply that $w_r^* w_l=0$, $r\ne l$.

For any non-empty $\mu\in\Lambda_n$ write $\sigma(\mu)=\emptyset$ if $|\mu|=1$, and
$\sigma(\mu)=(\mu_2,\ldots,\mu_k)$ if ${|\mu|=k>1}$. Further, for any $i=\overline{1,n}$, $r=\overline{1,m}$ one has
\begin{align*}
v_i^* w_r&=\sum_{k=0}^{\infty}\sum_{|\mu|=k} q^k s_i^* v_{\mu}\widetilde{w}_r v_{\mu}^*=
v_i^*\widetilde{w}_r+\sum_{k=1}^{\infty}\sum_{|\mu|=k} q^k \delta_{i\mu_1} v_{\sigma(\mu)}\widetilde{w}_r v_{\sigma(\mu)}^*v_i^*
\\
&=q\sum_{k=0}^{\infty}\sum_{|\mu|=k} q^k v_{\mu}\widetilde{w}_r v_{\mu}^*v_i^*=q w_r v_i^*.
\qedhere
\end{align*}
\end{proof}

\begin{lemma}
For any $r=\overline{1,m}$, one has $\widetilde{w}_r=(\mathbf{1}-\widetilde{Q})w_r$.
\end{lemma}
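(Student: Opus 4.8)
The plan is to left-multiply the series defining $w_r$ by $\mathbf{1}-\widetilde{Q}$ and observe that every term indexed by a non-empty word collapses. First I would record two elementary facts. Since $\widetilde{w}_r=v_{n+r}(\mathbf{1}-|q|^2\widetilde{Q})^{\frac12}$ and $v_i^*v_{n+r}=0$ for $i=\overline{1,n}$, we get $v_i^*\widetilde{w}_r=0$ for all such $i$, hence $\widetilde{Q}\widetilde{w}_r=0$ and $(\mathbf{1}-\widetilde{Q})\widetilde{w}_r=\widetilde{w}_r$. Second, $\widetilde{Q}=\sum_{i=1}^n v_iv_i^*$ is a projection (the $v_iv_i^*$ are pairwise orthogonal because $v_i^*v_j=\delta_{ij}\mathbf{1}$), and for any non-empty $\mu=(\mu_1,\dots,\mu_k)\in\Lambda_n$ we have $v_\mu=v_{\mu_1}v_{\sigma(\mu)}$, so $\widetilde{Q}v_\mu=v_{\mu_1}v_{\mu_1}^*v_{\mu_1}v_{\sigma(\mu)}=v_\mu$, i.e. $(\mathbf{1}-\widetilde{Q})v_\mu=0$.

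Next I would use norm-continuity of left multiplication by the fixed element $\widetilde{Q}$ to write
\[
\widetilde{Q}w_r=\widetilde{Q}\widetilde{w}_r+\sum_{k=1}^{\infty}\sum_{|\mu|=k}q^k\bigl(\widetilde{Q}v_\mu\bigr)\widetilde{w}_rv_\mu^*=0+\sum_{k=1}^{\infty}\sum_{|\mu|=k}q^k v_\mu\widetilde{w}_rv_\mu^*=w_r-\widetilde{w}_r,
\]
where in the last step I separated off the $k=0$ term $v_\emptyset\widetilde{w}_rv_\emptyset^*=\widetilde{w}_r$ from the series for $w_r$. Rearranging gives $(\mathbf{1}-\widetilde{Q})w_r=w_r-\widetilde{Q}w_r=\widetilde{w}_r$, which is the assertion.

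There is essentially no real obstacle here; the only point worth a sentence is the interchange of $\widetilde{Q}$ with the norm-convergent series (convergence in $\mathcal{E}_{n,m}^0$ was already noted when $w_r$ was introduced), which is immediate from continuity of multiplication in a $C^*$-algebra. Alternatively one can avoid the series rearrangement entirely and argue directly on $(\mathbf{1}-\widetilde{Q})w_r=\sum_{k\ge0}\sum_{|\mu|=k}q^k\bigl((\mathbf{1}-\widetilde{Q})v_\mu\bigr)\widetilde{w}_rv_\mu^*$, noting that only the $k=0$ summand survives and equals $(\mathbf{1}-\widetilde{Q})\widetilde{w}_r=\widetilde{w}_r$.
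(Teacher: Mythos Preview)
Your proof is correct and follows essentially the same approach as the paper: both rest on the two facts $(\mathbf{1}-\widetilde{Q})v_\mu=0$ for nonempty $\mu\in\Lambda_n$ and $\widetilde{Q}\widetilde{w}_r=0$, and then kill all but the $k=0$ term of the series. The paper multiplies by $\mathbf{1}-\widetilde{Q}$ directly (your ``alternative'' at the end), whereas your main version multiplies by $\widetilde{Q}$ and rearranges; this is a cosmetic difference.
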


\begin{proof}
First note that $(\mathbf{1}-\widetilde{Q})v_i=0$, $i=\overline{1,n}$, implies that
\[
(\mathbf{1}-\widetilde{Q})v_{\mu}=0,\quad |\mu|\in\Lambda_n,\ \mu\ne\emptyset.
\]
Then
\begin{align*}
(\mathbf{1}-\widetilde{Q})w_r&=(\mathbf{1}-\widetilde{Q})\Bigl(\sum_{k=0}\sum_{|\mu|=k} q^k v_{\mu}\widetilde{w}_r v_{\mu}^*\Bigr)
\\
&=(\mathbf{1}-\widetilde{Q})\widetilde{w}_r+\sum_{k=1}\sum_{|\mu|=k} q^k (\mathbf{1}-\widetilde{Q})v_{\mu}\widetilde{w}_r v_{\mu}^*=(\mathbf{1}-\widetilde{Q})\widetilde{w}_r.
\end{align*}

To complete the proof it remains to note that $\widetilde{Q}v_{n+r}=0$, $r=\overline{1,m}$. So,
\[
\widetilde{Q}\widetilde{w}_r=\widetilde{Q}v_{n+r}(\mathbf{1}-|q|^2\widetilde{Q})^{\frac{1}{2}}=0.
\]
\end{proof}

\begin{theorem}\label{q_ser_gen}
Let $v_i$, $i=\overline{1,n}+m$, be the isometries generating $\mathcal{E}_{n,m}^0$, and $\widetilde{Q}=\sum_{i=1}^n v_i v_i^*$. Put
\[
\widetilde{w}_r=v_{n+r}(\mathbf{1}-|q|^2\widetilde{Q})^{\frac{1}{2}}\quad \mbox{and}\quad w_r=\sum_{k=0}\sum_{|\mu|=k} q^k v_{\mu}\widetilde{w}_r v_{\mu}^*.
\]
Then
\[
v_i^*v_j=\delta_{ij}\mathbf{1},\quad w_r^* w_l=\delta_{rl}\mathbf{1},\quad v_i^* w_r=q w_r v_i^*,\quad i,j=\overline{1,n},\ r,l=\overline{1,m}.
\]
Moreover, the family $\{v_i,w_r\}_{i=1}^{n}{}_{r=1}^{m}$ generates $\mathcal{E}_{n,m}^0$.
\end{theorem}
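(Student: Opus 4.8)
The plan is to notice first that the three displayed families of commutation relations have in fact already been established. The identity $v_i^*v_j=\delta_{ij}\mathbf{1}$ is nothing but (part of) the defining relation of the Cuntz--Toeplitz algebra $\mathcal{E}_{n,m}^0=\mathcal{O}_{n+m}^{(0)}$ in terms of its isometric generators $v_1,\dots,v_{n+m}$, while $w_r^*w_l=\delta_{rl}\mathbf{1}$ and $v_i^*w_r=q\,w_r v_i^*$ are exactly the content of the lemma immediately preceding this theorem. Hence the only assertion still requiring an argument is the last one: that $\{v_i,w_r\}$ generates $\mathcal{E}_{n,m}^0$. Equivalently, one has to show that each $v_{n+r}$ lies in the $C^*$-subalgebra $\mathcal{B}:=C^*(v_1,\dots,v_n,w_1,\dots,w_m)$.

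To do this I would argue as follows. The element $\widetilde{Q}=\sum_{i=1}^n v_iv_i^*$ is a projection belonging to $C^*(v_1,\dots,v_n)\subseteq\mathcal{B}$; since the series defining $w_r$ converges in norm, $w_r\in\mathcal{B}$, and then the preceding lemma $\widetilde{w}_r=(\mathbf{1}-\widetilde{Q})w_r$ gives $\widetilde{w}_r\in\mathcal{B}$ as well. Because $|q|<1$ and $\widetilde{Q}$ is a projection, the positive element $\mathbf{1}-|q|^2\widetilde{Q}$ has spectrum contained in $\{1,\,1-|q|^2\}\subseteq[1-|q|^2,1]$, hence is invertible, and by continuous functional calculus applied to $\widetilde{Q}$ its inverse square root $(\mathbf{1}-|q|^2\widetilde{Q})^{-1/2}$ lies in $C^*(\widetilde{Q})\subseteq\mathcal{B}$. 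Therefore
\[
v_{n+r}=\widetilde{w}_r\,(\mathbf{1}-|q|^2\widetilde{Q})^{-1/2}\in\mathcal{B},\qquad r=\overline{1,m},
\]
and since $v_1,\dots,v_n\in\mathcal{B}$ too, the subalgebra $\mathcal{B}$ contains all generators of $\mathcal{E}_{n,m}^0$, i.e. $\mathcal{B}=\mathcal{E}_{n,m}^0$, which is the claim.

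I do not expect a genuine obstacle here: once the two lemmas preceding the theorem are available, the computation is routine. The one point deserving care is to keep track of the fact that $(\mathbf{1}-|q|^2\widetilde{Q})^{-1/2}$ really belongs to the $C^*$-algebra — which is precisely where $|q|<1$ enters, since for $|q|=1$ the element $\mathbf{1}-\widetilde{Q}$ is a non-invertible projection and the reconstruction of $v_{n+r}$ breaks down, consistently with the $q$-dependence becoming essential in that regime. I would close with a short remark linking this theorem to the corollary stated just before it: the relations verified above, together with the universal property of $\mathcal{E}_{n,m}^q$, produce a surjection $\psi\colon\mathcal{E}_{n,m}^q\to\mathcal{E}_{n,m}^0$ sending $s_i\mapsto v_i$, $t_r\mapsto w_r$, which is the candidate inverse of the homomorphism $\varphi$ of that corollary; since the defining formulas for $w_r$ and $\widetilde{w}_r$ here mirror exactly the formulas of Proposition~\ref{gener1} and Theorem~\ref{zer_q_gen} for $t_r$ and $\widetilde{t}_r$, checking $\varphi\circ\psi$ and $\psi\circ\varphi$ on generators then yields $\mathcal{E}_{n,m}^q\simeq\mathcal{E}_{n,m}^0$.
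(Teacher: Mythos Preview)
Your proof is correct and follows essentially the same route as the paper: both reduce to showing $v_{n+r}\in C^*(v_1,\dots,v_n,w_1,\dots,w_m)$ via the identity $v_{n+r}=\widetilde{w}_r(\mathbf{1}-|q|^2\widetilde{Q})^{-1/2}=(\mathbf{1}-\widetilde{Q})w_r(\mathbf{1}-|q|^2\widetilde{Q})^{-1/2}$, invoking the preceding lemma $\widetilde{w}_r=(\mathbf{1}-\widetilde{Q})w_r$. Your additional remarks on the spectrum of $\mathbf{1}-|q|^2\widetilde{Q}$ and the forthcoming isomorphism $\mathcal{E}_{n,m}^q\simeq\mathcal{E}_{n,m}^0$ are correct but go beyond what the paper includes in the proof of this theorem itself.
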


\begin{proof}
We need to prove only the last statement of the theorem.  We have
\[
v_{n+r}=\widetilde{w}_r (\mathbf{1}-|q|^2\widetilde{Q})^{-\frac{1}{2}}=(\mathbf{1}-\widetilde{Q})
w_r(\mathbf{1}-|q|^2\widetilde{Q})^{-\frac{1}{2}}\in C^*(w_r, v_i,\ i=\overline{1,n}).
\]
Hence $v_i$, $w_r$, $i=\overline{1,n}$, $r=\overline{1,m}$, generate $\mathcal{E}_{n,m}^0$.
 \end{proof}

\begin{corollary}\label{cor2}
The statement of Theorem \ref{q_ser_gen} and the universal property of $\mathcal{E}_{n,m}^q$ imply the existence of a surjective homomorphism $\psi\colon\mathcal{E}_{n,m}^q\rightarrow\mathcal{E}_{n,m}^0$ defined by
\[
\psi(s_i)=v_i,\quad \psi(t_r)=w_r,\quad i=\overline{1,n},\ r=\overline{1,m}.
\]
\end{corollary}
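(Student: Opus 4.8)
The plan is to obtain $\psi$ directly from the universal property of $\mathcal{E}_{n,m}^q$, feeding in the relations established in Theorem~\ref{q_ser_gen}. Recall that $\mathcal{E}_{n,m}^q=C^*(WE_{n,m}^q)$, where the $*$-algebra $WE_{n,m}^q$ is presented by generators $s_i$, $t_r$ subject to the Wick relations for the operator $T$ of the Introduction; for this particular $T$ (which vanishes on $\mathbb C^n\otimes\mathbb C^n$ and on $\mathbb C^m\otimes\mathbb C^m$ and acts by $q$, resp. $\overline q$, on the mixed tensors) these relations are exactly \eqref{baseqrel}, i.e. $s_i^*s_j=\delta_{ij}\mathbf 1$, $t_r^*t_l=\delta_{rl}\mathbf 1$, $s_j^*t_r=q\,t_r s_j^*$, together with their adjoints. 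Theorem~\ref{q_ser_gen} asserts precisely that the elements $v_i,w_r\in\mathcal{E}_{n,m}^0$ satisfy $v_i^*v_j=\delta_{ij}\mathbf 1$, $w_r^*w_l=\delta_{rl}\mathbf 1$ and $v_i^*w_r=q\,w_r v_i^*$; the adjoint cross relation $w_r^*v_i=\overline q\,v_i w_r^*$ follows by taking adjoints. Hence the assignment $s_i\mapsto v_i$, $t_r\mapsto w_r$ respects all defining relations of $WE_{n,m}^q$ and extends uniquely to a $*$-homomorphism $\beta\colon WE_{n,m}^q\to\mathcal{E}_{n,m}^0$.

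Next I would invoke the universal property recalled in the footnote: there is a canonical $*$-homomorphism $\theta\colon WE_{n,m}^q\to\mathcal{E}_{n,m}^q$, and applying the universal property to the $C^*$-algebra $\mathcal{E}_{n,m}^0$ and the $*$-homomorphism $\beta$ yields a unique $*$-homomorphism $\psi\colon\mathcal{E}_{n,m}^q\to\mathcal{E}_{n,m}^0$ with $\beta=\psi\circ\theta$. Identifying each generator of $WE_{n,m}^q$ with its image in $\mathcal{E}_{n,m}^q$ under $\theta$, this says exactly that $\psi(s_i)=v_i$ and $\psi(t_r)=w_r$. Finally, surjectivity is immediate: $\psi(\mathcal{E}_{n,m}^q)$ is a $C^*$-subalgebra of $\mathcal{E}_{n,m}^0$ containing every $v_i$ and every $w_r$, and by the last assertion of Theorem~\ref{q_ser_gen} this family generates $\mathcal{E}_{n,m}^0$, so $\psi$ is onto.

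I do not anticipate a real obstacle here: all the analytic content --- norm convergence of the series defining $w_r$ and the verification of the commutation relations --- has already been carried out in the lemmas preceding Theorem~\ref{q_ser_gen}. The only point that needs a moment's care is the bookkeeping check that the relations listed in Theorem~\ref{q_ser_gen} form a complete set of defining relations for $WE_{n,m}^q$, so that $\beta$ is genuinely well defined; this reduces to the observation about the action of $T$ made in the first paragraph.
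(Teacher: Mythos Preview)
Your argument is correct and is exactly the approach the paper intends: the corollary has no separate proof in the paper because it follows immediately from Theorem~\ref{q_ser_gen} and the universal property, precisely as you spell out. Your added remark that the relations verified in Theorem~\ref{q_ser_gen} are a complete presentation of $WE_{n,m}^q$ (and hence of $\mathcal{E}_{n,m}^q$) is the only point requiring a moment's thought, and you handle it correctly.
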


Now we are ready to formulate the main result of this section.

\begin{theorem}\label{qstab_thm}
For any $q\in\mathbb{C}$, $|q|<1$, one has an isomorphism $\mathcal{E}_{n,m}^q\simeq\mathcal{E}_{n,m}^0$.
\end{theorem}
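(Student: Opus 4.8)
The plan is to combine the two surjective $*$\nobreakdash-homomorphisms constructed above and show that they are mutually inverse, hence isomorphisms. By the Corollary following Theorem~\ref{zer_q_gen} we have a surjection $\varphi\colon\mathcal E_{n,m}^0\to\mathcal E_{n,m}^q$ sending $v_i\mapsto s_i$ and $v_{n+r}\mapsto\widehat t_r$, where $\widehat t_r=(\mathbf 1-Q)t_r(\mathbf 1-|q|^2Q)^{-1/2}$; and by Corollary~\ref{cor2} we have a surjection $\psi\colon\mathcal E_{n,m}^q\to\mathcal E_{n,m}^0$ sending $s_i\mapsto v_i$ and $t_r\mapsto w_r$, where $w_r=\sum_{k\ge 0}\sum_{|\mu|=k}q^k v_\mu\widetilde w_r v_\mu^*$ with $\widetilde w_r=v_{n+r}(\mathbf 1-|q|^2\widetilde Q)^{1/2}$. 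Since both maps are surjective homomorphisms between unital $C^*$\nobreakdash-algebras, it suffices to check that $\psi\circ\varphi=\mathrm{id}$ on a generating set of $\mathcal E_{n,m}^0$ and that $\varphi\circ\psi=\mathrm{id}$ on a generating set of $\mathcal E_{n,m}^q$; in fact, establishing one of the two composites is the identity already forces both $\varphi$ and $\psi$ to be bijective, but I would verify both directions for transparency.

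First I would compute $\psi\circ\varphi$ on the generators $v_i$ and $v_{n+r}$. On $v_i$ this is immediate: $\psi(\varphi(v_i))=\psi(s_i)=v_i$. On $v_{n+r}$ we must show $\psi(\widehat t_r)=v_{n+r}$. Since $\psi$ is a $*$\nobreakdash-homomorphism, $\psi(Q)=\sum_{i=1}^n\psi(s_i)\psi(s_i)^*=\sum_{i=1}^n v_iv_i^*=\widetilde Q$, and by continuity $\psi$ commutes with the continuous functional calculus, so $\psi((\mathbf 1-|q|^2Q)^{-1/2})=(\mathbf 1-|q|^2\widetilde Q)^{-1/2}$; also $\psi(\widetilde t_r)=\psi((\mathbf 1-Q)t_r)=(\mathbf 1-\widetilde Q)w_r$, which by the last Lemma before Theorem~\ref{q_ser_gen} equals $\widetilde w_r$. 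Hence $\psi(\widehat t_r)=\psi(\widetilde t_r)\,\psi((\mathbf 1-|q|^2Q)^{-1/2})=\widetilde w_r(\mathbf 1-|q|^2\widetilde Q)^{-1/2}=v_{n+r}$, the last equality being exactly the identity $v_{n+r}=\widetilde w_r(\mathbf 1-|q|^2\widetilde Q)^{-1/2}$ recorded in the proof of Theorem~\ref{q_ser_gen}. Thus $\psi\circ\varphi$ fixes all generators $v_1,\dots,v_{n+m}$ of $\mathcal E_{n,m}^0$, so $\psi\circ\varphi=\mathrm{id}_{\mathcal E_{n,m}^0}$.

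Next I would compute $\varphi\circ\psi$ on the generators $s_i,t_r$ of $\mathcal E_{n,m}^q$. On $s_i$ this is again immediate: $\varphi(\psi(s_i))=\varphi(v_i)=s_i$. For $t_r$ we need $\varphi(w_r)=t_r$. Applying the $*$\nobreakdash-homomorphism $\varphi$ to the norm\nobreakdash-convergent series defining $w_r$, using $\varphi(v_i)=s_i$, $\varphi(\widetilde Q)=Q$, and $\varphi(\widetilde w_r)=\varphi(v_{n+r})\varphi((\mathbf 1-|q|^2\widetilde Q)^{1/2})=\widehat t_r(\mathbf 1-|q|^2Q)^{1/2}=\widetilde t_r$ (the last step by the identity $\widetilde t_r=\widehat t_r(\mathbf 1-|q|^2Q)^{1/2}$ from the proof of Theorem~\ref{zer_q_gen}), we get
\[
\varphi(w_r)=\sum_{k=0}^\infty\sum_{|\mu|=k}q^k s_\mu\widetilde t_r s_\mu^*,
\]
which is precisely the series equal to $t_r$ by Proposition~\ref{gener1}. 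Hence $\varphi\circ\psi=\mathrm{id}_{\mathcal E_{n,m}^q}$, and $\varphi$, $\psi$ are mutually inverse isomorphisms.

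The only genuinely delicate point is the bookkeeping that $\varphi$ and $\psi$ intertwine the continuous functional calculus and the infinite series: both series converge in norm (as established in the proofs of Proposition~\ref{gener1} and Theorem~\ref{q_ser_gen}, using $\|\sum_{|\mu|=N}s_\mu x s_\mu^*\|\le\|x\|$ and its analogue in $\mathcal E_{n,m}^0$), $*$\nobreakdash-homomorphisms of $C^*$\nobreakdash-algebras are automatically contractive and hence continuous, and they commute with continuous functional calculus of self-adjoint elements; so applying $\varphi$ or $\psi$ termwise to the series and to the square-root and resolvent terms is legitimate. No serious obstacle remains beyond carefully matching each transformed generator against the identities already recorded above, so the theorem follows.
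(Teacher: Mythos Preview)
Your proof is correct and follows essentially the same approach as the paper: you verify that the two surjective homomorphisms $\varphi$ and $\psi$ constructed in the preceding results are mutually inverse by checking both composites on generators, using the identities $\psi(\widetilde t_r)=(\mathbf 1-\widetilde Q)w_r=\widetilde w_r$ and $\varphi(\widetilde w_r)=\widetilde t_r$ together with Proposition~\ref{gener1}. Your added remarks about continuity of $*$-homomorphisms and compatibility with continuous functional calculus make explicit what the paper leaves implicit, but the argument is otherwise identical.
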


\begin{proof}
In Theorem \ref{zer_q_gen}, we constructed the surjective homomorphism $\varphi\colon\mathcal{E}_{n,m}^0\rightarrow\mathcal{E}_{n,m}^q$ defined by
\[
\varphi(v_i)=s_i,\quad \varphi(v_{n+r})=\widehat{t}_r,\quad i=\overline{1,n},\ r=\overline{1,m}.
\]
Show that $\psi\colon\mathcal{E}_{n,m}^q\rightarrow\mathcal{E}_{n,m}^0$ from Corollary \ref{cor2} is the inverse of $\varphi$. Indeed, the equalities  ${\psi(s_i)=v_i}$, $i=\overline{1,n}$, imply that
\[
\psi (\mathbf{1}-Q)=\mathbf{1}-\widetilde{Q}.
\]
Then, since $\psi(t_r)=w_r$, we get
\[
\psi (\widetilde{t}_r)=\psi ((\mathbf{1}-Q)t_r)=(\mathbf{1}-\widetilde{Q})w_r=\widetilde{w}_r,\quad r=\overline{1,m},
\]
and
\[
\psi(\widehat{t}_r)=\psi(\widetilde{t}_r (\mathbf{1}-|q|^2 Q)^{-\frac{1}{2}})=
\widetilde{w}_r (\mathbf{1}-|q|^2 \widetilde{Q})^{-\frac{1}{2}}=v_{n+r},\quad r=\overline{1,m}.
\]
So, $\psi\varphi(v_i)=\psi (s_i)=v_i$, $\psi\varphi(v_{n+r})=\psi(\widehat{t}_r)=v_{n+r}$, $i=\overline{1,n}$, $r=\overline{1,m}$, and
\[
\psi\varphi= id_{\mathcal{E}_{n,m}^0}.
\]

Show that $\varphi\psi=id_{\mathcal{E}_{n,m}^q}$. Indeed,
\[
\varphi(\widetilde{w}_r)=\varphi(v_{n+r}(\mathbf{1}-|q|^2\widetilde{Q})^{\frac{1}{2}})=
\widehat{t}_r (\mathbf{1}-|q|^2 Q)^{\frac{1}{2}}=\widetilde{t}_r,\quad r=\overline{1,d}.
\]
Then for any $r=\overline{1,m}$, one has
\[
\varphi(w_r)=\sum_{k=0}\sum_{|\mu|=k}q^k\varphi(v_{\mu})\varphi(\widetilde{w}_r)\varphi{v_{\mu}^*}=
\sum_{k=0}\sum_{|\mu|=k} q^ks_{\mu} \widetilde{t}_r s_{\mu}^*= t_r.
\]
So, $\varphi\psi(s_i)=\varphi(v_i)=s_i$, $\varphi\psi(t_r)=\varphi(w_r)=t_r$, $i=\overline{1,n}$, $r=\overline{1,m}$.
 \end{proof}

\section {The case $|q|=1$}
In this section, we discuss the case $|q|=1$. Notice that for $|q|=1$, formula \eqref{baseqrel} implies  $t_js_i = q s_it_j$, $i=\overline{1,n}$, $j=\overline{1, m}$. Indeed, one has just put $B_{ij}=t_j s_i - q s_i t_j$ and 
check that $B_{ij}^* B_{ij}=0$, $i\ne j$.

\subsection{Auxiliary results}
In this subsection we collect some general facts about $C^*$-dynamical systems, crossed products and Rieffel deformations which we will use in our considerations.

\subsubsection{Fixed point subalgebras}\label{group_action}
First we recall how properties of a fixed point subalgebra of a $C^*$-algebra with an action of a compact group are related to properties of the whole algebra.

\begin{definition}
Let $\mathcal A$ be a $C^*$-algebra with an action $\gamma$ of a compact group $G$. A~fixed point subalgebra ${\mathcal A}^\gamma$ is a subset of all $a \in\mathcal A$ such that $\gamma_g(a) = a$ for all $g \in G$.
\end{definition}

Notice  that for every action of a compact group $G$ on a $C^*$-algebra $\mathcal A$ one can construct a faithful conditional expectation 
$E_{\gamma} :\mathcal A \rightarrow\mathcal{ A}^\gamma$ onto the fixed point subalgebra, given by
\[ E_{\gamma}(a) = \int_{G} \gamma_g(a) d \lambda, \]
where $\lambda$ is the Haar measure on $G$.

 A homomorphism $\varphi :\mathcal A \rightarrow\mathcal B$ between $C^*$-algebras with actions $\alpha$ and $\beta$ of a compact group $G$ is called equivariant if
 \[ \varphi \circ \alpha_g = \beta_g \circ \varphi \text{ for any } g \in G.
 \]

\subsubsection{Crossed products}

Given a locally compact group $G$ and a $C^*$-algebra $\mathcal A$ with a $G$-action $\alpha$, consider the full crossed product $C^*$-algebra $\mathcal{A} \rtimes_\alpha G$, see \cite{williams}. One has two natural embeddings into the multiplier algebra  $M(\mathcal{A} \rtimes_\alpha G)$,
\begin{gather*}
i_{\mathcal A} : \mathcal A \rightarrow M(\mathcal{A} \rtimes_\alpha G), \ i_G : G \rightarrow M(\mathcal{A} \rtimes_\alpha G), 
\\
(i_{\mathcal A}(a) f)(s) = a f(s), \quad (i_G(t)f)(s) = \alpha_t(f(t^{-1} s)), \quad t,s\in G, \ a \in\mathcal  A,
\end{gather*}
for $f \in C_c(G, \mathcal A)$.
\begin{remark}\label{rem2}
Obviously, $i_G(s)$ is a unitary element of $M(\mathcal A\rtimes_{\alpha} G)$ for any $s\in G$. Recall that $i_G$ determines the following homomorphism denoted also by $i_G$
\[
i_G\colon C^*(G)\rightarrow M(\mathcal{A}\rtimes_{\alpha} G)
\]
defined by
\[
i_G (f)=\int_G f(s) i_G(s) d\lambda(s),
\]
where $\lambda$ is the left Haar measure on $G$.

Notice that for any $g\in C_c(G,\mathcal A)$ one has
\[
(i_G(f) g)(t)=f\cdot_{\alpha} g,
\]
where $\cdot_\alpha$ denotes the product in $\mathcal{A}\rtimes_{\alpha} G$. In particular, when $\mathcal A$ is unital we can identify $i_G(f)$ with $f\cdot_\alpha \mathbf{1}_{\mathcal A}$, and in fact $i_G$ maps $C^*(G)$ into $\mathcal{A} \rtimes_\alpha G$. Also notice that
\[
i_G(t) i_{\mathcal A} (a) i_G(t)^{-1}  =i_{\mathcal A}( \alpha _t(a)) \in M(\mathcal{A} \rtimes_\alpha G).
\]
\end{remark}

If  $\varphi$ is an equivariant homomorphism between $C^*$-algebras $\mathcal A$ with a $G$-action $\alpha$ and $\mathcal B$ with a $G$-action $\beta$, then one can define the homomorphism
\[
\varphi \rtimes G : \mathcal{A} \rtimes_\alpha G \rightarrow\mathcal{B} \rtimes_\beta G, \ (\varphi \rtimes G)(f)(t) = \varphi(f(t)),
\quad f\in C_c(G,\mathcal A).
\]
Let $\mathcal A$ be a unital $C^*$-algebra with $G$-action $\alpha$. Then $\iota_{\mathcal A}\colon\mathbb{C}\rightarrow\mathcal A$,
\[
\iota_{\mathcal A}(\lambda)=\lambda\mathbf 1_{\mathcal A},
\]
is an equivariant homomorphism, where $G$ acts trivially on $\mathbb{C}$. Since $\mathbb{C}\rtimes G=C^* (G)$, one has that
\[
\iota_{\mathcal A}\rtimes G\colon C^*(G)\rightarrow \mathcal{A}\rtimes_{\alpha} G.
\]
In fact, in this case we have
\begin{equation}\label{unital_embedding}
    \iota_{\mathcal A} \rtimes G = i_G,
\end{equation}
where $i_G\colon C^*(G)\rightarrow\mathcal{A}\rtimes_{\alpha} G$ is described in Remark \ref{rem2}. Indeed, for any $g\in G_c (G,\mathcal A)$ one has
\begin{equation*}
    \begin{split}
        (i_G(f)\cdot_\alpha g)(s) & = \int_G f(t) \alpha_t(g(t^{-1}s)) dt
        \\
        & = \int_G f(t) 1_A \alpha_t(g(t^{-1}s)) dt
        \\
        & = ((f(\cdot)1_{\mathcal A}) \cdot_\alpha g)(s)=((\iota_{\mathcal A} \rtimes G)(f)\cdot_{\alpha}g)(s),
    \end{split}
\end{equation*}
implying $i_G(f)=(\iota_{\mathcal A}\rtimes G)(f)$ for any $f\in C^*(G)$.

\subsubsection{Rieffel's deformation}

Below, we recall some basic facts on Rieffel's deformations. Given a $C^*$-algebra $\mathcal A$ equipped with an action $\alpha$ of $\mathbb{R}^n$ and a skew symmetric matrix
$\Theta \in M_n(\mathbb R)$, one can construct the Rieffel deformation of $\mathcal A$, denoted by $\mathcal A_\Theta$, see \cite{anderson,rieffel_primar}. In particular the elements $a \in \mathcal  A$ such that $x \mapsto \alpha_x(a) \in C^\infty(\mathbb{R}^n,\mathcal  A)$ form a dense subset $\mathcal A_\infty$ in $\mathcal A_\Theta$ and for any $a,b\in \mathcal A_\infty$ their product in $\mathcal A_\Theta$ is given by the following oscillatory integral (see \cite{rieffel_primar}):
\begin{equation}\label{rieffel_product}
    a \cdot_\Theta b :=
\int_{\mathbb{R}^n} \int_{\mathbb{R}^n} \alpha_{\Theta(x)}(a) \alpha_y(b) e^{2 \pi i \langle x, y \rangle} dx dy,
\end{equation}
where $\langle \cdot,\cdot\rangle$ is a scalar product in $\mathbb R^n$. The mapping $\alpha^\Theta_x\colon a\to \alpha_{x}(a)$, $a\in \mathcal A_\infty$, extends naturally to an action $\alpha^\Theta$ of $\mathbb R^n$ on $\mathcal A_\Theta$.

Given an equivariant $*$-homomorphism $\phi$ between $C^*$-algebras $\mathcal A$ and $\mathcal B$ with actions of $\mathbb{R}^n$, one can define a $*$-homomorphism $\phi_\Theta : \mathcal A_\Theta \rightarrow \mathcal B_\Theta$, which is also equivariant with respect to the induced actions. Moreover, $\phi_\Theta$ is injective iff $\phi$ is injective, see e.g. \cite{Kasprzak_rieffel}.

The next result follows directly from Lemma 3.5 of \cite{Kasprzak_rieffel}. 

\begin{proposition}\label{double_deformation}
The mapping $\mathrm{id} : \mathcal A \rightarrow (\mathcal A_\Theta)_{-\Theta}$ is an equivariant $*$-isomorphism.
\end{proposition}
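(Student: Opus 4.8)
The plan is to reduce the statement to the cited Lemma~3.5 of \cite{Kasprzak_rieffel} by unwinding what the two successive Rieffel deformations do to the smooth subalgebra and to the deformed product. First I would fix notation: $\mathcal A$ carries an action $\alpha$ of $\mathbb R^n$, and $\mathcal A_\Theta$ carries the induced action $\alpha^\Theta$, which on the common dense subspace $\mathcal A_\infty$ agrees with $\alpha$ itself (only the product is changed, not the linear/topological structure on the smooth vectors). Consequently the smooth vectors for $\alpha^\Theta$ in $\mathcal A_\Theta$ are again exactly $\mathcal A_\infty$, so $(\mathcal A_\Theta)_{-\Theta}$ has the same dense subspace $\mathcal A_\infty$, now equipped with the product $\cdot_{-\Theta}$ computed from $\cdot_\Theta$ via the oscillatory integral \eqref{rieffel_product} with $\Theta$ replaced by $-\Theta$ and $\alpha$ replaced by $\alpha^\Theta$.

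The core computational step is then to check that for $a,b\in\mathcal A_\infty$ one has $a\cdot_{-\Theta}b = ab$, the original product of $\mathcal A$. This is precisely the content of Lemma~3.5 of \cite{Kasprzak_rieffel}: composing the deformation by $\Theta$ with the deformation by $-\Theta$ returns the undeformed product, because the two oscillatory integrals combine (after the change of variables dictated by $\alpha^\Theta_x = \alpha_x$ on $\mathcal A_\infty$) to an integral whose phase cancels, collapsing to $ab$. I would simply invoke that lemma rather than reprove the oscillatory-integral manipulation. Having identified the products, the identity map on $\mathcal A_\infty$ is a $*$-algebra isomorphism from $(\mathcal A, \cdot)$ restricted to smooth vectors onto $(\mathcal A_\Theta)_{-\Theta}$ restricted to its smooth vectors; since in each case the smooth subalgebra is dense and the $C^*$-norm is determined by the (faithful) representation underlying the construction, this extends to a $*$-isomorphism $\mathrm{id}\colon \mathcal A \to (\mathcal A_\Theta)_{-\Theta}$.

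Finally I would check equivariance: the action on $(\mathcal A_\Theta)_{-\Theta}$ is $(\alpha^\Theta)^{-\Theta}$, which on $\mathcal A_\infty$ is again just $\alpha$ (the action is never twisted, only the product is), so $\mathrm{id}$ intertwines $\alpha$ with $(\alpha^\Theta)^{-\Theta}$ on the dense subalgebra and hence everywhere by continuity. The only genuine obstacle is the oscillatory-integral identity $a\cdot_{-\Theta}b = ab$, and this is exactly what \cite[Lemma~3.5]{Kasprzak_rieffel} supplies, so the proof is essentially a matter of correctly bookkeeping the induced actions and quoting that lemma; I would keep the written proof to a few lines, citing Lemma~3.5 and the functoriality statement for equivariant homomorphisms recalled above.
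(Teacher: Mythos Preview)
Your proposal is correct and matches the paper's approach exactly: the paper does not give an independent proof but simply states that the result ``follows directly from Lemma 3.5 of \cite{Kasprzak_rieffel}''. Your write-up is a faithful unpacking of what that citation entails---identifying the common smooth subalgebra, invoking the lemma for the product identity, and reading off equivariance---and is entirely in line with the paper's intent.
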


In what follows, we will be interested in periodic actions of $\mathbb{R}^n$, i.e., we assume that $\alpha$ is an action of $\mathbb{T}^n$.
Given a character $\chi \in \widehat{\mathbb{T}}^n \simeq \mathbb{Z}^n$, consider
\[
\mathcal A_\chi = \{ a \in \mathcal A : \alpha_z(a) = \chi(z)a \text{ for every } z \in \mathbb{T}^n \} .
\]
Then
\[
\mathcal A=\overline{\bigoplus_{\chi\in\mathbb Z^n} \mathcal A_{\chi}},
\]
where some terms could be equal to zero, and $\mathcal A_{\chi_1}\cdot \mathcal A_{\chi_2}\subset \mathcal A_{\chi_1+\chi_2}$, $\mathcal A_{\chi}^*=\mathcal A_{-\chi}$. So,  $\mathcal A_\chi$, $\chi \in \mathbb Z^n$, can be treated as homogeneous components of
$\mathbb Z^n$-grading on $\mathcal A$.

For a periodic action $\alpha$ of $\mathbb{R}^n$ on a $C^*$-algebra $\mathcal A$ and a skew-symmetric matrix $\Theta \in M_n(\mathbb{R})$, construct the Rieffel deformation $\mathcal A_\Theta$. Notice that all
homogeneous elements belong to $\mathcal A_{\infty}$. Apply formula (\ref{rieffel_product}) to $a \in \mathcal A_p$, $b \in \mathcal A_q$:
\begin{equation*}
    \begin{split}
        a \cdot_\Theta b & =  \int_{\mathbb{R}^n}
        \int_{\mathbb{R}^n} e^{2 \pi i \langle \Theta(x), p \rangle} a e^{2 \pi i \langle y, q \rangle}
        b e^{2 \pi i \langle x, y \rangle} dx\, dy
        \\
        & = a \cdot b \int_{\mathbb{R}^n} e^{2 \pi i \langle y, q \rangle}
        \int_{\mathbb{R}^n} e^{2 \pi i \langle x, -\Theta(p) \rangle} e^{2 \pi i \langle x, y \rangle} dx\, dy
        \\
        & = a \cdot b \int_{\mathbb{R}^n} e^{2 \pi i \langle y, q \rangle} \delta_{y - \Theta(p)} \,dy
        \\
        & = e^{2 \pi i \langle \Theta(p), q \rangle} a \cdot b .
    \end{split}
\end{equation*}
Thus, given $a \in \mathcal A_p$ and $b \in \mathcal A_q$ one has
\begin{equation} \label{homogeneous_rieff_product}
    a \cdot_\Theta b = e^{2 \pi i \langle \Theta(p), q \rangle}a \cdot b .
\end{equation}

\begin{remark}
Notice that $\mathcal A_\Theta$ also possesses a $\mathbb{Z}^n$-grading such that $(\mathcal A_\Theta)_p = \mathcal A_p$ for every $p \in \mathbb{Z}^n$. Due to \eqref{homogeneous_rieff_product}, we have $a\cdot_\Theta b = a\cdot b$ for any $a,b\in \mathcal A_{\pm p}$, $p\in\mathbb{Z}^n$. Indeed, for any skew
symmetric $\Theta\in M_n(\mathbb R^n)$ and $p\in\mathbb Z^n$, one has $\langle\Theta\, p\, ,\, \pm p\rangle=0$. The involution on $(\mathcal A_\Theta)_p$ coincides with the involution on $\mathcal A_p$.
\end{remark}

Consider a $C^*$-dynamical system $(\mathcal A,\mathbb T^n, \alpha)$, and its covariant
representation $(\pi,U)$ on a Hilbert space $\mathcal H$. For any $p\in \mathbb{Z}^n\simeq  \mathbb{\widehat{T}}^n$, put
\[
 \mathcal H_p = \{ h\in \mathcal H \mid U_t h = e^{2\pi i \langle t,p\rangle} h\}.
\]
Then $\mathcal H = \bigoplus _{p\in \mathbb Z^n} \mathcal H_p$ (see \cite{williams}).

\begin{proposition}[\cite{warped}, Theorem 2.8]\label{theta_rep}
Let $(\pi,U)$ be a covariant representation of $(\mathcal A,\mathbb T^n,\alpha)$ on a Hilbert space $\mathcal{H}$. Then one can define a representation $\pi_\Theta$ of $\mathcal A_\Theta$ as follows:
\[
\pi_\Theta(a) \xi = e^{2 \pi i \langle \Theta(p), q \rangle} \pi(a) \xi,
\]
for every $\xi \in \mathcal{H}_q$, $a \in \mathcal A_p$, $p,q\in \mathbb Z^n$.
Moreover, $\pi_\Theta$ is faithful if and only if $\pi$ is faithful.
\end{proposition}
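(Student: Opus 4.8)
The plan is to verify the three claimed formulas --- faithfulness aside --- by a direct computation on homogeneous elements, then to reduce faithfulness to the injectivity statement already recorded for Rieffel deformations, and finally to reduce the general covariant case to the case of a faithful one.

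First I would set up the decomposition $\mathcal H=\bigoplus_{p\in\mathbb Z^n}\mathcal H_p$ coming from the unitary $U$, as stated just before the proposition, and define $\pi_\Theta$ on a dense subspace by the displayed formula, extending by linearity and continuity. The $\ast$-homomorphism property is then checked on homogeneous elements: for $a\in\mathcal A_p$ one has $\pi(a)\mathcal H_q\subset\mathcal H_{p+q}$ by covariance, so for $\xi\in\mathcal H_q$,
\[
\pi_\Theta(a)\pi_\Theta(b)\xi
= e^{2\pi i\langle\Theta(p'),p+q\rangle}e^{2\pi i\langle\Theta(p),q\rangle}\pi(a)\pi(b)\xi
\]
when $b\in\mathcal A_{p'}$, and one compares this with $\pi_\Theta(a\cdot_\Theta b)\xi = e^{2\pi i\langle\Theta(p'),p+q\rangle}\cdot e^{2\pi i\langle\Theta(p),p'\rangle}\pi(ab)\xi$, using the deformed product formula \eqref{homogeneous_rieff_product} for $a\cdot_\Theta b\in\mathcal A_{p+p'}$ together with bilinearity of $\langle\Theta(\cdot),\cdot\rangle$; the two phase factors agree since $\langle\Theta(p),p+q\rangle=\langle\Theta(p),p'\rangle+\langle\Theta(p),q\rangle$ after writing $p+q$ in the first slot appropriately, i.e. expanding $\langle\Theta(p'),p\rangle$ against $\langle\Theta(p),p'\rangle=-\langle\Theta(p'),p\rangle$. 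The adjoint relation $\pi_\Theta(a)^*=\pi_\Theta(a^*)$ is similar, using $\mathcal A_p^*=\mathcal A_{-p}$ and skew-symmetry of $\Theta$. Boundedness and the fact that these phases give a genuine (not merely densely defined) operator follow because each phase has modulus one, so $\pi_\Theta(a)$ has the same norm as $\pi(a)$ on each $\mathcal H_q$.

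For faithfulness, I would not argue directly on Hilbert space but instead invoke the functoriality of Rieffel deformation recalled in the excerpt: an equivariant $\ast$-homomorphism $\phi$ induces $\phi_\Theta$, which is injective iff $\phi$ is. Concretely, regard $\pi$ as an equivariant $\ast$-homomorphism from $(\mathcal A,\mathbb T^n,\alpha)$ into $(B(\mathcal H),\mathbb T^n,\mathrm{Ad}\,U)$; then $\pi_\Theta$ as constructed should be identified with the induced map into $B(\mathcal H)_\Theta$, composed with the canonical (faithful) representation of $B(\mathcal H)_\Theta$ on $\mathcal H$ determined by the same covariant data. Since deformation preserves injectivity, $\pi_\Theta$ is faithful iff $\pi$ is. This is essentially the content of Theorem 2.8 of \cite{warped}, so strictly speaking the proposition is a citation; the proof to be written is the verification that the formula displayed here is the same map as the one in that reference, which is the routine homogeneous-components computation above.

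The step I expect to be the main obstacle is bookkeeping the phase factors consistently: one must fix a convention for which argument of $\Theta$ carries the grading of the left factor versus the right factor, and the sign conventions in $\langle\Theta(x),y\rangle$ interact with the skew-symmetry identity $\langle\Theta(p),p'\rangle=-\langle\Theta(p'),p\rangle$. Getting a single wrong sign makes $\pi_\Theta$ fail to be multiplicative, so I would double-check the cocycle identity $e^{2\pi i\langle\Theta(p),q\rangle}e^{2\pi i\langle\Theta(p'),p+q\rangle}=e^{2\pi i\langle\Theta(p+p'),q\rangle}e^{2\pi i\langle\Theta(p),p'\rangle}$ against \eqref{homogeneous_rieff_product} before anything else. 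Everything else --- density of $\bigoplus_p\mathcal H_p$, continuity, the $\ast$-property --- is then immediate.
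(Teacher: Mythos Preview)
The paper does not give its own proof of this proposition; it is simply quoted from \cite{warped} as a known result. Your sketch is therefore not being compared against anything in the paper, and your own remark that ``strictly speaking the proposition is a citation'' is exactly right.

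That said, your outline is the correct way to prove the statement, and the faithfulness argument via functoriality of the deformation (an equivariant $\phi$ induces $\phi_\Theta$, injective iff $\phi$ is) is clean and matches the machinery the paper has already set up. One concrete warning: the phase factors you wrote down in the multiplicativity check are scrambled. With $a\in\mathcal A_p$, $b\in\mathcal A_{p'}$, $\xi\in\mathcal H_q$, applying $b$ first gives $\pi_\Theta(b)\xi\in\mathcal H_{p'+q}$ with phase $e^{2\pi i\langle\Theta(p'),q\rangle}$, and then $a$ contributes $e^{2\pi i\langle\Theta(p),p'+q\rangle}$; you wrote $\langle\Theta(p'),p+q\rangle$ and $\langle\Theta(p),q\rangle$, which is the wrong assignment of indices to slots. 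Likewise the cocycle identity you propose to verify at the end,
\[
e^{2\pi i\langle\Theta(p),q\rangle}e^{2\pi i\langle\Theta(p'),p+q\rangle}=e^{2\pi i\langle\Theta(p+p'),q\rangle}e^{2\pi i\langle\Theta(p),p'\rangle},
\]
is false as written: expanding both sides it reduces to $\langle\Theta(p'),p\rangle=\langle\Theta(p),p'\rangle$, whereas skew-symmetry gives the opposite sign. The correct identity is
\[
e^{2\pi i\langle\Theta(p'),q\rangle}e^{2\pi i\langle\Theta(p),p'+q\rangle}=e^{2\pi i\langle\Theta(p+p'),q\rangle}e^{2\pi i\langle\Theta(p),p'\rangle},
\]
which does hold by bilinearity alone, no skew-symmetry needed. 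You anticipated exactly this kind of bookkeeping slip, so this is not a gap in strategy, only in execution; fix the indices and the proof goes through.
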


 It is known that Rieffel's deformation can be embedded into $M(\mathcal A \rtimes_\alpha \mathbb{R}^n)$, but for the periodic actions we have an explicit description of this embedding.

 \begin{proposition}[\cite{Amandip}, Lemma 3.1.1]
The following mapping defines an embedding
\[ 
i_{\mathcal A_\Theta} : \mathcal A_\Theta \rightarrow M(\mathcal A \rtimes_\alpha \mathbb R^n), \ i_{\mathcal A_\Theta}(a_p) = i_{\mathcal A}(a_p)i_{\mathbb R^n}(-\Theta(p)), 
\]
where $p \in \mathbb{Z}^n$ and $a_p$ is homogeneous of degree $p$.
\end{proposition}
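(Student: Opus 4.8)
The plan is to check that the prescription $i_{\mathcal{A}_\Theta}(a_p) = i_{\mathcal{A}}(a_p)\,i_{\mathbb{R}^n}(-\Theta(p))$ on homogeneous elements extends to a well-defined injective $*$-homomorphism of $\mathcal{A}_\Theta$ (with its deformed product) into $M(\mathcal{A}\rtimes_\alpha\mathbb{R}^n)$. I would first record that the algebraic direct sum $\mathcal{A}_{\mathrm{fin}} := \bigoplus_{p\in\mathbb{Z}^n}\mathcal{A}_p$ is a dense $*$-subalgebra of $\mathcal{A}$, that each of its elements is $\alpha$-smooth and hence lies in $\mathcal{A}_\infty$, and that it is dense in $\mathcal{A}_\Theta$ as well; one defines $i_{\mathcal{A}_\Theta}$ on $\mathcal{A}_{\mathrm{fin}}$ by linear extension, verifies the $*$-algebra homomorphism property there, and then extends by continuity once boundedness for $\|\cdot\|_{\mathcal{A}_\Theta}$ is known.

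Multiplicativity on homogeneous elements is the computational core. Writing $i_{\mathcal{A}_\Theta}(a_p)\,i_{\mathcal{A}_\Theta}(b_q) = i_{\mathcal{A}}(a_p)\,i_{\mathbb{R}^n}(-\Theta(p))\,i_{\mathcal{A}}(b_q)\,i_{\mathbb{R}^n}(-\Theta(q))$, I would push $i_{\mathbb{R}^n}(-\Theta(p))$ to the right past $i_{\mathcal{A}}(b_q)$ via the covariance identity $i_{\mathbb{R}^n}(x)\,i_{\mathcal{A}}(a)\,i_{\mathbb{R}^n}(x)^{-1} = i_{\mathcal{A}}(\alpha_x(a))$ (the $\mathbb{R}^n$-case of the relation in Remark~\ref{rem2}); homogeneity of $b_q$ turns $\alpha_{-\Theta(p)}(b_q)$ into a scalar multiple of $b_q$, the two factors $i_{\mathbb{R}^n}(-\Theta(p))\,i_{\mathbb{R}^n}(-\Theta(q))$ recombine to $i_{\mathbb{R}^n}(-\Theta(p+q))$ by additivity of $\Theta$, and the resulting scalar must be matched with the one in $i_{\mathcal{A}_\Theta}(a_p\cdot_\Theta b_q)$, recalling that $a_p\cdot_\Theta b_q$ is homogeneous of degree $p+q$ and equals $e^{2\pi i\langle\Theta(p),q\rangle}a_pb_q$ in $\mathcal{A}$ by \eqref{homogeneous_rieff_product}. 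The involution is easier: $i_{\mathcal{A}_\Theta}(a_p)^* = i_{\mathbb{R}^n}(\Theta(p))\,i_{\mathcal{A}}(a_p^*)$, and since $a_p^*\in\mathcal{A}_{-p}$ while $\langle\Theta(p),p\rangle = 0$ by skew-symmetry, the same identity rewrites this as $i_{\mathcal{A}}(a_p^*)\,i_{\mathbb{R}^n}(-\Theta(-p)) = i_{\mathcal{A}_\Theta}(a_p^*)$.

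For injectivity I would exhibit a faithful representation of $\mathcal{A}_\Theta$ that factors through $i_{\mathcal{A}_\Theta}$. Starting from a faithful $\mathbb{T}^n$-covariant representation $(\pi,U)$ of $(\mathcal{A},\alpha)$ on $\mathcal{H} = \bigoplus_q\mathcal{H}_q$, Proposition~\ref{theta_rep} yields a faithful representation $\pi_\Theta$ of $\mathcal{A}_\Theta$. Composing $U$ with the quotient $\mathbb{R}^n\to\mathbb{T}^n$ and forming the integrated form gives a nondegenerate representation $\rho$ of $\mathcal{A}\rtimes_\alpha\mathbb{R}^n$, hence of $M(\mathcal{A}\rtimes_\alpha\mathbb{R}^n)$, with $\rho(i_{\mathcal{A}}(a)) = \pi(a)$ and $\rho(i_{\mathbb{R}^n}(x)) = U_x$; evaluating $\rho\circ i_{\mathcal{A}_\Theta}$ on the spectral subspaces $\mathcal{H}_q$ reproduces the recipe of Proposition~\ref{theta_rep} (up to the sign of $\Theta$ imposed by the chosen conventions), so $\rho\circ i_{\mathcal{A}_\Theta}$ is faithful on $\mathcal{A}_\Theta$ and $i_{\mathcal{A}_\Theta}$ is injective. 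Alternatively one can appeal to Rieffel's realization of $\mathcal{A}_\Theta$ as a generalized fixed-point algebra for the dual action inside $M(\mathcal{A}\rtimes_\alpha\mathbb{R}^n)$, from which the embedding property follows directly.

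The genuine difficulty here is bookkeeping rather than mathematics: one must fix mutually compatible conventions for the oscillatory-integral product~\eqref{rieffel_product}, the pairing $\langle\cdot,\cdot\rangle$, and the canonical maps $i_{\mathcal{A}}$ and $i_{\mathbb{R}^n}$, since an inconsistent choice only interchanges $\Theta$ and $-\Theta$ but would otherwise break the multiplicativity identity; and one must justify that the homomorphism defined on $\mathcal{A}_{\mathrm{fin}}$ is $\|\cdot\|_{\mathcal{A}_\Theta}$-bounded so that it extends continuously — this is exactly where the construction of the Rieffel norm does the work, although the faithful-representation argument above, carried out first, bypasses the need for a separate estimate.
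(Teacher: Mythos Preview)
The paper does not supply its own proof of this proposition: it is quoted verbatim as Lemma~3.1.1 of \cite{Amandip} and used as a black box, so there is nothing in the paper to compare your argument against. Your outline is the natural one and is essentially how the cited reference proceeds---check multiplicativity and $*$-preservation on homogeneous elements via the covariance relation $i_{\mathbb R^n}(x)i_{\mathcal A}(a)i_{\mathbb R^n}(x)^{-1}=i_{\mathcal A}(\alpha_x(a))$ and skew-symmetry of $\Theta$, then obtain injectivity (and hence the norm bound needed for continuous extension) by factoring a faithful representation of $\mathcal A_\Theta$ through $i_{\mathcal A_\Theta}$.

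One caution worth making explicit: with the paper's conventions, $\alpha_{-\Theta(p)}(b_q)=e^{-2\pi i\langle\Theta(p),q\rangle}b_q$, so the product $i_{\mathcal A_\Theta}(a_p)i_{\mathcal A_\Theta}(b_q)$ picks up the scalar $e^{-2\pi i\langle\Theta(p),q\rangle}$, while $i_{\mathcal A_\Theta}(a_p\cdot_\Theta b_q)$ picks up $e^{+2\pi i\langle\Theta(p),q\rangle}$ from~\eqref{homogeneous_rieff_product}. You correctly flag this as a convention issue (it amounts to replacing $\Theta$ by $-\Theta$, or equivalently to which side $\Theta$ sits on in the pairing), but you should resolve it rather than leave it as a parenthetical: either adjust the sign in the definition of $i_{\mathcal A_\Theta}$ or note that the paper's formula~\eqref{homogeneous_rieff_product} and the statement quoted from \cite{Amandip} are using opposite sign conventions for $\Theta$. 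Apart from this bookkeeping, your argument is complete.
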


\begin{proposition}[\cite{Kasprzak_rieffel}, Proposition 3.2 and \cite{Amandip}, Section 3.1]\label{prop4}
Let $(\mathcal A,\mathbb R^n,\alpha)$ be a $C^*$\nobreakdash-dyna\-mical system with  periodic $\alpha$ and unital $\mathcal A$.  Put $\mathcal A_{\Theta}$ to be the Rieffel deformation of $\mathcal A$. There exist a periodic
action $\alpha^{\Theta}$ of $\mathbb R^n$ on $\mathcal A_{\Theta}$ and an isomorphism $\Psi : \mathcal A_\Theta \rtimes_{\alpha^\Theta} \mathbb{R}^n \rightarrow \mathcal A \rtimes_\alpha \mathbb{R}^n$ such that the following diagram is commutative
\[ \begin{tikzcd}
& C^*(\mathbb{R}^n) \simeq C_0(\mathbb{R}^n) \arrow[dl, "i_{\mathbb{R}^n}"'] \arrow[dr, "i_{\mathbb{R}^n}"] & \\
\mathcal A_\Theta \rtimes_{\alpha^\Theta} \mathbb{R}^n \arrow[rr, "\Psi"] & & \mathcal A \rtimes_\alpha \mathbb{R}^n
\end{tikzcd} \]
\end{proposition}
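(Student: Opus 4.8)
Proposition \ref{prop4} is essentially a bookkeeping statement on top of the multiplier embedding $i_{\mathcal A_\Theta}$ recalled just above, so the plan is to build $\Psi$ as the integrated form of an explicit covariant pair and to obtain its inverse by deforming back with $-\Theta$. Concretely, I would take the pair $\bigl(i_{\mathcal A_\Theta},\, i_{\mathbb R^n}\bigr)$ inside $M(\mathcal A\rtimes_\alpha\mathbb R^n)$, where $i_{\mathcal A_\Theta}(a_p)=i_{\mathcal A}(a_p)\,i_{\mathbb R^n}(-\Theta(p))$ on homogeneous $a_p$ of degree $p$, show it is covariant for the deformed action $\alpha^\Theta$, integrate it to a $*$-homomorphism $\Psi$, and then repeat the construction for the system $(\mathcal A_\Theta,\mathbb R^n,\alpha^\Theta)$ with the matrix $-\Theta$; Proposition \ref{double_deformation} identifies $(\mathcal A_\Theta)_{-\Theta}$ with $\mathcal A$ and thereby supplies a two-sided inverse of $\Psi$.

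First I would record the easy structural facts. The deformed action $\alpha^\Theta$ acts on the homogeneous component $(\mathcal A_\Theta)_p=\mathcal A_p$ by $\alpha^\Theta_x=\alpha_x$ (this is how $\alpha^\Theta$ is defined on $\mathcal A_\infty$), so $\alpha^\Theta$ has the same $\mathbb Z^n$-spectrum as $\alpha$ and is in particular periodic, and the crossed product $\mathcal A_\Theta\rtimes_{\alpha^\Theta}\mathbb R^n$ makes sense. By the preceding proposition $i_{\mathcal A_\Theta}$ is a $*$-homomorphism for the product $\cdot_\Theta$ into $M(\mathcal A\rtimes_\alpha\mathbb R^n)$, and since $\mathcal A$ is unital and $\mathbf 1_{\mathcal A_\Theta}$ is homogeneous of degree $0$ with $\Theta(0)=0$ we get $i_{\mathcal A_\Theta}(\mathbf 1)=i_{\mathcal A}(\mathbf 1)=\mathbf 1$, so the pair is non-degenerate. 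Covariance is the computation, using that $i_{\mathbb R^n}$ takes values in commuting unitaries and the relation $i_{\mathbb R^n}(x)i_{\mathcal A}(a)i_{\mathbb R^n}(x)^{-1}=i_{\mathcal A}(\alpha_x(a))$ of Remark \ref{rem2},
\begin{align*}
i_{\mathbb R^n}(x)\, i_{\mathcal A_\Theta}(a_p)\, i_{\mathbb R^n}(x)^{-1}
&= i_{\mathbb R^n}(x)\, i_{\mathcal A}(a_p)\, i_{\mathbb R^n}(x)^{-1}\, i_{\mathbb R^n}(-\Theta(p)) \\
&= i_{\mathcal A}(\alpha_x(a_p))\, i_{\mathbb R^n}(-\Theta(p)) = i_{\mathcal A_\Theta}(\alpha^\Theta_x(a_p)),
\end{align*}
since $\alpha_x(a_p)$ is again homogeneous of degree $p$. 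By the universal property of the full crossed product this covariant pair integrates to a unital $*$-homomorphism $\Psi\colon\mathcal A_\Theta\rtimes_{\alpha^\Theta}\mathbb R^n\to M(\mathcal A\rtimes_\alpha\mathbb R^n)$ with $\Psi\circ i_{\mathbb R^n}=i_{\mathbb R^n}$ and $\Psi(i_{\mathcal A_\Theta}(a_p))=i_{\mathcal A}(a_p)i_{\mathbb R^n}(-\Theta(p))$.

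For bijectivity I would apply exactly the same recipe to $(\mathcal A_\Theta,\mathbb R^n,\alpha^\Theta)$ and the skew-symmetric matrix $-\Theta$. Using the identification $(\mathcal A_\Theta)_{-\Theta}\cong\mathcal A$ of Proposition \ref{double_deformation} (which is the identity on homogeneous components), this produces a unital $*$-homomorphism $\Psi'\colon\mathcal A\rtimes_\alpha\mathbb R^n\to M(\mathcal A_\Theta\rtimes_{\alpha^\Theta}\mathbb R^n)$ with $\Psi'\circ i_{\mathbb R^n}=i_{\mathbb R^n}$ and $\Psi'(i_{\mathcal A}(a_p))=i_{\mathcal A_\Theta}(a_p)i_{\mathbb R^n}(\Theta(p))$. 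Extending $\Psi$ and $\Psi'$ strictly to the multiplier algebras and composing, one checks on the generators $i_{\mathcal A}(a_p)$, $i_{\mathcal A_\Theta}(a_p)$ and $i_{\mathbb R^n}(f)$ that $\Psi\circ\Psi'$ and $\Psi'\circ\Psi$ are the identity maps (e.g. $\Psi\Psi'(i_{\mathcal A}(a_p))=\Psi(i_{\mathcal A_\Theta}(a_p))\,i_{\mathbb R^n}(\Theta(p))=i_{\mathcal A}(a_p)i_{\mathbb R^n}(-\Theta(p))i_{\mathbb R^n}(\Theta(p))=i_{\mathcal A}(a_p)$). Hence $\Psi$ has a two-sided inverse; in particular it is injective and its range is the $C^*$-subalgebra generated by $i_{\mathcal A}(\mathcal A)$ and $i_{\mathbb R^n}(C^*(\mathbb R^n))$, which is all of $\mathcal A\rtimes_\alpha\mathbb R^n$, so $\Psi$ is the asserted isomorphism. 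Commutativity of the triangle is built in: under the identification $C^*(\mathbb R^n)\simeq C_0(\mathbb R^n)$ the downward maps are the canonical $i_{\mathbb R^n}$'s (equivalently, by \eqref{unital_embedding}, the integrated forms of the equivariant unital inclusions $\mathbb C\hookrightarrow\mathcal A_\Theta$, $\mathbb C\hookrightarrow\mathcal A$), and we already observed $\Psi\circ i_{\mathbb R^n}=i_{\mathbb R^n}$.

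The one genuinely non-formal ingredient is that $i_{\mathcal A_\Theta}$ is multiplicative for the deformed product $\cdot_\Theta$, i.e. the preceding proposition (Lemma 3.1.1 of \cite{Amandip}), which is where the oscillatory integral \eqref{rieffel_product} and the homogeneous formula \eqref{homogeneous_rieff_product} enter; everything after that is keeping the $\pm\Theta(p)$ twists consistent and invoking the universal property of crossed products together with Proposition \ref{double_deformation}. So the main obstacle is conceptual rather than computational: checking that the ``twisted'' inclusion $a_p\mapsto i_{\mathcal A}(a_p)i_{\mathbb R^n}(-\Theta(p))$ really converts $\cdot_\Theta$ into the crossed-product multiplication, after which assembling $\Psi$ and its inverse is routine.
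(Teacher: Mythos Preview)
Your argument is correct and follows essentially the same approach as the paper's sketch: define $\alpha^\Theta$ to agree with $\alpha$ on homogeneous components, verify that $(i_{\mathcal A_\Theta},i_{\mathbb R^n})$ is a covariant pair in $M(\mathcal A\rtimes_\alpha\mathbb R^n)$, and integrate to obtain $\Psi$. The paper stops there and defers the isomorphism claim to \cite{Kasprzak_rieffel,Amandip}, whereas you supply the missing step by building the inverse via the $-\Theta$ construction and Proposition~\ref{double_deformation}; this is a clean and natural way to close the argument, and nothing is wrong with it. One small notational point worth tightening: you use $i_{\mathcal A_\Theta}$ both for the twisted embedding into $M(\mathcal A\rtimes_\alpha\mathbb R^n)$ and, implicitly, for the canonical inclusion of $\mathcal A_\Theta$ into $M(\mathcal A_\Theta\rtimes_{\alpha^\Theta}\mathbb R^n)$; distinguishing the two (say $j_{\mathcal A_\Theta}$ for the latter) would make the composition checks $\Psi\Psi'=\mathrm{id}$ and $\Psi'\Psi=\mathrm{id}$ easier to parse.
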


Namely, $\alpha^{\Theta}(a)=\alpha(a)$ holds for any $a\in \mathcal A_p$, $p\in\mathbb{Z}^n$. Then it is easy to verify that $i_{\mathcal A_{\Theta}}\colon \mathcal A_{\Theta}\rightarrow M(\mathcal A\rtimes_{\alpha}\mathbb{R}^n)$ with $i_{\mathbb R^n}\colon\mathbb R^n\rightarrow M(\mathcal A\rtimes_\alpha \mathbb R^n)$ determine a covariant representation of $(\mathcal A_{\Theta},\mathbb{R}^n,\alpha^{\Theta})$ in $M(
\mathcal A\rtimes_{\alpha}\mathbb R^n)$. Hence, by the universal property of crossed product we get the corresponding homomorphism
\[
\Psi\colon \mathcal A_{\Theta}\rtimes_{\alpha^\Theta}\mathbb{R}^n\rightarrow M(\mathcal A\rtimes_\alpha \mathbb R^n).
\]
In fact, the range of $\Psi$ coincides with $\mathcal A\rtimes_\alpha \mathbb R^n$ and $\Psi$ defines an isomorphism
\begin{equation}\label{PsiHom}
    \Psi\colon \mathcal A_{\Theta}\rtimes_{\alpha^\Theta}\mathbb{R}^n\rightarrow \mathcal A\rtimes_\alpha \mathbb R^n,
\end{equation}
see \cite{Kasprzak_rieffel,Amandip} for more detailed considerations.

The following propositions shows that Rieffel's deformation inherits properties of the non-deformed counterpart.
\begin{proposition}[\cite{Kasprzak_rieffel}, Theorem 3.10]\label{Rieff_nuclear}
A $C^*$-algebra $\mathcal A_\Theta$ is nuclear if and only if $\mathcal A$ is nuclear.
\end{proposition}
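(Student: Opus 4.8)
The plan is to deduce this from the crossed-product isomorphism \eqref{PsiHom} of Proposition~\ref{prop4}, combined with the standard permanence of nuclearity under crossed products by amenable groups. First I would record the fact I want to use: since $\mathbb R^n$ is amenable, for any $C^*$-dynamical system $(\mathcal B,\mathbb R^n,\beta)$ the full and reduced crossed products coincide, and $\mathcal B\rtimes_\beta\mathbb R^n$ is nuclear if and only if $\mathcal B$ is nuclear. The forward implication is the classical theorem that a crossed product of a nuclear $C^*$-algebra by an amenable group is nuclear. For the reverse implication I would observe that $\mathcal B$ is the range of the canonical faithful conditional expectation $E\colon\mathcal B\rtimes_\beta\mathbb R^n\to\mathcal B$, and that nuclearity passes to the range of a conditional expectation: given a completely positive approximation $\mathcal B\rtimes_\beta\mathbb R^n\xrightarrow{\ \phi_i\ }M_{k_i}\xrightarrow{\ \psi_i\ }\mathcal B\rtimes_\beta\mathbb R^n$, the maps $\phi_i|_{\mathcal B}$ and $E\circ\psi_i$ witness the completely positive approximation property for $\mathcal B$.

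With this in hand, the proof is just a chain of equivalences. Proposition~\ref{prop4}, in the form \eqref{PsiHom}, gives an isomorphism $\mathcal A_\Theta\rtimes_{\alpha^\Theta}\mathbb R^n\cong\mathcal A\rtimes_\alpha\mathbb R^n$, so applying the permanence fact on both sides yields
\[
\mathcal A\text{ nuclear}\iff\mathcal A\rtimes_\alpha\mathbb R^n\text{ nuclear}\iff\mathcal A_\Theta\rtimes_{\alpha^\Theta}\mathbb R^n\text{ nuclear}\iff\mathcal A_\Theta\text{ nuclear},
\]
which is the assertion. If I wanted to invoke only the easy (forward) half of the permanence statement, I would instead prove only ``$\mathcal A$ nuclear $\Rightarrow\mathcal A_\Theta$ nuclear'' and then get the converse for free from Proposition~\ref{double_deformation}: the already-proved implication applied to $\mathcal A_\Theta$ and $-\Theta$ shows $(\mathcal A_\Theta)_{-\Theta}$ is nuclear whenever $\mathcal A_\Theta$ is, and $(\mathcal A_\Theta)_{-\Theta}\cong\mathcal A$.

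I do not expect a serious obstacle: the real content has been packaged into Proposition~\ref{prop4}. The only things to be careful about are bookkeeping --- that $\Psi$ in \eqref{PsiHom} is an honest isomorphism of the $C^*$-crossed products (rather than merely of some dense $*$-subalgebras), and that the acting group is $\mathbb R^n$, which is amenable, so that both directions of the crossed-product permanence property are genuinely available. Everything else is formal.
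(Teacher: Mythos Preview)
The paper does not give its own proof of this proposition: it is quoted verbatim as a result from \cite{Kasprzak_rieffel}, Theorem~3.10, with no accompanying argument. So there is nothing in the paper to compare against; what you have written is in fact a sketch of Kasprzak's proof, which indeed proceeds via the crossed-product isomorphism of Proposition~\ref{prop4}.

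Your argument is essentially correct, with one caveat. The step where you invoke a ``canonical faithful conditional expectation $E\colon\mathcal B\rtimes_\beta\mathbb R^n\to\mathcal B$'' is not available as stated: for a non-discrete group such as $\mathbb R^n$, the algebra $\mathcal B$ is not a $C^*$-subalgebra of $\mathcal B\rtimes_\beta\mathbb R^n$ (only of its multiplier algebra), and there is no conditional expectation onto it in the usual sense. The clean way to get the reverse implication is either Takai duality --- $(\mathcal B\rtimes_\beta\mathbb R^n)\rtimes_{\hat\beta}\widehat{\mathbb R^n}\cong\mathcal B\otimes\mathbb K$, so nuclearity of the crossed product forces nuclearity of $\mathcal B\otimes\mathbb K$ and hence of $\mathcal B$ --- or, more simply, exactly the alternative you already wrote down: prove only the forward implication and then use Proposition~\ref{double_deformation} to get the converse for free. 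With that adjustment your proof is complete.
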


\begin{proposition}[\cite{Kasprzak_rieffel}, Theorem 3.13]\label{Rieff_K_theory}
For a $C^*$-algebra $\mathcal A$ one has
\[
K_0(\mathcal A_\Theta) = K_0(\mathcal A)\quad \mbox{and}\quad K_1(\mathcal A_{\Theta})=K_1(\mathcal A).
\]
\end{proposition}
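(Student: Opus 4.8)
The plan is to deduce the statement from the crossed-product picture of Rieffel's deformation together with Connes' Thom isomorphism, so that no direct manipulation of the $K$-groups of $\mathcal A_\Theta$ is needed. The two ingredients are: (i) the isomorphism $\Psi\colon \mathcal A_\Theta\rtimes_{\alpha^\Theta}\mathbb R^n\to \mathcal A\rtimes_\alpha \mathbb R^n$ of \eqref{PsiHom} (Proposition~\ref{prop4}), which immediately gives $K_i(\mathcal A_\Theta\rtimes_{\alpha^\Theta}\mathbb R^n)\cong K_i(\mathcal A\rtimes_\alpha\mathbb R^n)$ for $i=0,1$; and (ii) Connes' Thom isomorphism, which for any $C^*$-algebra $B$ with a continuous action $\beta$ of $\mathbb R$ yields $K_i(B)\cong K_{i+1}(B\rtimes_\beta\mathbb R)$, and hence, iterating $n$ times along a decomposition $\mathbb R^n=\mathbb R\times\cdots\times\mathbb R$ and using that the crossed product by a product group is the iterated crossed product, $K_i(B)\cong K_{i+n}(B\rtimes_\beta\mathbb R^n)$ (indices mod $2$).

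First I would apply (ii) with $B=\mathcal A_\Theta$ and $\beta=\alpha^\Theta$ to get $K_i(\mathcal A_\Theta)\cong K_{i+n}(\mathcal A_\Theta\rtimes_{\alpha^\Theta}\mathbb R^n)$. Then I would transport along $\Psi$ using (i) to obtain $K_{i+n}(\mathcal A_\Theta\rtimes_{\alpha^\Theta}\mathbb R^n)\cong K_{i+n}(\mathcal A\rtimes_\alpha\mathbb R^n)$. Finally I would apply the inverse of (ii) with $B=\mathcal A$ and $\beta=\alpha$ to get $K_{i+n}(\mathcal A\rtimes_\alpha\mathbb R^n)\cong K_i(\mathcal A)$. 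Composing the three isomorphisms yields $K_i(\mathcal A_\Theta)\cong K_i(\mathcal A)$ for $i=0,1$, which is the claim. Note that the total degree shift is $+n$ followed by $-n$, so it cancels irrespective of the parity of $n$, and that naturality of the Connes--Thom maps is not needed, since we only require abstract isomorphisms (there is no homomorphism $\mathcal A\to\mathcal A_\Theta$ in play).

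The only point requiring attention is the scope in which the crossed-product isomorphism $\Psi$ is available: in the excerpt, \eqref{PsiHom} is recorded for a periodic action on a \emph{unital} algebra, which already covers every application in the present paper. For a general $C^*$-algebra one either invokes the general form of Rieffel's theorem $\mathcal A_\Theta\rtimes_{\alpha^\Theta}\mathbb R^n\cong\mathcal A\rtimes_\alpha\mathbb R^n$, or reduces to the unital case by passing to the unitization $\widetilde{\mathcal A}$ (the action and $\Theta$ extend, the adjoined unit is a degree-zero fixed point so $\widetilde{\mathcal A}_\Theta=\widetilde{\mathcal A_\Theta}$, and $\mathcal A_\Theta\trianglelefteq\widetilde{\mathcal A}_\Theta$ with quotient $\mathbb C$, so the split six-term sequence reduces the computation to $\widetilde{\mathcal A}$). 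Beyond that, the proof is formal bookkeeping of the index shifts; the substantive input is entirely contained in Proposition~\ref{prop4} and the Connes--Thom isomorphism.
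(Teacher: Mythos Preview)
Your argument is correct and is in fact the standard proof: pass to crossed products via the isomorphism $\Psi$ of Proposition~\ref{prop4} and apply Connes' Thom isomorphism on both sides so that the degree shifts cancel. Note, however, that the paper does not actually give its own proof of this proposition---it is simply quoted from \cite{Kasprzak_rieffel}, Theorem~3.13, as a known result---so there is no ``paper's proof'' to compare against; what you have written is essentially the argument one finds in the cited reference.
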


\subsubsection{Rieffel's deformation of a tensor product }
In this part we apply Rieffel's deformation procedure to a tensor product of two nuclear unital $C^*$-algebras equipped with an action of $\mathbb{T}$.

Let $\mathcal A$, $\mathcal B$ be $C^*$-algebras with actions $\alpha$ and $\beta$ of $\mathbb{T}$. Then there is a natural action $\alpha \otimes \beta$ of $\mathbb{T}^2$ on $\mathcal A \otimes \mathcal B$ defined as
\[ (\alpha \otimes \beta)_{\varphi_1, \varphi_2}(a \otimes b) = \alpha_{\varphi_1}(a) \otimes \beta_{\varphi_2}(b). \]
Consider the induced gradings on $\mathcal A$ and $\mathcal B$:
\[ \mathcal A = \bigoplus_{p_1 \in \mathbb{Z}} \mathcal A_{p_1}, \quad \mathcal B = \bigoplus_{p_2 \in \mathbb{Z}} \mathcal B_{p_2}. \]
Then the corresponding grading on $\mathcal A \otimes \mathcal B$ is
\[ \mathcal A \otimes \mathcal B := \bigoplus_{(p_1, p_2)^t \in \mathbb{Z}^2} \mathcal A_{p_1} \otimes \mathcal B_{p_2}. \]
In particular, $a \otimes \mathbf 1 \in (\mathcal A \otimes \mathcal B)_{(p_1, 0)^t}$ and $\mathbf 1 \otimes b \in (\mathcal A \otimes  \mathcal B)_{(0, p_2)^t}$, where $a \in \mathcal A_{p_1}$ and $b \in \mathcal  B_{p_2}$.

Given $q = e^{2 \pi i \varphi_0}$, consider
\begin{equation}\label{theta_q}
\Theta_q = \left( \begin{array}{cc}
    0 & \frac{\varphi_0}{2}  \\
    -\frac{\varphi_0}{2} & 0
\end{array} \right).
\end{equation}

 One can see that 
 the Rieffel deformation $(\mathcal A \otimes \mathcal B)_{\Theta_q}$ is in fact the universal $C^*$-algebra $\mathcal A\otimes_{\Theta_q} \mathcal B$, generated by all homogeneous elements
 \[
 a \in \mathcal A_{p_1},\ b\in \mathcal B_{p_2}, \quad p_1, p_2 \in \mathbb Z,
 \]
 subject to the relations
 \begin{equation}\label{r_rel}
 ba = e^{2 \pi i \varphi_0 p_1 p_2} ab.
 \end{equation}
 Below we present more precise formulation and elementary prove of this result. The discussion on universality properties for much more general deformation of tensor product can be found in \cite{mey_wor}.
 
First, construct $*$-algebra $\mathcal A\hat{\otimes}_{\Theta_q}\mathcal B$,
\[
\mathcal A\hat{\otimes}_{\Theta_q}\mathcal B=\mathbb C \langle\, 
a\in\mathcal A_{p_1},\ b\in\mathcal B_{p_2}\mid ba = e^{2 \pi i \varphi_0 p_1 p_2} ab,\ p_1,p_2\in\mathbb Z \,
\rangle
\]
It is easy to see that correspondence $\tilde{\eta}$, determined by
\[
\tilde{\eta}(a)=a\otimes\mathbf 1,\quad 
\tilde{\eta}(b)=\mathbf 1\otimes b,\quad a\in\mathcal A_{p_1},\ b\in\mathcal B_{p_2},\ p_1,p_2\in\mathbb Z,
\]
extends to  a $*$-algebra homomorphism $\tilde{\eta}\colon\mathcal A\hat{\otimes}_{\Theta_q}\mathcal B\rightarrow (\mathcal A\otimes\mathcal B)_{\Theta_q}$. Indeed, let $e_1=(1,0)^t$, $e_2=(0,1)^t$. Then for $a\in\mathcal A_{p_1}$, $b\in\mathcal B_{p_2}$ one has
\[
\tilde{\eta}(a)\cdot_{\Theta_q}\tilde{\eta}(b)=(a\otimes\mathbf 1)\cdot_{\Theta_q}(\mathbf 1\otimes b)=e^{2\pi i\varphi_0 \langle p_1\Theta_q\,  e_1\, ,\, p_2 e_2\rangle} a\otimes b=e^{-\pi i\varphi_0 p_1 p_2} a\otimes b
\]
and
\[
\tilde{\eta}(b)\cdot_{\Theta_q}\tilde{\eta}(a)=(\mathbf 1\otimes b)\cdot_{\Theta_q}(a\otimes\mathbf 1)=e^{2\pi i\varphi_0 \langle p_2\Theta_q\,  e_2\, ,\, p_1 e_1\rangle} a\otimes b=e^{\pi i\varphi_0 p_1 p_2} a\otimes b.
\]
Hence
\[
\tilde{\eta}(b)\cdot_{\Theta_q}\tilde{\eta}(a)=e^{2\pi i\varphi_o p_1p_2}\tilde{\eta}(a)\cdot_{\Theta_q}\tilde{\eta}(b).
\]
In particular
\[
\tilde{\eta} (ab)=e^{-\pi\varphi_0 p_1p_2}a\otimes b,\quad a\in\mathcal A_{p_1},\ b\in\mathcal B_{p_2},\ p_1,p_2\in\mathbb Z.
\]

Since $(\mathcal A\otimes\mathcal B)_{\Theta_q}$ is a $C^*$-algebra, the set $\mathsf{Rep}\, \mathcal A\hat{\otimes}_{\Theta_q}\mathcal B $ is non-empty. Further, take any $\pi\in\mathsf{Rep}\, \mathcal A\hat{\otimes}_{\Theta_q}\mathcal B$. Consider its restriction, $\pi_{|\mathcal A}$ to $\mathcal A$. Then for any $a\in\mathcal A_{p_1}$ one has
\[
\|\pi(a)\|=\|\pi_{|\mathcal A} (a)\|\le \| a\|_{\mathcal A}.
\]
Analogously, $\|\pi(b)\|\le \| b\|_{\mathcal B}$, $b\in B_{p_2}$. Hence for any $x\in \mathcal A\hat{\otimes}_{\Theta_q}\mathcal B$ one has 
\[
\sup_{\pi\in\mathsf{Rep}\, \mathcal A\hat{\otimes}_{\Theta_q}\mathcal B} \|\pi(x)\|<\infty
\]
and the universal $C^*$-algebra $\mathcal A \otimes_{\Theta_q}\mathcal B:=C^* (\mathcal A\hat{\otimes}_{\Theta_q}\mathcal B)$ exists.

Notice that one has the natural $\mathbb T^2$ action $\gamma$ on $\mathcal A\otimes_{\Theta_q}\mathcal B$ determined by
\[
\gamma_{\varphi_1,\varphi_2} (a\cdot b)=e^{2\pi i (p_1\varphi_1+p_2\varphi_2)} a\cdot b,\quad a\in\mathcal A_{p_1},\ b\in\mathcal{B}_{p_2}, p_1,p_2\in\mathbb Z.
\]

Proposition \ref{double_deformation} implies the following result.

\begin{theorem}\label{Rieffel_universal_iso}
One has a $\mathbb{T}^2$-equivariant $*$-isomorphism
\[ 
\mathcal A\otimes_{\Theta_q} \mathcal B
 \simeq (\mathcal  A \otimes \mathcal B)_{\Theta_q}. 
 \]
\end{theorem}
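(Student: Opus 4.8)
The plan is to promote the $*$-homomorphism $\tilde\eta\colon\mathcal A\hat{\otimes}_{\Theta_q}\mathcal B\to(\mathcal A\otimes\mathcal B)_{\Theta_q}$ constructed above to a $\mathbb T^2$-equivariant $*$-isomorphism $\eta\colon\mathcal A\otimes_{\Theta_q}\mathcal B\to(\mathcal A\otimes\mathcal B)_{\Theta_q}$. Since $(\mathcal A\otimes\mathcal B)_{\Theta_q}$ is a $C^*$-algebra, the universal property of $\mathcal A\otimes_{\Theta_q}\mathcal B=C^*(\mathcal A\hat{\otimes}_{\Theta_q}\mathcal B)$ extends $\tilde\eta$ to such an $\eta$. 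It is surjective, because $\tilde\eta(ab)=e^{-\pi i\varphi_0 p_1p_2}\,a\otimes b$ for homogeneous $a\in\mathcal A_{p_1}$, $b\in\mathcal B_{p_2}$, so the range of $\eta$ already contains a dense subset of $(\mathcal A\otimes\mathcal B)_{\Theta_q}$; and it is equivariant for the grading action $\gamma$ on $\mathcal A\otimes_{\Theta_q}\mathcal B$ and the degreewise action $(\alpha\otimes\beta)^{\Theta_q}$ on $(\mathcal A\otimes\mathcal B)_{\Theta_q}$, since it preserves the $\mathbb Z^2$-gradings. Composing with $\eta$ also identifies the canonical maps $j_{\mathcal A}\colon\mathcal A\to\mathcal A\otimes_{\Theta_q}\mathcal B$ and $j_{\mathcal B}\colon\mathcal B\to\mathcal A\otimes_{\Theta_q}\mathcal B$ with the isometric embeddings $a\mapsto a\otimes\mathbf 1$ and $b\mapsto\mathbf 1\otimes b$, so in particular $j_{\mathcal A}$ and $j_{\mathcal B}$ are injective. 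The whole content of the theorem is then the injectivity of $\eta$, which I would obtain by exhibiting an explicit inverse.

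To this end I would set $\mathcal C:=\mathcal A\otimes_{\Theta_q}\mathcal B$, equipped with the grading action $\gamma$, and pass to the Rieffel deformation $\mathcal C_{-\Theta_q}$. For $a\in\mathcal A_{p_1}$ the element $j_{\mathcal A}(a)$ is homogeneous of degree $(p_1,0)$, hence lies in the smooth subalgebra of $\mathcal C_{-\Theta_q}$, and since $\langle\Theta_q(p_1,0),(p_1',0)\rangle=0$ formula \eqref{homogeneous_rieff_product} shows that the $\cdot_{-\Theta_q}$-products among the $j_{\mathcal A}(a)$ coincide with the products in $\mathcal C$; as $\Theta_q$ annihilates the grading of $\mathcal A$ one has $\mathcal A_{-\Theta_q}\cong\mathcal A$, so deforming the equivariant injection $j_{\mathcal A}$ — using that Rieffel deformation carries equivariant injective $*$-homomorphisms to injective ones — yields a copy of $\mathcal A$ inside $\mathcal C_{-\Theta_q}$, and similarly a copy of $\mathcal B$. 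The key computation is that, for $a\in\mathcal A_{p_1}$ and $b\in\mathcal B_{p_2}$, the phase $e^{2\pi i\varphi_0 p_1p_2}$ in the defining relation \eqref{r_rel} of $\mathcal C$ cancels the Rieffel twist of \eqref{homogeneous_rieff_product}, so that
\[
j_{\mathcal A}(a)\cdot_{-\Theta_q}j_{\mathcal B}(b)=e^{\pi i\varphi_0 p_1p_2}\,j_{\mathcal A}(a)\,j_{\mathcal B}(b)=j_{\mathcal B}(b)\cdot_{-\Theta_q}j_{\mathcal A}(a),
\]
i.e.\ the two copies commute in $\mathcal C_{-\Theta_q}$. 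Invoking nuclearity of $\mathcal A$ and $\mathcal B$, they assemble into a surjective, $\mathbb T^2$-equivariant $*$-homomorphism $\rho\colon\mathcal A\otimes\mathcal B\to\mathcal C_{-\Theta_q}$ with $\rho(a\otimes\mathbf 1)=j_{\mathcal A}(a)$ and $\rho(\mathbf 1\otimes b)=j_{\mathcal B}(b)$.

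Finally I would deform $\rho$ back by $\Theta_q$, getting $\rho_{\Theta_q}\colon(\mathcal A\otimes\mathcal B)_{\Theta_q}\to(\mathcal C_{-\Theta_q})_{\Theta_q}$, and use Proposition~\ref{double_deformation} to identify $(\mathcal C_{-\Theta_q})_{\Theta_q}$ with $\mathcal C$ along the identity map, equivariantly. Because a deformed homomorphism acts as the original one on homogeneous elements, this gives $\rho_{\Theta_q}(a\otimes\mathbf 1)=j_{\mathcal A}(a)$ and $\rho_{\Theta_q}(\mathbf 1\otimes b)=j_{\mathcal B}(b)$ inside $\mathcal C$. Since $\mathcal C$ is generated as a $C^*$-algebra by the $j_{\mathcal A}(a)$ and the $j_{\mathcal B}(b)$, and $(\mathcal A\otimes\mathcal B)_{\Theta_q}$ by the $a\otimes\mathbf 1$ and the $\mathbf 1\otimes b$, both composites $\rho_{\Theta_q}\circ\eta$ and $\eta\circ\rho_{\Theta_q}$ fix a generating set and hence equal the identity. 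Thus $\eta$ is a $*$-isomorphism, and being equivariant it is the asserted $\mathbb T^2$-equivariant $*$-isomorphism.

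The hard part is the middle step: one must check that the two distinguished copies of $\mathcal A$ and $\mathcal B$ really commute inside $\mathcal C_{-\Theta_q}$ — this hinges on the exact cancellation of the phase in \eqref{r_rel} against the Rieffel twist \eqref{homogeneous_rieff_product}, so the sign conventions (the direction of the grading and the sign in $\Theta_q$) must be tracked carefully — and then to use nuclearity to turn the resulting pair of commuting $*$-homomorphisms into a single $*$-homomorphism out of the $C^*$-tensor product $\mathcal A\otimes\mathcal B$. Everything else is bookkeeping with gradings, together with the functoriality and the double-deformation property of Rieffel's construction recalled in the preceding subsections.
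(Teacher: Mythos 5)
Your proposal is correct and follows essentially the same route as the paper: the surjection $\eta$ obtained from the universal property, the map $\rho$ (the paper's $\psi$) out of $\mathcal A\otimes\mathcal B$ into the $-\Theta_q$-deformation built from the cancellation of the phase in \eqref{r_rel} against \eqref{homogeneous_rieff_product}, and Proposition~\ref{double_deformation} to untwist. The only cosmetic difference is that you deform $\rho$ back to produce an explicit two-sided inverse of $\eta$, whereas the paper deforms $\eta$ by $-\Theta_q$ and reads off its injectivity from the surjectivity of $\psi$ in a commutative triangle.
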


\begin{proof}
The following argument is due to a private communication by P.~Kaszprak.

By the universal property we extend the homomorphism
\[
\tilde{\eta}\colon\mathcal A\hat{\otimes}_{\Theta_q}\mathcal B\rightarrow (\mathcal A\otimes\mathcal B)_{\Theta_q}
\]
to surjective homomorphism
\[
\eta\colon\mathcal A\otimes_{\Theta_q}\mathcal B\rightarrow (\mathcal A\otimes\mathcal B)_{\Theta_q}.
\]
Evidently, $\eta$ is equivariant with respect to $\mathbb T^2$ actions on $\mathcal A\otimes_{\Theta_q}\mathcal B$ and $(\mathcal A\otimes\mathcal B)_{\Theta_q}$ described above.

To show the injectivity of $\eta$, we construct homomorphism
\[ 
\psi \colon \mathcal A \otimes \mathcal B \to (\mathcal A\otimes_{\Theta_q} \mathcal B)_{-\Theta_q}, 
\]
defined as follows. For homogeneous $a \in \mathcal A_{p_1}$, $b \in \mathcal B_{p_2}$, put
\[
\psi(a\otimes \mathbf 1) = a \in (\mathcal A\otimes_{\Theta_q} \mathcal B)_{-\Theta_q}, \quad \psi(\mathbf 1\otimes b) = b \in (\mathcal A\otimes_{\Theta_q} \mathcal B)_{-\Theta_q}.
\]
If $a \in \mathcal A_{p_1}$ and $b \in \mathcal B_{p_2}$, then
\begin{equation*}
    \psi( a \otimes \mathbf 1) \cdot_{-\Theta_q} \psi(\mathbf 1\otimes b ) = e^{ \pi i \varphi_0p_1p_2} ab  =   e^{-\pi i \varphi_0 p_1p_2} ba = \psi(\mathbf 1\otimes b ) \cdot_{-\Theta_q} \psi(a \otimes \mathbf 1 ).
\end{equation*}
Here, we use the relation $ab = e^{-2\pi i \varphi_0 p_1p_2}ba$ which holds in $\mathcal A\otimes_{\Theta_q} \mathcal B$.

Due to the universal property of tensor product, $\psi$ extends to a surjective homomorphism from $\mathcal A \otimes \mathcal B$ to $(\mathcal A\otimes_{\Theta_q} \mathcal B)_{-\Theta_q}$. 

Recall that the equivariant homomorphism
\[
\eta\colon \mathcal A\otimes_{\Theta_q} \mathcal B \to (\mathcal  A \otimes \mathcal B)_{\Theta_q}
\] 
is injective if and only if the induced homomorphism
\[
\eta_{-\Theta_q}\colon (\mathcal A\otimes_{\Theta_q} \mathcal B)_{-\Theta_q} \to ((\mathcal  A \otimes \mathcal B)_{\Theta_q})_{-\Theta_q}
\]
determined by
\[
\eta_{-\Theta_q}(a)=a\otimes\mathbf 1,\quad \eta_{-\Theta_q}(b)=\mathbf 1\otimes b,\quad a\in\mathcal A_{p_1},\ b\in\mathcal B_{p_2},\ p_1,p_2\in\mathbb Z_{+},
\]
is injective. Recall also that, due to functorial properties of Rieffel deformation the identity mapping 
\[
a\otimes b\mapsto a\otimes b,\quad a\in\mathcal A_{p_1},\ b\in\mathcal B_{p_2},\quad
 p_1,p_2\in\mathbb Z,
\]
extends to isomorphism $((\mathcal  A \otimes \mathcal B)_{\Theta_q})_{-\Theta_q}\simeq \mathcal A\otimes\mathcal B$.  

So, one has the following commutative diagram
\[
\begin{tikzcd}
(\mathcal A\otimes_{\Theta_q} \mathcal B )_{-\Theta_q} 
\arrow[r, "\eta_{-\Theta_q}"] & ((\mathcal A \otimes \mathcal B)_{\Theta_q})_{-\Theta_q} \\
\mathcal A \otimes \mathcal B \arrow[u, "\psi"] \arrow[ur, equal]
\end{tikzcd}
\]
Since $\psi$ is surjective, $\eta_{-\Theta_q}$ is injective.
\end{proof}

\subsection{Nuclearity of $\mathcal{E}_{n,m}^q$}

The nuclearity of $\mathcal O_n^{(0)}\otimes\mathcal O_m^{(0)}$ and Proposition \ref{Rieff_nuclear} immediately imply the following

\begin{corollary}\label{EIsNuclear}
 The $C^*$-algebra $\mathcal{E}_{n,m}^q$ is nuclear for any $q\in\mathbb C$, $|q|=1$.
\end{corollary}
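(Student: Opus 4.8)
The plan is to identify $\mathcal{E}_{n,m}^q$ with $|q|=1$ as a Rieffel deformation of the tensor product $\mathcal{O}_n^{(0)}\otimes\mathcal{O}_m^{(0)}$ and then invoke Proposition~\ref{Rieff_nuclear} together with the known nuclearity of Cuntz--Toeplitz algebras. Concretely, write $q=e^{2\pi i\varphi_0}$ and recall the commutation relations \eqref{baseqrel} which, for $|q|=1$, also force $t_r s_j = q\, s_j t_r$. First I would equip $\mathcal{O}_n^{(0)} = C^*(s_1,\dots,s_n)$ and $\mathcal{O}_m^{(0)} = C^*(t_1,\dots,t_m)$ with the gauge actions of $\mathbb{T}$ (so that each $s_j$ has degree $1$ and each $t_r$ has degree $1$), inducing the $\mathbb{Z}$-gradings on each factor and hence the $\mathbb{Z}^2$-grading on $\mathcal{O}_n^{(0)}\otimes\mathcal{O}_m^{(0)}$ as in the preceding subsection. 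Taking $\Theta = \Theta_q$ from \eqref{theta_q}, Theorem~\ref{Rieffel_universal_iso} says that $(\mathcal{O}_n^{(0)}\otimes\mathcal{O}_m^{(0)})_{\Theta_q}$ is isomorphic to the universal $C^*$-algebra $\mathcal{O}_n^{(0)}\otimes_{\Theta_q}\mathcal{O}_m^{(0)}$ generated by the homogeneous elements of the two factors subject to the relations \eqref{r_rel}.

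The core step is then to check that $\mathcal{O}_n^{(0)}\otimes_{\Theta_q}\mathcal{O}_m^{(0)} \cong \mathcal{E}_{n,m}^q$. In one direction, the generators $s_j\otimes\mathbf 1$, $\mathbf 1\otimes t_r$ of the deformed tensor product are isometries satisfying $s_i^*s_j = \delta_{ij}\mathbf 1$, $t_r^*t_s = \delta_{rs}\mathbf 1$; since $s_j$ has degree $1$ and $t_r$ has degree $1$, relation \eqref{r_rel} gives $t_r s_j = e^{2\pi i\varphi_0} s_j t_r = q\, s_j t_r$, and taking adjoints/multiplying appropriately yields $s_j^* t_r = q\, t_r s_j^*$ — exactly the defining relations \eqref{baseqrel} of $\mathcal{E}_{n,m}^q$. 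Hence the universal property of $\mathcal{E}_{n,m}^q$ produces a surjection $\mathcal{E}_{n,m}^q \to \mathcal{O}_n^{(0)}\otimes_{\Theta_q}\mathcal{O}_m^{(0)}$. For the reverse map, note that $\mathcal{E}_{n,m}^q$ carries a $\mathbb{T}^2$ gauge action scaling the $s_j$ by the first coordinate and the $t_r$ by the second; this induces a $\mathbb{Z}^2$-grading, the degree-$(p_1,0)$ and degree-$(0,p_2)$ homogeneous components give copies of $\mathcal{O}_n^{(0)}$ and $\mathcal{O}_m^{(0)}$ inside $\mathcal{E}_{n,m}^q$ (their generation of Cuntz--Toeplitz subalgebras being standard), and the relations of $\mathcal{E}_{n,m}^q$ imply the twisted commutation \eqref{r_rel} between homogeneous elements of the two copies, so the universal property of $\mathcal{O}_n^{(0)}\otimes_{\Theta_q}\mathcal{O}_m^{(0)}$ yields the inverse homomorphism. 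Alternatively, one can shortcut this by producing a covariant representation of $(\mathcal{O}_n^{(0)}\otimes\mathcal{O}_m^{(0)},\mathbb{T}^2,\text{gauge})$ on the Fock space and applying Proposition~\ref{theta_rep}, comparing with the faithful Fock representation of $\mathcal{E}_{n,m}^q$.

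Once the isomorphism $\mathcal{E}_{n,m}^q \cong (\mathcal{O}_n^{(0)}\otimes\mathcal{O}_m^{(0)})_{\Theta_q}$ is in hand, nuclearity follows immediately: $\mathcal{O}_n^{(0)}$ and $\mathcal{O}_m^{(0)}$ are nuclear, hence so is their minimal (= maximal) tensor product, and Proposition~\ref{Rieff_nuclear} states that a Rieffel deformation is nuclear if and only if the undeformed algebra is. The main obstacle I anticipate is the careful bookkeeping in the identification of the two universal objects — in particular verifying that the homogeneous components of $\mathcal{E}_{n,m}^q$ for the gauge $\mathbb{T}^2$-action really do generate honest Cuntz--Toeplitz subalgebras (no extra collapse of relations) and that the $\mathbb{Z}^2$-grading is compatible on the nose; this is essentially the content already packaged in Theorem~\ref{Rieffel_universal_iso}, so the remaining work is to match gradings and degrees correctly. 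Everything else is formal, which is why the statement is phrased as an immediate corollary.
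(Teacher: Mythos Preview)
Your proposal is correct and follows essentially the same route as the paper: identify $\mathcal{E}_{n,m}^q$ with $(\mathcal{O}_n^{(0)}\otimes\mathcal{O}_m^{(0)})_{\Theta_q}$ via Theorem~\ref{Rieffel_universal_iso} (specialized to the gauge $\mathbb{T}$-actions on the Cuntz--Toeplitz factors), then apply Proposition~\ref{Rieff_nuclear} together with the nuclearity of $\mathcal{O}_n^{(0)}\otimes\mathcal{O}_m^{(0)}$. The paper treats the identification as immediate from Theorem~\ref{Rieffel_universal_iso} and states the corollary as a one-line consequence; your expanded verification of the matching of generators and gradings is exactly the bookkeeping implicit in that step.
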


The nuclearity of $\mathcal E_{n,m}^q$ can also be shown using  more explicit arguments. One can use the standard trick of untwisting the $q$-deformation in the crossed product, which clarifies  informally the nature of isomorphism (\ref{PsiHom}). Namely, for $q = e^{2 \pi i \varphi_0}$
consider the action $\alpha_q$ of $\mathbb{Z}$ on $\mathcal E_{n,m}^q$ defined on the generators as
\[
\alpha_q^k(s_j)=e^{\pi i k\varphi_0} s_j,\quad \alpha_q^k(t_r)=e^{-\pi i k\varphi_0} t_r,\quad
j=1,\dots,n,\ r=1,\dots,m,\ k\in\mathbb Z.
\]
Denote by the same symbol the similar action on
$\mathcal E_{n,m}^1\simeq \mathcal{O}_n^{(0)}\otimes\mathcal{O}_m^{(0)}$.
Here we denote by $\widetilde{s}_j$ and $\widetilde{t}_r$ the generators of $\mathcal E_{n,m}^1$.

\begin{proposition}
 For any  $\varphi_0\in [0,1)$, one has an isomorphism
 $\mathcal E_{n,m}^q\rtimes_{\alpha_q} \mathbb Z\simeq \mathcal E_{n,m}^1\rtimes_{\alpha_q} \mathbb Z$.
\end{proposition}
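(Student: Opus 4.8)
The plan is to build an explicit isomorphism between the two crossed products by conjugating the generators with a suitable unitary multiplier coming from the dual $\mathbb{Z}$-action. Concretely, inside $M(\mathcal E_{n,m}^q \rtimes_{\alpha_q}\mathbb Z)$ we have the canonical unitary $u := i_{\mathbb Z}(1)$ implementing $\alpha_q$, i.e. $u\, i_{\mathcal A}(a)\, u^* = i_{\mathcal A}(\alpha_q(a))$, and similarly a unitary $\widetilde u$ in $M(\mathcal E_{n,m}^1\rtimes_{\alpha_q}\mathbb Z)$. Since $\mathbb Z$ acts periodically (it factors through $\mathbb T$ via $k\mapsto e^{\pi i k\varphi_0}$ on the $s_j$ and $k\mapsto e^{-\pi ik\varphi_0}$ on the $t_r$), one can adjoin a square-root-type unitary: the element $z:=u$ has the property that $z$ itself, suitably rescaled, intertwines the gradings. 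The key idea is that the map sending $\widetilde s_j \mapsto s_j$ (unchanged, as they live in the same homogeneous component) and $\widetilde t_r \mapsto z\, t_r$ — or more symmetrically $\widetilde s_j \mapsto z^{1/2} s_j$, $\widetilde t_r\mapsto z^{-1/2} t_r$ after passing to a stabilised picture — converts the relation $s_j^* t_r = q\, t_r s_j^*$ into $\widetilde s_j^*\widetilde t_r = \widetilde t_r\widetilde s_j^*$, absorbing the phase $q$ into the crossed-product unitary. One must check this candidate extends to a $*$-homomorphism of the full crossed products and that the dual-action unitary is carried to itself, so that the inverse is constructed the same way with $q$ replaced by $\bar q$.

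First I would set up notation carefully: write $q = e^{2\pi i\varphi_0}$, let $v = i_{\mathbb Z}(1)\in M(\mathcal E_{n,m}^q\rtimes_{\alpha_q}\mathbb Z)$ and $\widetilde v = i_{\mathbb Z}(1)\in M(\mathcal E_{n,m}^1\rtimes_{\alpha_q}\mathbb Z)$, so that $v s_j v^* = e^{\pi i\varphi_0} s_j$, $v t_r v^* = e^{-\pi i\varphi_0} t_r$ and likewise for $\widetilde v,\widetilde s_j,\widetilde t_r$. Next I would define the proposed generators of a covariant representation of $(\mathcal E_{n,m}^q,\mathbb Z,\alpha_q)$ inside $M(\mathcal E_{n,m}^1\rtimes_{\alpha_q}\mathbb Z)$ by $S_j := \widetilde s_j$, $T_r := \widetilde v\,\widetilde t_r$ (or a symmetric variant), together with the unitary $V := \widetilde v$ implementing the action. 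I would then verify directly that the $S_j$ are isometries with mutually orthogonal ranges, the $T_r$ likewise, and that $S_j^* T_r = \widetilde s_j^*\widetilde v\,\widetilde t_r = \widetilde v\, e^{?}\widetilde s_j^*\widetilde t_r = \dots = q\, T_r S_j^*$ once the commutation of $\widetilde v$ past $\widetilde s_j^*$ (picking up $e^{-\pi i\varphi_0}$) and the relation $\widetilde s_j^*\widetilde t_r = \widetilde t_r\widetilde s_j^*$ in $\mathcal E_{n,m}^1$ are combined; the bookkeeping of the phases $e^{\pm\pi i\varphi_0}$ is exactly what produces the single factor $q$. By the universal property of $\mathcal E_{n,m}^q$ this yields a $*$-homomorphism $\mathcal E_{n,m}^q\to M(\mathcal E_{n,m}^1\rtimes_{\alpha_q}\mathbb Z)$, and since it is covariant with $V=\widetilde v$, the universal property of the crossed product gives $\Phi\colon \mathcal E_{n,m}^q\rtimes_{\alpha_q}\mathbb Z\to \mathcal E_{n,m}^1\rtimes_{\alpha_q}\mathbb Z$.

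To finish I would construct the inverse $\Psi$ the same way: inside $M(\mathcal E_{n,m}^q\rtimes_{\alpha_q}\mathbb Z)$ put $\widetilde S_j := s_j$, $\widetilde T_r := v^* t_r$, which now satisfy $\widetilde S_j^*\widetilde T_r = \widetilde T_r\widetilde S_j^*$ because the phase $q$ from $s_j^* t_r = q t_r s_j^*$ is cancelled by conjugating $v^*$ past $s_j^*$; together with the implementing unitary $v$ this gives $\Psi\colon \mathcal E_{n,m}^1\rtimes_{\alpha_q}\mathbb Z\to \mathcal E_{n,m}^q\rtimes_{\alpha_q}\mathbb Z$. Checking $\Phi\circ\Psi$ and $\Psi\circ\Phi$ act as the identity on the generators $s_j, t_r$ (resp. $\widetilde s_j,\widetilde t_r$) and on the implementing unitaries is then a short computation: $\Psi\Phi(s_j)=\Psi(\widetilde s_j)=s_j$ and $\Psi\Phi(t_r)=\Psi(\widetilde v\widetilde t_r)=v\cdot v^* t_r = t_r$, and symmetrically. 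The main obstacle I anticipate is purely a matter of care rather than depth: getting the half-angle phases $e^{\pm\pi i\varphi_0}$ to bookkeep correctly so that the twisting unitary is genuinely available in the multiplier algebra (this is where periodicity of $\alpha_q$ is essential — without it one could not split $q$ as $e^{\pi i\varphi_0}\cdot e^{\pi i\varphi_0}$ compatibly with an honest $\mathbb Z$-action) and confirming that the homomorphisms built via the two universal properties (of $\mathcal E_{n,m}^q$ and of the crossed product) really land in the full crossed product and not merely its multiplier algebra, for which one invokes that $\mathcal E_{n,m}^q$ is unital so that, as in Remark \ref{rem2} and \eqref{unital_embedding}, the range is contained in $\mathcal E_{n,m}^1\rtimes_{\alpha_q}\mathbb Z$ itself.
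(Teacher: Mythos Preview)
Your overall strategy is exactly the paper's: build mutually inverse covariant pairs by twisting the generators with the canonical unitary of the crossed product, and invoke the two universal properties. However, the specific formula you propose does not close. With $S_j:=\widetilde s_j$ and $T_r:=\widetilde v\,\widetilde t_r$, the relation $\widetilde v\,\widetilde s_j^*\widetilde v^{*}=e^{-\pi i\varphi_0}\widetilde s_j^*$ gives
\[
S_j^*T_r=\widetilde s_j^*\,\widetilde v\,\widetilde t_r
= e^{\pi i\varphi_0}\,\widetilde v\,\widetilde s_j^*\,\widetilde t_r
= e^{\pi i\varphi_0}\,\widetilde v\,\widetilde t_r\,\widetilde s_j^*
= e^{\pi i\varphi_0}\,T_rS_j^*,
\]
so you pick up only \emph{half} the required phase $q=e^{2\pi i\varphi_0}$; your ``$e^{?}$'' is $e^{\pi i\varphi_0}$, not $q$. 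The symmetric variant with $z^{1/2}$ you allude to is unavailable: the $\mathbb Z$-crossed product contains no square root of the implementing unitary, and there is no need to pass to any stabilised picture or to invoke periodicity of $\alpha_q$ (which is in any case a genuine $\mathbb Z$-action, not periodic for irrational $\varphi_0$).

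The fix is simple and is precisely what the paper does: twist \emph{both} families. Set $\widehat s_j:=\widetilde s_j\,u$ and $\widehat t_r:=\widetilde t_r\,u$ inside $\mathcal E_{n,m}^1\rtimes_{\alpha_q}\mathbb Z$. Then each of $\widehat s_j^*$ and $\widehat t_r$ contributes a factor $e^{\pi i\varphi_0}$ when commuted past the other's copy of $u$, and the two half-phases combine to give $\widehat s_j^*\widehat t_r=q\,\widehat t_r\widehat s_j^*$. (Equivalently, in your asymmetric picture one must use $T_r:=\widetilde v^{\,2}\widetilde t_r$ rather than $\widetilde v\,\widetilde t_r$; the inverse then uses $v^{-2}$, not $v^{*}$.) With this correction, the covariance check and the verification that $\Phi$ and $\Psi$ are mutually inverse on generators go through exactly as you sketched.
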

\begin{proof}
 Recall that $\mathcal E_{n,m}^1\rtimes_{\alpha_q} \mathbb Z$ is generated as a $C^*$-algebra by elements
 $\widetilde{s}_j$, $\widetilde{t}_r$ and a unitary $u$, such that the following relations satisfied
 \[
   u \widetilde{s}_ju^*=e^{i\pi \varphi_0}\, \widetilde{s}_j,\quad
    u \widetilde{t}_ru^*=e^{-i\pi \varphi_0}\, \widetilde{t}_r,\quad j=\overline{1,n},\ r=\overline{1,m}.
 \]
Put $\widehat{s}_j=\widetilde{s}_j\, u$ and $\widehat{t}_r=\widetilde{t}_r\, u$. Obviously,
$\widehat{s}_j$, $\widehat{t}_r$ and $u$ generate $\mathcal{E}_{n,m}^1\rtimes_{\alpha_q} \mathbb Z$. Further,
\[
 \widehat{s}_j^*\widehat{s}_k=\delta_{jk}\mathbf 1,\quad \widehat{t}_r^*\widehat{t}_l=\delta_{rl}\mathbf{1}
\]
and
\[
\widehat{s}_j\widehat{t}_r=\widetilde{s}_j u\widetilde{t}_r u=e^{-i\pi\varphi_0}\widetilde{s}_j\widetilde{t}_r u^2=
e^{-i\pi\varphi_0}\widetilde{t}_r\widetilde{s}_j u^2=
e^{-2\pi i\varphi_0}\widetilde{t}_r u\widetilde{s}_j u=\overline{q}\, \widehat{s}_j\widehat{t}_r.
\]
In a similar way we get $\widehat{s}_j^*\widehat{t}_r= q\widehat{t}_r\widehat{s}_j^*$, $j=\overline{1,n}$, $r=\overline{1,m}$.
Finally
\[
u\widehat{s}_j u^*=e^{i\pi\varphi_0}\widehat{s}_j,\quad u\widehat{t_r}u^*=e^{-i\pi\varphi_0}\widehat{t}_r.
\]
Hence the correspondence
\[
 s_j\mapsto \widehat{s}_j,\quad t_j\mapsto \widehat{t}_j,\quad u\mapsto u,
\]
determines a homomorphism
$\Phi_q\colon \mathcal E_{n,m}^q\rtimes_{\alpha_q} \mathbb Z \rightarrow \mathcal E_{n,m}^1\rtimes_{\alpha_q} \mathbb Z$.
The inverse is constructed evidently.
 \end{proof}

Let us show the nuclearity of $\mathcal{E}_{n,m}^q$ again. Indeed, $\mathcal{E}_{n,m}^1=\mathcal O_{n}^{(0)}\otimes\mathcal{O}_m^{(0)}$ is nuclear. Then so is the crossed product
 $\mathcal E_{n,m}^1\rtimes_{\alpha_q} \mathbb Z$. Then due to the isomorphism above,
 $\mathcal E_{n,m}^q\rtimes_{\alpha_q} \mathbb Z$ is nuclear, implying the nuclearity of $\mathcal E_{n,m}^q$, see
 \cite{Black}.

\subsection{Fock representation of $\mathcal E_{n,m}^q$}
In this part we study the Fock representation of $\mathcal E_{n,m}^q$.

First of all let us stress out that according to Theorem \ref{Rieffel_universal_iso} we can identify $\mathcal E_{n,m}^q$ with $(\mathcal O_n^{(0)}\otimes\mathcal O_m^{(0)})_{\Theta_q}$. In particular we use this isomorphism below to show that Fock representation of $\mathcal E_{n,m}^q$ is faithful.

\begin{definition}
The Fock representation of $\mathcal E_{n,m}^q$ is the unique up to unitary equivalence irreducible $*$-representation $\pi_F^q$ determined by the action  on vacuum vector $\Omega$, $||\Omega||=1$,
\[
\pi_F^q(s_j^*)\Omega = 0,\quad \pi_F^q(t_r^*)\Omega =0,\quad j=\overline{1,n},\ r=\overline{1,m}.
\]
\end{definition}

Denote by $\pi_{F,n}$ the Fock representation of $\mathcal{O}_n^{(0)}\subset\mathcal E_{n,m}^q$ acting on the space
\[
\mathcal{F}_n=\mathcal{T}(\mathcal{H}_n)= \mathbb{C}\Omega\oplus\bigoplus_{d=1}^{\infty} \mathcal{H}_n^{\otimes d},\quad \mathcal{H}_n=\mathbb{C}^n,
\]
by formulas
\begin{align*}
\pi_{F,n}(s_j)\Omega& =e_j,\quad \pi_{F,n}(s_j)e_{i_1}\otimes e_{i_2}\cdots\otimes e_{i_d}=e_j\otimes e_{i_1}\otimes e_{i_2}\cdots\otimes e_{i_d},\\
\pi_{F,n}(s_j^*)\Omega& =0,\quad \pi_{F,n}(s_j^*)e_{i_1}\otimes e_{i_2}\otimes\cdots\otimes e_{i_d}=\delta_{ji_1}e_{i_2}\otimes\cdots\otimes e_{i_d},\quad d\in\mathbb{N},
\end{align*}
where $e_1$, \dots, $e_n$ is the standard orthonormal basis of $\mathcal{H}_n$. Notice that $\pi_{F,n}$ is the unique irreducible faithful representations of $\mathcal{O}_n^{(0)}$, see for example \cite{jsw2}.

Below we give an explicit formula for $\pi_F^q (s_j)$,
$\pi_F^q (t_r)$. Consider the Fock representations $\pi_{F,n}$ and $\pi_{F,m}$ of $*$-subalgebras $C^*(\{s_1, \ldots, s_n\})=\mathcal O_n^{(0)}\subset\mathcal E_{n,m}^q$ and $C^*(\{t_1, \ldots, t_m\})=\mathcal{O}_m^{(0)}\subset \mathcal E_{n,m}^q$ respectively. Denote by $\Omega_n\in\mathcal F_n$ and $\Omega_m\in\mathcal F_m$ the corresponding vacuum vectors.
\begin{theorem}
The Fock representation $\pi_F^q$ of $\mathcal E_{n,m}^q$ acts on the space $\mathcal{F}=\mathcal{F}_n\otimes\mathcal{F}_m$ as follows
\begin{align*}
\pi_F^q(s_j)&=\pi_{F,n} (s_j)\otimes d_m(q^{-\frac{1}{2}}),\quad j = \overline{1,n},\\
\pi_F^q(t_r)&= d_n(q^{\frac{1}{2}})\otimes \pi_{F,m} (t_r),\quad r=\overline{1,m},
\end{align*}
where $d_k(\lambda)$ acts on $\mathcal F_k$, $k=n,m$ by
\[
d_k(\lambda)\Omega_k =\Omega_k,\quad d_k(\lambda)X=\lambda^l X,\quad X\in \mathcal{H}_k^{\otimes l},\quad l\in\mathbb{N}.
\]
\end{theorem}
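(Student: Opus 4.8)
The plan is to verify directly that the operators defined on $\mathcal{F} = \mathcal{F}_n \otimes \mathcal{F}_m$ by the stated formulas satisfy the defining relations \eqref{baseqrel} of $\mathcal{E}_{n,m}^q$, are irreducible, and kill the vacuum under the adjoints of the generators; then invoke uniqueness of the Fock representation together with faithfulness to identify this with $\pi_F^q$. The key computational point is that $d_k(\lambda)$ is unitary when $|\lambda| = 1$ (as it is for $\lambda = q^{\pm 1/2}$), and that it intertwines creation operators with a scalar: $d_n(\mu)\, \pi_{F,n}(s_j) = \mu\, \pi_{F,n}(s_j)\, d_n(\mu)$ on $\mathcal{F}_n$, since $\pi_{F,n}(s_j)$ raises the tensor degree by one. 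I would record this commutation identity first, as it is the engine of every subsequent check.

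Next I would check the relations. The isometry relations $\pi_F^q(s_i)^*\pi_F^q(s_j) = \delta_{ij}\mathbf{1}$ and $\pi_F^q(t_r)^*\pi_F^q(t_s) = \delta_{rs}\mathbf{1}$ follow immediately from the corresponding relations for $\pi_{F,n}$, $\pi_{F,m}$ together with unitarity of $d_m(q^{-1/2})$ and $d_n(q^{1/2})$ (the factor $d_k(\lambda)^* d_k(\lambda) = \mathbf{1}$). For the crossed relation $s_j^* t_r = q\, t_r s_j^*$, compute both sides on $\mathcal{F}_n \otimes \mathcal{F}_m$: the left side is $\pi_{F,n}(s_j^*)\, d_n(q^{1/2}) \otimes d_m(q^{-1/2})\,\pi_{F,m}(t_r)$ and the right side is $q\, d_n(q^{1/2})\,\pi_{F,n}(s_j^*) \otimes \pi_{F,m}(t_r)\, d_m(q^{-1/2})$. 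Using $d_n(q^{1/2})\,\pi_{F,n}(s_j^*) = q^{-1/2}\,\pi_{F,n}(s_j^*)\, d_n(q^{1/2})$ on the first leg (the adjoint of the intertwining identity, since $s_j^*$ lowers degree by one) and $d_m(q^{-1/2})\,\pi_{F,m}(t_r) = q^{-1/2}\,\pi_{F,m}(t_r)\,d_m(q^{-1/2})$ on the second leg, the scalars $q^{-1/2}\cdot q^{-1/2} = q^{-1}$ appear on the left and cancel against the $q$ on the right, so both sides agree. It should also be noted that $d_n(q^{1/2})$ and $d_m(q^{-1/2})$ commute with the relevant factors because they act on opposite legs of the tensor product, so there are no ordering subtleties beyond the single-leg intertwining.

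It remains to see that this representation is the Fock representation. The adjoint generators annihilate $\Omega := \Omega_n \otimes \Omega_m$: since $\pi_{F,n}(s_j^*)\Omega_n = 0$ and $d_m(q^{-1/2})\Omega_m = \Omega_m$, we get $\pi_F^q(s_j)^*\Omega = (d_n(q^{1/2})^*\,\pi_{F,n}(s_j^*)\Omega_n)\otimes \Omega_m = 0$, and symmetrically for $t_r$. For irreducibility, one argues that the $*$-algebra generated by the operators acts irreducibly on $\mathcal{F}$: monomials in $\pi_F^q(s_j), \pi_F^q(t_r)$ applied to $\Omega$ span a dense subspace (each application of a creation operator adds a basis tensor on one leg, up to a nonzero scalar, since $|q^{\pm 1/2}| = 1$), and any bounded operator commuting with all generators is scalar by the standard Cuntz-type argument — alternatively, transport irreducibility from $\pi_{F,n}\otimes\pi_{F,m}$ through the fact that the two families of operators differ from $\pi_{F,n}(s_j)\otimes \mathbf{1}$ and $\mathbf{1}\otimes\pi_{F,m}(t_r)$ only by unitary diagonal factors, which do not shrink the generated von Neumann algebra. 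By the definition of $\pi_F^q$ as the unique up-to-equivalence irreducible representation with the vacuum property, the constructed representation is $\pi_F^q$. The main obstacle, such as it is, is purely bookkeeping: keeping the half-integer powers $q^{\pm 1/2}$ and the two tensor legs straight so that the scalars cancel correctly in the crossed relation; once the intertwining identity for $d_k(\lambda)$ is in hand, everything else is routine.
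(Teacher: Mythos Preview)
Your approach is essentially the same as the paper's: verify the relations by direct calculation, check irreducibility, check the vacuum property, and invoke uniqueness. However, there is a bookkeeping slip in your crossed-relation check, precisely of the kind you warned about at the end. When you form $\pi_F^q(s_j)^*$, the second tensor leg should carry $d_m(q^{-1/2})^* = d_m(\overline{q^{-1/2}}) = d_m(q^{1/2})$ (since $\lvert q\rvert=1$), not $d_m(q^{-1/2})$. With the correct adjoint, the second-leg intertwining reads $d_m(q^{1/2})\,\pi_{F,m}(t_r) = q^{1/2}\,\pi_{F,m}(t_r)\,d_m(q^{1/2})$, and on the first leg $\pi_{F,n}(s_j^*)\,d_n(q^{1/2}) = q^{1/2}\,d_n(q^{1/2})\,\pi_{F,n}(s_j^*)$; thus both legs contribute a factor $q^{1/2}$ and the left side acquires $q^{1/2}\cdot q^{1/2} = q$, matching the explicit $q$ on the right. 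As written, your scalars $q^{-1/2}\cdot q^{-1/2} = q^{-1}$ do \emph{not} cancel against the $q$ on the right (that would force $q^2=1$). The same slip recurs in your vacuum computation, where the displayed formula has $d_n(q^{1/2})^*$ on the first leg instead of $d_m(q^{1/2})$ on the second; the conclusion there is unaffected since $\pi_{F,n}(s_j^*)\Omega_n = 0$ regardless. Once the adjoint is corrected, your argument goes through and coincides with the paper's.
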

\begin{proof}
It is a direct calculation to verify that the operators defined above satisfy the relations of $\mathcal E_{n,m}^q$. Since $\pi_{F,k}$ is irreducible on $\mathcal{F}_k$, $k=m,n$, the representation $\pi_F^q$ is irreducible on $\mathcal{F}_n\otimes\mathcal{F}_m$. Finally put $\Omega=\Omega_n\otimes\Omega_m$, then obviously
\[
\pi_F^q(s_j^*)\Omega=0,\ \mbox{and}\ \pi_F^q(t_r^*)\Omega=0,\quad j=\overline{1,n}, r=\overline{1,m}
\]
Thus $\pi_F^q$ is the Fock representation of $\mathcal E_{n,m}^q$.
 \end{proof}
\begin{remark}\label{rem_fock}
In some cases, it will be more convenient to present the operators of the Fock representation of $\mathcal E_{n,m}^q$ in one of the alternative forms,
\begin{align*}
\pi_F^q(s_j)&=\pi_{F,n} (s_j)\otimes\mathbf{1}_{\mathcal F_m},\quad j = \overline{1,n},
\\
\pi_F^q(t_r)&= d_n(q)\otimes \pi_{F,m} (t_r),\quad r=\overline{1,m},
\end{align*}
or
\begin{align*}
\pi_F^q(s_j)&=\pi_{F,n} (s_j)\otimes d_m(q^{-1}),\quad j = \overline{1,n},\\
\pi_F^q(t_r)&= \mathbf{1}_{\mathcal F_n}\otimes \pi_{F,m} (t_r),\quad r=\overline{1,m},
\end{align*}
which are obviously unitary equivalent to the one presented in the statement above.
\end{remark}
The next step we show that in fact $\pi_F^q$ can be obtained applying the  construction form Proposition \ref{theta_rep}  to the representation $\pi_{F,n}\otimes\pi_{F,m}$ of $\mathcal O_n^{(0)}\otimes\mathcal O_m^{(0)}$. 
To this end construct the family of unitary operators $\{U_{\varphi_1,\varphi_2},\ \varphi_1,\varphi_2\in [0,2\pi)\}$ acting on ${\mathcal F_n\otimes\mathcal F_m}$ as follows
\[
U_{\varphi_1,\varphi_2}(\xi_1\otimes\xi_2)=
e^{2\pi i (\varphi_1 p_1+\varphi_2 p_2)}\, \xi_1\otimes\xi_2,\quad \xi_1\in
\mathcal H_n^{\otimes p_1},\ \xi_2\in\mathcal H_m^{\otimes p_2},\quad p_1, p_2\in\mathbb Z_{+}.
\]
The pair $(\pi_{F,n} \otimes \pi_{F,m},\ U_{\varphi_1,\varphi_2})$ determines a covariant representation of $(\mathcal{O}_n^{(0)} \otimes \mathcal{O}_m^{(0)},\mathbb{T}^2,\alpha)$, where as above
\[
\alpha_{\varphi_1,\varphi_2}(s_j\otimes\mathbf 1)=e^{2\pi i\varphi_1} (s_j\otimes\mathbf 1),\quad \alpha_{\varphi_1,\varphi_2}(\mathbf 1\otimes t_r)=
e^{2\pi i\varphi_2} (\mathbf 1\otimes t_r).
\]
Notice that for $p=(p_1,p_2)^t\in\mathbb{Z}_{+}^2$, the subspace $\mathcal{H}_n^{\otimes p_1}\otimes \mathcal{H}_m^{\otimes p_2}$ is the $(p_1,p_2)^t$-homogeneous component of $\mathcal F$ related to the action of $U_{\varphi_1,\varphi_2}$, and $(\mathcal F)_p=\{0\}$ for any $p\in\mathbb Z^2\setminus\mathbb Z_{+}^2 $.

Recall also that $\widehat{s}_j=s_j\otimes \mathbf{1}$ is contained in $e_1=(1,0)^t$-homogeneous component and $\widehat{t}_r=\mathbf 1\otimes t_r$ is in $e_2=(0,1)^t$-homogeneous component of $\mathcal O_n^{(0)}\otimes\mathcal O_m^{(0)}$ with respect to $\alpha$. Now one can apply Proposition \ref{theta_rep}. Namely, given $\xi=\xi_1\otimes\xi_2 \in \mathcal{H}_n^{\otimes p_1} \otimes \mathcal{H}_m^{\otimes p_2}$ one gets
\begin{align*}
(\pi_{F,n} \otimes \pi_{F,m})_{\Theta_q}(\widehat{s}_j)\, \xi &= e^{2 \pi i \langle\Theta_q\, e_1,\, p  \rangle }\, \pi_{F,n} \otimes \pi_{F,m}(\widehat{s}_j)\, \xi = \\
&= \pi_{F,n}(s_j)\xi_1 \otimes e^{-\pi i\, p_2\, \varphi_0}\, \xi_2=(\pi_{F,n}(s_j)\otimes d_m(q^{-\frac{1}{2}}))\, \xi,
\end{align*}
and
\begin{align*}
(\pi_{F,n} \otimes \pi_{F,m})_{\Theta_q}(\widehat{t}_r)\, \xi &= e^{2 \pi i \langle\Theta_q\, e_2,\, p  \rangle }\, \pi_{F,n} \otimes \pi_{F,m}(\widehat{t}_r)\xi = \\
&=e^{\pi i\, p_1\, \varphi_0}\, \xi_1 \otimes
\pi_{F,m}(t_r)\, \xi_2= ( d_n(q^{\frac{1}{2}})\otimes \pi_{F,m}(t_r))\,\xi.
\end{align*}
Notice that for any $j=\overline{1,n}$, and $r=\overline{1,m}$,
\[
(\pi_{F,n}\otimes \pi_{F,m})_{\Theta_q}(\widehat{s}_j^*)\Omega=0,\quad
(\pi_{F,n}\otimes \pi_{F,m})_{\Theta_q}(\widehat{t}_r^*)\Omega=0.
\]
So, we have shown that $\pi_F^q=(\pi_{F,n}\otimes \pi_{F,m})_{\Theta_q}$. 

Since $\pi_{F,n}\otimes\pi_{F,m}$ is faithful representation of $\mathcal O_n^{(0)}\otimes\mathcal O_m^{(0)}$ we immediately get the following result.
\begin{theorem}
The Fock representation $\pi_F^q$ of $\mathcal E_{m,n}^q$ is faithful.
\end{theorem}

We finish this part by an analog of the well-known Wold decomposition theorem for a single isometry. Recall that
\[ Q = \sum_{j = 1}^n s_j s_j^*, \ P = \sum_{r = 1}^m t_r t_r^*. \]
\begin{theorem}[Generalised Wold decomposition]\label{wold_dec}
Let $\pi\colon\mathcal E_{n,m}^q\rightarrow \mathbb{B}(\mathcal H)$ be a $*$-repre\-sen\-ta\-tion. Then
\[
\mathcal H=\mathcal H_1\oplus\mathcal H_2\oplus\mathcal H_3\oplus\mathcal H_4,
\]
where each $\mathcal H_j$, $j=1,2,3,4$, is invariant with respect to $\pi$, and for
$\pi_j=\pi\restriction_{\mathcal H_j}$ one has
\begin{itemize}
\item $\mathcal H_1=\mathcal F\otimes\mathcal K$ for some Hilbert space $\mathcal{K}$, and $\pi_1=\pi_F^q\otimes\mathbf{1}_{\mathcal K}$;
\item $\pi_2(\mathbf{1}-Q)=0$, $\pi_2(\mathbf 1 -P)\ne 0$;
\item  $\pi_3(\mathbf{1}-P)=0$, $\pi_3(\mathbf 1 -Q)\ne 0$;
\item $\pi_4 (\mathbf{1}-Q)=0$, $\pi_4 (\mathbf 1- P)=0$;
\end{itemize}
where any of $\mathcal H_j$, $j=1,2,3,4$, could be zero.
\end{theorem}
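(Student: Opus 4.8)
\emph{Proof plan.} The plan is to run two successive Wold-type decompositions, first for the row isometry $(\pi(s_1),\dots,\pi(s_n))$ and then, on each resulting summand, for $(\pi(t_1),\dots,\pi(t_m))$, tracking the phases produced by the relations $t_rs_j=q\,s_jt_r$ and $s_j^*t_r=q\,t_rs_j^*$ (recall that $|q|=1$, so also $s_jt_r=\overline q\,t_rs_j$, and that $\pi(Q),\pi(P)$ are commuting projections, which is checked directly from the relations). First I would invoke the classical Wold decomposition for a row of isometries --- equivalently, the structure of $*$\nobreakdash-representations of $\mathcal O_n^{(0)}$, which follows from the pairwise orthogonality of $\{s_\mu s_\mu^*\}_{|\mu|=k}$ established in the lemmas above. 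Putting $\mathcal L_s:=(\mathbf 1-\pi(Q))\mathcal H=\bigcap_j\ker\pi(s_j)^*$, the subspaces $s_\mu\mathcal L_s$, $\mu\in\Lambda_n$, are mutually orthogonal, their closed linear span is unitarily equivalent to $\mathcal F_n\otimes\mathcal L_s$ with $s_j$ acting as $\pi_{F,n}(s_j)\otimes\mathbf 1$, and its orthogonal complement $\mathcal H^{(C)}_s:=\bigcap_{k\ge 0}\bigoplus_{|\mu|=k}s_\mu\mathcal H$ carries $\pi(Q)=\mathbf 1$. The crucial point is that both summands reduce all of $\pi(\mathcal E_{n,m}^q)$: from $t_rs_\mu=q^{|\mu|}s_\mu t_r$ and $t_r^*s_\mu=\overline q^{|\mu|}s_\mu t_r^*$ one sees that $\pi(t_r)$ and $\pi(t_r)^*$ preserve each $\bigoplus_{|\mu|=k}s_\mu\mathcal H$, hence $\mathcal H^{(C)}_s$ and its complement. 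Moreover $\mathcal L_s$ reduces $C^*(t_1,\dots,t_m)$ (again because $\pi(s_j)^*\xi=0$ for $\xi\in\mathcal L_s$), so the $t_r$ restrict there to a representation $\widetilde t_r$ of $\mathcal O_m^{(0)}$, and under the above identification $t_r$ acts on $\mathcal F_n\otimes\mathcal L_s$ as $d_n(q)\otimes\widetilde t_r$; likewise the $t_r$ restrict to a representation of $\mathcal O_m^{(0)}$ on $\mathcal H^{(C)}_s$.

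Next I would apply the same row-isometry Wold decomposition to $(\widetilde t_1,\dots,\widetilde t_m)$ on $\mathcal L_s$ and to $(\pi(t_1),\dots,\pi(t_m))$ on $\mathcal H^{(C)}_s$. This splits $\mathcal L_s\cong(\mathcal F_m\otimes\mathcal K)\oplus\mathcal L_s^{(C)}$ with $\mathcal K=(\mathbf 1-\pi(P))\mathcal L_s=\{\xi:\pi(s_j)^*\xi=\pi(t_r)^*\xi=0\}$ and $\sum_r\widetilde t_r\widetilde t_r^*=\mathbf 1$ on $\mathcal L_s^{(C)}$, and $\mathcal H^{(C)}_s\cong(\mathcal F_m\otimes\mathcal K')\oplus\mathcal H^{(CC)}$ with $\sum_r\pi(t_r)\pi(t_r)^*=\mathbf 1$ on $\mathcal H^{(CC)}$. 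Here one must check that the two $t$-Wold summands sitting inside $\mathcal H^{(C)}_s$ are again $s$-reducing; this is verified exactly as in the first step, with the roles of $s$ and $t$ interchanged, using the dual phases $s_jt_\nu=\overline q^{|\nu|}t_\nu s_j$ and $s_j^*t_\nu=q^{|\nu|}t_\nu s_j^*$.

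Assembling, set
\[
\mathcal H_1:=\mathcal F_n\otimes\mathcal F_m\otimes\mathcal K,\qquad \mathcal H_3:=\mathcal F_n\otimes\mathcal L_s^{(C)},\qquad \mathcal H_2:=\mathcal F_m\otimes\mathcal K',\qquad \mathcal H_4:=\mathcal H^{(CC)},
\]
which are pairwise orthogonal $\pi$-reducing subspaces with $\mathcal H=\mathcal H_1\oplus\mathcal H_2\oplus\mathcal H_3\oplus\mathcal H_4$. On $\mathcal H_1=\mathcal F\otimes\mathcal K$ the generators act as $\pi_{F,n}(s_j)\otimes\mathbf 1_{\mathcal F_m}\otimes\mathbf 1_{\mathcal K}$ and $d_n(q)\otimes\pi_{F,m}(t_r)\otimes\mathbf 1_{\mathcal K}$, which is $\pi_F^q\otimes\mathbf 1_{\mathcal K}$ by the first alternative presentation of the Fock representation in Remark~\ref{rem_fock}. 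A direct computation of $\pi(Q)$ and $\pi(P)$ on the remaining pieces --- using $d_k(q)d_k(q)^*=\mathbf 1$ (as $|q|=1$), that $\mathbf 1-\pi_{F,k}(Q)$ is the rank-one projection onto $\mathbb C\Omega_k$, and that $\sum_r\widetilde t_r\widetilde t_r^*=\mathbf 1$ on $\mathcal L_s^{(C)}$ (resp. $\sum_r\pi(t_r)\pi(t_r)^*=\mathbf 1$ on $\mathcal H^{(CC)}$) --- yields $\pi_2(\mathbf 1-Q)=0$ and $\pi_2(\mathbf 1-P)\ne 0$; $\pi_3(\mathbf 1-P)=0$ and $\pi_3(\mathbf 1-Q)\ne 0$; and $\pi_4(\mathbf 1-Q)=\pi_4(\mathbf 1-P)=0$, with the understanding that any of the four summands may be $\{0\}$, in which case the corresponding non-vanishing assertion is vacuous.

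I expect the main obstacle to be the bookkeeping of the $q$-phases through the two nested decompositions, so that on $\mathcal H_1$ one lands exactly on one of the equivalent presentations of $\pi_F^q$ from Remark~\ref{rem_fock}, together with the verification that the $t$-Wold summands obtained inside the two $s$-pieces remain invariant under $\pi(s_j)$ and $\pi(s_j)^*$ --- this is precisely where $|q|=1$ (hence $s_jt_r=\overline q\,t_rs_j$ and $\pi(Q)\pi(P)=\pi(P)\pi(Q)$) enters essentially. The underlying input, the Wold decomposition for a row of isometries, is classical and can either be cited or reconstructed in a few lines from the relations $s_\mu^*s_\nu=\delta_{\mu\nu}$ recorded above.
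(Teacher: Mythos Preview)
Your proposal is correct and follows essentially the same approach as the paper: two successive row-isometry Wold decompositions, first for $(s_1,\dots,s_n)$ and then for $(t_1,\dots,t_m)$ on each resulting summand, with the invariance of the summands under the full algebra verified via the commutation relations $t_rQ=Qt_r$ (and dually). Your notation $\mathcal L_s$, $\mathcal H_s^{(C)}$, $\mathcal L_s^{(C)}$, $\mathcal K'$ corresponds exactly to the paper's $\mathcal G$, $\mathcal H_F^\perp$, $\mathcal G_F^\perp$, $\mathcal L$, and the identification of $\pi_1$ with $\pi_F^q\otimes\mathbf 1_{\mathcal K}$ via Remark~\ref{rem_fock} is handled identically.
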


\begin{proof}
We will use the fact that any representation of $\mathcal{O}_n^{(0)}$ is a direct sum of a multiple of the Fock representation and a representation of $\mathcal O_n$.

So, restrict $\pi$ to $\mathcal{O}_n^{(0)}\subset\mathcal E_{n,m}^q$, and decompose $\mathcal H=\mathcal H_{F}\oplus\mathcal H_F^{\perp}$, where
\[
\pi(\mathbf 1-Q)_{|\mathcal H_F^{\perp}}=0,
\]
and $\pi(\mathcal{O}_n^{(0)})_{|\mathcal{H}_F}$ is a multiple of the Fock representation.
Denote
\[
S_j := \pi(s_j)\restriction_{\mathcal H_F}, \quad Q := \pi(Q)\restriction_{\mathcal H_F}.
\]
Put  $S_{\emptyset}:=\mathbf{1}_{\mathcal{H}_F}$ and $S_\lambda: = S_{\lambda_1}\cdots S_{\lambda_k}$ for any non-empty $\Lambda_n\ni\lambda=(\lambda_1,\ldots,\lambda_k)$ .
Since
\[
\mathcal H_F=\bigoplus_{\lambda\in\Lambda_n} S_{\lambda} (\ker Q),
\]
it is invariant with respect to $\pi(t_r)$, $\pi(t_r^*)$, $r=\overline{1,m}$. Indeed, $t_r Q=Q t_r$ in $\mathcal E_{n,m}^q$, implying the invariance of $\ker Q$ with respect to $\pi(t_r)$ and $\pi(t_r^*)$. Denote $\ker Q$ by $\mathcal{G}$ and $T_r := \pi(t_r)\restriction_{\mathcal G}$. Then
\[
\pi(t_r)S_{\lambda}\xi=q^{|\lambda|}S_{\lambda}\pi(t_r)\xi=q^{|\lambda|} S_{\lambda}T_r \xi, \quad \xi \in \mathcal G.
\]
Thus $\mathcal{H}_F\simeq\mathcal{F}_n\otimes\mathcal G$ with
\[
\pi(s_j)\restriction_{\mathcal{H}_F}=\pi_{F,n}(s_j)\otimes\mathbf{1}_{\mathcal G},\quad
\pi(t_r)\restriction_{\mathcal{H}_F}=d_n(q)\otimes T_r,\quad j=\overline{1,n},\ r=\overline{1,m},
\]
where the family $\{T_r\}$ determines a $*$-representation $\widetilde{\pi}$ of $\mathcal O_m^{(0)}$ on $\mathcal G$.

Further, decompose $\mathcal G$ as $\mathcal G=\mathcal G_{F}\oplus\mathcal G_F^{\perp}$ into an orthogonal sum of subspaces invariant with respect to $\widetilde{\pi}$, where $\mathcal G_F=\mathcal F_m\otimes\mathcal{K}$,
\[
\widetilde{\pi}_{\mathcal G_F}(t_r)=\pi_{F,m}(t_r)\otimes\mathbf{1}_{\mathcal{K}},\quad r=\overline{1,m},\quad  \mbox{and}\quad
\widetilde{\pi}\restriction_{\mathcal G_F^{\perp}}(\mathbf{1}-P)=0.
\]
Thus $\mathcal H_F=\left(\mathcal{F}_n\otimes\mathcal F_m\otimes\mathcal K\right)\oplus\left(\mathcal F_n\otimes\mathcal G_F^{\perp}\right)$ and
\begin{align*}
\pi_{\mathcal H_F}(s_j)&=\left(\pi_{F,n}(s_j)\otimes\mathbf{1}_{\mathcal F_m}\otimes\mathbf{1}_K \right)\oplus ( \pi_{F,n}(s_j)\otimes\mathbf{1}_{\mathcal G_F^{\perp}} ),\quad j=\overline{1,n},
\\
\pi_{\mathcal H_F}(t_r)&=\left(d_n(q)\otimes\pi_{F,m}(t_r)\otimes\mathbf{1}_K\right)\oplus
\left(d_n(q)\otimes\widetilde{\pi}_{|\mathcal{G}_F^{\perp}}(t_r)\right),\quad r=\overline{1,m}.
\end{align*}
Put $\mathcal H_1=\mathcal F_n\otimes\mathcal F_m\otimes\mathcal{K}=\mathcal F\otimes\mathcal K$ and notice that that $\pi\restriction_{\mathcal{H}_1}=\pi_F^q\otimes\mathbf{1}_{\mathcal K}$, see Remark \ref{rem_fock}. Put $\mathcal H_3=\mathcal F_n\otimes\mathcal G_F^{\perp}$ and $\pi_3=\pi\restriction_{\mathcal H_3}$ i.e.,
\[
\pi_3(s_j)=\pi_{F,n}(s_j)\otimes\mathbf{1}_{\mathcal{G}_F^\perp},\quad
\pi_3(t_r)=d_n(q)\otimes \widetilde{\pi}_{|\mathcal{G}_F^{\perp}}(t_r),\quad
j=\overline{1,n},\ r=\overline{1,m}.
\]
Evidently, $\pi_3(\mathbf 1 -P)=0$ and $\pi_3(\mathbf 1-Q)\ne 0$.

Finally, applying similar arguments to the invariant subspace $\mathcal H_F^{\perp}$ one can show that there exists a decomposition
\[
\mathcal H_F^{\perp}=\mathcal H_2\oplus \mathcal H_4
\]
into the orthogonal sum of invariant subspaces, where
\begin{itemize}
\item
$\mathcal H_2=\mathcal F_m\otimes\mathcal L$ and
\[
\pi_2(s_j) := \pi\restriction_{\mathcal H_2}(s_j)= d_m (\overline{q})\otimes\widehat{\pi}(s_j),\quad
\pi_2(t_r):= \pi\restriction_{\mathcal H_2}(t_r) = \pi_{F,m}(t_r)\otimes\mathbf 1_{\mathcal L},
\]
for a representation $\widehat{\pi}$ of $\mathcal O_n$. Evidently,
$\pi_2(\mathbf 1 - Q)=0$, $\pi_2 (\mathbf 1 - P)\ne 0$.
\item For $\pi_4:=\pi\restriction_{\mathcal H_4}$ one has
\[
\pi_4(\mathbf 1 -Q)=0,\quad \pi_4(\mathbf 1 - P)=0.
\]
\end{itemize}
 \end{proof}

\subsection{Ideals in $\mathcal{E}_{n,m}^q$}
In this part, we give a complete description of ideals in $\mathcal{E}_{n,m}^q$, and prove their independence on the deformation parameter $q$.

For
\[
 Q=\sum_{j=1}^n s_j s_j^*,\quad P=\sum_{r=1}^m t_r t_r^*.
\]
we consider  two-sided ideals, $\mathcal M_q$ generated by $1 - P$ and $1 - Q$, $\mathcal I_1^q$ generated by $1-Q$, $\mathcal I_2^q$~generated by $\mathbf 1-P$, and $\mathcal I_q$ generated by $(\mathbf 1-Q)(\mathbf 1-P)$. Evidently, \[ \mathcal I_q = \mathcal I^q_1 \cap \mathcal I^q_2 = \mathcal I^q_1 \cdot \mathcal I^q_2. \] Below we will show that any ideal in $\mathcal E_{n,m}^q$ coincides with the one listed above.

To clarify the structure of $\mathcal I_1^q$, $\mathcal I_2^q$ and $\mathcal I_q$, we use the construction of twisted tensor product of a certain $C^*$-algebra with the algebra of compact operators $\mathbb K$, see \cite{weber}. We give a brief review of the construction, adapted to our situation.

Recall that the $C^*$-algebra $\mathbb K$ can be considered as a universal $C^*$-algebra generated by a closed linear span of elements
$e_{\mu\nu}$, $\mu,\nu\in \Lambda_n$ subject to the relations
\[
e_{\mu_1\nu_1}e_{\mu_2\nu_2}=\delta_{\mu_2\nu_1}e_{\mu_1\nu_2},\quad
e_{\mu_1\nu_1}^*=e_{\nu_1 \mu_1},\quad \nu_i,\mu_i\in\Lambda_n,
\]
here $e_{\emptyset}:=e_{\emptyset\emptyset}$ is a minimal projection.

\begin{definition}
Let $\mathcal A$ be a $C^*$-algebra,
\[
\alpha=\{\alpha_{\mu},\ \mu\in\Lambda_n\}\subset \mathsf{Aut}(\mathcal A),\ \mbox{ where}\ \alpha_{\emptyset}=\id_{\mathcal A},
\] and $e_{\mu\nu}$, $\mu,\nu\in\Lambda_n$ be the generators of $\mathbb K$ specified above. Construct the universal $C^*$-algebra
\[
\langle\mathcal A, \mathbb{K} \rangle_\alpha = C^* (a \in\mathcal A, e_{\mu\nu} \in \mathbb{K} \, |\, \ a e_{\mu\nu} = e_{\mu\nu} \alpha_{\nu}^{-1}(\alpha_{\mu} (a)).
\]
We define $\mathcal A \otimes_\alpha \mathbb{K}$ as a subalgebra of $\langle\mathcal A, \mathbb{K} \rangle_\alpha$ generated by $ax$, $a \in\mathcal A \subset \langle\mathcal A, \mathbb{K} \rangle_\alpha$, $x \in \mathbb{K} \subset \langle\mathcal A, \mathbb{K} \rangle_\alpha$.
\end{definition}

Notice that $\langle\mathcal A, \mathbb{K} \rangle_\alpha$ exists for any $C^*$-algebra $\mathcal A$ and family $\alpha\subset\mathsf{Aut} (\mathcal A)$, see \cite{weber}.
\begin{remark}\ \\ \noindent
$1$. Let $x_{\mu}=e_{\mu\emptyset}$. Then $a x_{\mu}=x_{\mu}\alpha_{\mu}(a)$, $a x_{\mu}^*=x_{\mu}^*\alpha_{\mu}^{-1}(a)$, $a\in\mathcal A$, compare with \cite{weber}.\\ \noindent
$2$. For any $a\in\mathcal A$ one has $e_{\mu\nu} a=\alpha_{\mu}^{-1}(\alpha_{\nu}(a))e_{\mu\nu}$ implying that
\[
(a e_{\mu\nu})^*=\alpha_{\mu}^{-1}(\alpha_{\nu}(a))e_{\nu\mu}.
\]  \noindent
$3$. For any $a_1,a_2\in\mathcal A$ one has $(a_1 e_{\mu_1\nu_1})(a_2 e_{\mu_2\nu_2})=\delta_{\nu_1\mu_2} a_1\alpha_{\mu_1}^{-1}(\alpha_{\mu_2}(a_2))e_{\mu_1\nu_2}$.
\end{remark}

\begin{proposition}[\cite{weber}]
Let $\mathcal A$ be a $C^*$-algebra and
\[
\alpha = \{\alpha_{\mu},\ \mu \in \Lambda_n\} \subset \mathsf{Aut}(\mathcal A)\ \mbox{with}\ \alpha_{\emptyset} = \id_{\mathcal A}.
\]
Then the correspondence
\[
a e_{\mu\nu}\mapsto \alpha_{\mu}(a)\otimes e_{\mu\nu},\quad a\in\mathcal A,\ \mu,\nu\in\Lambda_n
\]
extends by linearity and continuity to an isomorphism
\[
\Delta_\alpha\colon\mathcal A\otimes_{\alpha} \mathbb K \rightarrow \mathcal A\otimes \mathbb K,
\]
where $\Delta_{\alpha}^{-1}$ is constructed via the correspondence
\[
a\otimes e_{\mu\nu}\mapsto \alpha_{\mu}^{-1}(a) e_{\mu\nu},\quad a\in \mathcal A, \ \mu,\nu\in\Lambda_n.
\]
\end{proposition}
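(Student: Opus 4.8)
The plan is to verify directly that the two stated correspondences are mutually inverse $*$-homomorphisms, using the universal property of $\langle\mathcal A,\mathbb K\rangle_\alpha$ (or rather of its subalgebra $\mathcal A\otimes_\alpha\mathbb K$) and the universal property of the minimal tensor product $\mathcal A\otimes\mathbb K$. First I would check that $\Delta_\alpha$ is well-defined and multiplicative. Using part~3 of the Remark, $(a_1e_{\mu_1\nu_1})(a_2e_{\mu_2\nu_2})=\delta_{\nu_1\mu_2}\,a_1\alpha_{\mu_1}^{-1}(\alpha_{\mu_2}(a_2))\,e_{\mu_1\nu_2}$, so its image under $\Delta_\alpha$ should be $\delta_{\nu_1\mu_2}\,\alpha_{\mu_1}(a_1\alpha_{\mu_1}^{-1}(\alpha_{\mu_2}(a_2)))\otimes e_{\mu_1\nu_2}=\delta_{\nu_1\mu_2}\,\alpha_{\mu_1}(a_1)\alpha_{\mu_2}(a_2)\otimes e_{\mu_1\nu_2}$, which is exactly the product $(\alpha_{\mu_1}(a_1)\otimes e_{\mu_1\nu_1})(\alpha_{\mu_2}(a_2)\otimes e_{\mu_2\nu_2})$ in $\mathcal A\otimes\mathbb K$. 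Similarly, using part~2 of the Remark, $\Delta_\alpha$ intertwines the involutions: $(ae_{\mu\nu})^*=\alpha_\mu^{-1}(\alpha_\nu(a))e_{\nu\mu}$ maps to $\alpha_\nu(\alpha_\mu^{-1}(\alpha_\nu(a)))\otimes e_{\nu\mu}$; one rewrites $\alpha_\nu\alpha_\mu^{-1}\alpha_\nu(a)=(\alpha_\nu\alpha_\mu^{-1})(\alpha_\nu(a))$ and compares with $(\alpha_\mu(a)\otimes e_{\mu\nu})^*=\alpha_\mu(a)^*\otimes e_{\nu\mu}$ — here one must be a little careful, since in general the $\alpha_\mu$ need not be mutually commuting, but since we only need $\Delta_\alpha$ on the dense $*$-subalgebra spanned by the $ae_{\mu\nu}$ and the relation $e_{\mu\nu}a=\alpha_\mu^{-1}(\alpha_\nu(a))e_{\mu\nu}$ we can always bring words into the normal form $ae_{\mu\nu}$ before applying $\Delta_\alpha$.

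Next I would make the map honest: the natural way to produce $\Delta_\alpha$ without worrying about well-definedness of a span is to invoke the universal property of $\langle\mathcal A,\mathbb K\rangle_\alpha$. In the multiplier algebra $M(\mathcal A\otimes\mathbb K)$ we have a copy of $\mathcal A$ (namely $a\mapsto a\otimes 1$, using that $1\in M(\mathbb K)$) and a copy of $\mathbb K$ (namely $e_{\mu\nu}\mapsto 1\otimes e_{\mu\nu}$ if $\mathcal A$ is unital, or in $M(\mathcal A)\otimes\mathbb K\subset M(\mathcal A\otimes\mathbb K)$ in general) — but to realize the deformed relation $ae_{\mu\nu}=e_{\mu\nu}\alpha_\nu^{-1}(\alpha_\mu(a))$ one instead represents $a\mapsto a\otimes 1$ and $e_{\mu\nu}\mapsto$ (the operator acting as $\alpha_\mu$-twisted matrix units), i.e. one checks that $A:=a\otimes 1$ and $E_{\mu\nu}$ defined by $E_{\mu\nu}(b\otimes e_{\rho\sigma})=\delta_{\nu\rho}\,\alpha_\mu\alpha_\nu^{-1}(b)\otimes e_{\mu\sigma}$ satisfy precisely the defining relations of $\langle\mathcal A,\mathbb K\rangle_\alpha$. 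That produces a $*$-homomorphism $\langle\mathcal A,\mathbb K\rangle_\alpha\to M(\mathcal A\otimes\mathbb K)$ whose restriction to the subalgebra $\mathcal A\otimes_\alpha\mathbb K$ has range inside $\mathcal A\otimes\mathbb K$ and agrees with the stated formula $ae_{\mu\nu}\mapsto\alpha_\mu(a)\otimes e_{\mu\nu}$; call it $\Delta_\alpha$. Conversely, $a\mapsto a$, $e_{\mu\nu}\mapsto e_{\mu\nu}$ defines $\mathcal A$ and $\mathbb K$ inside $\langle\mathcal A,\mathbb K\rangle_\alpha$; precomposing with $\alpha_\mu^{-1}$ on the $\mathcal A$-leg over the matrix block $e_{\mu\nu}$ — precisely $a\otimes e_{\mu\nu}\mapsto\alpha_\mu^{-1}(a)e_{\mu\nu}$ — is checked to be a $*$-homomorphism $\mathcal A\otimes\mathbb K\to\mathcal A\otimes_\alpha\mathbb K$ (again one verifies it preserves products and adjoints on simple tensors, using $e_{\mu\nu}a=\alpha_\mu^{-1}(\alpha_\nu(a))e_{\mu\nu}$; nuclearity of $\mathbb K$ guarantees the minimal tensor product has the required universal property so it suffices to check on elementary tensors). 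Composing the two in either order gives the identity on generators, hence $\Delta_\alpha$ is an isomorphism with the stated inverse.

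I expect the main obstacle to be the bookkeeping when the automorphisms $\alpha_\mu$ do not commute: one has to be disciplined about always writing words in the normal form ``element of $\mathcal A$ times $e_{\mu\nu}$'' before applying either map, and to check that the two expressions one can get for $e_{\mu\nu}e_{\rho\sigma}a$ (moving $a$ left past $e_{\rho\sigma}$ then past $e_{\mu\nu}$, versus first multiplying $e_{\mu\nu}e_{\rho\sigma}=\delta_{\nu\rho}e_{\mu\sigma}$) are consistent — they are, because $\delta_{\nu\rho}$ forces $\alpha_\nu^{-1}\alpha_\rho^{-1}\cdots$ to collapse correctly. A second, more technical point is the passage from the dense $*$-subalgebra to the $C^*$-completion: one invokes that a $*$-homomorphism out of a dense $*$-subalgebra into a $C^*$-algebra that is bounded (contractive on the generators, automatic here since everything is built from the universal algebras) extends continuously, and that the matrix-unit representation above is faithful enough to see that $\Delta_\alpha$ lands in and surjects onto $\mathcal A\otimes\mathbb K$ rather than some proper subalgebra. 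Once both maps are seen to be well-defined $*$-homomorphisms agreeing with the stated formulas on generators, that they are mutually inverse is immediate on generators and hence everywhere by continuity.
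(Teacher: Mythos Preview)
The paper does not prove this proposition; it is quoted from \cite{weber} without argument. Your direct verification via the universal properties of $\langle\mathcal A,\mathbb K\rangle_\alpha$ and of $\mathcal A\otimes\mathbb K$ is the natural route, and the multiplicativity check you wrote out is correct.

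Your hesitation at the $*$-compatibility step is not due to non-commutativity of the $\alpha_\mu$; it is due to a typo in the paper's Remark~2. The correct formula is
\[
(ae_{\mu\nu})^* \;=\; e_{\nu\mu}a^* \;=\; \alpha_\nu^{-1}\bigl(\alpha_\mu(a^*)\bigr)\,e_{\nu\mu},
\]
with the indices swapped and an $a^*$ rather than $a$. With this in hand the check is immediate:
\[
\Delta_\alpha\bigl((ae_{\mu\nu})^*\bigr)
= \alpha_\nu\bigl(\alpha_\nu^{-1}(\alpha_\mu(a^*))\bigr)\otimes e_{\nu\mu}
= \alpha_\mu(a^*)\otimes e_{\nu\mu}
= \bigl(\alpha_\mu(a)\otimes e_{\mu\nu}\bigr)^*,
\]
and no commutativity hypothesis on the family $\{\alpha_\mu\}$ is needed anywhere. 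The same remark applies to the inverse map. Once this is straightened out, your argument is complete: both maps are $*$-homomorphisms on generators, extend by the relevant universal properties, and are mutually inverse on generators, hence everywhere.
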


\begin{remark}
For $x_{\mu}=e_{\mu\emptyset}$, $\mu\in\Lambda_n$ one has, see \cite{weber},
\[
\Delta_{\alpha}(a x_\mu )= \alpha_\mu(a) \otimes x_\mu,\quad
\Delta_{\alpha}(a x_\mu^* )= a \otimes x_\mu^*.
\]
\end{remark}

The following functorial property of $\otimes_{\alpha}\mathbb K$ can be derived easily.
Consider \[ \alpha = (\alpha_\mu)_{\mu \in\Lambda_n} \subset \mathsf{Aut}(\mathcal A), \ \beta = (\beta_\mu)_{\mu \in\Lambda_n} \subset \mathsf{Aut}(\mathcal B). \] Suppose $\varphi :\mathcal A \rightarrow \mathcal B$ is equivariant, i.e.  $\varphi(\alpha_\mu(a)) = \beta_\mu(\varphi(a))$ for any $a\in\mathcal A$ and $\mu\in\Lambda_n$. Then one can define the homomorphism
\[
\varphi \otimes_\alpha^\beta : \mathcal A \otimes_\alpha \mathbb{K} \rightarrow \mathcal B \otimes_\beta \mathbb{K}, \quad \varphi\otimes_\alpha^\beta(ak)= \varphi(a)k,\quad a\in\mathcal A,\ k\in\mathbb K,
\]
making the following diagram commutative
\begin{equation}\label{ktwist_func}
\begin{tikzcd}
\mathcal A \otimes_\alpha \mathbb{K} \arrow[d, "\Delta_\alpha"] \arrow[r, "\varphi \otimes_\alpha^\beta"] & \mathcal B \otimes_\beta \mathbb{K} \arrow[d, "\Delta_\beta"] \\
\mathcal A \otimes \mathbb{K} \arrow[r, "\varphi \otimes \id_\mathbb{K}"] & \mathcal B \otimes \mathbb{K}
\end{tikzcd}
\end{equation}
Namely, it is easy to verify that
\[
(\Delta_\beta^{-1}\circ(\varphi\otimes \id_{\mathbb K})\circ\Delta_\alpha) (a e_{\mu\nu})=\varphi(a)e_{\mu\nu}=\varphi\otimes_{\alpha}^{\beta}(a e_{\mu\nu}),\quad a\in A,\ \mu,\nu\in\Lambda_n.
\]

An important consequence of the commutativity of the diagram above is exactness of the functor $\otimes_{\alpha}\mathbb K$. Let
\[
\beta=(\beta_{\mu})_{\mu\in\Lambda_n}\subset\mathsf{Aut} (\mathcal B),\
\alpha=(\alpha_{\mu})_{\mu\in\Lambda_n}\subset\mathsf{Aut} (\mathcal A),\  \gamma=(\gamma_{\mu})_{\mu\in\Lambda_n}\subset\mathsf{Aut} (\mathcal C)
\]
and consider a short exact sequence
\[
\begin{tikzcd}
 0 \arrow[r] & \mathcal B \arrow[r, "\varphi_1"]  & \mathcal A \arrow[r, "\varphi_2"]&\mathcal C\arrow[r] & 0
 \end{tikzcd}
\]
where $\varphi_1$, $\varphi_2$ are equivariant homomorphisms. Then one has the following short exact sequence
\[
\begin{tikzcd}
 0 \arrow[r] &\mathcal B\otimes_{\beta}\mathbb K \arrow[r, "\varphi_1\otimes_\beta^\alpha"]  & \mathcal A\otimes_{\alpha}\mathbb K \arrow[r, "\varphi_2\otimes_\alpha^\gamma"]&\mathcal C\otimes_\gamma \mathbb K\arrow[r] & 0
 \end{tikzcd}
\]

Now we are ready to study the structure of the ideals $\mathcal I_1^q\, ,\mathcal I_2^q,\mathcal I_q\subset\mathcal{E}_{n,m}^q$. We start with $\mathcal I_1^q$. Notice that
\[
\mathcal I_1^q=c.l.s.\, \{ \,t_{\mu_2}t_{\nu_2}^* s_{\mu_1}(\mathbf 1-Q)s_{\nu_1}^*,\ \mu_1,\nu_1\in\Lambda_n,\ \mu_2,\nu_2\in\Lambda_m\}.
\]
Put $E_{\mu_1\nu_1}=s_{\mu_1}(\mathbf 1-Q)s_{\nu_1}^*$, $\mu_1,\nu_1\in\Lambda_n$. Then $E_{\mu_1\nu_1}$ satisfy the relations for matrix units generating $\mathbb K$.  Moreover, $c.l.s.\,\{E_{\mu\nu},\ \mu,\nu\in\Lambda_n\}$ is an ideal in $\mathcal O_n^{(0)}$ isomorphic to $\mathbb K$.

Consider the family $\alpha^q =(\alpha_\mu)_{ \mu\in\Lambda_n}\subset\mathsf{Aut}(\mathcal O_m^{(0)})$ defined as
\[
\alpha_{\mu} (t_r)=q^{|\mu|} t_r,\quad
\alpha_{\mu} (t_r^*)=q^{-|\mu|} t_r^*,\quad \mu\in\Lambda_n,\ r=\overline{1,m}.
\]
\begin{proposition}\label{str_I1q}
The correspondence $a e_{\mu\nu}\mapsto a E_{\mu\nu}$, $a\in \mathcal O_m^{(0)}$, $\mu,\nu\in\Lambda_n$, extends to an isomorphism
\[
\Delta_{q,1} \colon \mathcal O_m^{(0)}\otimes_{\alpha^q}\mathbb K\rightarrow \mathcal I_1^q.
\]
\end{proposition}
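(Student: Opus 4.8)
The plan is to verify that the prescribed correspondence is a well-defined $*$-homomorphism which is surjective, and then to establish injectivity by exhibiting the inverse on generators. First I would check that the elements $E_{\mu_1\nu_1}=s_{\mu_1}(\mathbf 1-Q)s_{\nu_1}^*$, together with the copy of $\mathcal O_m^{(0)}$ sitting inside $\mathcal E_{n,m}^q$ and generated by $t_1,\dots,t_m$, satisfy exactly the defining relations of $\mathcal O_m^{(0)}\otimes_{\alpha^q}\mathbb K$. Concretely: the $E_{\mu\nu}$ are matrix units (this is already noted in the excerpt, and follows from $s_\nu^*s_\mu=\delta_{\nu\mu}\mathbf 1$ and the fact that $(\mathbf 1-Q)s_j=0$, $(\mathbf 1-Q)^2=\mathbf 1-Q$); and one must compute, for $a\in\mathcal O_m^{(0)}$,
\[
a\,E_{\mu\nu}=E_{\mu\nu}\,(\alpha^q_\nu)^{-1}(\alpha^q_\mu(a)).
\]
For $a=t_r$ this amounts to $t_r\,s_{\mu}(\mathbf 1-Q)s_{\nu}^*=s_{\mu}(\mathbf 1-Q)s_{\nu}^*\,q^{|\mu|-|\nu|}t_r$, which follows by pushing $t_r$ through $s_\mu^*$ and $s_\nu$ using the relation $s_j^*t_r=qt_rs_j^*$ (equivalently $t_rs_j=qs_jt_r$, valid here only after noting $|q|=1$; but in the present section $|q|\le 1$ suffices since we only move $t_r$ past the $s$'s in one direction, picking up $q^{|\mu|}$ on the left and $q^{-|\nu|}$ on the right, and $(\mathbf 1-Q)$ commutes with $t_r$ as $t_rQ=|q|^2Qt_r$ — I would double check which side the $\bar q$ versus $q$ lands on). By the universal property of $\langle \mathcal O_m^{(0)},\mathbb K\rangle_{\alpha^q}$ this produces a $*$-homomorphism from $\langle \mathcal O_m^{(0)},\mathbb K\rangle_{\alpha^q}$ into $\mathcal E_{n,m}^q$, whose restriction to the subalgebra generated by the $ax$ ($a\in\mathcal O_m^{(0)}$, $x\in\mathbb K$) gives $\Delta_{q,1}\colon \mathcal O_m^{(0)}\otimes_{\alpha^q}\mathbb K\to\mathcal E_{n,m}^q$ with image inside $\mathcal I_1^q$.

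Surjectivity onto $\mathcal I_1^q$ is then immediate from the stated description $\mathcal I_1^q=\mathrm{c.l.s.}\{t_{\mu_2}t_{\nu_2}^*s_{\mu_1}(\mathbf 1-Q)s_{\nu_1}^*\}$: every such spanning element is the image of $t_{\mu_2}t_{\nu_2}^*\,e_{\mu_1\nu_1}$. For injectivity, I would use the Fock representation. Composing $\Delta_{q,1}$ with $\pi_F^q$ and with the isomorphism $\Delta_{\alpha^q}\colon \mathcal O_m^{(0)}\otimes_{\alpha^q}\mathbb K\to\mathcal O_m^{(0)}\otimes\mathbb K$ from the Proposition of \cite{weber} quoted above, it suffices to show the resulting representation of $\mathcal O_m^{(0)}\otimes\mathbb K$ is faithful. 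Using the explicit form of $\pi_F^q$ on $\mathcal F=\mathcal F_n\otimes\mathcal F_m$ (Remark \ref{rem_fock}, say the form $\pi_F^q(s_j)=\pi_{F,n}(s_j)\otimes\mathbf 1$, $\pi_F^q(t_r)=d_n(q)\otimes\pi_{F,m}(t_r)$), one has $\pi_F^q(\mathbf 1-Q)=p_\Omega\otimes\mathbf 1_{\mathcal F_m}$ where $p_\Omega$ is the rank-one projection onto $\mathbb C\Omega_n$, so $\pi_F^q(E_{\mu\nu})$ are the standard matrix units of $\mathbb K(\mathcal F_n)\otimes\mathbf 1_{\mathcal F_m}$, while $\pi_F^q(\mathcal O_m^{(0)})$ acts (up to the $d_n(q)$ twist, which is a unitary and hence does not affect faithfulness after composing with $\Delta_{\alpha^q}$) faithfully on $\mathcal F_m$ via $\pi_{F,m}$. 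Matching this against $\pi_{F,m}\otimes\mathrm{id}_{\mathbb K}$ on $\mathcal O_m^{(0)}\otimes\mathbb K$, which is faithful because $\pi_{F,m}$ is faithful on $\mathcal O_m^{(0)}$ (recalled in the excerpt) and $\otimes\mathbb K$ is exact, gives injectivity of $\Delta_{q,1}$.

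The main obstacle, I expect, is bookkeeping with the $q$-powers and the twisting automorphisms: one has to be careful that the automorphism family $\alpha^q_\mu(t_r)=q^{|\mu|}t_r$ is the correct one (rather than $\bar q^{|\mu|}$ or $q^{-|\mu|}$), that $(\mathbf 1-Q)$ genuinely intertwines correctly with $t_r$ so that $E_{\mu\nu}$ remains in $\mathcal I_1^q$, and that the ``twist'' $d_n(q)$ appearing in the Fock picture is absorbed exactly by $\Delta_{\alpha^q}$ — this is precisely the content of the Remark on $\Delta_\alpha(ax_\mu)=\alpha_\mu(a)\otimes x_\mu$, so it should go through, but it is the step where sign/exponent errors would hide. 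A cleaner alternative for injectivity, avoiding the Fock computation, is to observe that $\mathcal O_m^{(0)}\otimes_{\alpha^q}\mathbb K\cong\mathcal O_m^{(0)}\otimes\mathbb K$ is nuclear and, being stable and with the right structure, has its ideals controlled; but the Fock-representation argument is the most direct and self-contained, so that is the route I would take.
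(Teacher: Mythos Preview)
Your proposal is correct and follows essentially the same approach as the paper: verify the twisted commutation relation $a\,E_{\mu\nu}=E_{\mu\nu}\,\alpha_\nu^{-1}(\alpha_\mu(a))$ to obtain a surjective homomorphism via the universal property of $\langle\mathcal O_m^{(0)},\mathbb K\rangle_{\alpha^q}$, then prove injectivity by identifying $\mathcal I_1^q$ with its faithful Fock image and checking explicitly that $\Delta_{q,1}\circ\Delta_{\alpha^q}^{-1}$ coincides with the restriction of $\pi_F^1$ to $\mathcal O_m^{(0)}\otimes\mathbb K$ (the $d_n(q)$ twist cancels against the $\alpha^q$-untwisting exactly as you anticipate). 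Your hedging about $|q|$ is unnecessary: this proposition lives in the $|q|=1$ section, so $t_rQ=Qt_r$ and $t_rs_j=qs_jt_r$ hold on the nose.
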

\begin{proof}
We note that for any $\mu_1,\nu_1\in\Lambda_n$ and $\mu_2,\nu_2\in\Lambda_m$ one has
\[
t_{\mu_2}t_{\nu_2}^* E_{\mu_1\nu_1}= q^{(|\nu_1|-|\mu_1|)(|\mu_2|-|\nu_2|)}
E_{\mu_1\nu_1}t_{\mu_2}t_{\nu_2}^* = E_{\mu_1\nu_1}
\alpha_{\nu_1}^{-1}(\alpha_{\mu_1}(t_{\mu_2}t_{\nu_2}^*)).
\]
Thus, due  to the universal property of $\langle \mathcal O_m^{(0)}, \mathbb K \rangle_{\alpha^q}$, the correspondence \[ a e_{\mu \nu} \mapsto a E_{\mu \nu} \] determines a surjective homomorphism
$\Delta_{q,1}\colon \mathcal O_m^{(0)}\otimes_{\alpha^q}\mathbb K\rightarrow \mathcal I_1^q$.

It remains to show that $\Delta_{q,1}$ is injective. Since the Fock representation of $\mathcal E_{n,m}^q$ is faithful, we can identify $\mathcal I_1^q$ with $\pi_F^q (\mathcal I_1^q)$. It will be convenient for us to use the following form of the Fock representation, see Remark \ref{rem_fock},
\begin{align*}
\pi_F^q (s_j)&=\pi_{F,n}(s_j)\otimes\mathbf{1}_{\mathcal F_m}
:= S_j\otimes\mathbf 1_{\mathcal F_m},\ j=\overline{1,n}, \\
\pi_F^q (t_r)&= d_n(q)\otimes\pi_{F,m}(t_r)
:= d_n(q)\otimes T_r,\ r=\overline{1,m}.
\end{align*}
In particular, for any $\mu_1,\nu_1\in\Lambda_n,\ \mu_2,\nu_2\in\Lambda_m$
\[
\pi_F^q (t_{\mu_2}t_{\nu_2}^* E_{\mu_1\nu_1})=
d_n(q^{|\mu_2|-|\nu_2|}) S_{\mu_1} (\mathbf 1 - Q)S_{\nu_1}\otimes T_{\mu_2}T_{\nu_2}^*.
\]

Consider $\Delta_{q,1}\circ\Delta_{\alpha^q}^{-1}\colon \mathcal O_m^{(0)}\otimes\mathbb K\rightarrow \pi_F^q (\mathcal I_1^q)$. We intend to show that
\[
\Delta_{q,1}\circ\Delta_{\alpha^q}^{-1}=\pi_F^1,
\]
where $\pi_F^1$ is the restriction of the Fock representation of $\mathcal{O}_n^{(0)}\otimes\mathcal{O}_m^{(0)}$ to $\mathbb K\otimes\mathcal O_m^{(0)}$, and $\mathbb K$ is generated by $E_{\mu\nu}$ specified above. Notice that the family
\[
\{t_{\mu_2}t_{\nu_2}^*\otimes E_{\mu_1\nu_1},\ \mu_1,\nu_1\in\Lambda_n,\ \mu_2,\nu_2\in\Lambda_m\}
\]
generates $\mathcal{O}_m^{(0)}\otimes\mathbb K$. Then
\[
\Delta_{\alpha^q}^{-1}(t_{\mu_2}t_{\nu_2}^*\otimes E_{\mu_1\nu_1})=
\alpha_{\mu_1}^{-1}(t_{\mu_2}t_{\nu_2}^*) e_{\mu_1\nu_1}=
q^{-|\mu_1|(|\mu_2|-|\nu_2|)}t_{\mu_2}t_{\nu_2}^* e_{\mu_1\nu_1},
\]
and
\begin{align*}
\Delta_{q,1}\circ & \Delta_{\alpha^q}^{-1}(t_{\mu_2}t_{\nu_2}^*\otimes E_{\mu_1\nu_1})=
q^{-|\mu_1|(|\mu_2|-|\nu_2|)}\pi_F^q(t_{\mu_2}t_{\nu_2}^* E_{\mu_1\nu_1})
\\
&=q^{-|\mu_1|(|\mu_2|-|\nu_2|)}d_n(q^{|\mu_2|-|\nu_2|})S_{\mu_1}
(\mathbf 1-Q)S_{\nu_1}^*\otimes T_{\mu_2}T_{\nu_2}^*
\\
&=
q^{-|\mu_1|(|\mu_2|-|\nu_2|)}q^{|\mu_1|(|\mu_2|-|\nu_2|)}S_{\mu_1}d_n(q^{|\mu_2|-|\nu_2|})
(\mathbf 1-Q)S_{\nu_1}^*\otimes T_{\mu_2}T_{\nu_2}^*
\\
&=S_{\mu_1}(\mathbf 1-Q)S_{\nu_1}^*\otimes T_{\mu_2}T_{\nu_2}^*=
\pi_F^1 (E_{\mu_1\nu_1}\otimes t_{\mu_2}t_{\nu_2}^*),
\end{align*}
where we used relations $d_n(\lambda)S_j=\lambda S_j d_n(\lambda)$, $j=\overline{1,n}$, $\lambda\in\mathbb C$, and the obvious fact that
\[
d_n(\lambda) (\mathbf 1 -Q)=\mathbf 1 -Q.
\]

To complete the proof we recall that $\pi_F^1$ is a faithful representation of $\mathcal O_n^{(0)}\otimes\mathcal O_{m}^{(0)}$, so its restriction to $\mathbb K\otimes\mathcal O_m^{(0)}$ is also faithful, implying the injectivity of $\Delta_q$.
 \end{proof}

\begin{remark}
Note that any $\mathbb T$-action on the $C^*$-algebra of compact operators is inner. Hence,  Corollary 5.16 and Example 5.17 in \cite{mey_wor} imply that any twisted tensor product with the compact operators is isomorphic to the usual tensor product. This gives a short proof of the above proposition, though without an explicit formula for the isomorphism.
\end{remark}

\begin{remark}\label{rem_iq_1}
Evidently, $\mathcal I_q$ is a closed linear span of the family
\[
\{\, t_{\mu_2}(1-P)t_{\nu_2}^*s_{\mu_1}(\mathbf 1 -Q)s_{\nu_1}^*,\
\mu_1,\nu_1\in\Lambda_n,\ \mu_2,\nu_2\in\Lambda_m\}\subset \mathcal I_1^q.
\]
Moreover, $c.l.s.\{t_{\mu_2}(1-P)t_{\nu_2}^*,\  \mu_2,\nu_2\in\Lambda_m\}=\mathbb K\subset\mathcal O_m^{(0)}$. It is easy to see that
\[
\alpha_{\mu}(t_{\mu_2}(1-P)t_{\nu_2}^*)=q^{|\mu|(|\mu_2|-|\nu_2|)}
t_{\mu_2}(1-P)t_{\nu_2}^*,
\]
so every $\alpha_\mu\in \alpha^q$ can be regarded as an element of $\mathsf{Aut} (\mathbb K)$.
\end{remark}

A moment reflection and Proposition \ref{str_I1q} give the following corollary
\begin{proposition}\label{str_Iq_1}
Restriction of $\Delta_{q,1}$ to $\mathbb{K}\otimes_{\alpha^q}\mathbb K \subset \mathcal{O}_m^{(0)} \otimes_{\alpha^q} \mathbb{K}$ gives an isomorphism \[ \Delta_{q,1} \colon \mathbb{K}\otimes_{\alpha^q}\mathbb K \rightarrow \mathcal I_q . \]
\end{proposition}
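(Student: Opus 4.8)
The plan is to deduce Proposition~\ref{str_Iq_1} directly from Proposition~\ref{str_I1q} by restricting the isomorphism $\Delta_{q,1}$ to a suitable $C^*$-subalgebra and then identifying its image with $\mathcal I_q$. First I would fix notation: let $\mathbb K_0 = c.l.s.\{t_{\mu_2}(1-P)t_{\nu_2}^*,\ \mu_2,\nu_2\in\Lambda_m\}\subset\mathcal O_m^{(0)}$, which by Remark~\ref{rem_iq_1} is a copy of $\mathbb K$ and is invariant under every $\alpha_\mu\in\alpha^q$; I write $\alpha^q$ again for the restricted family in $\mathsf{Aut}(\mathbb K_0)$. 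The inclusion $\iota\colon\mathbb K_0\hookrightarrow\mathcal O_m^{(0)}$ is then equivariant, so by the functoriality of $\otimes_{\alpha^q}\mathbb K$ (diagram \eqref{ktwist_func}) one obtains a $*$-homomorphism $\iota\otimes_{\alpha^q}^{\alpha^q}\colon \mathbb K_0\otimes_{\alpha^q}\mathbb K\to\mathcal O_m^{(0)}\otimes_{\alpha^q}\mathbb K$ fitting into a commuting square with $\iota\otimes\id_{\mathbb K}$ under the isomorphisms $\Delta_{\alpha^q}$. Since $\iota$ is injective and $\mathbb K$ is nuclear, $\iota\otimes\id_{\mathbb K}$ is injective, hence so is $\iota\otimes_{\alpha^q}^{\alpha^q}$ (equivalently, this is the exactness of the functor $\otimes_{\alpha^q}\mathbb K$ applied to the short exact sequence $0\to\mathbb K_0\to\mathcal O_m^{(0)}\to\mathcal O_m^{(0)}/\mathbb K_0\to 0$). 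This justifies the inclusion $\mathbb K\otimes_{\alpha^q}\mathbb K\subset\mathcal O_m^{(0)}\otimes_{\alpha^q}\mathbb K$ occurring in the statement, and makes the phrase ``restriction of $\Delta_{q,1}$'' meaningful.

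Next, the restriction $\Delta_{q,1}\restriction_{\mathbb K\otimes_{\alpha^q}\mathbb K}$ is automatically injective, being the restriction of the injective map of Proposition~\ref{str_I1q}, so it only remains to compute its range. The subalgebra $\mathbb K\otimes_{\alpha^q}\mathbb K\subset\mathcal O_m^{(0)}\otimes_{\alpha^q}\mathbb K$ is the closed linear span of the elements $a\,e_{\mu_1\nu_1}$ with $a\in\mathbb K_0$ and $\mu_1,\nu_1\in\Lambda_n$, that is, of $t_{\mu_2}(1-P)t_{\nu_2}^*\,e_{\mu_1\nu_1}$. Applying $\Delta_{q,1}$, which sends $a\,e_{\mu\nu}\mapsto a\,E_{\mu\nu}$ with $E_{\mu_1\nu_1}=s_{\mu_1}(\mathbf 1-Q)s_{\nu_1}^*$, the image is
\[
c.l.s.\{\, t_{\mu_2}(1-P)t_{\nu_2}^*\, s_{\mu_1}(\mathbf 1-Q)s_{\nu_1}^*,\ \mu_1,\nu_1\in\Lambda_n,\ \mu_2,\nu_2\in\Lambda_m\},
\]
which by Remark~\ref{rem_iq_1} is precisely $\mathcal I_q$. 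Thus $\Delta_{q,1}$ restricts to a bijective $*$-homomorphism $\mathbb K\otimes_{\alpha^q}\mathbb K\to\mathcal I_q$.

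I do not expect a serious obstacle here: the only point requiring a little care is the first one, namely verifying that $\mathbb K\otimes_{\alpha^q}\mathbb K$ genuinely embeds as the claimed subalgebra of $\mathcal O_m^{(0)}\otimes_{\alpha^q}\mathbb K$ rather than merely mapping into it, and this is handled by the functoriality and exactness of $\otimes_{\alpha^q}\mathbb K$ established before Proposition~\ref{str_I1q} together with injectivity of $\iota\otimes\id_{\mathbb K}$. Everything else is the bookkeeping of closed linear spans combined with the explicit formula for $\Delta_{q,1}$.
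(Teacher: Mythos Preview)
Your proposal is correct and follows essentially the same approach as the paper, which merely states that ``a moment reflection and Proposition~\ref{str_I1q}'' yield the result. You have simply spelled out what that moment of reflection consists of: the $\alpha^q$-invariance of $\mathbb K_0\subset\mathcal O_m^{(0)}$ from Remark~\ref{rem_iq_1}, the functorial embedding $\mathbb K\otimes_{\alpha^q}\mathbb K\hookrightarrow\mathcal O_m^{(0)}\otimes_{\alpha^q}\mathbb K$, and the identification of the image of the restricted $\Delta_{q,1}$ with the closed linear span description of $\mathcal I_q$.
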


To deal with $\mathcal I_2^q$, we consider the family $\beta^q=\{\beta_{\mu},\ \mu\in\Lambda_m\}\subset \mathsf{Aut}(\mathcal O_n^{(0)})$ defined as
\[
\beta_\mu (s_j)=q^{-|\mu|}s_j,\ \beta_{\mu}(s_j^*)=q^{|\mu|}s_j^*,\ j=\overline{1,n}.
\]
\begin{proposition}\label{str_I2q}
One has an isomorphism $\Delta_{q,2}\colon\mathcal{O}_n^{(0)}\otimes_{\beta^q}\mathbb K\rightarrow \mathcal I_2^q$.
\end{proposition}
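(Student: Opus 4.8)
The plan is to follow the proof of Proposition~\ref{str_I1q} verbatim, with the two families of isometries exchanged. First I would record the spanning description
\[
\mathcal I_2^q = c.l.s.\,\{\, s_{\mu_1}s_{\nu_1}^* F_{\mu_2\nu_2}\ :\ \mu_1,\nu_1\in\Lambda_n,\ \mu_2,\nu_2\in\Lambda_m\,\},\qquad F_{\mu_2\nu_2}:=t_{\mu_2}(\mathbf 1-P)t_{\nu_2}^*,
\]
and note that the elements $F_{\mu_2\nu_2}$ satisfy the matrix-unit relations and span a copy of $\mathbb K$ sitting inside $\mathcal O_m^{(0)}\subset\mathcal E_{n,m}^q$, exactly as $E_{\mu_1\nu_1}$ did for $\mathcal I_1^q$. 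Using $s_j^*t_r=qt_rs_j^*$ together with $t_rs_j=qs_jt_r$ (valid since $|q|=1$) and the fact that $\mathbf 1-P$ commutes with every $s_j$ and $s_j^*$ (because $P$ does), a short bookkeeping of commutation constants gives
\[
s_{\mu_1}s_{\nu_1}^* F_{\mu_2\nu_2} = q^{(|\nu_1|-|\mu_1|)(|\mu_2|-|\nu_2|)}\, F_{\mu_2\nu_2}\, s_{\mu_1}s_{\nu_1}^* = F_{\mu_2\nu_2}\,\beta_{\nu_2}^{-1}\bigl(\beta_{\mu_2}(s_{\mu_1}s_{\nu_1}^*)\bigr),
\]
which is precisely the defining relation of $\langle\mathcal O_n^{(0)},\mathbb K\rangle_{\beta^q}$. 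Hence the universal property yields a surjective homomorphism $\Delta_{q,2}\colon\mathcal O_n^{(0)}\otimes_{\beta^q}\mathbb K\to\mathcal I_2^q$, $a\,e_{\mu\nu}\mapsto a\,F_{\mu\nu}$.

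For injectivity I would argue as in Proposition~\ref{str_I1q}: since $\pi_F^q$ is faithful, identify $\mathcal I_2^q$ with $\pi_F^q(\mathcal I_2^q)$ and use the third form of the Fock representation from Remark~\ref{rem_fock},
\[
\pi_F^q(s_j)=\pi_{F,n}(s_j)\otimes d_m(q^{-1}),\qquad \pi_F^q(t_r)=\mathbf 1_{\mathcal F_n}\otimes \pi_{F,m}(t_r),
\]
so that $\pi_F^q(F_{\mu_2\nu_2})=\mathbf 1_{\mathcal F_n}\otimes \pi_{F,m}(F_{\mu_2\nu_2})$ acts on the second tensor leg alone, while $\pi_F^q(s_{\mu_1}s_{\nu_1}^*)=\pi_{F,n}(s_{\mu_1}s_{\nu_1}^*)\otimes d_m(q^{|\nu_1|-|\mu_1|})$. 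Composing $\Delta_{q,2}$ with $\Delta_{\beta^q}^{-1}$, which sends $a\otimes e_{\mu\nu}$ to $\beta_\mu^{-1}(a)\,e_{\mu\nu}$, and evaluating on the generating family $\{s_{\mu_1}s_{\nu_1}^*\otimes F_{\mu_2\nu_2}\}$ of $\mathcal O_n^{(0)}\otimes\mathbb K$, the power of $q$ produced by $\beta_{\mu_2}^{-1}$ applied to $s_{\mu_1}s_{\nu_1}^*$ cancels the one produced by pulling $d_m(q^{|\nu_1|-|\mu_1|})$ through $\pi_{F,m}(t_{\mu_2})$ and absorbing it into the vacuum projection $\mathbf 1-P$, using $d_m(\lambda)\pi_{F,m}(t_r)=\lambda\,\pi_{F,m}(t_r)\,d_m(\lambda)$ and $d_m(\lambda)(\mathbf 1-P)=\mathbf 1-P$. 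The upshot is $\Delta_{q,2}\circ\Delta_{\beta^q}^{-1}=\pi_F^1$, where $\pi_F^1$ is the restriction of the (faithful) Fock representation of $\mathcal O_n^{(0)}\otimes\mathcal O_m^{(0)}$ to $\mathcal O_n^{(0)}\otimes\mathbb K$. Since $\Delta_{\beta^q}$ is an isomorphism and $\pi_F^1$ is faithful, $\Delta_{q,2}$ is injective, hence an isomorphism onto $\mathcal I_2^q$.

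The argument is routine; the only delicate point is the bookkeeping of the $q$-exponents, namely checking that the constant $(|\nu_1|-|\mu_1|)(|\mu_2|-|\nu_2|)$ forced by the commutation relations is exactly the one implemented by $\beta_{\nu_2}^{-1}\beta_{\mu_2}$ on $s_{\mu_1}s_{\nu_1}^*$, and that the matching cancellation occurs in the faithfulness step. As in the remark following Proposition~\ref{str_I1q}, one can alternatively obtain the isomorphism abstractly from Corollary~5.16 and Example~5.17 of \cite{mey_wor} (every $\mathbb T$-action on $\mathbb K$ is inner, so a twisted tensor product with $\mathbb K$ is untwisted), at the cost of forgoing the explicit formula.
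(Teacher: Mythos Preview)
Your proposal is correct and follows precisely the approach the paper intends: the paper gives no explicit proof for this proposition, leaving it as the evident analogue of Proposition~\ref{str_I1q} with the roles of the $s$- and $t$-families interchanged (and later writes ``The proof for $\mathcal I_2^q$ is similar''). Your choice of the third Fock-representation form from Remark~\ref{rem_fock} is exactly the right one for the injectivity step, and your bookkeeping of the $q$-exponents (in particular the cancellation $q^{|\mu_2|(|\mu_1|-|\nu_1|)}\cdot q^{|\mu_2|(|\nu_1|-|\mu_1|)}=1$ after pushing $d_m(q^{|\nu_1|-|\mu_1|})$ through $T_{\mu_2}$ and absorbing it into $\mathbf 1-P$) checks out.
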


Obviously, $\Delta_{q,2}$ induces the isomorphism
$\mathbb K\otimes_{\beta^q}\mathbb K\simeq\mathcal I_q$, where the first term is an ideal in $\mathcal O_n^{(0)}$ and the second in $\mathcal O_m^{(0)}$ respectively.

Write
\[
\varepsilon_n\colon\mathbb K\rightarrow \mathcal O_n^{(0)}, \quad \varepsilon_m\colon\mathbb K\rightarrow \mathcal O_m^{(0)},
\]
for the canonical embeddings and
\[
q_n\colon\mathcal O_n^{(0)}\rightarrow\mathcal O_n, \quad q_m\colon\mathcal O_m^{(0)}\rightarrow \mathcal{O}_m,
\]
for the quotient maps. Let also
\[
\varepsilon_{q,j}\colon\mathcal I_q\rightarrow\mathcal I_{j}^q, \quad j=1,2,
\]
be the embeddings and
\[
\pi_{q,j}\colon\mathcal I_{j}^q\rightarrow \mathcal I_j^q/\mathcal I_q, \quad j=1,2,
\]
the quotient maps. Notice also that the families $\alpha^q\subset\mathsf{Aut}(\mathcal O_m^{(0)})$,
$\beta^q\subset\mathsf{Aut}(\mathcal O_n^{(0)})$ determine families of automorphisms of $\mathcal O_m$ and $\mathcal O_n$ respectively, also denoted by $\alpha^q$ and $\beta^q$.

\begin{theorem}\label{i1i2q_comm}
One has the following isomorphism of extensions
\[
\begin{tikzcd}
 0 \arrow[r] & \mathcal{I}_q \arrow[r, "\varepsilon_{q,1}"] \arrow[d,"\Delta_{\alpha^q}\circ\Delta_{q,1}^{-1}"] & \mathcal{I}_1^q \arrow[d, "\Delta_{\alpha^q}\circ\Delta_{q,1}^{-1}"] \arrow[r, "\pi_{q,1}"] & \mathcal{I}_1^q / \mathcal{I}_q \arrow[d, "\simeq"] \arrow[r] & 0 \\
  0 \arrow[r] & \mathbb{K} \otimes \mathbb{K} \arrow[r,  "\varepsilon_m\otimes\id_\mathbb{K}"] &\mathcal{O}_m^0 \otimes \mathbb{K}  \arrow[r, "q_m\otimes\id_{\mathbb K}"] & \mathcal{O}_m\otimes \mathbb{K}  \arrow[r] & 0
\end{tikzcd}
\]
and
\[
\begin{tikzcd}
 0 \arrow[r] & \mathcal{I}_q \arrow[r, "\varepsilon_{q,2}"] \arrow[d,"\Delta_{\beta^q}\circ\Delta_{q,2}^{-1}"] & \mathcal{I}_2^q \arrow[d, "\Delta_{\beta^q}\circ\Delta_{q,2}^{-1}"] \arrow[r,"\pi_{q,2}"] & \mathcal{I}_2^q / \mathcal{I}_q \arrow[d, "\simeq"] \arrow[r] & 0 \\
  0 \arrow[r] & \mathbb{K} \otimes \mathbb{K} \arrow[r, "\varepsilon_n\otimes
  \id_\mathbb{K}"] & \mathcal{O}_n^0\otimes\mathbb{K} \arrow[r, "q_n\otimes\id_{\mathbb K} "] & \mathcal{O}_n\otimes\mathbb{K} \arrow[r] & 0
\end{tikzcd}
\]
\end{theorem}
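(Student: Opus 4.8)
The plan is to obtain both isomorphisms of extensions by applying the twisted tensor product functor $\otimes_{\alpha^q}\mathbb{K}$ (resp. $\otimes_{\beta^q}\mathbb{K}$) to the Toeplitz extension of $\mathcal{O}_m$ (resp. $\mathcal{O}_n$) and then transporting everything to the non-deformed picture via the isomorphisms $\Delta_{\alpha^q}$. I describe the first diagram; the second is obtained verbatim after replacing $(\mathcal{O}_m^{(0)},\alpha^q,\Delta_{q,1})$ by $(\mathcal{O}_n^{(0)},\beta^q,\Delta_{q,2})$ and invoking Proposition \ref{str_I2q}.

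First I would record that the standard exact sequence
\[
0 \longrightarrow \mathbb{K} \xrightarrow{\ \varepsilon_m\ } \mathcal{O}_m^{(0)} \xrightarrow{\ q_m\ } \mathcal{O}_m \longrightarrow 0
\]
is equivariant for the automorphism family $\alpha^q=(\alpha_\mu)_{\mu\in\Lambda_n}$: since $\alpha_\mu$ multiplies each $t_r$ by $q^{|\mu|}$, it preserves the ideal $\mathbb{K}=c.l.s.\{t_{\mu_2}(\mathbf 1-P)t_{\nu_2}^*\}$ of Remark \ref{rem_iq_1}, hence descends to an automorphism family of $\mathcal{O}_m$ also denoted $\alpha^q$, and $\varepsilon_m$, $q_m$ intertwine these actions. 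Applying $\otimes_{\alpha^q}\mathbb{K}$ and using the exactness of this functor established above yields the exact sequence
\[
0 \longrightarrow \mathbb{K}\otimes_{\alpha^q}\mathbb{K} \xrightarrow{\ \varepsilon_m\otimes_{\alpha^q}^{\alpha^q}\id\ } \mathcal{O}_m^{(0)}\otimes_{\alpha^q}\mathbb{K} \xrightarrow{\ q_m\otimes_{\alpha^q}^{\alpha^q}\id\ } \mathcal{O}_m\otimes_{\alpha^q}\mathbb{K} \longrightarrow 0 ,
\]
and the functoriality square \eqref{ktwist_func}, for $\varphi=\varepsilon_m$ and for $\varphi=q_m$, identifies this sequence through the isomorphisms $\Delta_{\alpha^q}$ (one for each of the three algebras) with the bottom row $0\to\mathbb{K}\otimes\mathbb{K}\to\mathcal{O}_m^{(0)}\otimes\mathbb{K}\to\mathcal{O}_m\otimes\mathbb{K}\to0$ of the desired diagram.

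Next I would bring in Propositions \ref{str_I1q} and \ref{str_Iq_1}: $\Delta_{q,1}$ is an isomorphism $\mathcal{O}_m^{(0)}\otimes_{\alpha^q}\mathbb{K}\to\mathcal{I}_1^q$ whose restriction to $\mathbb{K}\otimes_{\alpha^q}\mathbb{K}$ is an isomorphism onto $\mathcal{I}_q$. On the generators $a e_{\mu\nu}$ with $a\in\mathbb{K}$, both $\Delta_{q,1}\circ(\varepsilon_m\otimes_{\alpha^q}^{\alpha^q}\id)$ and $\varepsilon_{q,1}\circ\Delta_{q,1}$ give $a E_{\mu\nu}$, so the left square built out of $\Delta_{q,1}$ commutes; moreover $\mathbb{K}\otimes_{\alpha^q}\mathbb{K}$ is exactly the kernel of $q_m\otimes_{\alpha^q}^{\alpha^q}\id$, hence $\mathcal{I}_q$ is the kernel of the map $\mathcal{I}_1^q\to\mathcal{O}_m\otimes_{\alpha^q}\mathbb{K}$ obtained by conjugating it with $\Delta_{q,1}$, and $\Delta_{q,1}$ therefore induces an isomorphism $\mathcal{I}_1^q/\mathcal{I}_q\xrightarrow{\ \simeq\ }\mathcal{O}_m\otimes_{\alpha^q}\mathbb{K}$ intertwining $\pi_{q,1}$ with $q_m\otimes_{\alpha^q}^{\alpha^q}\id$. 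Post-composing with the isomorphisms $\Delta_{\alpha^q}$ produces the three vertical arrows $\Delta_{\alpha^q}\circ\Delta_{q,1}^{-1}$ (on $\mathcal I_1^q$ and, restricted, on $\mathcal I_q$) and the induced map on the quotient; the commutativity of the two squares in the statement is then precisely the commutativity of \eqref{ktwist_func} for $\varphi=\varepsilon_m$ and $\varphi=q_m$, combined with the previous step.

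I expect the only point requiring genuine care to be the verification that the Toeplitz extension of $\mathcal{O}_m$ is indeed $\alpha^q$-equivariant — i.e.\ that $\alpha^q$ preserves $\mathbb{K}\subset\mathcal{O}_m^{(0)}$ so that it descends to $\mathcal{O}_m$ — together with the bookkeeping that matches $\varepsilon_{q,1}$, $\pi_{q,1}$ with $\varepsilon_m\otimes\id$, $q_m\otimes\id$ along the chain $\Delta_{\alpha^q}\circ\Delta_{q,1}^{-1}$. Once Propositions \ref{str_I1q}--\ref{str_I2q} and the exactness of $\otimes_{\alpha^q}\mathbb{K}$ are granted, there is no further analytic content and the argument is a diagram chase.
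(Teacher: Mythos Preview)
Your argument is correct and follows essentially the same route as the paper: both factor the desired vertical isomorphisms through an intermediate ``twisted'' row $0\to\mathbb{K}\otimes_{\alpha^q}\mathbb{K}\to\mathcal{O}_m^{(0)}\otimes_{\alpha^q}\mathbb{K}\to\mathcal{O}_m\otimes_{\alpha^q}\mathbb{K}\to 0$, use Propositions~\ref{str_I1q}--\ref{str_Iq_1} to match the top row with the middle via $\Delta_{q,1}$, and use the functoriality square~\eqref{ktwist_func} to match the middle with the bottom via $\Delta_{\alpha^q}$, the quotient isomorphism being induced. Your write-up is in fact slightly more explicit than the paper's in checking the $\alpha^q$-equivariance of the Toeplitz sequence and the commutativity of the top-left square on generators.
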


\begin{proof}
Indeed, each  row in diagram (\ref{comm_diag_tw}) below is exact and every non-dashed vertical arrow  is an isomorphism. The bottom left and bottom right squares are commutative due to (\ref{ktwist_func}). The top left square is commutative due to the arguments in the proof of Proposition \ref{str_I1q} combined with Remark \ref{rem_iq_1}. Hence there exists a unique isomomorphism
 \[
 \Phi_{q,1}\colon\mathcal I_1^q/\mathcal I_q\rightarrow\mathcal O_m\otimes_{\alpha^q}\mathbb K,
 \]
 making the diagram (\ref{comm_diag_tw}) commutative
\begin{equation}\label{comm_diag_tw}
\begin{tikzcd}
 0 \arrow[r] & \mathcal{I}_q \arrow[r, "\varepsilon_{q,1}"] \arrow[d, "\Delta_{q,1}^{-1}"] & \mathcal{I}_1^q \arrow[d, "\Delta_{q,1}^{-1}"] \arrow[r, "\pi_{q,1}"] & \mathcal{I}_1^q / \mathcal{I}_q \arrow[d, dashrightarrow, "\Phi_{q,1}"] \arrow[r] & 0 \\
 0 \arrow[r] & \mathbb{K} \otimes_{\alpha^q} \mathbb{K} \arrow[r, "\varepsilon_{m}\otimes_{\alpha^q}^{\alpha^q}"] \arrow[d, "\Delta_{\alpha^q}"] & \mathcal{O}_m^0 \otimes_{\alpha^q}\mathbb{K}  \arrow[d, "\Delta_{\alpha^q}"] \arrow[r, " q_m \otimes_{\alpha_q}^{\alpha^q}"] & \mathcal{O}_m\otimes_{\alpha^q}\mathbb{K}   \arrow[d, "\Delta_{\alpha^q}"] \arrow[r] & 0 \\
  0 \arrow[r] & \mathbb{K} \otimes \mathbb{K} \arrow[r, "\varepsilon_m\otimes\id_\mathbb{K} "] & \mathcal{O}_m^0\otimes\mathbb{K}  \arrow[r, "q_m\otimes\id_{\mathbb K} "] & \mathcal{O}_m\otimes\mathbb{K} \arrow[r] & 0 \\
\end{tikzcd}
\end{equation}
The proof for $\mathcal I_2^q$ is similar.
 \end{proof}

The following Lemma follows from the fact that $\mathcal M_q = \mathcal{I}_1^q + \mathcal{I}_2^q$.

\begin{lemma}\label{lemextmq}
\[
\mathcal{M}_q / \mathcal{I}_q \simeq  \mathcal{I}_1^q/\mathcal{I}_q \oplus \mathcal{I}_2^q/\mathcal{I}_q \simeq \mathcal{O}_m \otimes \mathbb{K}\oplus\mathcal O_n\otimes\mathbb{K}.
\]
\end{lemma}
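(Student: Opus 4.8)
The plan is to combine the elementary decomposition $\mathcal M_q=\mathcal I_1^q+\mathcal I_2^q$ with the identifications of $\mathcal I_j^q/\mathcal I_q$ already obtained in Theorem~\ref{i1i2q_comm}, using nothing beyond standard facts about closed ideals in $C^*$-algebras.

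First I would record that $\mathcal M_q=\mathcal I_1^q+\mathcal I_2^q$. The sum of two closed two-sided ideals in a $C^*$-algebra is again a closed two-sided ideal: the image of $\mathcal I_1^q$ under the quotient map $\mathcal E_{n,m}^q\to\mathcal E_{n,m}^q/\mathcal I_2^q$ is closed (a $*$-homomorphism of $C^*$-algebras has closed range), hence its preimage $\mathcal I_1^q+\mathcal I_2^q$ is closed. This ideal contains $\mathbf 1-Q$ and $\mathbf 1-P$, so it contains $\mathcal M_q$; conversely $\mathcal I_1^q,\mathcal I_2^q\subseteq\mathcal M_q$ since $\mathbf 1-Q,\mathbf 1-P\in\mathcal M_q$, which gives the reverse inclusion.

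Next I would pass to the quotient. Writing $\varrho\colon\mathcal M_q\to\mathcal M_q/\mathcal I_q$ for the quotient map and $\widetilde{\mathcal I}_j:=\varrho(\mathcal I_j^q)$, the sets $\widetilde{\mathcal I}_1,\widetilde{\mathcal I}_2$ are closed two-sided ideals of $\mathcal M_q/\mathcal I_q$ (equal to $\mathcal I_j^q/\mathcal I_q$ since $\mathcal I_q\subseteq\mathcal I_j^q$), and by the previous paragraph $\widetilde{\mathcal I}_1+\widetilde{\mathcal I}_2=\mathcal M_q/\mathcal I_q$. I would then check $\widetilde{\mathcal I}_1\cap\widetilde{\mathcal I}_2=0$: if $x\in\mathcal I_1^q$, $y\in\mathcal I_2^q$ and $x-y\in\mathcal I_q\subseteq\mathcal I_2^q$, then $x\in\mathcal I_1^q\cap\mathcal I_2^q=\mathcal I_q$, so $\varrho(x)=0$. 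Consequently the $*$-homomorphism $(a,b)\mapsto a+b$ from $\widetilde{\mathcal I}_1\oplus\widetilde{\mathcal I}_2$ onto $\mathcal M_q/\mathcal I_q$ is injective (if $a+b=0$ then $a=-b\in\widetilde{\mathcal I}_1\cap\widetilde{\mathcal I}_2=0$), hence an isomorphism, yielding $\mathcal M_q/\mathcal I_q\simeq\mathcal I_1^q/\mathcal I_q\oplus\mathcal I_2^q/\mathcal I_q$. Finally I would invoke Theorem~\ref{i1i2q_comm} together with the isomorphism $\mathcal A\otimes_\alpha\mathbb K\simeq\mathcal A\otimes\mathbb K$ (the Remark following Proposition~\ref{str_I1q}) to rewrite $\mathcal I_1^q/\mathcal I_q\simeq\mathcal O_m\otimes\mathbb K$ and $\mathcal I_2^q/\mathcal I_q\simeq\mathcal O_n\otimes\mathbb K$, completing the argument.

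There is no genuine obstacle here; the proof is purely formal and the only substantive input has already been supplied by Theorem~\ref{i1i2q_comm}. The two points that merit an explicit line are the closedness of $\mathcal I_1^q+\mathcal I_2^q$ in the first step and the vanishing $\widetilde{\mathcal I}_1\cap\widetilde{\mathcal I}_2=0$ in the second, the latter being exactly where the identity $\mathcal I_q=\mathcal I_1^q\cap\mathcal I_2^q$ (noted right after the definition of the four ideals) is used.
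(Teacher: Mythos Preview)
Your proof is correct and is exactly the argument the paper has in mind: the paper merely states that the lemma ``follows from the fact that $\mathcal M_q=\mathcal I_1^q+\mathcal I_2^q$'' without further detail, and you have spelled out the standard decomposition argument (using $\mathcal I_q=\mathcal I_1^q\cap\mathcal I_2^q$) together with the identifications from Theorem~\ref{i1i2q_comm}. One small redundancy: Theorem~\ref{i1i2q_comm} already gives $\mathcal I_1^q/\mathcal I_q\simeq\mathcal O_m\otimes\mathbb K$ directly (the untwisting $\Delta_{\alpha^q}$ is built into its diagram), so the extra appeal to $\mathcal A\otimes_\alpha\mathbb K\simeq\mathcal A\otimes\mathbb K$ is not needed.
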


Theorem \ref{i1i2q_comm} implies that $\mathcal I_q, \mathcal I^q_1, \mathcal I^q_2$ are stable $C^*$-algebras. It follows from \cite{Rordam}, Proposition 6.12, that an extension of a stable $C^*$-algebra by $\mathbb{K}$ is also stable. Thus, Lemma \ref{lemextmq} implies immediately the following important corollary.

\begin{corollary}
For any $q\in\mathbb{C}$, $|q|=1$, the $C^*$-algebra $\mathcal{M}_q$ is stable.
\end{corollary}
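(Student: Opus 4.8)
The plan is to exhibit $\mathcal{M}_q$ as an extension of a stable $C^*$-algebra by the compacts and then invoke the cited stability result. First I would record that $\mathcal{I}_q$ is isomorphic to $\mathbb{K}$. By Proposition \ref{str_Iq_1} (equivalently, by the left-hand vertical arrow in the first diagram of Theorem \ref{i1i2q_comm}) one has $\mathcal{I}_q \simeq \mathbb{K} \otimes \mathbb{K}$, and $\mathbb{K} \otimes \mathbb{K} \simeq \mathbb{K}$ since the spatial tensor product of the compact operators on two separable infinite-dimensional Hilbert spaces is again the algebra of compact operators on a separable infinite-dimensional Hilbert space. Thus $\mathcal{I}_q$ is (isomorphic to) $\mathbb{K}$.

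Next I would check that the quotient $\mathcal{M}_q / \mathcal{I}_q$ is stable. By Lemma \ref{lemextmq} we have $\mathcal{M}_q / \mathcal{I}_q \simeq (\mathcal{O}_m \otimes \mathbb{K}) \oplus (\mathcal{O}_n \otimes \mathbb{K})$. Each summand has the form $\mathcal{A} \otimes \mathbb{K}$ and is therefore stable, and a finite direct sum of stable $C^*$-algebras is stable: the isomorphisms $\mathcal{A}_i \simeq \mathcal{A}_i \otimes \mathbb{K}$ assemble to $\bigoplus_i \mathcal{A}_i \simeq \bigl(\bigoplus_i \mathcal{A}_i\bigr) \otimes \mathbb{K}$. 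Hence $\mathcal{M}_q / \mathcal{I}_q$ is stable.

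Finally I would apply the cited result to the short exact sequence
\[
0 \longrightarrow \mathcal{I}_q \longrightarrow \mathcal{M}_q \longrightarrow \mathcal{M}_q / \mathcal{I}_q \longrightarrow 0,
\]
which exhibits $\mathcal{M}_q$ as an extension of the stable separable $C^*$-algebra $\mathcal{M}_q / \mathcal{I}_q$ by $\mathcal{I}_q \simeq \mathbb{K}$; by \cite{Rordam}, Proposition 6.12, $\mathcal{M}_q$ is then stable.

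I do not expect a genuine obstacle here, since the substantive work is already done in Theorem \ref{i1i2q_comm}, Lemma \ref{lemextmq}, and the external stability theorem. The only points meriting a word of care are the identification $\mathbb{K} \otimes \mathbb{K} \simeq \mathbb{K}$, the (routine) stability of a finite direct sum of stable algebras, and the observation that all $C^*$-algebras occurring are separable, so that the hypotheses of \cite{Rordam}, Proposition 6.12, are satisfied.
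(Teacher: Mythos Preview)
Your proposal is correct and follows essentially the same approach as the paper: identify $\mathcal{I}_q\simeq\mathbb{K}$ via Theorem~\ref{i1i2q_comm}, use Lemma~\ref{lemextmq} to see that $\mathcal{M}_q/\mathcal{I}_q$ is stable, and then apply R{\o}rdam's Proposition~6.12 to the extension $0\to\mathcal{I}_q\to\mathcal{M}_q\to\mathcal{M}_q/\mathcal{I}_q\to 0$. Your write-up is in fact a bit more explicit than the paper's, which simply remarks that the corollary follows immediately from these ingredients.
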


Denote the Calkin algebra by $Q$. Recall that for $C^*$-algebras $\mathcal A$ and $\mathcal B$ the isomorphism
\[
\mathsf{Ext}( \mathcal A \oplus\mathcal  B,\mathbb{K}) \simeq \mathsf{Ext}(\mathcal A,\mathbb{K}) \oplus \mathsf{Ext}(\mathcal B,\mathbb{K})
\]
is given as follows. Let
\[
\iota_1 : \mathcal A \rightarrow \mathcal A \oplus\mathcal B,\quad  \iota_1(a) = (a,0),\quad
\iota_2 :\mathcal B \rightarrow \mathcal A \oplus \mathcal B,\quad \iota_2(b) = (0,b).
\]
For a Busby invariant $\tau : \mathcal A \oplus\mathcal B \rightarrow Q$ define
\[\mathsf{F} : \mathsf{Ext}(\mathcal A \oplus \mathcal B, \mathbb{K}) \rightarrow \mathsf{Ext}(\mathcal A, \mathbb{K}) \oplus \mathsf{Ext}(\mathcal B, \mathbb{K}),\quad
\mathsf{F}(\tau) = (\tau \circ \iota_1, \tau \circ \iota_2).
\]
It can be shown, see \cite{higson}, that $\mathsf{F}$ determines a group isomorphism.

\begin{remark}\label{rem_beta}
Consider an extension
\begin{equation}\label{ext_busby}
\begin{tikzcd}
  0 \arrow[r] & \mathcal B \arrow[r] & \mathcal E \arrow[r] & \mathcal A \arrow[r]  & 0
\end{tikzcd}
\end{equation}
Let $i\colon\mathcal B \to M(\mathcal B)$ be the canonical embedding. Define $\beta\colon\mathcal E\rightarrow M(\mathcal B)$, to be the unique map such that
\[
\beta(e)i(b) = i(eb),\quad\mbox{for every}\ b \in \mathcal B,\ 
e\in\mathcal E.
\]
Then the Busby invariant $\tau$ is the unique map which makes the diagram commute.
\[
\begin{tikzcd}
 0 \arrow[r] & \mathcal B \arrow[r, "i"] & M(\mathcal B) \arrow[r] & M(\mathcal B)/\mathcal B \arrow[r] & 0 \\
 0 \arrow[r] & \mathcal B \arrow[r]\arrow[equal]{u} & \mathcal E \arrow[r]  \arrow[u, "\beta"]& \mathcal A \arrow[r] \arrow[u, "\tau"] & 0
\end{tikzcd}
\]
We will use both notations $[\mathcal E]$ and $[\tau]$ in order to denote the class of the extension (\ref{ext_busby}) in 
$\mathsf{Ext}(\mathcal A,\mathcal B)$.
\end{remark}

Let $[\mathcal{M}_q] \in \mathsf{Ext}( \mathcal{I}_1^q/\mathcal{I}_q \oplus \mathcal{I}_2^q/\mathcal{I}_q, \mathcal I_q)$, $[\mathcal I_1^q] \in \mathsf{Ext}(\mathcal I_1^q / \mathcal{I}_q, \mathcal{I}_q)$, $[\mathcal I_2^q] \in \mathsf{Ext}(\mathcal I_2^q / \mathcal{I}_q, \mathcal{I}_q)$ respectively be the classes of the following extensions
\begin{gather*}
0 \rightarrow \mathcal I_q \rightarrow \mathcal M_q \rightarrow \mathcal{I}_1^q/\mathcal{I}_q \oplus \mathcal{I}_2^q/\mathcal{I}_q \rightarrow 0,
\\
0 \rightarrow \mathcal I_q \rightarrow \mathcal I^q_1 \rightarrow \mathcal I^q_1 / \mathcal I_q \rightarrow 0,
\\
0 \rightarrow \mathcal I_q \rightarrow \mathcal I^q_2 \rightarrow \mathcal I^q_2 / \mathcal I_q \rightarrow 0.
\end{gather*}

\begin{lemma}\label{lem_qdecomp}
\[
[\mathcal{M}_q] = ([\mathcal{I}_1^q], [\mathcal{I}_2^q]) \in \mathsf{Ext}(\mathcal I_1^q / \mathcal{I}_q, \mathcal{I}_q) \oplus \mathsf{Ext}(\mathcal I_2^q / \mathcal{I}_q, \mathcal{I}_q) \simeq \mathsf{Ext}( \mathcal{I}_1^q/\mathcal{I}_q \oplus \mathcal{I}_2^q/\mathcal{I}_q,\mathcal{I}_q).
\]
\end{lemma}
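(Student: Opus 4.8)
The plan is to unwind the definition of the isomorphism
\[
\mathsf F\colon \mathsf{Ext}\bigl(\mathcal I_1^q/\mathcal I_q\oplus\mathcal I_2^q/\mathcal I_q,\ \mathcal I_q\bigr)\longrightarrow \mathsf{Ext}\bigl(\mathcal I_1^q/\mathcal I_q,\mathcal I_q\bigr)\oplus\mathsf{Ext}\bigl(\mathcal I_2^q/\mathcal I_q,\mathcal I_q\bigr)
\]
recalled just before the lemma, and to check directly that it sends the class $[\mathcal M_q]$ to $([\mathcal I_1^q],[\mathcal I_2^q])$. Since $\mathsf F([\mathcal M_q])=([\tau_{\mathcal M_q}\circ\iota_1],[\tau_{\mathcal M_q}\circ\iota_2])$, where $\tau_{\mathcal M_q},\tau_{\mathcal I_1^q},\tau_{\mathcal I_2^q}$ denote the Busby invariants of the three extensions in the statement and $\iota_1,\iota_2$ are the coordinate inclusions into the direct sum, it suffices to prove the two equalities
\[
\tau_{\mathcal M_q}\circ\iota_1=\tau_{\mathcal I_1^q},\qquad \tau_{\mathcal M_q}\circ\iota_2=\tau_{\mathcal I_2^q}.
\]

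The first step is to pin down the quotient map of the extension $0\to\mathcal I_q\to\mathcal M_q\to\mathcal I_1^q/\mathcal I_q\oplus\mathcal I_2^q/\mathcal I_q\to 0$. Here I would use the facts already invoked for Lemma~\ref{lemextmq}: $\mathcal I_q\subset\mathcal I_1^q,\mathcal I_2^q$, $\mathcal I_1^q\cap\mathcal I_2^q=\mathcal I_q$, and $\mathcal M_q=\mathcal I_1^q+\mathcal I_2^q$. Then the images $J_j:=\mathcal I_j^q/\mathcal I_q$ in $\mathcal M_q/\mathcal I_q$ are closed ideals with $J_1\cap J_2=0$ and $J_1+J_2=\mathcal M_q/\mathcal I_q$, so $\mathcal M_q/\mathcal I_q=J_1\oplus J_2\simeq \mathcal I_1^q/\mathcal I_q\oplus\mathcal I_2^q/\mathcal I_q$; and under this identification the inclusion $\iota_j$ is precisely the map induced on quotients by the $*$-embedding $\mathcal I_j^q\hookrightarrow\mathcal M_q$.

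The second step is the computation of $\tau_{\mathcal M_q}\circ\iota_j$ via Remark~\ref{rem_beta}. Given $x\in\mathcal I_j^q/\mathcal I_q$, I would pick a lift $\tilde x\in\mathcal I_j^q$; by the previous step $\tilde x$, regarded as an element of $\mathcal M_q$, lifts $\iota_j(x)$, so $\tau_{\mathcal M_q}(\iota_j(x))$ is the image in $M(\mathcal I_q)/\mathcal I_q$ of the multiplier $\beta_{\mathcal M_q}(\tilde x)$, characterised by $\beta_{\mathcal M_q}(\tilde x)\,i(b)=i(\tilde x b)$ for $b\in\mathcal I_q$. Since $\tilde x$ already lies in $\mathcal I_j^q$ and $\mathcal I_q$ is an ideal, the same formula defines $\beta_{\mathcal I_j^q}(\tilde x)$, whence $\beta_{\mathcal M_q}(\tilde x)=\beta_{\mathcal I_j^q}(\tilde x)$ in $M(\mathcal I_q)$; passing to the corona gives $\tau_{\mathcal M_q}(\iota_j(x))=\tau_{\mathcal I_j^q}(x)$, the independence of the lift being immediate since two lifts differ by an element of $\mathcal I_q\subset M(\mathcal I_q)$. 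This yields $\tau_{\mathcal M_q}\circ\iota_j=\tau_{\mathcal I_j^q}$ for $j=1,2$, hence $\mathsf F([\mathcal M_q])=([\mathcal I_1^q],[\mathcal I_2^q])$.

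I do not expect a genuinely hard step: the lemma is a naturality statement for the splitting of $\mathsf{Ext}$ over a direct sum. The only point requiring care is the bookkeeping in the first step — verifying that the abstract decomposition $\mathcal M_q/\mathcal I_q\simeq\mathcal I_1^q/\mathcal I_q\oplus\mathcal I_2^q/\mathcal I_q$ really intertwines the quotient map $\mathcal I_j^q\to\mathcal M_q/\mathcal I_q$ with the coordinate inclusion $\iota_j$ — together with the (essentially formal) observation that $\beta_{\mathcal M_q}$ restricts to $\beta_{\mathcal I_j^q}$ on $\mathcal I_j^q$, which holds because the left/right module action of $\mathcal I_j^q$ on $\mathcal I_q$ is just the restriction of that of $\mathcal M_q$.
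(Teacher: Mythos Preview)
Your proposal is correct and follows essentially the same approach as the paper: both arguments verify $\tau_{\mathcal M_q}\circ\iota_j=\tau_{\mathcal I_j^q}$ by showing that the multiplier map $\beta_{\mathcal M_q}$ restricts to $\beta_{\mathcal I_j^q}$ on $\mathcal I_j^q$ (the paper phrases this as $\beta_2\circ j_1=\beta_1$ and invokes uniqueness, whereas you check it directly from the defining formula $\beta(e)i(b)=i(eb)$), and then pass to the corona. The paper wraps the bookkeeping in a large commutative diagram while you do the element chase explicitly, but the content is the same.
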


\begin{proof}
Consider the following morphism of extensions:
\[
\begin{tikzcd}[row sep=scriptsize, column sep=scriptsize]
& \mathcal{I}_q \arrow[dl, equal] \arrow[rr, "i"] \arrow[dd, equal] & & M(\mathcal{I}_q) \arrow[dd, equal] \arrow[rr] & & M(\mathcal{I}_q)/\mathcal{I}_q \arrow[dd, equal] \\
\mathcal{I}_q \arrow[rr, crossing over] \arrow[dd, equal] & & \mathcal{I}_1^q \arrow[ur, "\beta_1"] \arrow[rr,  crossing over] & & \mathcal{I}_1^q/\mathcal{I}_q  \arrow[ur, "\tau_{\mathcal{I}_1^q}"] \\
& \mathcal{I}_q \arrow[dl, equal] \arrow[rr] & & M(\mathcal{I}_q)  \arrow[rr] & & M(\mathcal{I}_q) / \mathcal{I}_q \arrow[from=dl, "\tau_{\mathcal{M}_q}"] \\
\mathcal{I}_q \arrow[rr] & & \mathcal{M}_q \arrow[ur, "\beta_2"] \arrow[from=uu, crossing over, hookrightarrow] \arrow[rr] & & \mathcal{I}_1^q/\mathcal{I}_q \oplus \mathcal{I}_2^q/\mathcal{I}_q \arrow[from=uu, crossing over] \\
\end{tikzcd}
\]
Here
\[
\beta_1\colon\mathcal I_1^q \rightarrow M(\mathcal I_q),\quad \beta_2\colon\mathcal M_q \rightarrow M(\mathcal I_q),
\]
are homomorphisms introduced in Remark \ref{rem_beta}, the vertical arrow
\[
j_1\colon \mathcal{I}_1^q \hookrightarrow \mathcal{M}_q
\]
is the inclusion, and the vertical arrow
\[
\iota_1 : \mathcal{I}_1^q / \mathcal{I}_q \rightarrow \mathcal{I}_1^q / \mathcal{I}_q \oplus \mathcal{I}_2^q / \mathcal{I}_q
\]
has the form $\iota_1(x) = (x, 0)$.

Notice that for every $b \in  \mathcal{I}_q$ and
$x \in \mathcal{I}_1^q$ one has
\[
(\beta_2 \circ j_1)(x) i(b) = i(j_1(x)b) = i(xb)=\beta_1(x)i(b).
\]
By the uniqueness of $\beta_1$, we get $\beta_2 \circ j_1 = \beta_1$. Thus the following diagram commutes
\[
\begin{tikzcd}
\mathcal I_1^q \arrow[r, "\beta_1"] \arrow[d, hookrightarrow] & M(\mathcal I_q) \arrow[d, equal] \\
 \mathcal M_q \arrow[r, "\beta_2"] & M(\mathcal I_q)
\end{tikzcd}
\]
Further, Remark \ref{rem_beta} implies that for  Busby invariants $\tau_{\mathcal I_1^q}$ and $\tau_{\mathcal M_q}$  the squares below are commutative
\[
\begin{tikzcd}
 M(\mathcal I_q) \arrow[r] & M(\mathcal I_q)/\mathcal I_q \\
 \mathcal I_1^q \arrow[r]  \arrow[u, "\beta_1"]& \mathcal I_1^q/\mathcal I_q \arrow[u, "\tau_{\mathcal I_1^q}"]
\end{tikzcd}\quad
\begin{tikzcd}
 M(\mathcal I_q) \arrow[r] & M(\mathcal I_q)/\mathcal I_q \\
 \mathcal M_q \arrow[r]  \arrow[u, "\beta_2"]& \mathcal I_1^q/\mathcal I_q
 \oplus \mathcal I_2^q/\mathcal I_q\arrow[u, "\tau_{\mathcal M_q}"]
\end{tikzcd}
\]
Hence the square
\[
\begin{tikzcd}
 \mathcal I_1^q/\mathcal I_q \arrow[r,"\tau_{\mathcal I_1^q}"] \arrow[d,"\iota_1"]& M(\mathcal I_q)/\mathcal I_q \arrow[d,equal]\\
 \mathcal I_1^q/\mathcal I_q\oplus\mathcal I_2^q/\mathcal I_q \arrow[r, "\tau_{\mathcal M_q}"] & M(\mathcal I_q)/\mathcal I_q
\end{tikzcd}
\]
is also commutative. Thus,
$\tau_{\mathcal{I}_1^q} = \tau_{\mathcal{M}_q} \circ \iota_1$. By the same arguments we get $\tau_{\mathcal I_2^q}=\tau_{\mathcal M_q}\circ\iota_2$, where
\[
\iota_2\colon\mathcal I_2^q/\mathcal I_q\rightarrow \mathcal{I}_1^q / \mathcal{I}_q \oplus \mathcal{I}_2^q / \mathcal{I}_q,\quad \iota_2 (y)=(0,y).
\]
Thus
\[ [\tau_{\mathcal{M}_q}] = ([\tau_{\mathcal{M}_q}\circ\iota_1], [\tau_{\mathcal{M}_q} \circ\iota_2]) = ([\tau_{\mathcal{I}_1^q}], [\tau_{\mathcal{I}_2^q}]).
\]
 \end{proof}

In the following theorem we give a description of all ideals in $\mathcal E_{n,m}^q$.

\begin{theorem}\label{ideals_enmq}
Any ideal $J\subset\mathcal E_{n,m}^q$ coincides with one of $\mathcal I_q$,
$\mathcal I_1^q$, $\mathcal I_2^q$, $\mathcal M_q$.
\end{theorem}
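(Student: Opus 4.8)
The plan is to prove that the only proper nonzero (closed, two-sided) ideals of $\mathcal E_{n,m}^q$ are the four listed, by first showing that every nonzero ideal contains $\mathcal I_q$ and then reading off the ideal lattice of the quotient $\mathcal E_{n,m}^q/\mathcal I_q$ from the structural results obtained above.

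First I would establish that $\mathcal I_q$ is contained in every nonzero ideal. By Proposition~\ref{str_Iq_1} together with the isomorphism $\mathbb K\otimes_{\alpha^q}\mathbb K\simeq\mathbb K\otimes\mathbb K\simeq\mathbb K$, the algebra $\mathcal I_q$ is isomorphic to the compact operators, in particular it is simple. It is also essential in $\mathcal E_{n,m}^q$: the Fock representation $\pi_F^q$ is faithful and irreducible, so $\pi_F^q(\mathcal I_q)$ is a nonzero ideal of the irreducibly acting $C^*$-algebra $\pi_F^q(\mathcal E_{n,m}^q)$, hence itself acts irreducibly on $\mathcal F$; being isomorphic to $\mathbb K$ and acting irreducibly on the infinite-dimensional space $\mathcal F$, it must equal $\mathbb K(\mathcal F)$, and then $a\mathcal I_q=0$ forces $\pi_F^q(a)\,\mathbb K(\mathcal F)=0$, i.e.\ $a=0$. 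Consequently, if $J$ is a nonzero ideal, then $J\cap\mathcal I_q\ne 0$ by essentiality and $J\cap\mathcal I_q=\mathcal I_q$ by simplicity, so $\mathcal I_q\subseteq J$.

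It therefore suffices to classify the ideals of $\bar{\mathcal E}:=\mathcal E_{n,m}^q/\mathcal I_q$. Write $\bar{\mathcal M}:=\mathcal M_q/\mathcal I_q$. By Lemma~\ref{lemextmq} (equivalently Theorem~\ref{i1i2q_comm}) we have $\bar{\mathcal M}\simeq A_1\oplus A_2$, where $A_1:=\mathcal I_1^q/\mathcal I_q\simeq\mathcal O_m\otimes\mathbb K$ and $A_2:=\mathcal I_2^q/\mathcal I_q\simeq\mathcal O_n\otimes\mathbb K$; since $\mathcal O_n$ and $\mathcal O_m$ are simple, $A_1$ and $A_2$ are simple, and each is non-unital (a nonzero stable algebra). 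Moreover $\bar{\mathcal E}/\bar{\mathcal M}=\mathcal E_{n,m}^q/\mathcal M_q=\mathcal O_n\otimes_q\mathcal O_m$ is simple. Now let $\bar J$ be an ideal of $\bar{\mathcal E}$; its image in $\bar{\mathcal E}/\bar{\mathcal M}$ is either $\{0\}$ or everything. If it is $\{0\}$, then $\bar J$ is an ideal of $\bar{\mathcal M}\simeq A_1\oplus A_2$; an ideal of a direct sum of two simple algebras is one of $0,A_1,A_2,A_1\oplus A_2$, and these pull back to $J\in\{\mathcal I_q,\mathcal I_1^q,\mathcal I_2^q,\mathcal M_q\}$, as desired. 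If the image is everything, i.e.\ $\bar J+\bar{\mathcal M}=\bar{\mathcal E}$, then the composite $\bar{\mathcal M}\to\bar{\mathcal E}\to\bar{\mathcal E}/\bar J$ is surjective, so $\bar{\mathcal E}/\bar J$ is a quotient of $A_1\oplus A_2$; the only such quotients are $0,A_1,A_2,A_1\oplus A_2$, and among these only $0$ is unital. But $\bar{\mathcal E}/\bar J$ is also a quotient of $\bar{\mathcal E}$, which is unital (a proper quotient of the unital algebra $\mathcal E_{n,m}^q$, proper since $\bar{\mathcal M}\ne 0$). Hence $\bar{\mathcal E}/\bar J=0$, i.e.\ $\bar J=\bar{\mathcal E}$ and $J=\mathcal E_{n,m}^q$. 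Putting the cases together gives the claim.

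The main obstacle is this last case $\bar J+\bar{\mathcal M}=\bar{\mathcal E}$: one has to exclude an ideal that surjects onto $\mathcal O_n\otimes_q\mathcal O_m$ while meeting $\mathcal M_q$ in a proper ideal of it. The unitality-versus-stability dichotomy above settles it, but it genuinely relies on the simplicity of $\mathcal O_n\otimes_q\mathcal O_m$ and on the identification $\mathcal M_q/\mathcal I_q\simeq\mathcal O_n\otimes\mathbb K\oplus\mathcal O_m\otimes\mathbb K$ from Theorem~\ref{i1i2q_comm}/Lemma~\ref{lemextmq}, so the substantive work is already packaged in those earlier results; the present argument is then essentially a bookkeeping of ideal lattices. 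An alternative, more computational route would establish directly that $\mathcal M_q/\mathcal I_q$ is an essential ideal of $\bar{\mathcal E}$ and that $\mathcal I_2^q/\mathcal I_1^q$ is essential in $\mathcal E_{n,m}^q/\mathcal I_1^q$, via the explicit Fock picture behind Theorem~\ref{wold_dec}, but checking these essentiality statements is considerably less clean.
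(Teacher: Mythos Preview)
Your proof is correct and takes a genuinely different route from the paper's. The paper argues via representation theory: given a nonzero proper ideal $\mathcal J$, it picks a representation $\pi$ with $\ker\pi=\mathcal J$, notes that the Fock component in the Wold decomposition (Theorem~\ref{wold_dec}) must vanish (since $\pi_F^q$ is faithful), and then computes $\ker\pi_2=\mathcal I_1^q$, $\ker\pi_3=\mathcal I_2^q$, $\ker\pi_4=\mathcal M_q$ by inspecting which of $\mathbf 1-Q$, $\mathbf 1-P$ is annihilated; the possible intersections of these kernels give exactly the four ideals. You instead work purely with the ideal lattice: you first pin down $\mathcal I_q$ as the unique minimal ideal directly (simple plus essential, via the faithful irreducible Fock representation), and then dispose of the dangerous case $\bar J+\bar{\mathcal M}=\bar{\mathcal E}$ by the observation that $\bar{\mathcal E}/\bar J$ would be simultaneously unital (as a quotient of the unital $\bar{\mathcal E}$) and a quotient of the stable, hence non-unital, algebra $\bar{\mathcal M}\simeq(\mathcal O_m\otimes\mathbb K)\oplus(\mathcal O_n\otimes\mathbb K)$, forcing it to vanish. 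This single stroke replaces both the Wold-decomposition analysis and the explicit invertibility argument that the paper gives separately in Corollary~\ref{mq_unique} to show $\mathcal M_q$ is the largest ideal. The paper's approach has the merit of exhibiting concretely, for each ideal, what the corresponding quotient representation looks like; your argument is shorter and makes transparent that the only structural inputs needed are $\mathcal I_q\simeq\mathbb K$, the identification of $\mathcal M_q/\mathcal I_q$ from Lemma~\ref{lemextmq}, and the simplicity of $\mathcal E_{n,m}^q/\mathcal M_q$.
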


\begin{proof}
First we notice that $\mathcal I_1^q/\mathcal I_q\simeq \mathcal O_m\otimes\mathbb K$, $\mathcal I_2^q/\mathcal I_q\simeq \mathcal O_n\otimes\mathbb K$ are simple. Hence for any ideal $\mathcal J$ such that $\mathcal I_q\subseteq\mathcal J\subseteq\mathcal I_1^q$ or  $\mathcal I_q\subseteq\mathcal J\subseteq\mathcal I_2^q$,
one has  $\mathcal J=\mathcal I_q$, or $\mathcal J=\mathcal I_1^q$, or $\mathcal J=\mathcal I_2^q$.

Further, using the fact that $\mathcal M_q=\mathcal I_1^q+\mathcal I_2^q$ and $\mathcal I_q=\mathcal I_1^q\cap\mathcal I_2^q$ we get
\[
\mathcal M_q/\mathcal I_1^q\simeq\mathcal I_2^q/\mathcal I_q\simeq\mathcal O_n\otimes\mathbb K.
\]
So if $\mathcal I_1^q \subseteq \mathcal J\subseteq\mathcal M_q$, then again either
$\mathcal J=\mathcal I_1^q$ or $\mathcal J=\mathcal M_q$. Obviously, the same result holds for $\mathcal I_2^q \subseteq \mathcal J\subseteq\mathcal M_q$.

Below, see Proposition \ref{OmnIsPurelyInfinite}, we show that $\mathcal E_{n,m}^q/\mathcal M_q$ is simple and purely infinite. In particular, $\mathcal M_q$ contains any ideal in $\mathcal E_{n,m}^q$, see Corollary \ref{mq_unique}.

Let $\mathcal J\subset \mathcal E_{n,m}^q$ be an ideal and $\pi$ be a representation of $\mathcal{E}_{n,m}^q$ such that $\ker \pi = \mathcal{J}$. Notice that the Fock component $\pi_1$ in the Wold decomposition of $\pi$ is zero. Thus, by Theorem \ref{wold_dec},
\begin{equation}\label{pi_decomp}
\pi=\pi_2\oplus\pi_3\oplus\pi_4,
\end{equation}
and $\mathcal J=\ker\pi=\ker\pi_2\cap\ker\pi_3\cap\ker\pi_4$. Let us describe these kernels. Suppose that the component $\pi_2$ is non-zero. Since $\pi_2(\mathbf 1-Q)=0$ and $\pi_2(\mathbf 1-P)\ne 0$, we have
\[
\mathcal I_1^q\subseteq\ker\pi_2\subsetneq\mathcal M_q,
\]
implying $\ker\pi_2=\mathcal I_1^q$. Using the same arguments, one can deduce that if the component $\pi_3$ is non-zero, then $\ker\pi_3=\mathcal I_2^q$, and if $\pi_4$ is non-zero, then $\ker\pi_4=\mathcal M_q$.

Finally, if in (\ref{pi_decomp}) $\pi_2$ and $\pi_3$ are non-zero then $\mathcal J=\ker\pi=\mathcal I_q$. If either $\pi_2 \neq 0$ and $\pi_3 = 0$ or $\pi_3\neq 0$ and $\pi_2=0$, then either  $\mathcal J=\mathcal I_1^q$ or $\mathcal J=\mathcal I_2^q$. In the case $\pi_2=0$ and $\pi_3=0$ one has $\mathcal J=\ker\pi_4=\mathcal M_q$.
 \end{proof}

\begin{corollary}\label{essential}
 All ideals in $\mathcal E_{n,m}^q$ are essential. The ideal $\mathcal I^q$ is the unique minimal ideal.
\end{corollary}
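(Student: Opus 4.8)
The plan is to deduce both claims from the ideal structure obtained in Theorem~\ref{ideals_enmq}, namely that the complete list of ideals is $\mathcal I_q\subsetneq\mathcal I_1^q,\mathcal I_2^q\subsetneq\mathcal M_q$, together with the isomorphisms $\mathcal I_1^q/\mathcal I_q\simeq\mathcal O_m\otimes\mathbb K$ and $\mathcal I_2^q/\mathcal I_q\simeq\mathcal O_n\otimes\mathbb K$ from Theorem~\ref{i1i2q_comm}, and the structural description of $\mathcal I_q\simeq\mathbb K\otimes_{\alpha^q}\mathbb K\simeq\mathbb K\otimes\mathbb K\simeq\mathbb K$ from Proposition~\ref{str_Iq_1}. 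Recall that an ideal $J$ in a $C^*$-algebra $\mathcal A$ is \emph{essential} iff $J$ has nonzero intersection with every nonzero ideal of $\mathcal A$; equivalently, for the four ideals here, it suffices (since the lattice of ideals is totally ordered once one removes the incomparable pair $\mathcal I_1^q,\mathcal I_2^q$) to check that each ideal in the list contains, or has nonzero intersection with, $\mathcal I_q$.

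First I would record that $\mathcal I_q$ is the unique minimal ideal. Indeed, by Theorem~\ref{ideals_enmq} the only ideals are $\mathcal I_q$, $\mathcal I_1^q$, $\mathcal I_2^q$, $\mathcal M_q$, and from the definitions $\mathcal I_q=\mathcal I_1^q\cap\mathcal I_2^q\subseteq\mathcal I_1^q\subseteq\mathcal M_q$ and likewise $\mathcal I_q\subseteq\mathcal I_2^q\subseteq\mathcal M_q$; moreover $\mathcal I_q\neq 0$ since it contains the nonzero element $(\mathbf 1-Q)(\mathbf 1-P)$ (its image under the faithful Fock representation is the rank-one-tensor projection $(\mathbf 1-Q)\otimes(\mathbf 1-P)$ onto $\mathbb C\Omega_n\otimes\mathbb C\Omega_m$, which is nonzero). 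Hence every nonzero ideal of $\mathcal E_{n,m}^q$ contains $\mathcal I_q$, so $\mathcal I_q$ is minimal, and it is the unique minimal ideal.

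Next, for essentiality: let $J$ be any nonzero ideal of $\mathcal E_{n,m}^q$ and let $J'$ be any other nonzero ideal. By the previous paragraph both $J$ and $J'$ contain $\mathcal I_q$, so $J\cap J'\supseteq\mathcal I_q\neq 0$. Since this holds for every nonzero $J'$, the ideal $J$ is essential. As $J$ was an arbitrary nonzero ideal, all ideals of $\mathcal E_{n,m}^q$ are essential; in particular $\mathcal I_q$ itself is essential, which is consistent with (and stronger than) being the unique minimal ideal. I would present this as the short paragraph: ``Let $J\ne 0$ be an ideal. By Theorem~\ref{ideals_enmq} and the inclusions $\mathcal I_q\subseteq\mathcal I_1^q,\mathcal I_2^q\subseteq\mathcal M_q$ we have $\mathcal I_q\subseteq J$; since $\mathcal I_q\ne 0$ (it contains $(\mathbf 1-Q)(\mathbf 1-P)$, which is nonzero in the faithful Fock representation), $J$ has nonzero intersection with every nonzero ideal, i.e.\ $J$ is essential. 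The same computation shows $\mathcal I_q$ is the unique minimal ideal.''

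There is essentially no obstacle here, since Theorem~\ref{ideals_enmq} has already done the real work of classifying the ideals; the only point that needs a line of justification is the non-vanishing of $\mathcal I_q$, and for that I would explicitly invoke the faithfulness of $\pi_F^q$ (proved earlier) and compute $\pi_F^q((\mathbf 1-Q)(\mathbf 1-P))$ on $\mathcal F_n\otimes\mathcal F_m$ in one of the forms of Remark~\ref{rem_fock}: it is the orthogonal projection onto $\mathbb C\Omega$, hence nonzero. One might also remark, for emphasis, that $\mathcal E_{n,m}^q$ therefore admits a faithful irreducible representation on $\mathcal F_n\otimes\mathcal F_m$ restricting faithfully to every ideal, but this is not needed for the statement.
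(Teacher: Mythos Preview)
Your argument is correct and matches the paper's approach: the corollary is stated without proof there, being regarded as an immediate consequence of the classification in Theorem~\ref{ideals_enmq} and the inclusions $\mathcal I_q\subseteq\mathcal I_1^q,\mathcal I_2^q\subseteq\mathcal M_q$. Your extra line verifying $\mathcal I_q\neq 0$ via the faithful Fock representation is a welcome point of rigor that the paper leaves implicit.
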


In particular, the extension
\[ 0
\rightarrow \mathcal{I}_q \rightarrow \mathcal{M}_q \rightarrow \mathcal{I}_1^q/\mathcal{I}_q \oplus \mathcal{I}_2^q/\mathcal{I}_q \rightarrow 0
\]
is essential.
Indeed, the ideal $\mathbb K=\mathcal{I}_q\subset \mathcal E_{n,m}^q$ is the unique minimal ideal. Since an ideal of an ideal in a $C^*$-algebra is an ideal in the whole algebra, $\mathcal{I}_q$ is the unique minimal ideal in $\mathcal M_q$, thus it is essential in $\mathcal M_q$.

The following proposition is a corollary of Voiculescu's Theorem, see Theorem 15.12.3 of \cite{Black}.

\begin{proposition}\label{Voiculescu}
Let $\mathcal E_1, \mathcal E_2$ be two essential extensions of a nuclear $C^*$-algebra $\mathcal A$ by $\mathbb{K}$. If $[\mathcal E_1] = [\mathcal E_2] \in \mathsf{Ext}(\mathcal A, \mathbb{K})$ then $\mathcal E_1 \simeq \mathcal E_2$.
\end{proposition}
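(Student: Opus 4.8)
The plan is to pass from extensions to their Busby invariants and then invoke Voiculescu's absorption theorem. Recall that an essential extension $0\to\mathbb K\to\mathcal E_i\to\mathcal A\to 0$ carries exactly the data of an injective $*$-homomorphism $\tau_i\colon\mathcal A\to M(\mathbb K)/\mathbb K=Q$ into the Calkin algebra (the Busby invariant, Remark \ref{rem_beta}), and $\mathcal E_i$ is recovered as the pullback $\{(x,a)\in M(\mathbb K)\oplus\mathcal A\mid \pi(x)=\tau_i(a)\}$, where $\pi\colon M(\mathbb K)\to Q$ is the quotient map; essentiality of the extension is precisely injectivity of $\tau_i$. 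It therefore suffices to produce a unitary $w\in M(\mathbb K)=\mathbb B(\mathcal H)$ with $\pi(w)\tau_1(a)\pi(w)^*=\tau_2(a)$ for every $a\in\mathcal A$: then $\mathrm{Ad}(w)$ is an automorphism of $M(\mathbb K)$ carrying $\mathbb K$ onto $\mathbb K$ and inducing $\mathrm{Ad}(\pi(w))$ on $Q$, hence it restricts to an isomorphism of the two pullbacks and $\mathcal E_1\simeq\mathcal E_2$.

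Next I would unwind the hypothesis $[\mathcal E_1]=[\mathcal E_2]$. Since $\mathcal A$ is separable and nuclear, the Choi--Effros lifting theorem makes every extension of $\mathcal A$ by $\mathbb K$ semisplit, so $\mathsf{Ext}(\mathcal A,\mathbb K)$ is a group whose neutral element is the class of the trivial (split) extensions and whose addition is the Cuntz sum $\tau\oplus\sigma$ (using a fixed identification $\mathcal H\oplus\mathcal H\simeq\mathcal H$). Consequently $[\mathcal E_1]=[\mathcal E_2]$ is equivalent to the existence of trivial extensions $\sigma_1,\sigma_2\colon\mathcal A\to Q$, i.e.\ $\sigma_j=\pi\circ\rho_j$ for $*$-homomorphisms $\rho_j\colon\mathcal A\to M(\mathbb K)$, such that $\tau_1\oplus\sigma_1$ and $\tau_2\oplus\sigma_2$ are unitarily equivalent through a unitary in $M(\mathbb K)$.

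The key input is Voiculescu's theorem in the guise of absorption of trivial extensions (Theorem 15.12.3 of \cite{Black} and the standard corollary it yields): if $\tau\colon\mathcal A\to Q$ is essential, i.e.\ injective, and $\sigma$ is any trivial extension of $\mathcal A$ by $\mathbb K$, then $\tau\oplus\sigma$ is unitarily equivalent to $\tau$ via a unitary in $M(\mathbb K)$. Applying this to $\tau_1$ and to $\tau_2$ and concatenating the equivalences,
\[
\tau_1\ \sim\ \tau_1\oplus\sigma_1\ \sim\ \tau_2\oplus\sigma_2\ \sim\ \tau_2 ,
\]
each $\sim$ being implemented by a unitary in $M(\mathbb K)$; composing these unitaries gives the desired $w\in M(\mathbb K)$ with $\mathrm{Ad}(\pi(w))\circ\tau_1=\tau_2$, and the first paragraph then finishes the proof.

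The only step with genuine content is the absorption statement $\tau_j\oplus\sigma_j\sim\tau_j$: this is exactly where separability, nuclearity and essentiality (injectivity of $\tau_j$) are used, and where one must be careful that the implementing unitary can be taken in $M(\mathbb K)=\mathbb B(\mathcal H)$ rather than merely in the Calkin algebra, so that it descends to an honest isomorphism of the extension algebras restricting to the identity on $\mathbb K$. Since this is precisely the content of the cited Voiculescu theorem, the remainder of the argument is just bookkeeping with Busby invariants; the pullback description of extensions and the group structure on $\mathsf{Ext}$ are standard.
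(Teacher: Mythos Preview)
The paper does not give a proof of this proposition at all: it is simply stated as a corollary of Voiculescu's theorem with a reference to \cite[Theorem 15.12.3]{Black}. Your argument is exactly the standard way to unpack that citation---pass to Busby invariants, use that essentiality means injectivity, use nuclearity to get the group structure on $\mathsf{Ext}$, and then apply Voiculescu absorption $\tau_j\oplus\sigma_j\sim\tau_j$ to turn stable unitary equivalence into genuine unitary equivalence by a unitary in $M(\mathbb K)$---so there is nothing to compare: you have supplied precisely the details the paper omits.

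One small remark: you correctly insert separability of $\mathcal A$ as a hypothesis in your argument, even though the proposition as stated in the paper only says ``nuclear''. Voiculescu's theorem does need separability, and in the paper's applications $\mathcal A$ is always separable, so this is harmless; but it is worth noting that the proposition as literally stated is slightly imprecise on this point.
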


\begin{theorem}
For any $q\in\mathbb C$, $|q|=1$, one has
$\mathcal{M}_q \simeq \mathcal{M}_1$.
\end{theorem}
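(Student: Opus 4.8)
The plan is to realise both $\mathcal{M}_q$ and $\mathcal{M}_1$ as \emph{essential} extensions of one and the same nuclear $C^*$-algebra by the compacts, to show that the two extensions carry the same class in the corresponding $\mathsf{Ext}$-group, and then to invoke Voiculescu's theorem in the form of Proposition~\ref{Voiculescu}.

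First I would set up the extension. Recall that $\mathcal{I}_q\simeq\mathbb{K}\otimes_{\alpha^q}\mathbb{K}\simeq\mathbb{K}\otimes\mathbb{K}\simeq\mathbb{K}$ (Proposition~\ref{str_Iq_1} together with the functoriality of $\otimes_\alpha\mathbb{K}$ and the standard fact $\mathbb{K}\otimes\mathbb{K}\simeq\mathbb{K}$), and that by Lemma~\ref{lemextmq} the quotient $\mathcal{A}:=\mathcal{M}_q/\mathcal{I}_q\simeq\mathcal{O}_m\otimes\mathbb{K}\oplus\mathcal{O}_n\otimes\mathbb{K}$ is nuclear and, crucially, \emph{independent of $q$}. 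Thus $\mathcal{M}_q$ is an extension of $\mathcal{A}$ by $\mathbb{K}$, and by Corollary~\ref{essential} (and the remark following it, which identifies $\mathcal{I}_q$ as the unique minimal, hence essential, ideal of $\mathcal{M}_q$) this extension is essential. Consequently, by Proposition~\ref{Voiculescu}, it suffices to prove that $[\mathcal{M}_q]=[\mathcal{M}_1]$ in $\mathsf{Ext}(\mathcal{A},\mathbb{K})$.

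Next I would reduce this to a comparison of the ``one-sided'' classes. By Lemma~\ref{lem_qdecomp}, under the natural isomorphism $\mathsf{Ext}(\mathcal{A},\mathcal{I}_q)\simeq\mathsf{Ext}(\mathcal{I}_1^q/\mathcal{I}_q,\mathcal{I}_q)\oplus\mathsf{Ext}(\mathcal{I}_2^q/\mathcal{I}_q,\mathcal{I}_q)$ one has $[\mathcal{M}_q]=([\mathcal{I}_1^q],[\mathcal{I}_2^q])$, so it is enough to verify $[\mathcal{I}_1^q]=[\mathcal{I}_1^1]$ and $[\mathcal{I}_2^q]=[\mathcal{I}_2^1]$ after the appropriate identifications. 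Here Theorem~\ref{i1i2q_comm} does essentially all the work: it exhibits an isomorphism of the extension $0\to\mathcal{I}_q\to\mathcal{I}_1^q\to\mathcal{I}_1^q/\mathcal{I}_q\to0$ onto the extension $0\to\mathbb{K}\otimes\mathbb{K}\to\mathcal{O}_m^{(0)}\otimes\mathbb{K}\to\mathcal{O}_m\otimes\mathbb{K}\to0$, with explicit vertical isomorphisms $\Delta_{\alpha^q}\circ\Delta_{q,1}^{-1}$ on the ideal and middle term and the induced isomorphism on the quotient. The target extension is manifestly independent of $q$ --- it is $0\to\mathbb{K}\to\mathcal{O}_m^{(0)}\to\mathcal{O}_m\to0$ tensored with $\mathbb{K}$. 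Since $\mathsf{Ext}$ is a bifunctor, an isomorphism of extensions preserves the class; transporting along the ($q$-free) identifications $\mathcal{I}_q\simeq\mathbb{K}\otimes\mathbb{K}$ and $\mathcal{I}_1^q/\mathcal{I}_q\simeq\mathcal{O}_m\otimes\mathbb{K}$ supplied by Theorem~\ref{i1i2q_comm}, the class $[\mathcal{I}_1^q]$ becomes the fixed class of $0\to\mathbb{K}\otimes\mathbb{K}\to\mathcal{O}_m^{(0)}\otimes\mathbb{K}\to\mathcal{O}_m\otimes\mathbb{K}\to0$; in particular it does not depend on $q$, so $[\mathcal{I}_1^q]=[\mathcal{I}_1^1]$. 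The same argument with $\Delta_{\beta^q}\circ\Delta_{q,2}^{-1}$ and the second diagram of Theorem~\ref{i1i2q_comm} gives $[\mathcal{I}_2^q]=[\mathcal{I}_2^1]$. Combining with Lemma~\ref{lem_qdecomp} we obtain $[\mathcal{M}_q]=[\mathcal{M}_1]$, and Proposition~\ref{Voiculescu} yields $\mathcal{M}_q\simeq\mathcal{M}_1$.

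The only genuinely delicate point is the bookkeeping of the $\mathsf{Ext}$-identifications: one must be careful that the isomorphisms of $\mathsf{Ext}$-groups used to compare $[\mathcal{I}_j^q]$ across different values of $q$ are induced by the \emph{same} ($q$-free) identifications of the ideal $\mathbb{K}$ and of the quotient $\mathcal{O}_\bullet\otimes\mathbb{K}$ --- but this is precisely what Theorem~\ref{i1i2q_comm} provides, since the bottom rows of both of its diagrams involve no $q$ at all. (Note in particular that one should \emph{not} expect $\mathcal{M}_q$ to be built from $\mathcal{I}_1^q$ and $\mathcal{I}_2^q$ by gluing along literally the same isomorphism of $\mathcal{I}_q$ on both sides; it is exactly the passage through $\mathsf{Ext}$ and Voiculescu's theorem that circumvents this.) Everything else is a routine application of the already-established functoriality of $\otimes_\alpha\mathbb{K}$, of Lemma~\ref{lem_qdecomp}, and of Proposition~\ref{Voiculescu}.
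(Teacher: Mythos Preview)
Your proposal is correct and follows essentially the same route as the paper: use Theorem~\ref{i1i2q_comm} to see that $[\mathcal I_1^q]$ and $[\mathcal I_2^q]$ are independent of $q$, combine via Lemma~\ref{lem_qdecomp} to get $[\mathcal M_q]=[\mathcal M_1]$, and conclude via Corollary~\ref{essential} and Proposition~\ref{Voiculescu}. Your extra care about the $q$-free identifications is well placed but is exactly what the bottom rows of the diagrams in Theorem~\ref{i1i2q_comm} encode.
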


\begin{proof}
By Theorem \ref{i1i2q_comm}, $[\mathcal{I}_1^q]\in\mathsf{Ext}(\mathcal{O}_m \otimes \mathbb{K},\mathbb{K})$, and  $[\mathcal{I}_2^q] \in \mathsf{Ext}(\mathcal{O}_n \otimes \mathbb{K}, \mathbb{K})$ do not depend on $q$. By Lemma \ref{lem_qdecomp}, $[\mathcal{M}_q]$ does not depend on $q$. Thus by Corollary \ref{essential} and Proposition \ref{Voiculescu}, $\mathcal{M}_q \simeq \mathcal{M}_1$.
 \end{proof}

\section{The multiparameter case}

We now turn to the $C^*$-algebra $\mathcal O_n\otimes_q\mathcal O_m:=\mathcal E_{n,m}^q/\mathcal M_q$. Our goal is to show that it is isomorphic to $\mathcal O_n\otimes\mathcal O_m$. More generally, we show that even multiparameter twists of $\mathcal O_n\otimes\mathcal O_m$ are impossible. This is very specific for the Cuntz algebra. Indeed, recall that the tensor product of $C(S^1)\otimes C(S^1)$ \emph{does} admit twists, the famous rotation algebra $A_q$ being the result. Also twists of tensor products of Toeplitz algebras have been considered \cite{weber, jeu_pinto}. However, the tensor product of Cuntz algebras may not be twisted as we will see.

Our proof  uses deep theory from the classification of $C^*$-algebras: We  use Kirchberg's seminal result from the 1990's \cite{Kirchberg}, stating that two unital, separable, nuclear, simple, purely infinite $C^*$-algebras are isomorphic if and only if they have the same $K$-groups.

Note that twists of $C^*$-algebras $A$ with Cuntz algebras $\mathcal O_m$ were first considered by Cuntz in 1981 \cite{Cuntz81}, see also the PhD thesis of Neub\"user from 2000 \cite{neubueser}. We  reformulate Cuntz's definition in terms of universal $C^*$-algebras later. Let us now focus on the case $A=\mathcal O_n$.

\begin{definition}\label{DefOmn}
 Let $2\leq n,m < \infty$ and let $\Theta=(q_{ij})$ be a matrix with $q_{ij}\in S^1$ being scalars of absolute value one, for $i=1,\ldots,n$ and $j=1,\ldots,m$. We define the twist of two Cuntz algebras $\mathcal O_n$ and $\mathcal O_m$ as the universal $C^*$-algebra $\mathcal O_n\otimes_\Theta \mathcal O_m$ generated by isometries $s_1,\ldots,s_n$ and $t_1,\ldots,t_m$ with $\sum^n_{i=1}s_is_i^*=\sum^m_{j=1}t_jt_j^*=1$ and $s_it_j=q_{ij}t_js_i$ for all $i$ and $j$. 
\end{definition}

Observe that for $q_{ij}=1$ for all $i$ and $j$, we obtain $\mathcal O_n\otimes_\Theta \mathcal O_m=\mathcal O_n\otimes \mathcal O_m$, while the choice $q_{ij}=\bar q$ yields the one parameter deformation $\mathcal O_n\otimes_q\mathcal O_m:=\mathcal E_{n,m}^q/\mathcal M_q$, which can be seen as follows.

We have $s_it_j=q_{ij}t_js_i$ for all $i$ and $j$ if and only if $s_i^*t_j=\bar q_{ij}t_js_i^*$ for all $i$ and $j$. Indeed, assuming the former relations and using $t_k^*t_i=\delta_{ik}$, we infer
\[s_i^*t_j=\sum_k t_kt_k^*s_i^*t_j=\sum_k\bar q_{ik} t_ks_i^*t_k^*t_j=\bar q_{ij} t_js_i^*.\]
Conversely, fixing $i$ and $j$ and assuming $s_it_j^*=\bar q_{ij}t_j^*s_i$, we obtain
\[(s_it_j-q_{ij}t_js_i)^*(s_it_j-q_{ij}t_js_i)=1-\bar q_{ij}s_i^*t_j^*s_it_j-q_{ij}t_j^*s_i^*t_js_i+1=0.\]
Moreover, note that we may view $\mathcal O_n\otimes_\Theta \mathcal O_m$ as a twisted tensor product in the following sense. Putting Cuntz's definition from \cite{Cuntz81} to a language of universal $C^*$-algebras, we may say that given a unital $C^*$-algebra $A$ and automorphisms $\alpha_1,\ldots,\alpha_m$ of $A$ satisfying $\alpha_i\circ\alpha_j=\alpha_j\circ\alpha_i$, we may define a twisted tensor product $A\times_{(\alpha_1,\ldots,\alpha_m)}\mathcal O_m$ as the universal unital $C^*$-algebra generated by all elements $a\in A$ (including the relations of $A$) and isometries $t_1,\ldots, t_m$ such that $\sum_kt_kt_k^*=1$ and $t_j a=\alpha_j(a) t_j$, for all $j=1,\ldots, m$ and $a\in A$. We observe that  $\mathcal O_n\otimes_\Theta \mathcal O_m$ is such a twisted tensor product, if we put $\alpha_j(s_i):=\bar q_{ij}s_i$ for $A=\mathcal O_n=C^*(s_1,\ldots,s_n)$.

Let us quickly show that $\mathcal O_n\otimes_\Theta \mathcal O_m$ may be represented concretely on a Hilbert space. 
Firstly, represent $\mathcal O_m$  on $\ell^2(\mathbb N_0)$ by $\hat T_j e_k:=e_{\beta_j(k)}$, where  
\[\beta_1,\ldots,\beta_m:\N_0\to\N_0\] 
are injective functions with disjoint  ranges  such that the union of their ranges is all of $\N_0$. Secondly, choose any representation $\sigma:\mathcal O_n\to L(\ell^2(\mathbb N_0))$ mapping  $s_i\mapsto\hat S_i$. Thirdly, let $\tilde S$ be the unilateral shift on $\ell^2(\mathbb Z)$. Fourthly, define the diagonal unitaries $U_i$ on $\ell^2(\mathbb N_0\times \mathbb Z)$ by $U_ie_{m,n}:=\zeta_i(m,n)e_{m,n}$, where the scalars $\zeta_i(m,n)$ with $\lvert\zeta_i(m,n)\rvert=1$ obey the inductive rule $\zeta_i(\beta_j(m),n+1)=q_{ij}\zeta_i(m,n)$. Finally,  $\pi:\mathcal O_n\otimes_\Theta \mathcal O_m\to B(\ell^2(\mathbb N_0\times\mathbb N_0
\times\mathbb Z))$ with
\[\pi(s_i):=\hat S_i\otimes U_i, \qquad\pi(t_j):=\id\otimes \hat T_j\otimes \tilde S\]
exists by the universal property.

\subsection{$\mathcal O_n\otimes_\Theta \mathcal O_m$ is nuclear}

Let us now begin with collecting the ingredients for an application of Kirchberg's Theorem. The first step is to verify that $\mathcal O_n\otimes_\Theta \mathcal O_m$ is nuclear. 
For the one parameter deformation $\mathcal O_n\otimes_q\mathcal O_m$, this is a consequence of Corollary \ref{EIsNuclear}. For the multiparameter deformations, we
 use  the following lemma by Rosenberg, which is actually a statement about crossed products with the semigroup $\mathbb N_0$ by the endomorphism $b\mapsto sbs^*$ of $B$. Recall that the bootstrap class $\mathcal N$ (also called UCT class) is a class of nuclear $C^*$-algebras which is closed under many operations; see \cite[ch. IX, sect. 22.3]{Black}  for more on $\mathcal N$.

\begin{lemma}[{\cite[th. 3]{Rosenberg}}]\label{LemRosenberg}
 Let $A$ be a unital $C^*$-algebra, and let $B\subset A$ be a nuclear $C^*$-subalgebra containing the unit of $A$. Let $s\in A$ be an isometry such that $sBs^*\subset B$ and $A=C^*(B,s)$, i.e. let $A$ be generated by $B$ and $s$. Then $A$ is nuclear. Moreover, if $B$ is in the bootstrap class $\mathcal N$, so is $A$.
\end{lemma}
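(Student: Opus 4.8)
The plan is to reformulate the hypothesis as a statement about a semigroup crossed product and then to import known structural results. First I would note that $\phi\colon B\to B$, $\phi(b)=sbs^*$, is an injective $*$-endomorphism of $B$ (injectivity follows from $s^*s=\mathbf 1$, which gives $s^*\phi(b)s=b$), and that $\phi(\mathbf 1)=ss^*$ is a projection of $B$ by hypothesis. The pair $(\iota\colon B\hookrightarrow A,\ s)$ is a covariant representation of $\phi$, i.e.\ $s\iota(b)s^*=\iota(\phi(b))$ and $s^*s=\mathbf 1$, so by the universal property $A=C^*(B,s)$ is a quotient of the semigroup crossed product $\mathcal T_\phi:=B\rtimes_\phi\mathbb N_0$. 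Thus it is enough to show that $\mathcal T_\phi$ is nuclear (resp.\ lies in $\mathcal N$ when $B$ does) and then to pass to the quotient $A$.

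For $\mathcal T_\phi$ I would use the standard dilation argument. Form the stationary inductive limit $B_\infty=\varinjlim\bigl(B\xrightarrow{\ \phi\ }B\xrightarrow{\ \phi\ }\cdots\bigr)$; since $\phi$ is injective the connecting maps are isometric, so $B_\infty$ is nuclear whenever $B$ is (inductive limits of nuclear $C^*$-algebras are nuclear) and lies in $\mathcal N$ whenever $B$ does (the bootstrap class is closed under countable inductive limits). The endomorphism $\phi$ dilates to an automorphism $\tilde\phi$ of $B_\infty$, and $\mathcal T_\phi$ is isomorphic to a full hereditary corner of the ordinary crossed product $B_\infty\rtimes_{\tilde\phi}\mathbb Z$; this is the classical dilation picture for semigroup crossed products by a single injective endomorphism. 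Now $B_\infty\rtimes_{\tilde\phi}\mathbb Z$ is nuclear because $\mathbb Z$ is amenable, and it lies in $\mathcal N$ whenever $B_\infty$ does, because $\mathcal N$ is closed under crossed products by $\mathbb Z$ --- equivalently, the Pimsner--Voiculescu six-term sequence exhibits it as $KK$-equivalent to a mapping cone built from $B_\infty$, and $\mathcal N$ is closed under extensions and $KK$-equivalence. Nuclearity and membership in $\mathcal N$ are preserved under passing to a full corner (Morita equivalence), which gives both properties for $\mathcal T_\phi$.

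It remains to descend to $A=\mathcal T_\phi/J$. Nuclearity is automatic, since quotients of nuclear $C^*$-algebras are nuclear. For the bootstrap statement the key point is to control the ideal $J$: it is invariant under the dual $\mathbb T$-action (gauge action) on $\mathcal T_\phi$, because the quotient map imposes no new relations among the elements of $B$ and carries the canonical generating isometry to $s$, hence intertwines the gauge actions. Transporting $J$ through the corner identification, it corresponds to an ideal of the form $I_\infty\rtimes_{\tilde\phi}\mathbb Z$ for a $\tilde\phi$-invariant ideal $I_\infty$ of $B_\infty$, and $I_\infty$, being an inductive limit of ideals of $B$, again lies in $\mathcal N$; therefore $J$ lies in $\mathcal N$, and so does $A=\mathcal T_\phi/J$ by closure of the bootstrap class under extensions.

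The main obstacle is exactly this last descent. The two ingredients used along the way --- the identification of $\mathcal T_\phi$ with a full corner of the dilated $\mathbb Z$-crossed product, and the stability of $\mathcal N$ under crossed products by $\mathbb Z$ --- are well documented and I would simply quote them; the delicate step is to verify that the kernel of $\mathcal T_\phi\twoheadrightarrow A$ is gauge-invariant (which may require first producing a gauge action on $A$, or arguing via a gauge-invariant uniqueness type statement) and that the associated invariant ideal of $B_\infty$ remains in the bootstrap class.
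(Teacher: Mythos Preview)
The paper does not prove this lemma; it is quoted from Rosenberg's 1977 paper and used as a black box. So there is no ``paper's own proof'' to compare against. Your dilation argument for nuclearity --- pass to the stationary inductive limit $B_\infty$, extend $\phi$ to an automorphism, realise $\mathcal T_\phi$ as a full corner of $B_\infty\rtimes_{\tilde\phi}\mathbb Z$, then descend to the quotient $A$ --- is precisely the standard route and is correct. This is essentially Rosenberg's own argument.

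For the bootstrap clause (which postdates Rosenberg's paper and is a later folklore addition), the gap you flag is real and cannot be patched the way you suggest. The kernel $J=\ker(\mathcal T_\phi\twoheadrightarrow A)$ need not be gauge-invariant: take $B=\mathbb C$, so $sBs^*\subset B$ forces $ss^*=1$ and $s$ is unitary; then $\mathcal T_\phi\cong C(\mathbb T)$ with the gauge action given by rotation, while $A=C^*(s)\cong C(\sigma(s))$ for an arbitrary closed $\sigma(s)\subset\mathbb T$. The kernel $C_0(\mathbb T\setminus\sigma(s))$ is rotation-invariant only when $\sigma(s)=\mathbb T$. So your proposed mechanism --- identify $J$ with $I_\infty\rtimes_{\tilde\phi}\mathbb Z$ via gauge-invariance --- fails already in this toy case (even though here $A$ is commutative and trivially in $\mathcal N$). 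There is also a second soft spot: even when $J$ \emph{is} gauge-invariant, your claim that ``$I_\infty$, being an inductive limit of ideals of $B$, again lies in $\mathcal N$'' presupposes that ideals of a bootstrap algebra are themselves bootstrap, which is not known in general without the full UCT for nuclear $C^*$-algebras. A clean justification of the bootstrap clause therefore requires a different device (e.g.\ recognising $A$ directly as a Cuntz--Pimsner algebra over $B$ and invoking Pimsner's six-term sequence, or appealing to a result that places such semigroup crossed products in $\mathcal N$ outright), not the quotient route you sketch.
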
 

Let us denote by $s_{\mu}$ the product $s_{\mu_1}\ldots s_{\mu_k}$, where $\mu=(\mu_1,\ldots,\mu_k)$ is a multi index of length $|\mu|=k$ and $\mu_1,\ldots,\mu_k\in\{1,\ldots,n\}$.
Given $k\in \N_0$, denote by $\mathcal G_k$ the $C^*$-subalgebra of $\mathcal O_n\otimes_\Theta \mathcal O_m$ defined as
\[\mathcal G_k:=C^*(s_{\mu}xs^*_{\nu}\mid x\in\mathcal O_m, |\mu|=|\nu|=k)\subset\mathcal O_n\otimes_\Theta \mathcal O_m.\]
Here, we denote by $\mathcal O_m$ the $C^*$-subalgebra $\mathcal G_0=C^*(t_1,\ldots,t_m)\subset\mathcal O_n\otimes_\Theta \mathcal O_m$ (which is isomorphic to $\mathcal O_m$, of course).
The closed union of all $\mathcal G_k$ is denoted by $B_0$, so
\[B_0:=C^*(s_{\mu}xs^*_{\nu}\mid x\in\mathcal O_m, |\mu|=|\nu|\in\mathbb N_0)\subset\mathcal O_n\otimes_\Theta \mathcal O_m.\]

\begin{proposition}\label{OmnIsNuclear}
We have the following:
\begin{itemize}
\item[(a)]  $\mathcal G_k\subset\mathcal O_n\otimes_\Theta \mathcal O_m$ is nuclear and in $\mathcal N$ for all $k\in\N_0$.
\item[(b)] $B_0\subset\mathcal O_n\otimes_\Theta \mathcal O_m$ is nuclear and in $\mathcal N$.
\item[(c)] $\mathcal O_n\otimes_\Theta \mathcal O_m$ is nuclear and  in $\mathcal N$.
\end{itemize}
\end{proposition}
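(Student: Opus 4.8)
The plan is to establish the three parts in sequence, with the ultimate goal being a single application of Rosenberg's Lemma~\ref{LemRosenberg} to a carefully chosen nuclear subalgebra. Throughout I would use the automorphisms $\theta_i\in\mathsf{Aut}(\mathcal O_m)$ determined by $\theta_i(t_j)=\bar q_{ij}t_j$, which exist by the universal property of $\mathcal O_m$; the relation $s_i^*t_j=\bar q_{ij}t_js_i^*$ extends to $s_i^*x=\theta_i(x)s_i^*$ for every $x\in\mathcal O_m$, and more generally $s_\mu^*x=\theta_\mu(x)s_\mu^*$ with $\theta_\mu:=\theta_{\mu_1}\cdots\theta_{\mu_{|\mu|}}$.

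For part (a) I would argue that $\mathcal G_k$ is a matrix algebra over $\mathcal O_m$. Since $\mathbf 1=\sum_{|\mu|=k}s_\mu s_\mu^*\in\mathcal G_k$, the algebra $\mathcal G_k$ is unital; the elements $e_{\mu\nu}:=s_\mu s_\nu^*$ with $|\mu|=|\nu|=k$ form a system of $n^k\times n^k$ matrix units inside $\mathcal G_k$ (using $s_\nu^*s_{\mu'}=\delta_{\nu\mu'}\mathbf 1$) summing to $\mathbf 1$; and, fixing a multi index $\mu_0$ of length $k$, the corner $e_{\mu_0\mu_0}\,\mathcal G_k\,e_{\mu_0\mu_0}$ equals $s_{\mu_0}\,\mathcal O_m\,s_{\mu_0}^*$, which is $*$-isomorphic to $\mathcal O_m$ via $x\mapsto s_{\mu_0}xs_{\mu_0}^*$ (inverse $y\mapsto s_{\mu_0}^*ys_{\mu_0}$). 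The standard structure theorem for a $C^*$-algebra carrying such a system of matrix units then yields $\mathcal G_k\cong M_{n^k}(\mathcal O_m)$, which is nuclear and in $\mathcal N$ because $\mathcal O_m$ is.

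For part (b) I would check that the $\mathcal G_k$ are nested: inserting $\mathbf 1=\sum_{i=1}^n s_is_i^*$ and using $s_i^*x=\theta_i(x)s_i^*$, one gets, for $|\mu|=|\nu|=k$ and $x\in\mathcal O_m$,
\[
s_\mu x s_\nu^*=\sum_{i=1}^n s_{\mu i}\,\theta_i(x)\,s_{\nu i}^*\in\mathcal G_{k+1},
\]
so $\mathcal G_k\subset\mathcal G_{k+1}$ and $B_0=\overline{\bigcup_k\mathcal G_k}=\varinjlim_k\mathcal G_k$. Since both nuclearity and membership in the bootstrap class $\mathcal N$ are preserved under sequential inductive limits, part (a) gives that $B_0$ is nuclear and in $\mathcal N$.

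For part (c) I would apply Lemma~\ref{LemRosenberg} with $A=\mathcal O_n\otimes_\Theta\mathcal O_m$, $B=B_0$, and $s=s_1$. Then $B_0$ is unital and nuclear with $\mathbf 1_A\in\mathcal O_m=\mathcal G_0\subset B_0$; the inclusion $s_1B_0s_1^*\subset B_0$ follows from $s_1(s_\mu xs_\nu^*)s_1^*=s_{1\mu}xs_{1\nu}^*\in\mathcal G_{|\mu|+1}$; and $C^*(B_0,s_1)=A$ because $s_is_1^*\in\mathcal G_1\subset B_0$ forces $s_i=(s_is_1^*)s_1\in C^*(B_0,s_1)$ for every $i$, while $t_1,\dots,t_m\in\mathcal O_m\subset B_0$. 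Lemma~\ref{LemRosenberg} then gives exactly the conclusions of (c). The one point requiring care is the very choice of $B_0$: it must simultaneously be understood concretely enough to see that it is nuclear (hence the identification $\mathcal G_k\cong M_{n^k}(\mathcal O_m)$ and the passage to the limit), be invariant under conjugation by a single generating isometry, and generate the whole algebra together with that isometry — no individual $\mathcal G_k$, nor the copy of $\mathcal O_m$ alone, achieves all three, so $B_0$ is tailored precisely to do so, the remaining generators $s_2,\dots,s_n$ being recovered through $s_i=(s_is_1^*)s_1$.
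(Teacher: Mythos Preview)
Your argument is correct. Parts (a) and (b) are essentially the same as the paper's proof: the paper obtains the isomorphism $\mathcal G_k\cong M_{n^k}(\mathbb C)\otimes\mathcal O_m$ by exhibiting elements satisfying the universal relations and invoking simplicity, while you use the matrix-unit/corner description, but the content is identical; the nesting $\mathcal G_k\subset\mathcal G_{k+1}$ and the inductive-limit passage are treated the same way.

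Part (c) is where you genuinely diverge. The paper applies Lemma~\ref{LemRosenberg} $n$ times, setting $B_{j+1}:=C^*(B_j,s_{j+1})$ and checking at each stage that $s_{j+1}B_js_{j+1}^*\subset B_j$; the trick $s_{j+1}s_ks_{j+1}^*=(s_{(j+1,k)}s_{(k,j+1)}^*)s_k\in B_j$ is used there to propagate invariance through the induction. You instead apply Rosenberg's Lemma \emph{once}, with $B=B_0$ and the single isometry $s=s_1$, recovering the remaining generators via $s_i=(s_is_1^*)s_1$ with $s_is_1^*\in\mathcal G_1\subset B_0$. This is the same algebraic identity deployed for a different purpose (generation rather than invariance), and it makes your argument shorter and cleaner; the paper's iterated version is more mechanical but requires no cleverness about which isometry to adjoin. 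Both routes are valid.
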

\begin{proof}
For (a), we show that $\mathcal G_k$ is isomorphic to $M_{n^k}(\C)\otimes\mathcal O_m$, where $M_{n^k}(\C)$ denotes the algebra of $n^k\times n^k$ matrices with complex entries. For doing so, let $k>0$ and write $M_{n^k}(\C)\otimes\mathcal O_m$ as the universal $C^*$-algebra generated by elements $e_{\mu\nu}$ for multi-indices $\mu, \nu$ in $\{1,\ldots,n\}$ of length $k$, together with  isometries $t_1,\ldots,t_m$ 
such that $\sum_k t_kt_k^*=1$
and  $e_{\mu\nu}e_{\rho\sigma}=\delta_{\nu\rho}e_{\mu\sigma}$, $e_{\mu\nu}^*=e_{\nu\mu}$ and $e_{\mu\nu}t_i=t_ie_{\mu\nu}$.

It is then easy to see that the  elements $\hat e_{\mu\nu}:=s_{\mu}s^*_{\nu}$ and $\hat t_i:= \sum_{|\rho|=k}s_{\rho}t_is^*_{\rho}$ in $\mathcal G_k$ fulfill the relations of $M_{n^k}(\C)\otimes\mathcal O_m$. As this $C^*$-algebra is simple, $\mathcal G_k$ is isomorphic to $M_{n^k}(\C)\otimes\mathcal O_m$. Now, $\mathcal O_m$ is in $\mathcal N$ and the bootstrap class is closed under tensoring with matrix algebras, so $M_{n^k}(\C)\otimes\mathcal O_m$ is in $\mathcal N$.

As for (b), note that we have $\mathcal G_k\subset \mathcal G_{k+1}$ for all $k$. Indeed, check that $s_{\mu}t_is^*_{\nu}=\sum_k\bar q_{ki}s_{\mu}s_kt_is^*_ks^*_{\nu}\in\mathcal G_{k+1}$. We may then view  $B_0$ as the inductive limit of the system 
$\mathcal G_1 \subset \mathcal G_2 \subset \ldots$. 
As $\mathcal N$ is closed under inductive limits, we get the result by (a). 

Finally, for (c),  put $B_{j+1}:=C^*(B_j, s_{j+1})\subset\mathcal O_n\otimes_\Theta \mathcal O_m$ for $j=0,\ldots,n-1$. We  prove inductively that $B_{j+1}$ is in $\mathcal N$ and that $B_{j+1}=C^*(B_0,s_1,\ldots,s_{j+1})$, using Lemma \ref{LemRosenberg}. By induction hypothesis (or by (b) in the case $j=0$), the $C^*$-algebra $B_j$ is nuclear and in $\mathcal N$, it is unital with the same unit as $B_{j+1}$ and it is contained in $B_{j+1}$. Therefore, to apply Rosenberg's lemma, we only have to check, that $s_{j+1}B_js^*_{j+1}\subset B_j=C^*(B_0,s_1,\ldots,s_j)$. 

For this, let $s_{\mu}xs^*_{\nu}$ be in $B_0$ for some $x\in\mathcal O_m$ and $|\mu|=|\nu|\in\N_0$. Then $s_{j+1}s_{\mu}xs^*_{\nu}s^*_{j+1}\in B_0\subset B_j$. We conclude $s_{j+1}B_0s^*_{j+1}\subset B_j$. For $k=1,\ldots,j$, we have $s_{(j+1,k)}s^*_{(k,j+1)}\in B_0$, where $(j+1,k)$ and $(k,j+1)$ denote multi-indices of length two. Thus:
\[s_{j+1}s_ks^*_{j+1}=s_{j+1}s_ks^*_{j+1}s^*_ks_k=(s_{(j+1,k)}s^*_{(k,j+1)})s_k\in B_j\]
Thus, if $x=x_1\ldots x_k$ is a product of elements $x_l\in B_0\cup\{s_1,s_1^*,\ldots, s_j, s_j^*\}$, for $l=1,\ldots,k$, we have
\[s_{j+1}xs_{j+1}^*=(s_{j+1}x_1s_{j+1}^*)\ldots(s_{j+1}x_ks_{j+1}^*)\in B_j.\]
This proves $s_{j+1}C^*(B_0,s_1,\ldots,s_j)s^*_{j+1}\subset B_j$ and we are done. 

Since $\mathcal O_n\otimes_\Theta \mathcal O_m=B_n$, we obtain the stated result.
\end{proof}

\subsection{$\mathcal O_n\otimes_\Theta \mathcal O_m$ is purely infinite}

Next, we are going to show that $\mathcal O_n\otimes_\Theta \mathcal O_m$ is purely infinite. Recall, that a unital $C^*$-algebra $A$ is called purely infinite, if for all nonzero elements $x\in A$, there are $a,b\in A$ such that $axb=1$. In particular, $A$ is then simple.

In our proof,  we  follow Cuntz's work on $\mathcal O_n$ from 1977 \cite{cun} and we  adapt it to $\mathcal O_n\otimes_\Theta \mathcal O_m$.
The idea is to find a faithful expectation $\phi$ on $\mathcal O_n\otimes_\Theta \mathcal O_m$, such that an element $0\neq x\in\mathcal O_n\otimes_\Theta \mathcal O_m$ (or rather an element $y$ close to $x^*x$) is mapped to a self-adjoint complex valued matrix with controlled norm. By linear algebra, we then can find a minimal projection $e$, which projects onto a one-dimensional subspace corresponding to the largest eigenvalue of this matrix: its norm. We then choose a unitary $u$ transforming this projection $e$ into  $S_1^rS_1'^r(S_1'^*)^r(S_1^*)^r$. Moreover, we implement $\phi$ locally by an isometry  $w\in\mathcal O_n\otimes_\Theta \mathcal O_m$. Using all these elements in $\mathcal O_n\otimes_\Theta \mathcal O_m$, we can define $z\in A$, such that $zyz^*=1$. This implies that $zx^*xz^*$ is  invertible and we finish the proof by putting $a:= b^*x^*$ and $b:=z^*(zx^*xz^*)^{-\frac{1}{2}}$. Then $axb=1$. 

Let us now work out the details.
We define the following subsets of $\mathcal O_n\otimes_\Theta \mathcal O_m$.
\begin{itemize}
\item $\mathcal F_{k,l}:=\textnormal{span}\big\{s_\mu t_{\mu'} t^*_{\nu'}s^*_{\nu} \, \big| \, \lvert\mu\rvert=\lvert\nu\rvert=k, \, \lvert\mu'\rvert=\lvert\nu'\rvert=l\big\}$ \; for $k,l\in\N_0$
\item $\mathcal F_{\N_0,\N_0}:=\overline{\textnormal{span}}\big\{s_\mu t_{\mu'} t^*_{\nu'}s^*_{\nu} \, \big| \, \lvert\mu\rvert=\lvert\nu\rvert, \lvert\mu'\rvert=\lvert\nu'\rvert\big\}=\overline{\bigcup_{k,l\in\N_0}\mathcal F_{k,l}}$ 
\item $\mathcal F_{\N_0,\bullet}:=\overline{\textnormal{span}}\big\{s_\mu t_{\mu'} t^*_{\nu'}s^*_{\nu}\, \big| \, \lvert\mu\rvert=\lvert\nu\rvert\big\}$ 
\item $\mathcal F_{\bullet,\N_0}:=\overline{\textnormal{span}}\big\{s_\mu t_{\mu'} t^*_{\nu'}s^*_{\nu} \, \big| \, \lvert\mu'\rvert=\lvert\nu'\rvert\big\}$ 
\end{itemize}

\begin{lemma}\label{Fabsorbessmallerones}
The above subsets $\mathcal F_{k,l}$,  $\mathcal F_{\N_0,\N_0}$,  $\mathcal F_{\N_0,\bullet}$,  $\mathcal F_{\bullet,\N_0}$ are $C^*$-subalgebras of $\mathcal O_n\otimes_\Theta \mathcal O_m$ and we have $\mathcal F_{k,l}=\cup_{i\leq k, j\leq l} \mathcal F_{i,j}$. Actually, $\mathcal F_{k,l}$ is isomorphic to $M_{n^km^l}(\mathbb C)$.
\end{lemma}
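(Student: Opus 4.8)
The plan is to extract the combinatorial normal form for monomials first, and then read off all three assertions from it.

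\textbf{Normal form.} I would first check that the defining relations let us rewrite any word in $s_1,\dots,s_n,s_1^*,\dots,s_n^*,t_1,\dots,t_m,t_1^*,\dots,t_m^*$ as a unimodular scalar times one of the monomials $s_\mu t_{\mu'}t^*_{\nu'}s^*_\nu$, or as $0$. The inputs are $s_i^*s_j=\delta_{ij}\mathbf 1$ and $t_i^*t_j=\delta_{ij}\mathbf 1$ (the first following from $\sum_i s_is_i^*=1$ as usual), together with the fact that each $s$-letter commutes with each $t$-letter up to a scalar in $S^1$: besides $s_it_j=q_{ij}t_js_i$ one has $s_i^*t_j=\bar q_{ij}t_js_i^*$ (derived above) and, taking adjoints, $s_it_j^*=\bar q_{ij}t_j^*s_i$ and $s_i^*t_j^*=q_{ij}t_j^*s_i^*$. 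Using these I would push all $s,s^*$ letters to the left of all $t,t^*$ letters, reduce each block by the Cuntz relations to $s_\mu s^*_\nu$ resp.\ $t_{\mu'}t^*_{\nu'}$ (or to $0$), and finally slide $t_{\mu'}t^*_{\nu'}$ leftward past $s^*_\nu$. It follows that $D:=\operatorname{span}\{s_\mu t_{\mu'}t^*_{\nu'}s^*_\nu\}$ is a dense $*$-subalgebra of $\mathcal O_n\otimes_\Theta\mathcal O_m$.

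\textbf{The algebras $\mathcal F_{k,l}$.} Fix $k,l$ and put $e_{(\mu,\mu'),(\nu,\nu')}:=s_\mu t_{\mu'}t^*_{\nu'}s^*_\nu$ with $|\mu|=|\nu|=k$, $|\mu'|=|\nu'|=l$. Since $|\nu|=|\rho|$ forces $s^*_\nu s_\rho=\delta_{\nu\rho}\mathbf 1$ and $|\nu'|=|\rho'|$ forces $t^*_{\nu'}t_{\rho'}=\delta_{\nu'\rho'}\mathbf 1$, a one-line computation gives
\[
e_{(\mu,\mu'),(\nu,\nu')}\,e_{(\rho,\rho'),(\sigma,\sigma')}=\delta_{\nu\rho}\,\delta_{\nu'\rho'}\,e_{(\mu,\mu'),(\sigma,\sigma')},\qquad e_{(\mu,\mu'),(\nu,\nu')}^*=e_{(\nu,\nu'),(\mu,\mu')},
\]
so these are $n^km^l$ matrix units. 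They are nonzero: $s_{\mu_0}t_{\mu'_0}$ is a product of the isometries $s_i,t_j$, hence an isometry, so the diagonal element $e_{(\mu_0,\mu'_0),(\mu_0,\mu'_0)}=(s_{\mu_0}t_{\mu'_0})(s_{\mu_0}t_{\mu'_0})^*$ is a nonzero projection. Hence the canonical $*$-homomorphism $M_{n^km^l}(\C)\to\mathcal O_n\otimes_\Theta\mathcal O_m$ is nonzero, therefore injective by simplicity of $M_{n^km^l}(\C)$, with range exactly $\mathcal F_{k,l}$. In particular $\mathcal F_{k,l}\cong M_{n^km^l}(\C)$ is finite-dimensional, hence norm-closed, hence a $C^*$-subalgebra.

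\textbf{The inclusions and $\mathcal F_{\N_0,\N_0}$.} For $i\le k$ and $j\le l$ I would take a generator $s_\mu t_{\mu'}t^*_{\nu'}s^*_\nu$ of $\mathcal F_{i,j}$, insert $\mathbf 1=\sum_{|\rho|=k-i}s_\rho s^*_\rho$ just after $s_\mu$ and just before $s^*_\nu$, and $\mathbf 1=\sum_{|\tau|=l-j}t_\tau t^*_\tau$ just after $t_{\mu'}$ and just before $t^*_{\nu'}$, then commute the interior $s^*_\rho$ and $t^*_\tau$ outward through the $t,t^*$-blocks (again picking up only unimodular scalars). The cross terms vanish by $s^*_\rho s_\sigma=\delta_{\rho\sigma}\mathbf 1$ and $t^*_\tau t_\omega=\delta_{\tau\omega}\mathbf 1$, and what remains is a finite sum of unimodular scalars times $s_{\mu\rho}t_{\mu'\tau}t^*_{\nu'\tau}s^*_{\nu\rho}$ with $|\mu\rho|=|\nu\rho|=k$ and $|\mu'\tau|=|\nu'\tau|=l$, i.e.\ an element of $\mathcal F_{k,l}$. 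Thus $\mathcal F_{i,j}\subseteq\mathcal F_{k,l}$, and as the reverse inclusion is trivial, $\mathcal F_{k,l}=\bigcup_{i\le k,\,j\le l}\mathcal F_{i,j}$; in particular $(\mathcal F_{k,l})$ is increasing in both indices, so $\mathcal F_{\N_0,\N_0}=\overline{\bigcup_{k,l}\mathcal F_{k,l}}$ is the closure of an increasing union of $C^*$-subalgebras, hence a $C^*$-subalgebra (an AF algebra). For $\mathcal F_{\N_0,\bullet}$ and $\mathcal F_{\bullet,\N_0}$, the universal property in Definition~\ref{DefOmn} shows that $s_i\mapsto zs_i$, $t_j\mapsto t_j$ defines for each $z\in\mathbb T$ an automorphism $\gamma_z$ of $\mathcal O_n\otimes_\Theta\mathcal O_m$ (the relations are preserved, e.g.\ $(zs_i)t_j=q_{ij}t_j(zs_i)$), and $\gamma$ is a strongly continuous $\mathbb T$-action; likewise $\delta_w(s_i)=s_i$, $\delta_w(t_j)=wt_j$. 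The conditional expectation $E_\gamma(x)=\int_{\mathbb T}\gamma_z(x)\,dz$ onto $(\mathcal O_n\otimes_\Theta\mathcal O_m)^\gamma$ sends $s_\mu t_{\mu'}t^*_{\nu'}s^*_\nu$ to $\big(\int_{\mathbb T}z^{|\mu|-|\nu|}\,dz\big)\,s_\mu t_{\mu'}t^*_{\nu'}s^*_\nu$, so it annihilates this monomial unless $|\mu|=|\nu|$; since $D$ is dense and $E_\gamma$ is a continuous surjection onto the fixed-point algebra, $(\mathcal O_n\otimes_\Theta\mathcal O_m)^\gamma=\overline{E_\gamma(D)}=\overline{\operatorname{span}}\{s_\mu t_{\mu'}t^*_{\nu'}s^*_\nu:|\mu|=|\nu|\}=\mathcal F_{\N_0,\bullet}$, which is therefore a $C^*$-subalgebra, and symmetrically $\mathcal F_{\bullet,\N_0}=(\mathcal O_n\otimes_\Theta\mathcal O_m)^\delta$. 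The only genuine obstacle in all of this is the normal-form reduction of the first step (pure scalar bookkeeping, but it underlies everything); once it is available the rest is formal. If one prefers to avoid the gauge action, $\mathcal F_{\N_0,\bullet}$ can instead be realised as the closed increasing union of the $C^*$-subalgebras $\overline{\operatorname{span}}\{s_\mu x s^*_\nu:|\mu|=|\nu|=k,\ x\in\mathcal O_m\}\cong M_{n^k}(\C)\otimes\mathcal O_m$, by the same matrix-unit argument as in Proposition~\ref{OmnIsNuclear}(a).
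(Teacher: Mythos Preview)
Your proof is correct and follows essentially the same path as the paper's: matrix-unit relations for $\mathcal F_{k,l}$, insertion of $\sum s_\rho s^*_\rho$ and $\sum t_\tau t^*_\tau$ for the inclusions, and closure by definition for the remaining sets. The paper's version is terser in two places: for the inclusion $\mathcal F_{i,j}\subseteq\mathcal F_{k,l}$ it inserts a single copy of the combined projection $P=\sum_{|\delta|=k-i,\,|\epsilon'|=l-j}s_\delta t_{\epsilon'}t^*_{\epsilon'}s^*_\delta=1$ in the middle (avoiding your cross-term bookkeeping), and for $\mathcal F_{\N_0,\bullet}$, $\mathcal F_{\bullet,\N_0}$ it simply asserts they are $^*$-subalgebras from the normal form, postponing the gauge-action/expectation argument you give here to the next lemma (Lemma~\ref{ExpectOmn}). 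Your version is more explicit but not genuinely different.
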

\begin{proof}
Using the well-known relations $t_\mu^*t_\nu=\delta_{\mu\nu}$ for multi indices $\lvert\mu\rvert=\lvert\nu\rvert$ and similarly for $s_{\mu'}$, we infer that all these subsets are in fact $^*$-subalgebras. Moreover, all of these $^*$-subalgebras are closed. For $\mathcal F_{k,l}$ this follows from the fact that the elements  $t_\mu s_{\mu'} s^*_{\nu'}t^*_{\nu} $ satisfy the relations of  matrix units and hence $\mathcal F_{k,l}$ is isomorphic to $M_{n^km^l}(\mathbb C)$.
Finally, let $s_{\mu}t_{\mu'}t^*_{\nu'}s^*_{\nu}\in\mathcal F_{i,j}$ for $i\leq k, j\leq l$ and write $1$ as the sum $P$ of all projections $s_{\delta} t_{\epsilon'}t^*_{\epsilon'}s^*_{\delta}$ with $\lvert\delta\rvert=k-i$ and $\lvert\epsilon'\rvert=l-j$. Then $s_{\mu}t_{\mu'}t^*_{\nu'}s^*_{\nu}=s_{\mu}t_{\mu'}Pt^*_{\nu'}s^*_{\nu}\in\mathcal F_{k,l}$.
\end{proof}

Next we are going to construct a faithful expectation for $\mathcal O_n\otimes_\Theta \mathcal O_m$, similar to the one for $\mathcal O_n$ or the rotation algebra $A_q$. Recall that for a unital $C^*$-algebra $A$, a unital, linear,  positive map  $\phi:A\to B$ is an expectation, if $B\subset A$ is a $C^*$-subalgebra and $\phi^2=\phi$. It is faithful, if it maps non-zero positive elements to non-zero positive elements.

Let $\rho_{\zeta,\xi}:\mathcal O_n\otimes_\Theta \mathcal O_m\to\mathcal O_n\otimes_\Theta \mathcal O_m$ be the automorphisms mapping $s_i\mapsto\zeta s_i$ and $t_j\mapsto\xi t_j$, where $\zeta, \xi\in\C$ are scalars of absolute value one. Put, for $x\in\mathcal O_n\otimes_\Theta \mathcal O_m$:
\begin{gather*}
\phi_1(x):= \int_0^1\rho_{e^{2\pi it},1}(x)\textnormal{dt} \\
\phi_2(x):= \int_0^1\rho_{1,e^{2\pi it}}(x)\textnormal{dt}
\end{gather*}

\begin{lemma} \label{ExpectOmn}
The maps $\phi_1:\mathcal O_n\otimes_\Theta \mathcal O_m\to \mathcal F_{\N_0,\bullet}$,  $\phi_2:\mathcal O_n\otimes_\Theta \mathcal O_m\to\mathcal F_{\bullet,\N_0}$ and $\phi:=\phi_1\circ\phi_2=\phi_2\circ\phi_1:\mathcal O_n\otimes_\Theta \mathcal O_m\to\mathcal F_{\N_0,\N_0}$ are faithful expectations.
\end{lemma}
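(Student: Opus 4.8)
The plan is to verify three things for each of the maps $\phi_1,\phi_2,\phi$: that it is a well-defined unital completely positive idempotent, that its range is the claimed $C^*$-subalgebra, and that it is faithful on positive elements. The guiding analogy is Cuntz's argument for $\mathcal O_n$ and the analogous averaging expectation for the rotation algebra $A_q$; here we simply have a $\mathbb T^2$-action $\rho_{\zeta,\xi}$ instead of a single circle action, and we average over the two circles separately.

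First I would treat $\phi_1$. For each fixed $t$, $\rho_{e^{2\pi it},1}$ is a $*$-automorphism of $\mathcal O_n\otimes_\Theta\mathcal O_m$ (it preserves all defining relations, since multiplying every $s_i$ by a common phase $\zeta$ and leaving the $t_j$ fixed preserves $s_i^*s_i=1$, $\sum s_is_i^*=1$, $\sum t_jt_j^*=1$ and $s_it_j=q_{ij}t_js_i$; that it is an automorphism follows from the universal property applied to $\zeta^{-1}$ as well). The map $t\mapsto\rho_{e^{2\pi it},1}(x)$ is norm-continuous — it suffices to check this on the generators, where it is visibly continuous, and then extend to the dense $*$-subalgebra and finally to all of $\mathcal O_n\otimes_\Theta\mathcal O_m$ by a standard $3\varepsilon$-estimate using $\|\rho_{\zeta,1}\|=1$. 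Hence the Bochner/Riemann integral $\phi_1(x)=\int_0^1\rho_{e^{2\pi it},1}(x)\,\mathrm{dt}$ exists in norm, and $\phi_1$ is unital, linear, and positive (an integral of positive elements is positive), with $\|\phi_1\|\le 1$. Complete positivity is automatic since each $\rho_{e^{2\pi it},1}$ is a $*$-homomorphism, so $\phi_1$ is a contractive c.p.\ map. To identify the range: on a monomial $s_\mu t_{\mu'}t^*_{\nu'}s^*_\nu$ we compute $\rho_{e^{2\pi it},1}(s_\mu t_{\mu'}t^*_{\nu'}s^*_\nu)=e^{2\pi it(|\mu|-|\nu|)}s_\mu t_{\mu'}t^*_{\nu'}s^*_\nu$, so $\phi_1$ kills every monomial with $|\mu|\ne|\nu|$ and fixes those with $|\mu|=|\nu|$. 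Since monomials span a dense subalgebra, $\phi_1(\mathcal O_n\otimes_\Theta\mathcal O_m)\subseteq\mathcal F_{\N_0,\bullet}$, and $\phi_1$ restricted to $\mathcal F_{\N_0,\bullet}$ is the identity, whence $\phi_1^2=\phi_1$ and the range is exactly $\mathcal F_{\N_0,\bullet}$. The identical argument with $\rho_{1,e^{2\pi it}}$ (which multiplies the monomial by $e^{2\pi it(|\mu'|-|\nu'|)}$) handles $\phi_2$ with range $\mathcal F_{\bullet,\N_0}$. Since the two circle actions commute, $\phi_1$ and $\phi_2$ commute (Fubini for the double integral, justified by norm-continuity), $\phi=\phi_1\phi_2=\phi_2\phi_1$, and $\phi$ kills every monomial unless both $|\mu|=|\nu|$ and $|\mu'|=|\nu'|$, so its range is $\mathcal F_{\N_0,\N_0}$ and it is idempotent; it is a contractive c.p.\ unital map, i.e.\ an expectation.

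The one genuinely non-formal point — the main obstacle — is \emph{faithfulness}. Here I would invoke that $\mathcal O_n\otimes_\Theta\mathcal O_m$ is nuclear (Proposition \ref{OmnIsNuclear}) only as context and instead argue directly: for the circle action $\rho_{e^{2\pi it},1}$, the map $\phi_1$ is the conditional expectation onto the fixed-point algebra of a compact-group action, which is automatically faithful — if $x\ge 0$ and $\phi_1(x)=0$ then $\int_0^1\rho_{e^{2\pi it},1}(x)\,\mathrm{dt}=0$ with a positive integrand, forcing $\rho_{e^{2\pi it},1}(x)=0$ for a.e.\ $t$ by continuity for all $t$, hence $x=0$. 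This is exactly the faithful conditional expectation $E_\gamma$ from Section \ref{group_action}. Thus $\phi_1$ and $\phi_2$ are faithful, and a composition of faithful positive idempotent maps is faithful: if $x\ge 0$ and $\phi(x)=\phi_1(\phi_2(x))=0$, then since $\phi_2(x)\ge 0$ and $\phi_1$ is faithful we get $\phi_2(x)=0$, and since $\phi_2$ is faithful, $x=0$. This completes the proof. The only subtlety worth stating carefully in the write-up is the norm-continuity of $t\mapsto\rho_{e^{2\pi it},1}(x)$ for general $x$, which is what makes the integral and the a.e.-to-everywhere step legitimate; everything else is bookkeeping on monomials.
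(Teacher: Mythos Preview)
Your proof is correct and follows essentially the same approach as the paper: both average over the gauge action $\rho_{\zeta,\xi}$, check the action of $\phi_1,\phi_2$ on monomials to identify the ranges and verify idempotence and commutativity, and deduce faithfulness from the positivity and norm-continuity of the integrand. Your write-up is in fact more explicit than the paper's, which compresses the faithfulness argument into a single clause about approximation by finite sums.
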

\begin{proof}
 The map $(\zeta,\xi)\mapsto \rho_{\zeta,\xi}(x)$ is continuous in norm, for all  $x\in\mathcal O_n\otimes_\Theta \mathcal O_m$, as $\rho_{\zeta,\xi}(s_\mu t_{\mu'} t^*_{\nu'}s^*_{\nu})=\zeta^{\lvert\mu\rvert - \lvert\nu\rvert}\xi^{\lvert\mu'\rvert - \lvert\nu'\rvert}s_\mu t_{\mu'} t^*_{\nu'}s^*_{\nu}$. Thus, $\phi_1$ and $\phi_2$ are well-defined as limits of sums $\frac{1}{N}\sum^N_{k=1}\rho_{e^{2\pi i\theta_k},1}(x)$ for partitions $\theta_1,\ldots,\theta_N$ of the unit interval (likewise for $\phi_2$). They are unital, linear, positive and faithful as may be deduced immediately from this approximation by finite sums.

Applying $\phi_1$ to $s_\mu t_{\mu'} t^*_{\nu'}s^*_{\nu}$ (where $\mu,\mu',\nu$ and $\nu'$ are of arbitrary length) yields:
\begin{align*}
\phi_1(s_\mu t_{\mu'} t^*_{\nu'}s^*_{\nu})
&=\left(\int^1_0e^{2\pi i(\lvert\mu\rvert -\lvert\nu\rvert)t}\textnormal{dt}\right)s_\mu t_{\mu'} t^*_{\nu'}s^*_{\nu}\\
&=\begin{cases} s_\mu t_{\mu'} t^*_{\nu'}s^*_{\nu}  & \textnormal{for } \lvert\mu\rvert=\lvert\nu\rvert \\ 
0& \textnormal{otherwise} \end{cases}
\end{align*}
Therefore, since the span of the words $s_\mu t_{\mu'}t'^*_{\nu'}s^*_{\nu}$ is dense in $\mathcal O_n\otimes_\Theta \mathcal O_m$, we get $\phi_1^2=\phi_1$ and the image of $\phi_1$ is $\mathcal F_{\N_0,\bullet}$, similarly for $\phi_2$.

The composition of two faithful expectations is again a unital, linear, positive and faithful map. If they commute, we even get an expectation. This is the case for $\phi_1$ and $\phi_2$, due to the following computation.
\[\phi_1\circ\phi_2(s_\mu t_{\mu'} t^*_{\nu'}s^*_{\nu})=\begin{cases} s_\mu t_{\mu'} t^*_{\nu'}s^*_{\nu}  & \textnormal{for } \lvert\mu\rvert=\lvert\nu\rvert \textnormal{ and }\lvert\mu'\rvert=\lvert\nu'\rvert\\ 
0& \textnormal{otherwise} \end{cases}\]
The same holds for $\phi_2\circ\phi_1(s_\mu t_{\mu'} t^*_{\nu'}s^*_{\nu})$, thus $\phi_1\circ\phi_2=\phi_2\circ\phi_1$ and we are done.
\end{proof}

Let us now implement $\phi$ locally by an isometry using a standard trick.

\begin{lemma} \label{PhiAlsW}
For all $k,l\in\N_0$, there exists an isometry $w\in\mathcal O_n\otimes_\Theta \mathcal O_m$, which commutes with all elements in $\mathcal F_{k,l}$, such that $\phi(y)=w^*yw$ for all   $y\in\textnormal{span}\left\{s_\mu t_{\mu'}t^*_{\nu'}s^*_\nu \, \big| \, \lvert\mu\rvert,\lvert\nu\rvert\leq k \textnormal{ and }\lvert\mu'\rvert,\lvert\nu'\rvert\leq l\right\}$.
\end{lemma}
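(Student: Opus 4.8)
The approach is the classical one of \emph{implementing the conditional expectation by an isometry}, as in Cuntz's treatment of $\mathcal O_n$ \cite{cun}, adapted to the twist. It is enough to produce an isometry $w\in\mathcal O_n\otimes_\Theta \mathcal O_m$ such that $w$ commutes with every element of $\mathcal F_{k,l}$ and $w^*mw=0$ for every monomial $m=s_\mu t_{\mu'}t^*_{\nu'}s^*_\nu$ with $\lvert\mu\rvert,\lvert\nu\rvert\leq k$, $\lvert\mu'\rvert,\lvert\nu'\rvert\leq l$ whose bidegree $(\lvert\mu\rvert-\lvert\nu\rvert,\lvert\mu'\rvert-\lvert\nu'\rvert)$ is nonzero. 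Indeed, when the bidegree is zero such a monomial lies in $\mathcal F_{k,l}$ by (the proof of) Lemma \ref{Fabsorbessmallerones}, so $w^*mw=w^*wm=m=\phi(m)$; when it is nonzero, $\phi(m)=0$; and these monomials span the space appearing in the statement.

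To get the commutation for free I would place $w$ in the relative commutant of $\mathcal F_{k,l}$. One checks that the ``shifted generators''
\[
\hat s_i:=\sum_{\lvert a\rvert=k,\ \lvert a'\rvert=l}(s_a t_{a'})\,s_i\,(s_a t_{a'})^*,\qquad
\hat t_j:=\sum_{\lvert a\rvert=k,\ \lvert a'\rvert=l}(s_a t_{a'})\,t_j\,(s_a t_{a'})^*
\]
are isometries with $\sum_i\hat s_i\hat s_i^*=\sum_j\hat t_j\hat t_j^*=1$ and $\hat s_i\hat t_j=q_{ij}\hat t_j\hat s_i$ (using only $s_a^*s_b=\delta_{ab}$, $t_{a'}^*t_{b'}=\delta_{a'b'}$ for indices of equal length), and that each of them commutes with all the matrix units $s_\alpha t_{\alpha'}t^*_{\beta'}s^*_\beta$ ($\lvert\alpha\rvert=\lvert\beta\rvert=k$, $\lvert\alpha'\rvert=\lvert\beta'\rvert=l$) spanning $\mathcal F_{k,l}$. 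Thus $C^*(\hat s_i,\hat t_j)\cong\mathcal O_n\otimes_\Theta \mathcal O_m$ sits in the relative commutant of $\mathcal F_{k,l}$, and for words $\rho$ over $\{1,\dots,n\}$ and $\sigma$ over $\{1,\dots,m\}$ one gets $\hat s_\rho\hat t_\sigma=\sum_{\lvert a\rvert=k,\lvert a'\rvert=l}(s_at_{a'})(s_\rho t_\sigma)(s_at_{a'})^*$.

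Now put $w:=\hat s_\rho\hat t_\sigma$, with $\rho=(1,2,2,\dots,2)$ of length $k+1$ and $\sigma=(1,2,2,\dots,2)$ of length $l+1$ (this uses $n,m\geq 2$); these words have \emph{no nontrivial self-overlap}, i.e.\ no proper suffix equals the prefix of the same length. Then $w$ is an isometry (a product of isometries) commuting with $\mathcal F_{k,l}$, so it remains to check $w^*mw=0$ for $m$ of nonzero bidegree. I would expand
\[
w^*mw=\sum_{(a,a'),(b,b')}(s_at_{a'})(t_\sigma^*s_\rho^*)(s_at_{a'})^*\;m\;(s_bt_{b'})(s_\rho t_\sigma)(s_bt_{b'})^*
\]
and collapse junctions: $s_a^*s_\mu$ and $t^*_{a'}t_{\mu'}$ survive only if $a,a'$ extend $\mu,\mu'$, and symmetrically $b,b'$ must extend $\nu,\nu'$. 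Pushing the remaining $s$'s past the $t$'s (picking up scalars $q_{ij}$ which thereafter occur only as harmless prefactors, since the $s$-letters cancel internally through $s^*_i s_i=1$, and likewise for the $t$'s), the whole expression reduces to a scalar multiple of a factor of the form $s^*_{\rho_{[d_1+1,\,k+1]}}s_\rho$ (when $d_1\neq 0$) or $t^*_{\sigma_{[d_2+1,\,l+1]}}t_\sigma$ (when $d_2\neq 0$); here $1\le d_1\le k$ forces $\rho_{[d_1+1,k+1]}$ to be a proper suffix of $\rho$ of length $k+1-d_1\in\{1,\dots,k\}$, which by the non-self-overlap of $\rho$ is not the prefix of that length, so $s^*_{\rho_{[d_1+1,k+1]}}s_\rho=0$, and symmetrically for $\sigma$. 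Hence $w^*mw=0$, as needed.

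\textbf{The main obstacle is the twist.} Over $\mathcal O_n\otimes\mathcal O_m$ one could simply take $w$ to be a product of a Cuntz-type isometry in the $s_i$'s with one in the $t_j$'s; but when some $q_{ij}\neq 1$ this product fails to commute with $\mathcal F_{k,l}$, because a word in the $s_i$'s conjugates $t_{\mu'}t^*_{\nu'}$ by a scalar depending on the actual letters of $\mu',\nu'$ (not merely on their lengths). Building $w$ inside the relative-commutant copy $C^*(\hat s_i,\hat t_j)$ of $\mathcal O_n\otimes_\Theta \mathcal O_m$ repairs this; the price is the bookkeeping of the twisting scalars in the verification above, i.e.\ checking that all the $q_{ij}$'s produced by normal ordering end up merely multiplying a term that already vanishes for combinatorial reasons. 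That bookkeeping is the only delicate point.
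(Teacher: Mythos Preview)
Your proof is correct and follows essentially the same approach as the paper's. The paper defines $w:=\sum_{|\delta|=k,\,|\epsilon'|=l}s_\delta t_{\epsilon'}s_\gamma t^*_{\epsilon'}s^*_\delta$ with $s_\gamma=s_1^{2k}s_2t_1^{2l}t_2$, which is exactly your $\hat s_\rho\hat t_\sigma$ for the choice $\rho=1^{2k}2$, $\sigma=1^{2l}2$; you instead take $\rho=12^k$ and $\sigma=12^l$, which are shorter but equally serve the purpose since they too have length exceeding $k$ (resp.\ $l$) and no proper suffix equal to a prefix.

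Your packaging via the shifted generators $\hat s_i,\hat t_j$ is a genuine expository improvement over the paper: it makes the isometry property and the commutation with $\mathcal F_{k,l}$ immediate (as $w$ is then a word in isometries lying in the relative commutant), whereas the paper verifies these by a direct computation with the explicit sum. For the vanishing of $w^*yw$ on off-diagonal monomials, the paper simply asserts it; your sketch via normal-ordering into an $s$-block times a $t$-block (with the twist contributing only scalar prefactors, since the reductions $s_i^*s_j=\delta_{ij}$ and $t_i^*t_j=\delta_{ij}$ are untwisted) and then invoking the no-self-overlap of $\rho,\sigma$ is the right argument and goes through without difficulty.
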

\begin{proof} Put $s_\gamma:=s_1^{2k}s_2t^{2l}_1t_2$ and define $w$ by:
\[w:=\sum_{\lvert\delta\rvert=k,\lvert\epsilon'\rvert=l} s_\delta t_{\epsilon'}s_\gamma t^*_{\epsilon'}s^*_\delta \]
Then, $w$ is an isometry.
We have $ws_\mu t_{\mu'}=s_\mu t_{\mu'}s_\gamma$ and $t^*_{\nu'}s^*_\nu w=s_\gamma t^*_{\nu'}s^*_\nu$, for all $\mu, \mu', \nu, \nu'$ with $\lvert\mu\rvert=\lvert\nu\rvert=k$ and $\lvert\mu'\rvert=\lvert\nu'\rvert=l$. We conclude  $ws_\mu t_{\mu'}t^*_{\nu'}s^*_\nu =s_\mu t_{\mu'}t^*_{\nu'}s^*_\nu w$, thus $w$ commutes with $\mathcal F_{k,l}$. 

Now let $y=s_\mu t_{\mu'}t^*_{\nu'}s^*_\nu$ be a word with $\lvert\mu\rvert,\lvert\nu\rvert\leq k$  and $\lvert\mu'\rvert,\lvert\nu'\rvert\leq l$. If $\lvert\mu\rvert=\lvert\nu\rvert$ and  $\lvert\mu'\rvert=\lvert\nu'\rvert$, then $y\in\mathcal F_{k,l}$ by Lemma \ref{Fabsorbessmallerones}, thus we have $w^*yw=w^*wy=y=\phi(y)$.
In the case of $\lvert\mu\rvert\neq\lvert\nu\rvert$ or $\lvert\mu'\rvert\neq\lvert\nu'\rvert$, we have $w^*yw=0=\phi(y)$.
\end{proof}

\begin{proposition}\label{OmnIsPurelyInfinite}
$\mathcal O_n\otimes_\Theta \mathcal O_m$ is purely infinite.
\end{proposition}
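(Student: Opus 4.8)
The plan is to adapt Cuntz's argument for $\mathcal O_n$ from \cite{cun} to $\mathcal O_n\otimes_\Theta\mathcal O_m$, using the faithful expectation $\phi$ and the local implementing isometries $w$ constructed above. As $\mathcal O_n\otimes_\Theta\mathcal O_m$ is unital, it suffices to show that for every nonzero $x$ there are $a,b$ with $axb=1$; and since $x^*x\neq 0$ whenever $x\neq 0$, it is enough to produce, for the positive element $x^*x$, an element $z$ with $zx^*xz^*$ invertible: one then sets $b:=z^*(zx^*xz^*)^{-1/2}$ and $a:=b^*x^*$, so that $axb=(zx^*xz^*)^{-1/2}(zx^*xz^*)(zx^*xz^*)^{-1/2}=1$.

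First I would put $c_0:=\lVert\phi(x^*x)\rVert$, which is strictly positive since $\phi$ is faithful (Lemma~\ref{ExpectOmn}), and approximate $x^*x$ by a \emph{positive} element $y_0$ lying in the algebraic span $\textnormal{span}\{s_\mu t_{\mu'}t^*_{\nu'}s^*_\nu\mid\lvert\mu\rvert,\lvert\nu\rvert\le k,\ \lvert\mu'\rvert,\lvert\nu'\rvert\le l\}$ for suitable $k,l$, with $\lVert x^*x-y_0\rVert<c_0/4$ (such a $y_0$ exists: take $\xi$ in the span close to $(x^*x)^{1/2}$ and set $y_0:=\xi^*\xi$). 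By the computation in the proof of Lemma~\ref{ExpectOmn}, $\phi(y_0)$ is a positive element of $\mathcal F_{k,l}$, which by Lemma~\ref{Fabsorbessmallerones} is isomorphic to $M_{n^km^l}(\mathbb C)$; set $\lambda:=\lVert\phi(y_0)\rVert$, so that $\lambda\ge 3c_0/4>0$. Linear algebra in $\mathcal F_{k,l}$ produces a rank-one projection $e$ with $e\,\phi(y_0)\,e=\lambda e$, and since $e$ and the matrix unit $p:=s_1^kt_1^l(t_1^*)^l(s_1^*)^k\in\mathcal F_{k,l}$ are both rank-one projections of $M_{n^km^l}(\mathbb C)$, there is a unitary $u\in\mathcal F_{k,l}$ with $ueu^*=p$.

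Next I would bring in the implementing isometry $w$ of Lemma~\ref{PhiAlsW} for the same $k,l$: it commutes with $\mathcal F_{k,l}$ (hence with $e$ and $u$) and satisfies $\phi(y_0)=w^*y_0w$. Putting $v:=ew=we$ one gets $v^*y_0v=e\,\phi(y_0)\,e=\lambda e$, and with the isometry $\eta:=s_1^kt_1^l$ (so that $\eta\eta^*=p=ueu^*$ and $\eta^*\eta=1$) I would define
\[z:=\lambda^{-1/2}\,\eta^*\,u\,v^*\in\mathcal O_n\otimes_\Theta\mathcal O_m.\]
A short computation then gives $zy_0z^*=\lambda^{-1}\eta^*u(\lambda e)u^*\eta=\eta^*(ueu^*)\eta=\eta^*\eta\eta^*\eta=1$, while $\lVert z\rVert\le\lambda^{-1/2}$ because $u$ is unitary, $\eta$ is an isometry and $\lVert v\rVert\le 1$. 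Consequently $\lVert zx^*xz^*-1\rVert=\lVert z(x^*x-y_0)z^*\rVert\le\lambda^{-1}\lVert x^*x-y_0\rVert<(c_0/4)/(3c_0/4)=1/3$, so $zx^*xz^*$ is invertible, and the reduction in the first paragraph finishes the proof.

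The step I expect to require the most care is the bookkeeping that lets all the auxiliary elements — $e$, $u$, $w$, and $\eta$ — be arranged inside (or around) the \emph{same} finite-dimensional corner $\mathcal F_{k,l}$, so that $w$ genuinely commutes with $e$ and $u$ and implements $\phi$ on $y_0$, which is exactly what makes the identities $v^*y_0v=\lambda e$ and $zy_0z^*=1$ hold on the nose; tied to this is keeping $\lambda$ bounded away from $0$ (via the choice $\lVert x^*x-y_0\rVert<c_0/4$) so that the perturbation term $z(x^*x-y_0)z^*$ is genuinely of norm $<1$. Everything else is routine manipulation of isometries and matrix units.
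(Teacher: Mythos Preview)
Your proof is correct and follows essentially the same strategy as the paper: approximate $x^*x$ by an element in the algebraic span, use $\phi$ and the local implementing isometry $w$ to land in a matrix algebra $\mathcal F_{k,l}$, pick off a rank-one eigenprojection $e$, conjugate it to the standard minimal projection via a unitary $u$, and build $z$ so that $zy_0z^*=1$ while $\lVert z\rVert$ is controlled by $\lVert\phi(y_0)\rVert^{-1/2}$. The only cosmetic differences are that the paper normalises so that $\lVert\phi(x^*x)\rVert=1$ and takes a merely self-adjoint approximant $y$ (relying implicitly on $\phi(y)$ being close to the positive $\phi(x^*x)$ to ensure the top eigenvalue equals the norm), whereas you keep $c_0=\lVert\phi(x^*x)\rVert$ explicit and force the approximant $y_0$ to be positive; your version is slightly cleaner on that point.
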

\begin{proof}
 Let $0\neq x\in\mathcal O_n\otimes_\Theta \mathcal O_m$. Under the faithful expectation $\phi$, the  non-zero, positive element $x^*x$ is mapped   to the non-zero, positive element $\phi(x^*x)$ of norm $1$, if suitably scaled. Since the linear span $\mathcal S$ of all elements $s_\mu t_{\mu'}t^*_{\nu'}s^*_\nu$, where $\mu, \mu', \nu, \nu'$ are multi indices of arbitrary length, is dense in $\mathcal O_n\otimes_\Theta \mathcal O_m$, there is a self-adjoint element $y\in\mathcal S$, such that $\lVert x^*x-y\rVert < \frac{1}{4}$. We conclude that $\lVert\phi(y)\rVert>\frac{3}{4}$, as 
$1=\lVert\phi(x^*x)\rVert\leq \lVert\phi(x^*x-y)\rVert + \lVert\phi(y)\rVert< \frac{1}{4}+\lVert\phi(y)\rVert$.

Let $r$  be the maximal length of the multi indices of the summands of $y$ from its presentation as an element in $\mathcal S$. By Lemma \ref{PhiAlsW}, there is an isometry $w\in\mathcal O_n\otimes_\Theta \mathcal O_m$ which commutes with $\mathcal F_{r,r}$ such that $\phi(y)=w^*yw$.

By Lemma \ref{Fabsorbessmallerones}, we can view $\phi(y)\in\mathcal F_{r,r}\cong M_{n^rm^r}(\C)$  as a self-adjoint matrix. Thus, there is a minimal projection $e\in\mathcal F_{r,r}$, such that $e\phi(y)=\phi(y)e=\lVert\phi(y)\rVert e > \frac{3}{4}e$. There is also a unitary $u\in\mathcal F_{r,r}$ with $ueu^*=s_1^rt_1^r(t_1^*)^r(s_1^*)^r$, transforming one minimal projection into another.

Put $z:=\lVert\phi(y)\rVert^{-\frac{1}{2}}(t_1^*)^r(s_1^*)^ruew^*$. Then $\lVert z\rVert\leq\lVert\phi(y)\rVert^{-\frac{1}{2}}<\frac{2}{\sqrt{3}}$ and we have:
\[zyz^*=\lVert \phi(y)\rVert^{-1}(t_1^*)^r(s_1^*)^rue\phi(y) eu^*s_1^rt_1^r=(t_1^*)^r(s_1^*)^rueu^*s_1^rt_1^r=1\]
Hence $zx^*xz^*$ is invertible:
\[\lVert 1-zx^*xz^*\rVert=\lVert z(y-x^*x)z^*\rVert < \frac{4}{3} \cdot \frac{1}{4} = \frac{1}{3} < 1\]
Finally put $a:= b^*x^*$ and $b:=z^*(zx^*xz^*)^{-\frac{1}{2}}$. Then $axb=1$.
\end{proof}

\subsection{$K$-groups of $\mathcal O_n\otimes_\Theta \mathcal O_m$}

Finally, we  calculate the $K$-groups of $\mathcal O_n\otimes_\Theta \mathcal O_m$ building on the work by Cuntz \cite{Cuntz81}. 
Recall, that Cuntz gave a slightly different definition for the twist of a $C^*$-algebra $A$ with $\mathcal O_m$ in his article: Let $A\subset B(H)$ be a unital $C^*$-algebra, $\alpha_1,\ldots,\alpha_m$ be pairwise commuting automorphisms of $A$ and let  $u_1,\ldots,u_m\in B(H)$ be  pairwise commuting unitaries implementing the automorphisms by $\alpha_i=\textnormal{Ad}(u_i)$. We denote by $\mathcal{U}:=(u_1,\ldots,u_m)$ the tuple of the unitaries and by $A \times_\mathcal{U} \mathcal O_m$ the $C^*$-subalgebra of $B(H)\otimes\mathcal O_m$ generated by all elements $a\otimes 1$ for $a\in A$ together with $u_1\otimes S_1$, \dots, $u_m\otimes S_m$. 

\begin{proposition}[{\cite[th. 1.5]{Cuntz81}}] \label{CuntzKTheorie}
In the situation as above,  the following 6-term  sequence is exact.
\begin{align*}
 &&K_0(A) &&\stackrel{\textnormal{id}-\sum\alpha_{i*}^{-1}}{\longrightarrow} &&K_0(A) &&\longrightarrow &&K_0(A \times_\mathcal{U} \mathcal O_m)\\
 &&\uparrow &&                               &&         &&    &&\downarrow\\
 &&K_1(A \times_\mathcal{U} \mathcal O_m)&&\longleftarrow &&K_1(A) &&\stackrel{\textnormal{id}-\sum\alpha_{i*}^{-1}}{\longleftarrow} &&K_1(A) 
\end{align*}
Here $K_{j}(A)\to K_j(A \times_\mathcal{U} \mathcal O_m)$ for $j=0,1$ is the map induced by the natural inclusion of $A$ into $A \times_\mathcal{U} \mathcal O_m$ via $a\mapsto a\otimes 1$.
\end{proposition}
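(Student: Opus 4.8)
The plan is to exhibit $A\times_{\mathcal U}\mathcal O_m$ as the quotient in a Toeplitz-type extension and then to read off the six-term $K$-theory sequence of that extension. Let $\mathcal E_{\mathcal U}$ denote the ``Cuntz--Toeplitz companion'' of $A\times_{\mathcal U}\mathcal O_m$: the universal unital $C^*$-algebra generated by a unital copy of $A$ together with isometries $\hat t_1,\dots,\hat t_m$ with pairwise orthogonal ranges satisfying $\hat t_j a=\alpha_j(a)\hat t_j$ for all $a\in A$, but \emph{without} imposing $\sum_j\hat t_j\hat t_j^*=\mathbf 1$. Then $e_\emptyset:=\mathbf 1-\sum_j\hat t_j\hat t_j^*$ is a projection commuting with $A$, the ideal $J\subset\mathcal E_{\mathcal U}$ generated by $e_\emptyset$ satisfies $\mathcal E_{\mathcal U}/J\cong A\times_{\mathcal U}\mathcal O_m$, and repeating the argument of Proposition~\ref{str_I1q} with the family $\alpha=(\alpha_\mu)_\mu$, $\alpha_\mu=\alpha_{\mu_1}\cdots\alpha_{\mu_k}$, identifies $J$ with $A\otimes_\alpha\mathbb K$ and hence, through the untwisting isomorphism $\Delta_\alpha$, with $A\otimes\mathbb K$. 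Consequently $K_*(J)\cong K_*(A)$, the class of a projection $p\in A$ corresponding to $[pe_\emptyset]$.

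The first substantial step is to prove that the inclusion $\iota\colon A\hookrightarrow\mathcal E_{\mathcal U}$ induces an isomorphism $\iota_*\colon K_*(A)\xrightarrow{\ \sim\ }K_*(\mathcal E_{\mathcal U})$ (in fact a $KK$-equivalence). This is where the ``infiniteness'' of $\mathcal O_m$ is absorbed, and I expect it to be the main obstacle. I would obtain it either by the classical Fock-module homotopy --- compress $\mathcal E_{\mathcal U}$ by the increasing projections $\sum_{|\mu|\le N}\hat t_\mu e_\emptyset\hat t_\mu^*$, apply a rotation argument, and let $N\to\infty$ to deformation-retract $\mathcal E_{\mathcal U}$ onto $A$ --- or, more cheaply, by quoting Pimsner's theorem that the Toeplitz algebra $\mathcal T_E$ of a $C^*$-correspondence is $KK$-equivalent to its coefficient algebra, applied to the rank-$m$ free correspondence $E$ over $A$ whose left action is twisted by the $\alpha_j^{-1}$; note that $\mathcal E_{\mathcal U}\cong\mathcal T_E$ and (since the left action is injective and by compacts) $A\times_{\mathcal U}\mathcal O_m\cong\mathcal O_E=\mathcal T_E/\mathbb K(\mathcal F_E)$.

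Granting the $K$-equivalence $\iota_*$, I apply the six-term exact sequence in $K$-theory to
\[
0\longrightarrow J\longrightarrow\mathcal E_{\mathcal U}\longrightarrow A\times_{\mathcal U}\mathcal O_m\longrightarrow 0
\]
and substitute $K_*(J)\cong K_*(A)$ and $K_*(\mathcal E_{\mathcal U})\cong K_*(A)$. The remaining point is to identify the map $\kappa_*\colon K_*(J)\to K_*(\mathcal E_{\mathcal U})$ induced by $J\hookrightarrow\mathcal E_{\mathcal U}$, transported to a self-map of $K_*(A)$, with $\id-\sum_i\alpha_{i*}^{-1}$. On $K_0$ this is a direct computation inside $\mathcal E_{\mathcal U}$: for a projection $p\in A$ one has, using $p\hat t_j=\hat t_j\alpha_j^{-1}(p)$, the orthogonal decomposition of projections
\[
p=pe_\emptyset+\sum_{j=1}^m p\hat t_j\hat t_j^*=pe_\emptyset+\sum_{j=1}^m\hat t_j\alpha_j^{-1}(p)\hat t_j^*,
\]
and $\hat t_j\alpha_j^{-1}(p)$ is a partial isometry with source projection $\alpha_j^{-1}(p)$ and range projection $\hat t_j\alpha_j^{-1}(p)\hat t_j^*$, whence $[\hat t_j\alpha_j^{-1}(p)\hat t_j^*]=[\alpha_j^{-1}(p)]$ in $K_0(\mathcal E_{\mathcal U})$. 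Therefore $[pe_\emptyset]=[p]-\sum_j[\alpha_j^{-1}(p)]$, i.e.\ $\kappa_*=\id-\sum_j\alpha_{j*}^{-1}$ after transport; the $K_1$-statement follows by running the same partial-isometry argument on unitary representatives over $A$, or by suspension together with naturality of the six-term sequence.

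Substituting these identifications into the six-term sequence yields precisely the displayed exact hexagon. Finally, the quotient map $\mathcal E_{\mathcal U}\to A\times_{\mathcal U}\mathcal O_m$ restricts on the copy of $A$ to the inclusion $a\mapsto a\otimes\mathbf 1$, so the composite $K_j(A)\xrightarrow{\iota_*}K_j(\mathcal E_{\mathcal U})\to K_j(A\times_{\mathcal U}\mathcal O_m)$ is exactly the map induced by that inclusion, which gives the last assertion of the statement.
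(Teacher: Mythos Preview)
The paper does not prove this proposition; it is simply cited from Cuntz's 1981 article \cite[th.~1.5]{Cuntz81} and then applied. Your proposal is essentially a reconstruction of Cuntz's original argument: build the Toeplitz extension $0\to A\otimes\mathbb K\to\mathcal E_{\mathcal U}\to A\times_{\mathcal U}\mathcal O_m\to 0$, show that the inclusion $A\hookrightarrow\mathcal E_{\mathcal U}$ is a $K$-equivalence, and read off the six-term sequence after identifying the connecting map. Your $K_0$ computation of $\kappa_*$ as $\id-\sum_j\alpha_{j*}^{-1}$ is correct, and your two suggested routes to the $K$-equivalence (direct homotopy via the Fock filtration, or invoking Pimsner's theorem for the Toeplitz algebra of the obvious rank-$m$ correspondence) are both standard and valid; Cuntz's own paper takes the first route. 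So there is no discrepancy to discuss---you have supplied a proof the paper elected to quote.
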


We now get an according 6-term exact sequence for $\mathcal O_n\otimes_\Theta \mathcal O_m$.

\begin{corollary}\label{OmnKTheoryIndependentOfTheta}
The following sequence in $K$-theory is exact.
\[0\to K_1(\mathcal O_n\otimes_\Theta \mathcal O_m)\to K_0(\mathcal O_n) \stackrel{(m-1)}{\to}  K_0(\mathcal O_n) \to K_0(\mathcal O_n\otimes_\Theta \mathcal O_m)\to 0\]
Hence, the $K$-groups of $\mathcal O_n\otimes_\Theta \mathcal O_m$ are independent from the parameter $\Theta$.
\end{corollary}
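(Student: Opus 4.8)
The plan is to invoke Cuntz's six-term exact sequence, Proposition \ref{CuntzKTheorie}, with $A=\mathcal O_n$ and the pairwise commuting automorphisms $\alpha_1,\ldots,\alpha_m$ of $\mathcal O_n$ determined by $\alpha_j(s_i)=\bar q_{ij}s_i$. The first step is to fit $\mathcal O_n\otimes_\Theta\mathcal O_m$ into the framework of Proposition \ref{CuntzKTheorie}: the automorphisms $\alpha_j$ pairwise commute because they merely rescale the generators by scalars, so in a suitable faithful representation of $\mathcal O_n$ (for instance the one obtained by restricting a faithful representation of $\mathcal O_n\rtimes\mathbb Z^m$) one can choose pairwise commuting unitaries $u_1,\ldots,u_m$ with $\alpha_j=\operatorname{Ad}(u_j)$; as observed in the discussion following Definition \ref{DefOmn}, $\mathcal O_n\otimes_\Theta\mathcal O_m$ is exactly the twisted tensor product $\mathcal O_n\times_{\mathcal U}\mathcal O_m$ attached to the tuple $\mathcal U=(u_1,\ldots,u_m)$, and the simplicity of $\mathcal O_n\otimes_\Theta\mathcal O_m$ (Proposition \ref{OmnIsPurelyInfinite}) guarantees that the concrete $C^*$-subalgebra of $B(H)\otimes\mathcal O_m$ appearing in Cuntz's definition agrees with the universal object. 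I expect this identification to be the only genuinely non-mechanical point, and hence the main (if modest) obstacle.

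Granting the identification, the next step is to compute the maps occurring in Proposition \ref{CuntzKTheorie}. One has $K_0(\mathcal O_n)\cong\mathbb Z/(n-1)\mathbb Z$, generated by the class $[\mathbf 1]$ of the unit, and $K_1(\mathcal O_n)=0$. Since each $\alpha_j$ is unital, it fixes $[\mathbf 1]$, hence $\alpha_{j*}=\id$ on $K_0(\mathcal O_n)$ and (trivially) on $K_1(\mathcal O_n)$, so also $\alpha_{j*}^{-1}=\id$. (Alternatively, $\alpha_j$ is homotopic to $\id_{\mathcal O_n}$ through automorphisms rescaling each $s_i$ along a continuous path of unimodular scalars, which again yields $\alpha_{j*}=\id$.) Therefore
\[
\id-\sum_{j=1}^m\alpha_{j*}^{-1}=\id-m\cdot\id=(1-m)\cdot\id
\]
on $K_0(\mathcal O_n)$; after composing with the automorphism $-\id$ of $K_0(\mathcal O_n)$ this is multiplication by $m-1$.

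Finally, plugging this into the six-term sequence and using $K_1(\mathcal O_n)=0$, the two occurrences of $K_1(\mathcal O_n)$ in the cyclic sequence are zero, so exactness reduces to the four-term exact sequence
\[
0\to K_1(\mathcal O_n\otimes_\Theta\mathcal O_m)\to K_0(\mathcal O_n)\xrightarrow{\,1-m\,}K_0(\mathcal O_n)\to K_0(\mathcal O_n\otimes_\Theta\mathcal O_m)\to 0,
\]
which is precisely the asserted sequence, since multiplication by $1-m$ and by $m-1$ have the same kernel and cokernel. For the concluding assertion, note that this exact sequence involves no data depending on $\Theta$: the scalars $q_{ij}$ could only have entered through the maps $\alpha_{j*}^{-1}$, and we have shown these to be the identity regardless of $(q_{ij})$. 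Consequently $K_1(\mathcal O_n\otimes_\Theta\mathcal O_m)$ is the kernel and $K_0(\mathcal O_n\otimes_\Theta\mathcal O_m)$ the cokernel of multiplication by $m-1$ on $\mathbb Z/(n-1)$, both independent of $\Theta$ (and in fact both isomorphic to $\mathbb Z/\gcd(n-1,m-1)$, in agreement with the Künneth computation of the $K$-groups of $\mathcal O_n\otimes\mathcal O_m$).
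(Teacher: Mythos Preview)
Your proof is correct and follows essentially the same approach as the paper: identify $\mathcal O_n\otimes_\Theta\mathcal O_m$ with Cuntz's $\mathcal O_n\times_{\mathcal U}\mathcal O_m$ via simplicity, then apply Proposition~\ref{CuntzKTheorie} and use that the $\alpha_j$ act trivially on $K$-theory (the paper invokes the homotopy argument, which you also mention). Your handling of the sign issue $(1-m)$ versus $(m-1)$ is slightly more careful than the paper's, which simply writes ``multiplication by $(m-1)$'' without comment.
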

\begin{proof}
In our situation, we put $A:=\mathcal O_n$ and $\alpha_j(s_i)=\bar q_{ij}s_i$. We may represent $\mathcal O_n\rtimes_{\alpha_1}\mathbb Z\rtimes_{\alpha_2}\mathbb Z\ldots \rtimes_{\alpha_m}\mathbb Z$ concretely on some Hilbert space by some representation $\pi$ which yields unitaries $u_1,\ldots,u_m$ implementing the automorphisms $\alpha_1,\ldots,\alpha_m$. We may thus form $\mathcal O_n\times_\mathcal U\mathcal O_m$ in the sense of Cuntz and we obtain a $^*$-homomorphism $\sigma:\mathcal O_n\otimes_\Theta \mathcal O_m\to\mathcal O_n\times_\mathcal U\mathcal O_m$ mapping $s_i\mapsto\pi(s_i)\otimes 1$ and $t_j\mapsto  u_j\otimes S_j$. As $\mathcal O_n\otimes_\Theta \mathcal O_m$ is purely infinite, it is simple in particular. Hence, $\sigma$ is an isomorphism.

We now apply Proposition \ref{CuntzKTheorie} to $\mathcal O_n\otimes_\Theta \mathcal O_m\cong \mathcal O_n \times_{\mathcal U} \mathcal O_m$. Since the automorphisms $\alpha_i$  are homotopic to the identity, we get $\alpha_{i*}=\id$ for all $i=1,\ldots,m$ on the level of $K$-theory. Thus the map $\id-\sum\alpha_{i*}^{-1}$ is  multiplication by $(m-1)$ on $K_0(\mathcal O_n)=\mathbb Z / (n-1)\mathbb Z$.
As $K_1(\mathcal O_n)=0$, we end up with the  exact sequence of our assertion.

Now, the map $K_0(\mathcal O_n) \stackrel{(m-1)}{\to}  K_0(\mathcal O_n)$ is independent from $\Theta$ and so are its kernel $K_1(\mathcal O_n\otimes_\Theta \mathcal O_m)$ and its image $K_0(\mathcal O_n\otimes_\Theta \mathcal O_m)$.
Note that  the isomorphisms of
$K_0(\mathcal O_n\otimes_\Theta \mathcal O_m)$ and $K_0(\mathcal O_n\otimes_{\Theta'} \mathcal O_m)$ for different parameters $\Theta$ and $\Theta'$ map units to units.
\end{proof}

\begin{remark}
We may use the K\"unneth formula to compute the $K$-theory of ${\mathcal O_m\otimes\mathcal O_n}$ explicitly. Recall that $\mathcal O_m\otimes\mathcal O_n$ satisfies the UCT (see Prop. \ref{OmnIsNuclear}).
The K\"unneth formula for K-theory, see \cite[th. 23.1.3]{Black},  gives the following short exact sequences, $j\in\mathbb Z_2$,
\begin{gather*}
0 \rightarrow \bigoplus_{i\in\mathbb Z_2}  K_i(\mathcal{O}_n) \otimes_\mathbb{Z} K_{i + j}(\mathcal{O}_m) \rightarrow K_j(\mathcal{O}_n \otimes \mathcal{O}_m) 
\\
\rightarrow \bigoplus_{i\in\mathbb Z_2} Tor_1^\mathbb{Z}(K_{i}(\mathcal{O}_n), K_{i + j + 1}(\mathcal{O}_m)) \rightarrow 0
\end{gather*}
Let $d = \gcd(n - 1, m - 1)$. 
It is a well known fact in homological algebra, see \cite{McLane}, that for an abelian group $A$
\[
Tor_1^\mathbb{Z}(A, \mathbb{Z}/d\mathbb{Z}) \simeq Ann_A(d) = \{a \in A  \mid  da = 0 \}.
\]
In particular,
\[
Tor_1^\mathbb{Z}(\mathbb{Z}/n\mathbb{Z}, \mathbb{Z}/m\mathbb{Z}) \simeq \mathbb{Z}/\gcd(n,m)\mathbb{Z}.
\]
Recall that, see \cite{Cuntz_Ktheory},
\[
K_0(\mathcal{O}_n) = \mathbb{Z} / (n - 1) \mathbb{Z}, \quad K_1(\mathcal{O}_n) = 0.
\]
Hence, for $\mathcal{O}_n \otimes \mathcal{O}_m$, one has the following short exact sequences:
\begin{gather*}
0 \rightarrow \mathbb{Z}/(n-1)\mathbb{Z} \otimes_\mathbb{Z} \mathbb{Z}/(m-1)\mathbb{Z} \rightarrow K_0(\mathcal{O}_n \otimes \mathcal{O}_m) \rightarrow 0 \rightarrow 0,
\\
0 \rightarrow 0 \rightarrow K_1(\mathcal{O}_n \otimes \mathcal{O}_m) \rightarrow \mathbb{Z} / d \mathbb{Z} \rightarrow 0.\qedhere
\end{gather*}
This implies
\[ 
K_0(\mathcal{O}_n \otimes \mathcal{O}_m) \simeq \mathbb{Z} / d\mathbb{Z}, \quad  K_1(\mathcal{O}_n \otimes\mathcal{O}_m) \simeq \mathbb{Z} / d\mathbb{Z}.
\]
\end{remark}

\subsection{$\mathcal O_n\otimes_\Theta \mathcal O_m$ is isomorphic to $\mathcal O_n\otimes \mathcal O_m$}

We may now put together all ingredients in order to apply Kirchberg's Theorem \cite{Kirchberg} which is as follows.

\begin{proposition} \label{KirchbThm}
 Let $A$ and $B$ be unital, separable, nuclear, simple and purely infinite $C^*$-algebras in the bootstrap class $\mathcal N$ with $K_j(A)\cong K_j(B)$ for $j=0,1$ (with matching units in the case of $j=0$). Then $A\cong B$. 
\end{proposition}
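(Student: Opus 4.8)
My plan for Proposition~\ref{KirchbThm} is \emph{not} to reprove it: this is the Kirchberg--Phillips classification theorem for unital Kirchberg algebras satisfying the UCT, one of the deepest results in the structure theory of nuclear $C^*$-algebras, and a self-contained proof lies far outside the scope of this paper. I would simply invoke it as cited, \cite{Kirchberg} (see also \cite{Philips}, and \cite{Black,Rordam} for detailed expositions). Accordingly, the actual work in this section is not the proposition itself but the verification of its hypotheses for the pair $A=\mathcal O_n\otimes_\Theta \mathcal O_m$ and $B=\mathcal O_n\otimes \mathcal O_m$, which has already been carried out above: separability and unitality are evident, nuclearity and membership in the bootstrap class $\mathcal N$ are Proposition~\ref{OmnIsNuclear}, simplicity and pure infiniteness are Proposition~\ref{OmnIsPurelyInfinite}, and the coincidence of the pointed $K$-groups is Corollary~\ref{OmnKTheoryIndependentOfTheta}.

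For orientation, let me recall the shape of the genuine argument behind the proposition. One first establishes Kirchberg's two fundamental absorption results: every Kirchberg algebra $A$ is $\mathcal O_\infty$-absorbing, $A\cong A\otimes\mathcal O_\infty$, and $\mathcal O_2$-absorbing Kirchberg algebras are unique. Combining $\mathcal O_\infty$-absorption with the UCT, the numerical hypothesis $K_j(A)\cong K_j(B)$ (respecting the class $[1_A]\in K_0$) upgrades to the existence of a \emph{unital} $KK$-equivalence $x\in KK(A,B)$. One then invokes the classification of unital $KK$-equivalences between unital Kirchberg algebras: such an $x$ is realized by an honest $*$-isomorphism $A\to B$. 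This last passage is proved by an Elliott-type approximate-intertwining scheme, producing mutually inverse $*$-homomorphisms up to approximate unitary equivalence out of the $KK$-classes, using the existence and uniqueness theorems for $*$-homomorphisms into purely infinite simple nuclear $C^*$-algebras.

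The genuine obstacle is therefore precisely this approximate-intertwining machinery and the lifting of $KK$-classes to $*$-homomorphisms — the technical heart of Kirchberg--Phillips — and I do not attempt to reproduce it. Taking Proposition~\ref{KirchbThm} for granted, the only remaining step on our side is to feed Propositions~\ref{OmnIsNuclear} and \ref{OmnIsPurelyInfinite} together with Corollary~\ref{OmnKTheoryIndependentOfTheta} into it, with the matching of units supplied by the last assertion of Corollary~\ref{OmnKTheoryIndependentOfTheta}; this yields at once $\mathcal O_n\otimes_\Theta \mathcal O_m\cong\mathcal O_n\otimes \mathcal O_m$, and in particular $\mathcal O_n\otimes_q\mathcal O_m\cong\mathcal O_n\otimes \mathcal O_m$ for every $q\in\mathbb C$ with $\lvert q\rvert=1$.
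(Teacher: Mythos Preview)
Your proposal is correct and matches the paper's treatment: Proposition~\ref{KirchbThm} is stated in the paper without proof, as a citation of Kirchberg's theorem \cite{Kirchberg}, and is then applied exactly as you describe in the subsequent theorem. Your additional sketch of the Kirchberg--Phillips machinery is accurate and helpful context, but the paper itself offers none of it and simply invokes the result as a black box.
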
 

\begin{theorem}
For any parameter $\Theta$, the $C^*$-algebra $\mathcal O_n\otimes_\Theta \mathcal O_m$ is isomorphic to $\mathcal O_m\otimes\mathcal O_n$. Hence we may not twist the tensor product of two Cuntz algebras in this sense.
\end{theorem}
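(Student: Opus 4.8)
The plan is to invoke Kirchberg's classification theorem, Proposition~\ref{KirchbThm}, so the task reduces to verifying its hypotheses for $A=\mathcal O_n\otimes_\Theta\mathcal O_m$ and $B=\mathcal O_n\otimes\mathcal O_m$. Separability and unitality are immediate from the definitions. Nuclearity and membership in the bootstrap class $\mathcal N$ for $\mathcal O_n\otimes_\Theta\mathcal O_m$ are exactly Proposition~\ref{OmnIsNuclear}(c), and for $\mathcal O_n\otimes\mathcal O_m$ they follow from the fact that $\mathcal O_n$, $\mathcal O_m$ are nuclear and in $\mathcal N$ and these properties pass to tensor products. Simplicity and pure infiniteness of $\mathcal O_n\otimes_\Theta\mathcal O_m$ are Proposition~\ref{OmnIsPurelyInfinite}; for the undeformed $\mathcal O_n\otimes\mathcal O_m$ this is classical (a tensor product of purely infinite simple $C^*$-algebras, one of them nuclear, is again purely infinite and simple).

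The remaining point is to match the $K$-groups, with units corresponding in degree zero. Corollary~\ref{OmnKTheoryIndependentOfTheta} already shows that $K_0(\mathcal O_n\otimes_\Theta\mathcal O_m)$ and $K_1(\mathcal O_n\otimes_\Theta\mathcal O_m)$ arise as the cokernel and kernel, respectively, of multiplication by $(m-1)$ on $K_0(\mathcal O_n)=\mathbb Z/(n-1)\mathbb Z$, hence are literally independent of $\Theta$; moreover the final sentence of that corollary records that the isomorphism $K_0(\mathcal O_n\otimes_\Theta\mathcal O_m)\cong K_0(\mathcal O_n\otimes_{\Theta'}\mathcal O_m)$ can be chosen to send unit to unit, since it is induced by the inclusion $\mathcal O_n\hookrightarrow\mathcal O_n\otimes_\Theta\mathcal O_m$ and the class of the unit of $\mathcal O_n\otimes_\Theta\mathcal O_m$ is the image of $[1]\in K_0(\mathcal O_n)$. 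Applying this with $\Theta'$ the trivial parameter (all $q_{ij}=1$), for which $\mathcal O_n\otimes_{\Theta'}\mathcal O_m=\mathcal O_n\otimes\mathcal O_m$, we obtain $K_j(\mathcal O_n\otimes_\Theta\mathcal O_m)\cong K_j(\mathcal O_n\otimes\mathcal O_m)$ for $j=0,1$ with matching units in degree zero. (Alternatively one can quote the K\"unneth computation in the preceding remark, which gives $K_0\cong K_1\cong\mathbb Z/d\mathbb Z$ with $d=\gcd(n-1,m-1)$, but the cleaner route is to compare both deformed and undeformed algebras to the same six-term sequence.)

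With all hypotheses of Proposition~\ref{KirchbThm} in place, Kirchberg's theorem yields a $^*$-isomorphism $\mathcal O_n\otimes_\Theta\mathcal O_m\cong\mathcal O_n\otimes\mathcal O_m$, and since the tensor product is commutative this is the same as $\mathcal O_m\otimes\mathcal O_n$. In particular, specialising $\Theta$ to the one-parameter matrix $q_{ij}=\bar q$ gives $\mathcal O_n\otimes_q\mathcal O_m\cong\mathcal O_n\otimes\mathcal O_m$, so the quotient $\mathcal E_{n,m}^q/\mathcal M_q$ is independent of $q$.

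I do not expect a genuine obstacle here: the theorem is assembled entirely from results proved earlier in the paper. If anything, the one point deserving a careful sentence is the claim that the $K_0$-isomorphism respects the class of the unit — this is not automatic from abstract group-isomorphism but follows because every comparison isomorphism in sight is induced by a unital inclusion of $\mathcal O_n$, so it suffices to observe that $[1_{\mathcal O_n\otimes_\Theta\mathcal O_m}]$ is the image of $[1_{\mathcal O_n}]$ under $K_0(\mathcal O_n)\to K_0(\mathcal O_n\otimes_\Theta\mathcal O_m)$, uniformly in $\Theta$.
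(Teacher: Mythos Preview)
Your proof is correct and follows exactly the same route as the paper: verify nuclearity and membership in $\mathcal N$ (Proposition~\ref{OmnIsNuclear}), pure infiniteness and simplicity (Proposition~\ref{OmnIsPurelyInfinite}), independence of $K$-theory with matching units (Corollary~\ref{OmnKTheoryIndependentOfTheta}), and then invoke Kirchberg's theorem. Your version is simply more explicit about the unit-matching step, which the paper leaves to the last sentence of Corollary~\ref{OmnKTheoryIndependentOfTheta}.
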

\begin{proof}
By Proposition \ref{OmnIsNuclear}, $\mathcal O_n\otimes_\Theta \mathcal O_m$ is nuclear and in $\mathcal N$ and by Proposition \ref{OmnIsPurelyInfinite}, it is purely infinite and simple. By Corollary \ref{OmnKTheoryIndependentOfTheta}, the $K$-groups of  $\mathcal O_n\otimes_\Theta \mathcal O_m$ and $\mathcal O_m\otimes\mathcal O_n$ coincide. Thus, by Kirchberg's Theorem, $\mathcal O_n\otimes_\Theta \mathcal O_m$ is isomorphic to $\mathcal O_m\otimes\mathcal O_n$. 
\end{proof}

\begin{remark}
The above result on $\mathcal O_n\otimes_\Theta \mathcal O_m$ has been part of the fourth author's PhD thesis from 2011 and it has not been published in any other article.
\end{remark}

\subsection{ The isomorphism $\mathcal{O}_n \otimes_q \mathcal{O}_m \simeq \mathcal{O}_n \otimes \mathcal{O}_m$. The Rieffel deformation approach}
In this section we return to one parameter case of $\mathcal O_n\otimes_q \mathcal O_m$, and present an approach to the proof of isomorphism
\[
\mathcal{O}_n \otimes_q \mathcal{O}_m \simeq \mathcal{O}_n \otimes \mathcal{O}_m,\quad |q|<1 
\]
which is based on the properties of Rieffel deformation.

In \cite{Skandalis}, the authors have shown that
for every $C^*$-algebra $\mathcal A$ with an action $\alpha$ of $\mathbb{R}$, there exists a KK-isomorphism 
$t_\alpha \in KK_{1}(\mathcal A, \mathcal A \rtimes_\alpha \mathbb{R})$. This $t_\alpha$ is a generalization of the Connes-Thom isomorphisms for K-theory. Below we will denote by 
$\circ : KK(\mathcal A, \mathcal B) \times KK(\mathcal B, \mathcal C) \rightarrow KK(\mathcal A, \mathcal C)$ the Kasparov product, and by $\boxtimes : KK(\mathcal A, \mathcal B) \times KK(\mathcal C, \mathcal D) \rightarrow KK(\mathcal A \otimes \mathcal C, \mathcal B \otimes \mathcal D)$ the exterior tensor product. Given a homomorphism $\phi : \mathcal A \rightarrow\mathcal B$, put $[\phi] \in KK(\mathcal A, \mathcal B)$ to be the induced KK\nobreakdash-morphism. For more details see \cite{Black,Knudsen}.

We list some properties of $t_\alpha$ that will be used below.
\begin{enumerate}
    \item Inverse of $t_\alpha$ is given by $t_{\widehat{\alpha}}$, where $\widehat{\alpha}$ is the dual action.
    \item If $\mathcal A = \mathbb{C}$ with the trivial action of $\mathbb{R}$, then the corresponding element
    \[
    t_1 \in KK_1(\mathbb{C}, C_0(\mathbb{R})) \simeq \mathbb{Z}
    \]
    is the generator of the group.
    \item Let $\phi: (\mathcal A, \alpha) \rightarrow (\mathcal B, \beta)$ be an equivariant homomorphism. Then the following diagram commutes in KK-theory
    \[
    \begin{tikzcd}
    \mathcal A \arrow[r, "t_\alpha"] \arrow[d, "\phi"] & \mathcal A \rtimes_\alpha \mathbb{R} \arrow[d, "\phi \rtimes \mathbb{R}"] \\
    \mathcal B \arrow[r, "t_\beta"] & \mathcal B \rtimes_\beta \mathbb{R}
    \end{tikzcd}
    \]
    \item Let $\beta$ be an action of $\mathbb{R}$ on $\mathcal B$. For the action $\gamma = \id_{\mathcal A} \otimes \beta$ on $\mathcal A \otimes\mathcal B$ we have
    \[
    t_\gamma = \mathbf1_{\mathcal A} \boxtimes\, t_\beta.
    \]
\end{enumerate}

Further we will need the following version of classification result by Kirchberg and Philips:

\begin{theorem}[\cite{Kirchberg}, Corollary 4.2.2]
Let $\mathcal A$ and $\mathcal B$ be separable nuclear unital purely infinite simple $C^*$-algebras, and suppose that there exists an invertible element $\eta \in KK(\mathcal A,\mathcal B)$, such that $[\iota_{\mathcal A}] \circ \eta = [\iota_{\mathcal B}]$, where $\iota_{\mathcal A} : \mathbb{C} \rightarrow \mathcal A$ is defined by  $\iota_{\mathcal A}(1) =\mathbf1_{\mathcal A}$, and $\iota_{\mathcal B} : \mathbb{C} \rightarrow\mathcal B$ is defined by $\iota_{\mathcal B}(1) =\mathbf1_{\mathcal B}$. Then $\mathcal A$ and $\mathcal B$ are isomorphic.
\end{theorem}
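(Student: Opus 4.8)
The plan is to recognize this statement as the unital Kirchberg--Phillips classification theorem and to reduce it to two structural pillars about $*$-homomorphisms between Kirchberg algebras, after which an Elliott-type approximate intertwining produces the isomorphism. Since the invertible class $\eta$ is handed to us as a hypothesis, no appeal to the UCT is needed: the entire content is to upgrade an abstract $KK$-equivalence into a genuine $*$-isomorphism that respects the units. First I would isolate the two pillars: an \emph{existence} statement, that every class in $KK(\mathcal A,\mathcal B)$ compatible with the units is realized as $[\phi]$ for some unital $*$-homomorphism $\phi\colon\mathcal A\to\mathcal B$; and a \emph{uniqueness} statement, that any two unital $*$-homomorphisms $\mathcal A\to\mathcal B$ inducing the same $KK$-class are approximately unitarily equivalent. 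Throughout I use that $\mathcal A$ and $\mathcal B$ are separable, nuclear, unital, simple and purely infinite, so that every nonzero homomorphism is full and that both algebras absorb $\mathcal O_\infty$.

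Granting the two pillars, the isomorphism is assembled as follows. Using the normalization $[\iota_{\mathcal A}]\circ\eta=[\iota_{\mathcal B}]$ and its consequence $[\iota_{\mathcal B}]\circ\eta^{-1}=[\iota_{\mathcal A}]$, the existence pillar yields unital $*$-homomorphisms $\phi\colon\mathcal A\to\mathcal B$ and $\psi\colon\mathcal B\to\mathcal A$ with $[\phi]=\eta$ and $[\psi]=\eta^{-1}$. By functoriality of the Kasparov product, $[\psi\circ\phi]=\eta\circ\eta^{-1}=1_{\mathcal A}=[\id_{\mathcal A}]$ and $[\phi\circ\psi]=1_{\mathcal B}=[\id_{\mathcal B}]$. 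The uniqueness pillar then gives $\psi\circ\phi\sim_{\mathrm{a.u.}}\id_{\mathcal A}$ and $\phi\circ\psi\sim_{\mathrm{a.u.}}\id_{\mathcal B}$, i.e. each composite is a norm-limit of inner perturbations of the identity. Feeding these approximate unitary equivalences into Elliott's two-sided approximate intertwining --- building sequences of unitaries $u_k\in\mathcal A$, $v_k\in\mathcal B$ so that the perturbed maps $\mathrm{Ad}(v_k)\circ\phi$ and $\mathrm{Ad}(u_k)\circ\psi$ converge pointwise to mutually inverse $*$-homomorphisms --- produces a unital $*$-isomorphism $\mathcal A\cong\mathcal B$.

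For the existence pillar I would pass to the Cuntz/asymptotic picture of $KK(\mathcal A,\mathcal B)$ and exploit the $\mathcal O_\infty$-absorption $\mathcal B\cong\mathcal B\otimes\mathcal O_\infty$, which provides the ``room'' needed to realize any class by an honest homomorphism rather than a mere asymptotic morphism; nuclearity of $\mathcal A$ is what makes the relevant Kasparov module geometrically realizable. The unital normalization of $\eta$ is used at the very end to correct the realized map so that it sends $\mathbf 1_{\mathcal A}$ to $\mathbf 1_{\mathcal B}$: pure infiniteness guarantees that the defect projection is properly infinite and Murray--von Neumann equivalent to the correct complement, permitting an inner adjustment.

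The genuinely hard step is the uniqueness pillar, and this is where the deep work of Kirchberg resides. The strategy I would follow combines two absorption theorems. The first is Kirchberg's $\mathcal O_2$-machinery (every unital separable nuclear $\mathcal A$ satisfies $\mathcal A\otimes\mathcal O_2\cong\mathcal O_2$, and all unital $*$-homomorphisms into $\mathcal O_2$ are approximately unitarily equivalent, since $KK(\mathcal A,\mathcal O_2)=0$); the second is the $\mathcal O_\infty$-absorption $\mathcal A\cong\mathcal A\otimes\mathcal O_\infty$. One realizes two $KK$-equal maps $\phi,\psi\colon\mathcal A\to\mathcal B$ inside the tensor-factorized picture and invokes a Weyl--von Neumann--Voiculescu type absorption, valid in the purely infinite setting, to add a common ``ample'' summand that washes out the difference between $\phi$ and $\psi$ at the level of approximate unitary equivalence. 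Controlling this absorption --- that is, proving that fullness together with equality in $KK$ forces approximate unitary equivalence, with the unitaries taken in $\mathcal B$ itself rather than only in its stabilization --- is the technical heart of the theorem and the step I expect to consume essentially all of the real effort.
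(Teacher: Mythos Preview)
Your outline is a reasonable high-level sketch of the Kirchberg--Phillips classification machinery, but there is nothing to compare it against: the paper does not prove this statement. The theorem is quoted verbatim as Corollary~4.2.2 from Kirchberg's ICM article and is invoked as a black box in the subsequent proof of Theorem~\ref{q_stab_res}. No argument for it appears anywhere in the paper.

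So the relevant feedback is not about mathematical correctness but about scope. The paper treats this result as an external input from the literature; if you were asked to supply a proof for the paper's purposes, the appropriate response would be simply to cite \cite{Kirchberg} (and possibly \cite{Philips}) rather than to reconstruct the existence/uniqueness/intertwining architecture. Your sketch correctly identifies the main ingredients --- existence of homomorphisms realizing $KK$-classes, uniqueness up to approximate unitary equivalence, Elliott intertwining, and the role of $\mathcal O_2$- and $\mathcal O_\infty$-absorption --- but each of these is itself a substantial theorem, and what you have written is an outline rather than a proof. In particular, the uniqueness pillar you flag as ``the genuinely hard step'' is not something one can supply in a paragraph; it is the content of Kirchberg's work.
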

Notice that the conditions of the theorem above does not require $C^*$-algebras $\mathcal A$ and $\mathcal B$ to be in the bootstrap class $\mathcal N$.
\begin{theorem}\label{q_stab_res}
The $C^*$-algebras $\mathcal{O}_n \otimes_q \mathcal{O}_m$ and $\mathcal{O}_n \otimes \mathcal{O}_m$ are isomorphic for any $|q| = 1$.
\end{theorem}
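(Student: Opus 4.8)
The plan is to exhibit an invertible $KK$-equivalence between $\mathcal O_n\otimes_q\mathcal O_m$ and $\mathcal O_n\otimes\mathcal O_m$ that preserves the class of the unit, and then to invoke the $KK$-theoretic form of the Kirchberg--Phillips classification theorem quoted immediately above. First I would record the structural facts: $\mathcal O_n\otimes_q\mathcal O_m$ is the one-parameter instance of Definition \ref{DefOmn} with $\Theta_q$ as in \eqref{theta_q}, hence by Theorem \ref{Rieffel_universal_iso}, applied to $\mathcal O_n$ and $\mathcal O_m$ with their gauge $\mathbb T$-actions, it is $\mathbb T^2$-equivariantly isomorphic to the Rieffel deformation $\mathcal A_{\Theta_q}$, where $\mathcal A:=\mathcal O_n\otimes\mathcal O_m$ carries the periodic $\mathbb R^2$-action $\alpha$ assembled from the two gauge actions (so that $s_i\in\mathcal A_{(1,0)}$ and $t_r\in\mathcal A_{(0,1)}$), and $\mathcal A_{\Theta_q}$ carries the deformed action $\alpha^{\Theta_q}$. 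Both $\mathcal A$ and $\mathcal A_{\Theta_q}$ are unital, separable, nuclear (Corollary \ref{EIsNuclear} and Proposition \ref{OmnIsNuclear}) and simple, purely infinite (Proposition \ref{OmnIsPurelyInfinite}), so the classification theorem will apply once the required $KK$-class is produced.

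For the $KK$-class I would use Proposition \ref{prop4}, which provides an isomorphism $\Psi\colon\mathcal A_{\Theta_q}\rtimes_{\alpha^{\Theta_q}}\mathbb R^2\to\mathcal A\rtimes_\alpha\mathbb R^2$, together with the Fack--Skandalis classes $t_\alpha\in KK(\mathcal A,\mathcal A\rtimes_\alpha\mathbb R^2)$ and $t_{\alpha^{\Theta_q}}\in KK(\mathcal A_{\Theta_q},\mathcal A_{\Theta_q}\rtimes_{\alpha^{\Theta_q}}\mathbb R^2)$ — each obtained by composing the Connes--Thom elements of the two $\mathbb R$-factors, hence of degree $0$ and invertible by property (1). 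I would then set
\[
\eta:=t_{\alpha^{\Theta_q}}\circ[\Psi]\circ t_\alpha^{-1}\in KK(\mathcal A_{\Theta_q},\mathcal A),
\]
which is invertible as a composite of invertibles.

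The remaining, and main, step is to verify that $\eta$ carries the unit correctly, i.e.\ $[\iota_{\mathcal A_{\Theta_q}}]\circ\eta=[\iota_{\mathcal A}]$ in $KK(\mathbb C,\mathcal A)$, where $\iota_{\mathcal A}$ and $\iota_{\mathcal A_{\Theta_q}}$ are the unital embeddings of $\mathbb C$; note these have the same image — the common unit, which is homogeneous of degree $0$ — so both are equivariant for the trivial action of $\mathbb R^2$ on $\mathbb C$. Applying naturality of the Fack--Skandalis class (property (3), iterated over the two copies of $\mathbb R$) gives $[\iota_{\mathcal A}]\circ t_\alpha=t_{\mathrm{triv}}\circ[\iota_{\mathcal A}\rtimes\mathbb R^2]$ and the analogous identity with $\alpha^{\Theta_q}$, where $t_{\mathrm{triv}}\in KK(\mathbb C,\mathbb C\rtimes\mathbb R^2)=KK(\mathbb C,C_0(\mathbb R^2))$. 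By \eqref{unital_embedding}, $\iota_{\mathcal A}\rtimes\mathbb R^2$ and $\iota_{\mathcal A_{\Theta_q}}\rtimes\mathbb R^2$ are exactly the canonical embeddings $i_{\mathbb R^2}$ of $C^*(\mathbb R^2)\simeq C_0(\mathbb R^2)$ into the respective crossed products, and the commuting triangle of Proposition \ref{prop4} says precisely that $\Psi$ intertwines these two embeddings; chasing the diagram yields $[\iota_{\mathcal A_{\Theta_q}}]\circ\eta=t_{\mathrm{triv}}\circ[i_{\mathbb R^2}]\circ t_\alpha^{-1}=[\iota_{\mathcal A}]$. I expect this diagram chase — keeping track of which embedding and which action belongs where, and of the order of Kasparov products — to be the only genuinely delicate part; everything else is formal. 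Once it is done, the classification theorem of Kirchberg and Phillips quoted above gives $\mathcal O_n\otimes_q\mathcal O_m\simeq\mathcal A_{\Theta_q}\simeq\mathcal O_n\otimes\mathcal O_m$, completing the proof. (The invertibility of $\eta$ alone already re-proves that the $K$-groups agree, cf.\ Corollary \ref{OmnKTheoryIndependentOfTheta}; the added content here is the compatibility with the unit, which is what this $KK$-form of the classification requires.)
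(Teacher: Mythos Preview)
Your proposal is correct and follows essentially the same route as the paper: identify $\mathcal O_n\otimes_q\mathcal O_m$ with the Rieffel deformation $(\mathcal O_n\otimes\mathcal O_m)_{\Theta_q}$, use the Fack--Skandalis/Connes--Thom $KK$-equivalences for the two $\mathbb R$-factors together with the crossed-product isomorphism $\Psi$ of Proposition~\ref{prop4} to produce the invertible class $\eta=t_{\alpha^{\Theta_q}}\circ[\Psi]\circ t_\alpha^{-1}$, and verify unit compatibility via the commuting triangle of Proposition~\ref{prop4} and the identification \eqref{unital_embedding}. If anything, your write-up spells out the diagram chase for $[\iota_{\mathcal A_{\Theta_q}}]\circ\eta=[\iota_{\mathcal A}]$ more explicitly than the paper does.
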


\begin{proof}
Throughout the proof we will distinguish between the actions of $\mathbb{T}^2$ on $\mathcal{O}_n \otimes \mathcal{O}_m$ and on $\mathcal{O}_n \otimes_q \mathcal{O}_m$, denoting the latter by $\alpha^q$. As shown above, the both algebras are separable nuclear unital simple and purely infinite.

Further,  
Proposition \ref{prop4}, and the isomorphism $\mathcal O_n\otimes_q\mathcal O_m\simeq (\mathcal O_n\otimes\mathcal O_m)_{\Theta_q}$ yield the isomorphism
\[
\Psi : (\mathcal{O}_n \otimes \mathcal{O}_m) \rtimes_\alpha \mathbb{R}^2 \rightarrow (\mathcal{O}_n \otimes_q \mathcal{O}_m) \rtimes_{\alpha^q} \mathbb{R}^2.
\]
Decompose the crossed products as follows:
\begin{align*}
(\mathcal{O}_n \otimes \mathcal{O}_m) \rtimes_\alpha \mathbb{R}^2 &\simeq (\mathcal{O}_n \otimes \mathcal{O}_m) \rtimes_{\alpha_1} \mathbb{R} \rtimes_{\alpha_2} \mathbb{R},
\\
(\mathcal{O}_n \otimes_q \mathcal{O}_m) \rtimes_{\alpha^q} \mathbb{R}^2 &\simeq (\mathcal{O}_n \otimes_q \mathcal{O}_m) \rtimes_{\alpha_1^q} \mathbb{R} \rtimes_{\alpha_2^q} \mathbb{R}.
\end{align*}
Define
\begin{align*}
t_\alpha &= t_{\alpha_1} \circ (\mathbf1_{C_0(\mathbb{R})} \boxtimes t_{\alpha_2}) \in KK(\mathcal{O}_n \otimes \mathcal{O}_m,(\mathcal{O}_n \otimes \mathcal{O}_m) \rtimes_\alpha \mathbb{R}^2),
\\
t_{\alpha^q} &= t_{\alpha_1^q} \circ  (\mathbf1_{C_0(\mathbb{R})} \boxtimes t_{\alpha_2^q}) \in KK(\mathcal{O}_n \otimes_q \mathcal{O}_m,(\mathcal{O}_n \otimes_q \mathcal{O}_m) \rtimes_{\alpha^q} \mathbb{R}^2),
\end{align*}
Then
\[ \eta = t_{\alpha^q} \circ [\Psi] \circ t_{\alpha}^{-1} \in KK(\mathcal{O}_n \otimes_q \mathcal{O}_m, \mathcal{O}_n \otimes \mathcal{O}_m) \]
is a $KK$-isomorphism. The property $[\iota_{\mathcal{O}_n \otimes_q \mathcal{O}_m}] \circ \eta = [\iota_{\mathcal{O}_n \otimes \mathcal{O}_m}]$ follows from the commutativity of the following diagram
\[
\begin{tikzcd}[column sep = 0.3cm]
\mathbb{C} \arrow[rr, "t_1 \circ (\mathbf1_{C_0(\mathbb{R})} \boxtimes\, t_1)"] \arrow[d, "\iota_{\mathcal{O}_n \otimes_q \mathcal{O}_m}"]
& &
C_0(\mathbb{R}^2) \arrow[rr,"(\mathbf1_{C_0(\mathbb{R})} \boxtimes\, t_1)^{-1} \circ\, t_1^{-1}"] \arrow[dl,"\iota_{\mathcal{O}_n \otimes_q \mathcal{O}_m} \rtimes \mathbb{R}^2"'] \arrow[dr,"\iota_{\mathcal{O}_n \otimes \mathcal{O}_m} \rtimes \mathbb{R}^2"]
& &
\mathbb{C} \arrow[d, "\iota_{\mathcal{O}_n \otimes \mathcal{O}_m}" left] \\
\mathcal{O}_n \otimes_q \mathcal{O}_m \arrow[r,"t_{\alpha^q}"] &
(\mathcal{O}_n \otimes_q \mathcal{O}_m) \rtimes_{\alpha^q} \mathbb{R}^2 \arrow[rr,"\Psi"] & &
(\mathcal{O}_n \otimes \mathcal{O}_m) \rtimes_{\alpha} \mathbb{R}^2 \arrow[r,"t_{\alpha}^{-1}"] &
\mathcal{O}_n \otimes \mathcal{O}_m.\ 
\end{tikzcd}
\]
\end{proof}
\begin{remark}
The proof presented above was obtained by the first author independently of the proof of multi-parameter case.
\end{remark}

\subsection{Computation of $\mathsf{Ext}$ for $\mathcal{E}_{n,m}^q$}

Let us finish with some remarks on $\mathcal{E}_{n,m}^q$. Firstly, the simplicity of $\mathcal O_n\otimes_q\mathcal O_m$ implies that $\mathcal{M}_q\subset\mathcal E_{n,m}^q$ is the largest ideal.
\begin{corollary}\label{mq_unique}
 The ideal $\mathcal{M}_q\subset\mathcal E_{n,m}^q$ is the unique largest ideal.
\end{corollary}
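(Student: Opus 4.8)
The plan is to obtain Corollary~\ref{mq_unique} as a purely formal consequence of the ideal classification in Theorem~\ref{ideals_enmq} together with the simplicity of the quotient $\mathcal O_n\otimes_q\mathcal O_m=\mathcal E_{n,m}^q/\mathcal M_q$ proved in Proposition~\ref{OmnIsPurelyInfinite}; no new computation is required.

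First I would check that $\mathcal M_q$ is a \emph{proper} ideal: since $\mathcal O_n\otimes_q\mathcal O_m$ is unital, simple and purely infinite it is in particular non-zero, so $\mathbf 1\notin\mathcal M_q$ and $\mathcal M_q\neq\mathcal E_{n,m}^q$. Next I would record the elementary containments
\[
\mathcal I_q=\mathcal I_1^q\cap\mathcal I_2^q\subseteq\mathcal I_j^q\subseteq\mathcal I_1^q+\mathcal I_2^q=\mathcal M_q,\qquad j=1,2,
\]
so that each of the four ideals $\mathcal I_q,\mathcal I_1^q,\mathcal I_2^q,\mathcal M_q$ is contained in $\mathcal M_q$. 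By Theorem~\ref{ideals_enmq} these four exhaust all (proper) ideals of $\mathcal E_{n,m}^q$; hence every ideal of $\mathcal E_{n,m}^q$ is contained in $\mathcal M_q$, which is therefore its unique largest ideal.

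There is essentially no obstacle here, since the substantive work has already been carried out in Theorem~\ref{ideals_enmq} and Proposition~\ref{OmnIsPurelyInfinite}. The one point worth flagging is that the simplicity of $\mathcal E_{n,m}^q/\mathcal M_q$ on its own yields only that $\mathcal M_q$ is a \emph{maximal} ideal, not that it is the largest one; it is the full description of the ideal lattice, and in particular the observation that both $\mathcal I_1^q$ and $\mathcal I_2^q$ sit inside $\mathcal M_q$, that upgrades ``maximal'' to ``unique largest''. Equivalently, one could rerun the argument from the proof of Theorem~\ref{ideals_enmq}: if $J=\ker\pi$ is a non-zero ideal, then the Fock part of $\pi$ vanishes by faithfulness of $\pi_F^q$, and the generalised Wold decomposition of Theorem~\ref{wold_dec} forces $J$ to be one of $\mathcal I_q,\mathcal I_1^q,\mathcal I_2^q,\mathcal M_q$, hence $J\subseteq\mathcal M_q$.
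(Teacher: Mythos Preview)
Your main argument is circular. In the paper, the proof of Theorem~\ref{ideals_enmq} explicitly \emph{uses} Corollary~\ref{mq_unique}: the sentence ``In particular, $\mathcal M_q$ contains any ideal in $\mathcal E_{n,m}^q$, see Corollary~\ref{mq_unique}'' is invoked precisely to obtain the containments $\ker\pi_2\subsetneq\mathcal M_q$, $\ker\pi_3\subsetneq\mathcal M_q$ needed to identify these kernels. So Theorem~\ref{ideals_enmq} depends on Corollary~\ref{mq_unique}, not the other way around, and you cannot cite the ideal classification to prove the corollary. Your ``equivalent'' alternative has the same defect: the Wold decomposition only tells you that each $\ker\pi_j$ \emph{contains} one of $\mathcal I_1^q,\mathcal I_2^q,\mathcal M_q$; the step pinning $\ker\pi_j$ down (and hence forcing $J\subseteq\mathcal M_q$) is exactly the content of Corollary~\ref{mq_unique}.

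The paper's own proof is a direct, self-contained argument using only the simplicity of $\mathcal E_{n,m}^q/\mathcal M_q$ from Proposition~\ref{OmnIsPurelyInfinite}. If an ideal $\mathcal J$ has nonzero image in the quotient, simplicity gives $\mathbf 1+x\in\mathcal J$ for some $x\in\mathcal M_q$. One then approximates $x$ by a finite sum $x_\varepsilon$ of elements of the form $s_{\mu_1}t_{\nu_1}(\mathbf 1-P)^{\varepsilon_1}(\mathbf 1-Q)^{\varepsilon_2}t_{\nu_2}^*s_{\mu_2}^*$ and compresses by $s_{\mu}^*t_{\nu}^*(\,\cdot\,)t_{\nu}s_{\mu}$ with $|\mu|,|\nu|$ larger than all word lengths appearing in $x_\varepsilon$; this kills $x_\varepsilon$ and produces an element of $\mathcal J$ within $\varepsilon$ of $\mathbf 1$, hence invertible. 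That compression trick is the substantive content you are missing---it is what upgrades ``$\mathcal M_q$ is maximal'' (which, as you correctly note, is all simplicity gives) to ``$\mathcal M_q$ is the unique largest ideal''.
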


\begin{proof}
 Let $\eta\colon\mathcal E_{n,m}^q\rightarrow \mathcal{O}_n\otimes_q\mathcal{O}_m$ be the quotient homomorphism.
 Suppose that $\mathcal{J}\subset\mathcal{E}_{n,m}^q$ is a two-sided $*$-ideal. Due to the simplicity of
 $\mathcal{O}_n\otimes_q\mathcal{O}_m$ we have that either $\eta(\mathcal J)=\{0\}$ and
 $\mathcal J\subset\mathcal M_q$, or $\eta(\mathcal J)= \mathcal{O}_n\otimes_q\mathcal{O}_m$. In the latter case,
 $\mathbf1+x\in \mathcal J$ for a certain $x\in\mathcal M_q$. For any $0<\varepsilon<1$, choose
 $N_{\varepsilon}\in \mathbb N$,
 such that for
 \[
 x_{\varepsilon}=\mspace{-9mu}
 \sum_{\substack{\varepsilon_1,\varepsilon_2 \in\{0,1\},\\
 \varepsilon_1+\varepsilon_2\ne 0}}
 \sum_{\substack{\mu_1,\mu_2\in\Lambda_n,\\ |\mu_j|\le N_{\varepsilon}}}
 \sum_{\substack{\nu_1,\nu_2\in\Lambda_m,\\ |\nu_j|\le N_{\varepsilon}}} \mspace{-9mu}
 \Psi_{\mu_1,\mu_2\nu_1\nu_2}^{(\varepsilon_1,\varepsilon_2)}s_{\mu_1}t_{\nu_1}
 (\mathbf 1-P)^{\varepsilon_1}(\mathbf 1-Q)^{\varepsilon_2}t_{\nu_2}^*s_{\mu_2^*}\in\mathcal M_q
 \]
one has $\|x-x_{\varepsilon}\|<\varepsilon$. Notice that for any $\mu\in\Lambda_n$, $\nu\in\Lambda_m$
with
$|\mu|,|\nu|> N_{\varepsilon}$ one has ${s_{\mu}^* t_{\nu}^* x_{\varepsilon}=0}$.

Fix $\mu\in\Lambda_n$ and $\nu\in\Lambda_m$, $|\mu|=|\nu|>N_{\varepsilon}$, then
\[
y_{\varepsilon}=s_{\mu}^*t_{\nu}^*(\mathbf 1- x)t_{\nu}s_{\mu}=\mathbf 1 - s_{\mu}^*
t_{\nu}^*(x- x_{\varepsilon})t_{\nu}s_{\mu}
\in\mathcal J.
\]
Thus $\|s_{\mu}^*t_{\nu}^*(x- x_{\varepsilon})t_{\nu}s_{\mu}\|<\varepsilon$ implies that $y_{\varepsilon}$ is
invertible,
so $\mathbf 1\in\mathcal J$.
 \end{proof}

Secondly, we may show that $\mathsf{Ext}(\mathcal{O}_n \otimes_q \mathcal{O}_m, \mathcal{M}_q)=0$ if $\gcd(n-1,m-1)=1$. To this end we compute first
the K-theory of $\mathcal{M}_q$.

\begin{theorem}
Let $d = \gcd(n - 1, m - 1)$. Then
\[
K_0(\mathcal{M}_q) \simeq \mathbb{Z} / d\mathbb{Z} \oplus \mathbb{Z}, \text{ }  K_1(\mathcal{M}_q) \simeq 0.
\]
\end{theorem}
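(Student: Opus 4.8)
The plan is to compute $K_*(\mathcal M_q)$ from the six-term exact sequence attached to the extension
\[
0 \longrightarrow \mathcal M_q \longrightarrow \mathcal E_{n,m}^q \longrightarrow \mathcal O_n\otimes_q\mathcal O_m \longrightarrow 0,
\]
which is available by the very definition $\mathcal O_n\otimes_q\mathcal O_m=\mathcal E_{n,m}^q/\mathcal M_q$; to run it we first record the $K$-groups of the two outer terms. By Theorem~\ref{Rieffel_universal_iso} we have $\mathcal E_{n,m}^q\simeq(\mathcal O_n^{(0)}\otimes\mathcal O_m^{(0)})_{\Theta_q}$, so Proposition~\ref{Rieff_K_theory} gives $K_j(\mathcal E_{n,m}^q)=K_j(\mathcal O_n^{(0)}\otimes\mathcal O_m^{(0)})$. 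Since the Cuntz--Toeplitz algebra $\mathcal O_n^{(0)}$ is $KK$-equivalent to $\mathbb C$ (so that $K_0(\mathcal O_n^{(0)})=\mathbb Z$ and $K_1(\mathcal O_n^{(0)})=0$; a classical fact, see \cite{cun,Cuntz_Ktheory}), the same holds for $\mathcal O_n^{(0)}\otimes\mathcal O_m^{(0)}$, whence $K_0(\mathcal E_{n,m}^q)=\mathbb Z$ and $K_1(\mathcal E_{n,m}^q)=0$. On the other hand $\mathcal O_n\otimes_q\mathcal O_m\simeq\mathcal O_n\otimes\mathcal O_m$ by Theorem~\ref{q_stab_res}, and the K\"unneth computation carried out above yields $K_0(\mathcal O_n\otimes_q\mathcal O_m)\simeq K_1(\mathcal O_n\otimes_q\mathcal O_m)\simeq\mathbb Z/d\mathbb Z$, where $d=\gcd(n-1,m-1)$.

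For $K_1(\mathcal M_q)$ it is fastest to use the extension $0\to\mathcal I_q\to\mathcal M_q\to\mathcal O_m\otimes\mathbb K\oplus\mathcal O_n\otimes\mathbb K\to 0$ of Lemma~\ref{lemextmq}: since $K_1(\mathcal I_q)=K_1(\mathbb K)=0$, $K_1(\mathcal O_m\otimes\mathbb K)=K_1(\mathcal O_m)=0$ and $K_1(\mathcal O_n\otimes\mathbb K)=K_1(\mathcal O_n)=0$, exactness forces $K_1(\mathcal M_q)=0$. For $K_0(\mathcal M_q)$ we return to the six-term sequence of the first extension, writing $\iota\colon\mathcal M_q\hookrightarrow\mathcal E_{n,m}^q$ and $\pi\colon\mathcal E_{n,m}^q\to\mathcal O_n\otimes_q\mathcal O_m$. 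Its relevant segment is
\[
0=K_1(\mathcal E_{n,m}^q)\xrightarrow{\ \pi_*\ }K_1(\mathcal O_n\otimes_q\mathcal O_m)\xrightarrow{\ \partial\ }K_0(\mathcal M_q)\xrightarrow{\ \iota_*\ }K_0(\mathcal E_{n,m}^q)\xrightarrow{\ \pi_*\ }K_0(\mathcal O_n\otimes_q\mathcal O_m).
\]
As $\partial$ is preceded by $0$, it is injective, so $\ker\iota_*=\mathrm{im}\,\partial\simeq K_1(\mathcal O_n\otimes_q\mathcal O_m)\simeq\mathbb Z/d\mathbb Z$; and $\mathrm{im}\,\iota_*=\ker\pi_*$ is a finite-index subgroup of $K_0(\mathcal E_{n,m}^q)=\mathbb Z$, since $\mathbb Z/\ker\pi_*$ embeds into the finite group $K_0(\mathcal O_n\otimes_q\mathcal O_m)=\mathbb Z/d\mathbb Z$; hence $\mathrm{im}\,\iota_*\simeq\mathbb Z$. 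Therefore $K_0(\mathcal M_q)$ fits into the short exact sequence
\[
0\longrightarrow\mathbb Z/d\mathbb Z\longrightarrow K_0(\mathcal M_q)\longrightarrow\mathbb Z\longrightarrow 0,
\]
which splits because $\mathbb Z$ is projective; thus $K_0(\mathcal M_q)\simeq\mathbb Z/d\mathbb Z\oplus\mathbb Z$, as asserted.

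There is no serious obstacle once the two input computations are in place: the argument is a routine diagram chase. The point worth stressing is that the coarser extension $0\to\mathcal I_q\to\mathcal M_q\to\mathcal O_m\otimes\mathbb K\oplus\mathcal O_n\otimes\mathbb K\to 0$ does \emph{not} by itself pin down $K_0(\mathcal M_q)$ — it only produces $0\to\mathbb Z\to K_0(\mathcal M_q)\to\mathbb Z/(n-1)\mathbb Z\oplus\mathbb Z/(m-1)\mathbb Z\to 0$, and this abstract sequence is satisfied by both $\mathbb Z\oplus\mathbb Z/d\mathbb Z$ and $\mathbb Z\oplus\mathbb Z/(n-1)\mathbb Z\oplus\mathbb Z/(m-1)\mathbb Z$ — so one genuinely has to exploit the realisation of $\mathcal M_q$ as the kernel of $\mathcal E_{n,m}^q\to\mathcal O_n\otimes_q\mathcal O_m$, where the surrounding $K$-theory is free, respectively torsion. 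Finally, the answer is consistent with the isomorphism $\mathcal M_q\simeq\mathcal M_1$ obtained earlier and depends on $n,m$ only through $d$.
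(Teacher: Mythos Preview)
Your proof is correct and follows essentially the same approach as the paper: both use the extension $0\to\mathcal I_q\to\mathcal M_q\to\mathcal O_m\otimes\mathbb K\oplus\mathcal O_n\otimes\mathbb K\to 0$ to get $K_1(\mathcal M_q)=0$, and then the extension $0\to\mathcal M_q\to\mathcal E_{n,m}^q\to\mathcal O_n\otimes_q\mathcal O_m\to 0$ together with the known $K$-groups of the outer terms to determine $K_0(\mathcal M_q)$. The only organisational difference is in the final step: the paper first extracts from the $\mathcal I_q$-extension that $K_0(\mathcal M_q)\simeq\mathbb Z\oplus\mathsf{Tors}$ for some finite group $\mathsf{Tors}$, and then identifies $\mathsf{Tors}$ as the kernel of $\iota_*\colon K_0(\mathcal M_q)\to\mathbb Z$; you bypass this by observing directly that the short exact sequence $0\to\mathbb Z/d\mathbb Z\to K_0(\mathcal M_q)\to\mathbb Z\to 0$ splits because $\mathbb Z$ is projective. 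Your route is slightly cleaner, and your closing remark that the $\mathcal I_q$-extension alone does not pin down $K_0$ is a worthwhile observation.
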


\begin{proof}
The isomorphism $\mathcal E_{n,m}^q\simeq (\mathcal O_n^{(o)}\otimes\mathcal O_m^{(0)})_{\Theta_q}$, Proposition \ref{Rieff_K_theory}, and \cite{Cuntz_Ktheory}, Proposition 3.9, imply that
\begin{align*}
K_0(\mathcal{E}_{n,m}^q) &= K_0((\mathcal{O}_n^{(0)}\otimes\mathcal{O}_m^{(0)})_{\Theta_q}) = K_0(\mathcal{O}_n^0 \otimes \mathcal{O}_m^0) = \mathbb{Z},
\\
K_1(\mathcal{E}_{n,m}^q) &= K_1((\mathcal{O}_n^{(0)}\otimes\mathcal{O}_m^{(0)})_{\Theta_q}) = K_1(\mathcal{O}_n^0 \otimes \mathcal{O}_m^0) = 0.
\end{align*}
Applying the 6-term exact sequence for
\[ 0
\rightarrow \mathbb{K} \rightarrow \mathcal{M}_q \rightarrow \mathcal O_n\otimes\mathbb K \oplus \mathcal O_m\otimes\mathbb K \rightarrow 0 ,
\]
we get
\[
\begin{tikzcd}
\mathbb{Z} \arrow[r] & K_0(\mathcal{M}_q) \arrow[r] & \mathbb{Z}/(n-1)\mathbb{Z} \oplus \mathbb{Z}/(m-1)\mathbb{Z} \arrow[d] \\
0 \arrow[u] & \arrow[l] K_1(\mathcal{M}_q) & \arrow[l] 0
\end{tikzcd}
\]
Then  $K_1(\mathcal{M}_q) = 0$,
and elementary properties of finitely generated abelian groups imply that
\[ K_0(\mathcal{M}_q) = \mathbb{Z} \oplus \mathsf{Tors},\] where $\mathsf{Tors}$ is a direct sum of finite cyclic groups.

Further, the following exact sequence
\[
0 \longrightarrow \mathcal{M}_q \longrightarrow \mathcal{E}_{n,m}^q \rightarrow \mathcal{O}_n \otimes_q \mathcal{O}_m\longrightarrow 0
\]
gives
\[
\begin{tikzcd}
K_0(\mathcal{M}_q) \arrow[r, "p"] & \mathbb{Z} \arrow[r] & \mathbb{Z} / d \mathbb{Z} \arrow[d] \\
\mathbb{Z} / d \mathbb{Z} \arrow[u, "i"] & \arrow[l] 0 & \arrow[l] 0
\end{tikzcd}
\]
The map $p : K_0(\mathcal{M}_q) \simeq \mathbb{Z} \oplus \mathsf{Tors} \rightarrow \mathbb{Z}$ has form $p=(p_1,p_2)$, where
\[
p_1\colon\mathbb Z\rightarrow\mathbb Z,\quad p_2\colon\mathsf{Tors}
\rightarrow\mathbb Z.
\]
Evidently, $p_2=0$, and $p\ne 0$ implies that $\ker p_1=\{0\}$.
Thus,
\[
\ker p = \mathsf{Tors} = \mathrm{Im}(i) \simeq \mathbb{Z} / d\mathbb{Z}.
\qedhere
\]
\end{proof}

\begin{theorem}\label{Ext_comp}
Let $d=\gcd(n - 1, m - 1)=1$. Then $\mathsf{Ext}(\mathcal{O}_n \otimes_q \mathcal{O}_m, \mathcal{M}_q)=0$.
\end{theorem}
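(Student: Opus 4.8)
The plan is to reduce the statement to the vanishing of a $KK$-group. Since $\mathcal{O}_n\otimes_q\mathcal{O}_m$ is separable and nuclear (Propositions \ref{OmnIsNuclear} and \ref{OmnIsPurelyInfinite}), every extension of it is semisplit and $\mathsf{Ext}(\mathcal{O}_n\otimes_q\mathcal{O}_m,\mathcal{M}_q)$ is a group; and since $\mathcal{M}_q$ is separable and stable (shown above in this section), the Brown--Douglas--Fillmore--Kasparov identification yields a natural isomorphism
\[
\mathsf{Ext}(\mathcal{O}_n\otimes_q\mathcal{O}_m,\mathcal{M}_q)\simeq KK^1(\mathcal{O}_n\otimes_q\mathcal{O}_m,\mathcal{M}_q),
\]
see \cite{Black}. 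So it suffices to prove that the right-hand side is zero.

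Next I would record the $K$-theory of $\mathcal{O}_n\otimes_q\mathcal{O}_m$. By Theorem \ref{q_stab_res} we have $\mathcal{O}_n\otimes_q\mathcal{O}_m\simeq\mathcal{O}_n\otimes\mathcal{O}_m$, and by the K\"unneth computation above (equivalently, by Corollary \ref{OmnKTheoryIndependentOfTheta} with the substitution of multiplication by $(m-1)$ on $\mathbb{Z}/(n-1)\mathbb{Z}$) one gets $K_0(\mathcal{O}_n\otimes_q\mathcal{O}_m)\simeq\mathbb{Z}/d\mathbb{Z}$ and $K_1(\mathcal{O}_n\otimes_q\mathcal{O}_m)\simeq\mathbb{Z}/d\mathbb{Z}$. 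In the present case $d=\gcd(n-1,m-1)=1$, hence $K_*(\mathcal{O}_n\otimes_q\mathcal{O}_m)=0$.

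Now I would invoke the Universal Coefficient Theorem of Rosenberg and Schochet, applicable because $\mathcal{O}_n\otimes_q\mathcal{O}_m$ lies in the bootstrap class $\mathcal{N}$ by Proposition \ref{OmnIsNuclear}(c). A separable $C^*$-algebra in $\mathcal{N}$ with vanishing $K$-theory is $KK$-equivalent to the zero algebra, so $KK^1(\mathcal{O}_n\otimes_q\mathcal{O}_m,B)=0$ for every separable $B$; alternatively, one feeds $\mathcal{O}_n\otimes_q\mathcal{O}_m$ into the UCT short exact sequence for $KK^1(\mathcal{O}_n\otimes_q\mathcal{O}_m,\mathcal{M}_q)$ and observes that both the $\mathrm{Hom}_{\mathbb{Z}}(K_*(\mathcal{O}_n\otimes_q\mathcal{O}_m),K_*(\mathcal{M}_q))$ term and the $\mathrm{Ext}^1_{\mathbb{Z}}(K_*(\mathcal{O}_n\otimes_q\mathcal{O}_m),K_*(\mathcal{M}_q))$ term vanish since $K_*(\mathcal{O}_n\otimes_q\mathcal{O}_m)=0$. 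Combining this with the first paragraph gives $\mathsf{Ext}(\mathcal{O}_n\otimes_q\mathcal{O}_m,\mathcal{M}_q)=0$.

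The argument is genuinely short, since the substantive input was already assembled: nuclearity, pure infiniteness and bootstrap membership of $\mathcal{O}_n\otimes_q\mathcal{O}_m$, its $K$-groups, and the stability of $\mathcal{M}_q$. The only thing requiring attention is checking that the hypotheses of the two black boxes — the $\mathsf{Ext}\simeq KK^1$ identification and the UCT — are all met (separability of both algebras, nuclearity and bootstrap membership of $\mathcal{O}_n\otimes_q\mathcal{O}_m$, stability of $\mathcal{M}_q$); each of these is available from the preceding sections. For general $d$ the same scheme, together with the computed $K_*(\mathcal{M}_q)$, would instead produce a short exact sequence $0\to(\mathbb{Z}/d\mathbb{Z})^{2}\to\mathsf{Ext}(\mathcal{O}_n\otimes_q\mathcal{O}_m,\mathcal{M}_q)\to\mathbb{Z}/d\mathbb{Z}\to0$, which collapses to $0$ precisely when $d=1$.
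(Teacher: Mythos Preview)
Your proof is correct and follows essentially the same route as the paper: identify $\mathsf{Ext}$ with $KK^1$ via nuclearity and stability, then apply the UCT using that $\mathcal{O}_n\otimes_q\mathcal{O}_m$ lies in the bootstrap class and has $K_*\simeq\mathbb{Z}/d\mathbb{Z}$. The paper computes the individual $\mathrm{Hom}$ and $\mathrm{Ext}^1_{\mathbb{Z}}$ terms in the UCT sequence explicitly (arriving at the same short exact sequence $0\to(\mathbb{Z}/d\mathbb{Z})^2\to KK_1\to\mathbb{Z}/d\mathbb{Z}\to0$ you mention at the end), whereas you more efficiently observe that $K_*(\mathcal{O}_n\otimes_q\mathcal{O}_m)=0$ when $d=1$ kills both UCT terms at once; this is a cosmetic difference, not a different argument.
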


\begin{proof}
Recall that for nuclear $C^*$-algebras $\mathsf{Ext}(\mathcal A,\mathcal B) \simeq KK_1(\mathcal A,\mathcal B)$. 

We use  the UCT sequence 
\[ 
0 \rightarrow \bigoplus_{i \in \mathbb{Z}_2} Ext_\mathbb{Z}^1(K_i(\mathcal A), K_i(\mathcal B)) \rightarrow KK_1(\mathcal A, \mathcal B) \rightarrow \bigoplus_{i \in \mathbb{Z}_2} {Hom}(K_i(\mathcal A), K_{i + 1}(\mathcal B)).
\]
for $\mathcal A = \mathcal{O}_n \otimes_q \mathcal{O}_m$ and $\mathcal B = \mathcal{M}_q$.

Since $K_0(\mathcal A)=K_1(\mathcal A)=\mathbb Z/d\mathbb Z$ and $K_0(\mathcal B)=\mathbb Z\oplus\mathbb Z/d\mathbb Z$, $K_1(\mathcal B)=0$, one has
\[ 
{Hom}(K_0(\mathcal A), K_1(\mathcal B)) = 0, \quad {Hom}(K_1(\mathcal A), K_0(\mathcal B)) = \mathbb{Z} / d \mathbb{Z},
\]
and, see \cite{McLane},
\[ 
Ext_\mathbb{Z}^1(K_0(\mathcal A), K_0(\mathcal B)) = \mathbb{Z} / d \mathbb{Z} \oplus \mathbb{Z} / d \mathbb{Z},\quad \ Ext_\mathbb{Z}^1(K_1(\mathcal A), K_1(\mathcal B)) = 0.
\] 
Hence the following sequence is exact
\[ 0
\rightarrow \mathbb{Z} / d \mathbb{Z} \oplus \mathbb{Z} / d \mathbb{Z} \rightarrow KK_1(\mathcal{O}_n \otimes_q \mathcal{O}_m, \mathcal{M}_q) \rightarrow \mathbb{Z} / d \mathbb{Z} \rightarrow 0. 
\]
\end{proof}

By Theorem \ref{Ext_comp}, for the case of $\gcd(n-1, m-1)=1$ one can immediately deduce that extension classes of
\[ 0
\rightarrow \mathcal{M}_q \rightarrow \mathcal{E}_{n,m}^q \rightarrow \mathcal{O}_n \otimes_q \mathcal{O}_m \rightarrow 0,
\]
and
\[ 0
\rightarrow \mathcal{M}_1 \rightarrow \mathcal{E}_{n,m}^1 \rightarrow \mathcal{O}_n \otimes \mathcal{O}_m \rightarrow 0,
\]
coincide in $\mathsf{Ext}(\mathcal{O}_n \otimes \mathcal{O}_m, \mathcal{M}_1)$ and are trivial. These extensions are essential, however in general case one does not have an immediate generalization of Proposition \ref{Voiculescu}. Thus the study of the problem whether $\mathcal{E}_{n,m}^q \simeq \mathcal{E}_{n,m}^1$ would require further investigations, see \cite{E_theory,Eilers}.

\section*{Acknowledgments}

The work on the paper was initiated during the visit  of V. Ostrovskyi, D. Proskurin and R. Yakymiv to Chalmers University of Technology. We appreciate the working atmosphere and stimulating discussions with Prof. Lyudmila Turowska and Prof. Magnus Goffeng. We are grateful to Prof. Pawel Kasprzak for discussions on properties of Rieffel's deformation  and to  Prof. Eugene Lytvynov for  discussions on generalized statistics and his kind hospitality during the visit of D. Proskurin to Swansea University.
We also indebted to Prof. K. Iusenko for helpful comments and remarks.
M.~Weber has been supported by \emph{SFB-TRR 195} and the \emph{DFG}.

\end{document}